\numberwithin{equation}{section}
\newtheorem{theorem}{Theorem}[section]
\newtheorem{lemma}[theorem]{Lemma}
\newtheorem{proposition}[theorem]{Proposition}
\newtheorem{corollary}[theorem]{Corollary}
\newtheorem{conjecture}[theorem]{Conjecture}
\newtheorem*{theorem*}{Theorem}
\theoremstyle{remark}
\newtheorem{remark}[theorem]{Remark}
\newtheorem{definition}[theorem]{Definition}
\numberwithin{equation}{section}
\newcommand{\lif}[1]{\widetilde{#1}}  
\newcommand{\com}[2][\theta_{0}]{#2^{#1}}
\newcommand{\End}[2]{\mathcal{E}_{#1}(#2)}
\newcommand{\tEnd}[3][\omega]{\mathcal{E}_{#2}(#3,#1)}
\newcommand{\uP}[1]{\Phi^{+}_{unit}(#1)}
\renewcommand{\P}[1]{\Phi(#1)}
\newcommand{\Pbd}[1]{\Phi_{bdd}(#1)}
\newcommand{\Psm}[1]{\Phi_{sim}(#1)}
\newcommand{\cuP}[1]{\bar{\Phi}^{+}_{unit}(#1)}
\newcommand{\cP}[1]{\bar{\Phi}(#1)}
\newcommand{\cPbd}[1]{\bar{\Phi}_{bdd}(#1)}
\newcommand{\cPdt}[1]{\bar{\Phi}_{2}(#1)}
\newcommand{\cQdt}[1]{\bar{\Psi}_{2}(#1)}
\newcommand{\cPsm}[1]{\bar{\Phi}_{sim}(#1)}
\newcommand{\cPel}[1]{\bar{\Phi}_{ell}(#1)}
\newcommand{\p}{\phi}
\newcommand{\q}{\psi}
\newcommand{\lp}{\tilde{\phi}}
\newcommand{\Pkt}[1]{\Pi_{#1}}
\newcommand{\cPkt}[1]{\bar{\Pi}_{#1}}
\newcommand{\clPkt}[1]{\tilde{\bar{\Pi}}_{#1}}
\renewcommand{\r}{\pi}
\newcommand{\lr}{\tilde{\pi}}
\renewcommand{\H}{\mathcal{H}}
\newcommand{\sH}{\bar{\mathcal{H}}}
\newcommand{\Idt}[2]{I_{disc #2}^{#1}}
\newcommand{\tIdt}[2]{I_{disc #2}^{(#1, \omega)}}
\newcommand{\Sdt}[2]{S_{disc #2}^{#1}} 
\newcommand{\cS}[1]{\bar{S}_{#1}}
\renewcommand{\S}[1]{\mathcal{S}_{#1}}
\newcommand{\+}{\oplus}
\renewcommand{\#}{\boxplus}
\renewcommand{\c}{\lambda}
\newcommand{\lG}{\widetilde{G}}
\newcommand{\lM}{\widetilde{M}}
\newcommand{\lP}{\widetilde{P}}
\newcommand{\x}{\omega}
\renewcommand{\L}[1]{{}^L#1}
\newcommand{\D}[1]{\widehat{#1}}
\newcommand{\Gal}[1]{\Gamma_{#1}}
\renewcommand{\a}{\alpha}
\renewcommand{\Im}{\text{Im}\,}
\newcommand{\lZ}{Z_{\widetilde{G}}}
\newcommand{\Z}{Z_{G}}
\newcommand{\Res}{\text{Res}}
\newcommand{\Cent}{\text{Cent}}
\newcommand{\Two}{\mathbb{Z}/2\mathbb{Z}}
\newcommand{\C}{\mathbb{C}}
\newcommand{\A}{\mathbb{A}}
\newcommand{\ep}{\phi^{\mathcal{E}}}
\newcommand{\lf}{\tilde{f}}
\newcommand{\Norm}{\text{Norm}}
\newcommand{\N}[1]{\frak{N}_{#1}}
\newcommand{\cT}[1]{\bar{T}_{#1}}
\newcommand{\Aut}{\text{Aut}}
\newcommand{\Out}{\text{Out}}
\newcommand{\Hom}{\text{Hom}}
\begin{document}
\title{Global L-packets of quasisplit $GSp(2n)$ and $GO(2n)$}

\author{Bin Xu} 
\thanks{Supported by NSFC No. 20191300979 and Tsinghua University Initiative Scientific Research Program No. 2019Z07L02016}

\address{Yau Mathematical Sciences Center and Department of Mathematics \\  Tsinghua University, Beijing, China}
\email{binxu@tsinghua.edu.cn}

\subjclass[2010]{22E50 (primary); 11F70 (secondary)}
\keywords{similitude group, twisted endoscopic transfer, L-packet, stabilized twisted trace formula}

\begin{abstract}
This is a sequel to \cite{Xu:2018} on the $L$-packets of quasisplit general symplectic and even orthogonal groups. We show the existence of global $L$-packets and establish the functoriality of endoscopic transfer for them in many cases.
\end{abstract}

\maketitle

\section{Introduction}
\label{sec: introduction}

Let $F$ be a number field and $G$ be a quasisplit symplectic or special even orthogonal group over $F$. Let $\mathbb{A}_{F}$ be the ad\`ele ring of $F$. We fix an automorphism $\theta_{0}$ of $G$ preserving an $F$-splitting. It induces a dual automorphism $\D{\theta}_0$ on the dual group $\D{G}$. When $G$ is symplectic, $\theta_{0}$ is trivial. When $G$ is special even orthogonal, we require $\theta_{0}$ to be the unique nontrivial outer automorphism induced from the conjugation of the full orthogonal group. We say two irreducible admissible representations of $G(\A_{F})$ are $\theta_0$-conjugate if they are $\theta_0$-conjugate at every place. In \cite{Arthur:2013} Arthur proved the discrete automorphic spectrum of $G(\A_{F})$ can be decomposed
\[
L^{2}_{disc}(G(F) \backslash G(\mathbb{A}_{F})) = \bigoplus_{\psi \in \cQdt{G}} L^{2}_{disc, \psi}(G(F) \backslash G(\mathbb{A}_{F}))
\]
according to the set $\cQdt{G}$ of $\D{\theta}_0$-conjugacy classes of discrete Arthur parameters of $G$. For each $\q \in \cQdt{G}$, Arthur associated a multi-set $\cPkt{\q}$ of isomorphism classes of irreducible admissible representations of $G(\mathbb{A}_{F})$ modulo $\theta_0$-conjugacy, and proved that
\[
L^{2}_{disc, \psi}(G(F) \backslash G(\mathbb{A}_{F})) = \bigoplus_{\substack{[\pi] \in \cPkt{\q} \\ \langle \cdot, \pi \rangle = \epsilon_{\q}}} \pi
\]
modulo $\theta_0$-conjugacy, where $\epsilon_{\q}$ is a character of certain two-group $\S{\q}$ associated with $\q$ and there is a map
\[
\cPkt{\q} \rightarrow \D{\S{\q}}, \quad [\pi] \mapsto \langle \cdot , \r \rangle.
\]
Let $\lG$ be the group of symplectic or orthogonal similitudes over $F$, whose derived group is $G$. We can extend $\theta_0$ to $\lG$. Let $\tilde{\zeta}$ be a character of $Z_{\lG}(\mathbb{A}_{F})/Z_{\lG}(F)$ and $\zeta$ be the restriction to $Z_{G}(\mathbb{A}_{F})$. Then we have
\[
{\rm Ind}^{\, \lG(\A_{F})}_{\, \lG(F)\lZ(\A_{F})G(\A_{F})} L^{2}_{disc}( G(F) \backslash G(\A_{F}), \zeta) \cong L^{2}_{disc}(\lG(F) \backslash \lG(\A_{F}), \lif{\zeta})
\]
(cf. \cite[Lemma 5.3]{Xu:2018}). So we can decompose 
\[
L^{2}_{disc}(\lG(F) \backslash \lG(\A_{F}), \lif{\zeta}) = \bigoplus_{\psi \in \bar{\Psi}_{2}(G, \zeta)} L^{2}_{disc, \psi}(\lG(F) \backslash \lG(\mathbb{A}_{F}), \lif{\zeta})
\]
where $\bar{\Psi}_{2}(G, \zeta) \subseteq \bar{\Psi}_{2}(G)$ is associated with the central character $\zeta$ and
\[
L^{2}_{disc, \psi}(\lG(F) \backslash \lG(\mathbb{A}_{F}), \lif{\zeta}) := {\rm Ind}^{\, \lG(\A_{F})}_{\, \lG(F)\lZ(\A_{F})G(\A_{F})} L^{2}_{disc, \q}( G(F) \backslash G(\A_{F}), \zeta).
\]

In this paper, we will focus on the subset of generic parameters $\bar{\Phi}_2(G, \zeta) = \bar{\Psi}_2(G, \zeta) \cap \bar{\Phi}(G)$. Let $\clPkt{\p, \lif{\zeta}}$ be the set of isomorphism classes of irreducible admissible representations of $\lG(\A_{F})$ modulo $\theta_0$-conjugacy with central character $\lif{\zeta}$, whose restriction to $G(\A_{F})$ have irreducible constituents contained in $\cPkt{\p}$. Let 
\[
Y = \Hom(\lG(\A_{F}) / \lG(F) \lZ(\A_{F})G(\A_{F}), \C^{\times}), \quad X = \Hom(\lG(\A_{F}) / \lZ(\A_{F})G(\A_{F}), \C^{\times}).
\] 
There is a natural group homomorphism
\(
\alpha: \S{\p} \rightarrow Y
\)
(cf. \eqref{eq: global twisted endoscopic sequence}), so that $\r \otimes \omega \cong \r$ for any $\x \in \alpha(\S{\p})$ and $[\r] \in \clPkt{\p, \lif{\zeta}}$. Define $\S{\lp} := {\rm Ker} \, \a$, and
\[
\clPkt{\p, \lif{\zeta}} \rightarrow \D{\S{\lp}}, \quad [\lr] \mapsto \langle \cdot , \lr \rangle
\]
by
\[
\langle \cdot , \lr \rangle = \langle \cdot , \r \rangle|_{\S{\lp}}
\]
where $\r$ is any irreducible constituent in the restriction of $\lr$ to $G(\A_{F})$. By \cite[Proposition 5.11]{Xu:2018},
\begin{align}
\label{eq: coarse decomposition}
L^2_{disc, \p} (\lG(F) \backslash \lG(\A_{F}), \lif{\zeta}) = \sum_{\x \in Y / \a(\S{\p})} \quad \sum_{\substack{[\lr] \in \clPkt{\p, \lif{\zeta}} / X  \\  \langle \cdot, \lr \rangle = 1}} \lr \otimes \x,
\end{align}
modulo $\theta_0$-conjugacy, where $\lr$ are taken to be the representatives of $\clPkt{\p, \lif{\zeta}} / X$ in the discrete automorphic spectrum of $\lG(\A_{F})$. Our first main result gives a refined decomposition of \eqref{eq: coarse decomposition}.

\begin{theorem}
\label{thm: global L-packet 1}
For any $\p \in \cPdt{G}$, there exists a global packet $\cPkt{\lp}$ of isomorphism classes of irreducible admissible representations for $\lG(\A_{F})$ modulo $\theta_0$-conjugacy, unique up to twisting by $Y$, such that
\[
\cPkt{\lp} = \otimes'_{v} \cPkt{\lp_v}
\]
where $\cPkt{\lp_v}$ is defined in \cite[Theorem 4.6]{Xu:2018}, and
\begin{align*}
L^{2}_{disc, \p}(\lG(F) \backslash \lG(\A_{F}), \lif{\zeta}) =  \bigoplus_{\x \in Y / \a(\S{\p})} \bigoplus_{\substack{ [\lr] \in \cPkt{\lp} \otimes \x \\ \langle \cdot, \lr \rangle = 1}} \lr
\end{align*}
modulo $\theta_0$-conjugacy.
\end{theorem}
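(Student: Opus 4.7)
The plan is to construct $\cPkt{\lp}$ as a restricted tensor product of the local packets $\cPkt{\lp_v}$ from \cite[Theorem 4.6]{Xu:2018}, and then match the resulting combinatorial decomposition with the coarse automorphic decomposition \eqref{eq: coarse decomposition}.

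First, at each place $v$ where $\p_v$ is unramified (which is almost every $v$), I would fix a distinguished unramified member $\lr_v^0 \in \cPkt{\lp_v}$ compatible with $\lif{\zeta}_v$; such a choice is a torsor under the local character group $X_v = \Hom(\lG(F_v)/\lZ(F_v) G(F_v), \C^{\times})$. Setting $\cPkt{\lp} := \otimes'_v \cPkt{\lp_v}$ with respect to these $\lr_v^0$ defines a set of isomorphism classes of irreducible admissible representations of $\lG(\A_F)$ modulo $\theta_0$-conjugacy. Any two such global collections differ by a global character in $X$, and the resulting $\lG(F)$-equivalence classes coincide precisely up to twisting by $Y$, which gives the asserted uniqueness.

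Next, I would verify that every $[\lr]$ appearing on the left of \eqref{eq: coarse decomposition} lies in some twist $\cPkt{\lp} \otimes \x$ with $\x \in Y$. By the coarse decomposition, $\lr|_{G(\A_F)}$ contains some $\r \in \cPkt{\p}$, so locally $\lr_v|_{G(F_v)}$ contains $\r_v \in \cPkt{\p_v}$ and hence $\lr_v \in \cPkt{\lp_v}$. Thus $\lr$ differs from an element of $\cPkt{\lp}$ by an adelic character in $X$; since both sides carry the central character $\lif{\zeta}$, and the $\lG(F)$-invariance forced by discrete automorphy must be respected, this character lies in $Y$ modulo the subgroup $\a(\S{\p})$ of characters that already stabilise $\cPkt{\lp}$.

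To obtain the refined multiplicity, I would compute $\langle \cdot, \lr \rangle$ on $\S{\lp}$ by restricting $\langle \cdot, \r \rangle = \prod_v \langle \cdot, \r_v \rangle$ supplied by Arthur's multiplicity formula for $G$. Since $Y$ acts trivially on $\S{\lp} = \Ker \, \a$, the pairing $\langle \cdot, \lr \rangle$ descends to each twist $\cPkt{\lp} \otimes \x$ for $\x \in Y/\a(\S{\p})$. Combining \eqref{eq: coarse decomposition} with Frobenius reciprocity for the inclusion $\lG(F) \lZ(\A_F) G(\A_F) \subseteq \lG(\A_F)$, the multiplicity of $\lr$ in the discrete spectrum equals the number of $[\r] \in \cPkt{\p}$ below $\lr$ with $\langle \cdot, \r \rangle = 1$, and an orbit-counting argument matches this with the multiplicity on the right-hand side of the theorem.

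The main obstacle will be the combinatorial comparison in the last step: showing that the disjoint union over $\x \in Y/\a(\S{\p})$ accounts, without repetition, for all $[\lr]$ appearing on the left, and that the condition $\langle \cdot, \lr \rangle = 1$ isolates the correct constituents of each twist $\cPkt{\lp} \otimes \x$. This requires a careful analysis of the exact sequence $\S{\lp} \hookrightarrow \S{\p} \xrightarrow{\a} Y$ together with the fibre structure of the local restriction maps $\cPkt{\lp_v} \to \cPkt{\p_v}/X_v$ from \cite[Theorem 4.6]{Xu:2018}, consistently glued across all places, which is where the interaction between the local and global $\theta_0$-conjugacy equivalences must be handled with care.
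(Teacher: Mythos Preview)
Your proposal has a genuine gap in step~2. You write that if $\lr$ is discrete automorphic and $\lr \in \cPkt{\lp} \otimes \omega$ for some $\omega \in X$, then ``the $\lG(F)$-invariance forced by discrete automorphy'' implies $\omega \in Y$. This does not follow. Automorphy of $\lr$ means $\lr$ itself is $\lG(F)$-equivariant; it says nothing about the reference packet $\cPkt{\lp}$, which you defined purely locally and which need not consist of automorphic representations at all. The character $\omega$ is read off from the local components $\lr_v$ relative to your fixed choices $\cPkt{\lp_v}$, and there is no a priori reason these local discrepancies should assemble into an id\`ele class character. Establishing exactly this coherence is the content of the theorem, not a formal consequence of the coarse decomposition~\eqref{eq: coarse decomposition}.

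The paper's proof is fundamentally different: it proceeds by an elaborate induction on the stabilized (twisted) trace formula. For a discrete parameter $\p$, one uses the endoscopic expansion (Lemma~\ref{lemma: twisted endoscopic expansion}) together with the stable multiplicity formula for proper endoscopic groups (Theorem~\ref{thm: stable multiplicity formula}, established inductively) to show that a specific combination of $\Idt{\lG}{,\p}(\lf)$ and endoscopic terms is a \emph{stable} distribution. One then fixes an automorphic $\lr^0 \in \cPkt{\lp}$, and the stability at each place $v$ (applied via \cite[Corollary~4.8]{Xu:2018} and linear independence of characters) forces every element of $\cPkt{\lp_v} \otimes (\otimes_{w\neq v}\lr^0_w)$ to contribute with the same coefficient. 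Iterating over places shows the entire packet $\cPkt{\lp}$ contributes, and then~\eqref{eq: discrete spectrum} yields the refined decomposition. The author explicitly notes in the introduction that an earlier attempt along more direct lines contained an error, and that the present proof depends on first establishing a weak form of the stable multiplicity formula (Theorem~\ref{thm: stable multiplicity formula}); your combinatorial/Frobenius-reciprocity approach bypasses this machinery and therefore cannot close the gap.
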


This result was originally announced in the author's thesis \cite[Theorem 5.3.1]{Xu:thesis}, but there is an error in the proof. It is due to the application of a comparison formula (cf. \cite[Lemma 5.4.5]{Xu:thesis}, \cite[Lemma 5.25]{Xu:2018}) in the induction argument, whose validity depends on the assumption of Conjecture~\ref{conj: stable multiplicity formula}. This conjecture was only proved in certain cases (cf. \cite[Theorem 5.3.5]{Xu:thesis}, \cite[Theorem 5.21]{Xu:2018}). In \cite{Xu:2018}, we reproduce the proof of Theorem~\ref{thm: global L-packet 1} in the stable case, i.e., $\S{\lp} = 1$. In the current work, we are able to prove this in all cases by establishing a weak form of this conjecture (cf. Theorem~\ref{thm: stable multiplicity formula}).

Our second main result is on the functoriality of endoscopic transfer. Let $\lif{H}$ be an elliptic endoscopic group of $\lG$. Then the derived group $H$ of $\lif{H}$ is an elliptic endoscopic group of $G$ and it is isomorphic to $G_{1} \times G_{2}$, where $G_{i}$ is a quasisplit symplectic or special even orthogonal group. Here we also allow $G_i$ to be trivial. Let $\lG_i$ be the corresponding group of similitudes and $\lambda_i$ the similitude character. In case $G_{i}$ is trivial, let $\lG_i = \mathbb{G}_m$ and $\lambda_i = id$. Then
\[
\lif{H} \cong \Big\{(g_1, g_2) \in \lG_1 \times \lG_2 \, | \, \lambda_1(g_1) = \lambda_2(g_2) \Big\}.
\]
For $\phi_i \in \cPdt{G_i}$, let $\phi_H := \phi_1 \times \phi_2$ and we define a global $L$-packet $\cPkt{\lp_H}$ of $\tilde{H}$ to be the restriction of $\cPkt{\lp_1} \otimes \,\cPkt{\lp_2}$. Let $\p := \p_1 \boxplus \p_2 \in \cPdt{G}$ (cf. \eqref{eq: global parameter}). Then we define the global endoscopic transfer of $\cPkt{\lp_{H}}$ to be
\[
{\rm Tran} \, \cPkt{\lp_H} := \otimes'_{v} {\rm Tran} \, \cPkt{\lp_{H, v}}
\]
where ${\rm Tran} \, \cPkt{\lp_{H, v}} := \cPkt{\lp_v} $ is the local $L$-packet transfered from $\cPkt{\lp_{H, v}}$ through the local character relation (cf. \cite[Theorem 4.6]{Xu:2018}). It follows from Theorem~\ref{thm: global L-packet 1} that 
\[
{\rm Tran} \, \cPkt{\lp_H} = \cPkt{\lp} \otimes \omega
\] 
for some $\omega \in X$. The functoriality of endoscopic transfer in this case means that $\omega$ can be chosen in $Y$. We prove this under certain technical assumption.

\begin{theorem}
\label{thm: functoriality 1}
Suppose either $\phi_1$ or $\phi_2$ does not contain $GL$-type orthogonal simple parameters (cf. Definition~\ref{def: GL-type}), then 
\[
{\rm Tran} \, \cPkt{\lp_H} = \cPkt{\lp} \otimes \omega
\] 
for some $\omega \in Y$.
\end{theorem}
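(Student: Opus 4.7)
The plan is to promote the tensor-product definition of ${\rm Tran}\, \cPkt{\lp_H}$ to a global statement by exhibiting a single discrete automorphic representation of $\lG(\A_F)$ that lies simultaneously in $\cPkt{\lp} \otimes \omega$ and in some $Y$-twist of $\cPkt{\lp}$, thereby forcing $\omega$ to lie in $Y$ up to the intrinsic $Y$-ambiguity of $\cPkt{\lp}$.

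First, I would apply Theorem~\ref{thm: global L-packet 1} to each $\p_i \in \cPdt{G_i}$ to produce the packets $\cPkt{\lp_i}$, and use the multiplicity formula there to extract a discrete automorphic representation $\lr_i$ in a suitable $Y$-twist of $\cPkt{\lp_i}$. The fibered-product description of $\lif{H}$ then yields a discrete automorphic representation $\lr_H$ of $\lif{H}(\A_F)$ lying in $\cPkt{\lp_H}$ up to an overall automorphic character twist of $\lif{H}$.

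Second, I would invoke the stabilized twisted trace formula for $\lG$, with the endoscopic data provided by $\lif{H}$, to exhibit a discrete automorphic representation $\lr$ of $\lG(\A_F)$ that realizes the endoscopic transfer of $\lr_H$ and whose local components lie in ${\rm Tran}\, \cPkt{\lp_{H, v}} = \cPkt{\lp_v}$ at every place. The assumption that either $\p_1$ or $\p_2$ contains no $GL$-type orthogonal simple parameter enters precisely here: under it, the weak stable multiplicity formula of Theorem~\ref{thm: stable multiplicity formula} on the $\lif{H}$-side is sharp enough to identify the $\p_H$-contribution to $L^2_{disc}(\lif{H})$ with a $Y$-twist of $\cPkt{\lp_H}$, and the comparison formula of \cite[Lemma 5.25]{Xu:2018} can then be applied to conclude that the corresponding $\p$-contribution on the $\lG$-side carries $\lr$ with positive multiplicity.

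Finally, Theorem~\ref{thm: global L-packet 1} applied to $\p$ places this automorphic $\lr$ inside $\cPkt{\lp} \otimes \x$ for some $\x \in Y$; combined with $\lr \in {\rm Tran}\, \cPkt{\lp_H} = \cPkt{\lp} \otimes \omega$, this forces $\omega$ and $\x$ to agree modulo the $Y$-ambiguity of $\cPkt{\lp}$, whence $\omega$ can be taken in $Y$. The main obstacle is the second step: producing a genuine automorphic transfer $\lr$ of $\lr_H$, as opposed to merely a stable character identity. This rests on the stable multiplicity formula for $\lif{H}$ being sharp enough to isolate the $\p_H$-isotypic subspace from the remaining discrete spectrum, and it is exactly the $GL$-type hypothesis that removes the degeneracy in the relevant global $\S{\q}$-structure which would otherwise obstruct this identification.
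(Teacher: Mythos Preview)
Your second step contains a genuine gap, and it is precisely the crux of the theorem. The stabilized trace formula furnishes only a linear identity among character distributions: the $\lif{H}$-contribution to $\tIdt{\lG^{\theta}}{, \p}$ is a sum over \emph{all} elements of ${\rm Tran}\,\cPkt{\lp_H}$ weighted by characters of $\S{\lp}$, mixed in with contributions from every other elliptic endoscopic group. Knowing the stable multiplicity formula on the $\lif{H}$-side pins down that contribution as a distribution, but it does not hand you a single automorphic $\lr$ of $\lG$ lying in the specific $X$-twist ${\rm Tran}\,\cPkt{\lp_H}$ rather than in some other twist. Your appeal to \cite[Lemma 5.25]{Xu:2018} is also circular here: the introduction explicitly notes that that comparison lemma depends on Conjecture~\ref{conj: stable multiplicity formula}, and the point of the present paper is to replace it by the weaker Theorem~\ref{thm: stable multiplicity formula}, whose proof is intertwined with the proof of Theorem~\ref{thm: functoriality} itself. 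Finally, your diagnosis of where the GL-type hypothesis enters is off: it does not sharpen the $\lif{H}$-side formula, but rather guarantees that the semisimple element $s$ cutting out $H$ lies in $(S^{\theta}_{\p_o}\times Z(\D{G}_b))/Z(\D{G})^{\Gal{}}$, which is the range to which the paper's machinery applies.

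The paper's argument is structurally different. In \S\ref{subsubsec: discrete parameter}--\S\ref{subsec: proof of compatible normalization} one first adds correcting terms to $\Idt{\lG}{, \p}$ so that the resulting distribution is stable, and then a place-by-place stability argument together with the compatible-lifting result (Proposition~\ref{prop: compatible lifting packet}) forces $\cPkt{\lp}=\cPkt{\lp_x}$ whenever the image of $x$ in $\prod_v \S{\lp_{o,v}}$ is nontrivial or $\S{\p_o}^{\Sigma_0}\neq\S{\lp_o}$. The residual case, where the local--global map \eqref{eq: local-global component group} is trivial and $\S{\p_o}^{\Sigma_0}=\S{\lp_o}$, is handled by an auxiliary \emph{doubling} construction: one passes to the non-discrete parameter $\p^{+}=2\p_1\#\p_2\#\cdots\#\p_r$ obtained by duplicating a non-GL-type simple factor $\p_1$, and compares the $(\theta,\eta_1)$-twisted trace formulas for $\p^{+}$. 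The GL-type hypothesis is used exactly to guarantee the existence of such a $\p_1$ and, in the special-orthogonal case, to run the final stability argument showing that the obstruction term \eqref{eq: vanishing 5} vanishes. Your outline does not touch this doubling step, and without it the argument cannot close in the residual case.
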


This result strengthens \cite[Theorem 7.4.3]{Xu:thesis} and \cite[Lemma 6.27]{Xu:2018}. To give a few examples, the assumption in this theorem is always satisfied in the following cases:
\begin{itemize}


\item $\lG = GSp(4), GSp(6), GSO(4, \eta), GSO(6, \eta)$, where $\eta$ is a quadratic idele class character;

\item $\lG = GSO(8, \eta)$ for $\eta \neq 1$;

\item $\lG = GSp(2n)$ and $\lif{H} = GSO(2n)$.

\end{itemize}

This paper is organized as follows. In Section~\ref{sec: L-packet}, we review our earlier work \cite{Xu:2018} on both the local and global theory about $\lG$. In Section~\ref{sec: stable trace formula}, we review the relevant stabilized twisted trace formulas. In Section~\ref{sec: main results}, we formulate the main results of this paper in their most general forms. In particular, we also consider the twisted version of Theorem~\ref{thm: functoriality 1}. In Section~\ref{sec: comparison formula}, we formulate some comparison formulas. In Section~\ref{sec: GL-type}, we prove the main results for parameters consisting of orthogonal simple parameters of $GL$-type (cf. Definition~\ref{def: GL-type}). In Section~\ref{sec: endoscopic expansion}, we prove the comparison formulas following the same strategy as that of \cite[Lemma 5.25]{Xu:2018}. In Section~\ref{sec: compatible lifting}, we show that the global $L$-packets of $G$ has compatible lifting to $\lG$ (cf. Corollary~\ref{cor: compatible lifting packet for product}). The notion of compatible lifting has been introduced in \cite[Section 7.3]{Xu:thesis}. It has the following implication. Let $\lif{H}'_{1}, \lif{H}'_{2}$ be endoscopic groups of $\lG$. If $\lif{H}''$ is an endoscopic group for both $\lif{H}'_{1}, \lif{H}'_{2}$ such that the following diagram of endoscopic embeddings commutes.
\[
\xymatrix{& \L{\lif{H}'}_1 \ar[dr]^{\xi_1} &  \\
\L{\lif{H}''} \ar[ur]^{\iota_1} \ar[dr]_{\iota_2} &&  \L{\lG} \\
& \L{\lif{H}'}_{2} \ar[ur]_{\xi_{2}} & 
}
\]
Then
\begin{align}
\label{eq: transitivity}
{\rm Tran}_{\xi_{1}} \circ {\rm Tran}_{\iota_1} \, \cPkt{\lp_{H''}} = {\rm Tran}_{\xi_{2}} \circ {\rm Tran}_{\iota_2} \, \cPkt{\lp_{H''}}
\end{align}
for any global $L$-packet $\cPkt{\lp_{H''}}$ of $\lif{H}''$. This is essentially a local statement. It is not trivial since one can not compose the geometric endoscopic transfer maps. It provides a reduction for the proof of Theorem~\ref{thm: functoriality 1}. As another application, we construct a bijection between local Langlands parameters and local $L$-packets for $\lG$ in the nonarchimedean case such that it is compatible with the endoscopic transfers (cf. Theorem~\ref{thm: LLC} and \cite[Theorem 7.3.5]{Xu:thesis}). This construction depends on certain choices, so it is not canonical. Nevertheless, it will suffice for our construction of the local Arthur packets for $\lG$ in a forthcoming work. It is also necessary to consider \eqref{eq: transitivity} for the twisted endoscopic groups. To do so, we have claimed incorrectly in \cite[Section 7.3]{Xu:thesis} that there is a canonical lift of endoscopic embedding from an endoscopic group of $G$ to a twisted endoscopic group of $\lG$. In fact, that construction has an obstruction by a $2$-cycle of the Galois group in the central torus $\D{D}$ of the dual group in the case when $\lG = GSO(2n, \eta)$ for $\eta \neq 1$. In the current paper, we fix this error by introducing a $1$-cochain of the Weil group in $\D{D}$, which splits the $2$-cocycle. It is the fact that we can not make a canonical choice for the twisted endoscopic embedding for $\lG$ makes the discussion in this section complicated. In the last section, we prove the main results in the cases left by Section~\ref{sec: GL-type}. The proof of Theorem~\ref{thm: global L-packet 1} is similar to that of \cite[Theorem 5.21]{Xu:2018}. In the proof of Theorem~\ref{thm: functoriality 1}, we introduce some auxiliary global parameters by doubling certain simple parameter. This is motivated by some arguments in \cite{Arthur:2013}. The proof is similar to that of \cite[Theorem 7.4.3]{Xu:thesis}.


\section{L-packets}
\label{sec: L-packet}

Let $F$ be a local or global field of characteristic zero and $\bar{F}$ be its algebraic closure. When $F$ is global, let us denote the ad\`ele ring of $F$ by $\A_{F}$, the id\`ele group by $I_{F}$. Let $\Gal{F}$ or $\Gal{}$ be the absolute Galois group of $F$ and $W_{F}$ the Weil group. For any quasisplit connected reductive group $G$ over $F$, we denote by $Z_{G}$ its center, by $A_{G}$ the split connected component of $Z_{G}$, by $\D{G}$ its complex dual group, by $Z(\D{G})$ the centre of $\D{G}$, and by $\L{G}$ its $L$-group, which is a semidirect product of $\D{G}$ with the Weil group $W_{F}$, i.e., $\D{G} \rtimes W_{F}$. Let $X^{*}(G)$ be the group of algebraic characters of $G$ over $F$ and $\mathfrak{a}_{G} = \Hom_{\mathbb{Z}}(X^{*}(G), \mathbb{R})$.

Let $G \subseteq \lG$ be two quasisplit connected reductive groups over $F$ such that they have the same derived group. Then $\lG/G$ is a torus, denoted by $D$. There is an exact sequence
\begin{align}
\label{eq: extension}
\xymatrix{1 \ar[r] & G \ar[r] & \lG \ar[r]^{\lambda}  & D \ar[r] & 1.}
\end{align}
On the dual side, we have 
\begin{align*}
\xymatrix{1 \ar[r] & \D{D} \ar[r] & \D{\lG} \ar[r]^{\bold{p}}  & \D{G} \ar[r] & 1,}
\end{align*}
where all the homomorphisms can be extended to $L$-homomorphisms of $L$-groups. Let $\Sigma$ be a finite abelian group of $F$-automorphisms of $\lG$ preserving a fixed $F$-splitting of $\lG$. We will always assume that $\c$ is $\Sigma$-invariant. This implies that $\Sigma$ also acts on $G$. Let $\lG^{\Sigma} = \lG \rtimes \Sigma$ and $G^{\Sigma} = G \rtimes \Sigma$. Since $\Sigma$ induces dual automorphisms on $\D{\lG}$ and $\D{G}$, we denote this group by $\D{\Sigma}$ and define $\D{\lG}^{\Sigma} = \D{\lG} \rtimes \D{\Sigma}$ and $\D{G}^{\Sigma} = \D{G} \rtimes \D{\Sigma}$. 

As in \cite{Xu:2018} we are mainly concerned with the case that $G$ is a quasisplit symplectic or special even orthogonal group and $\lG$ is the corresponding similitude group. We denote the split symplectic and special even orthogonal group of rank $n$ by $Sp(2n)$ and $SO(2n)$ respectively. We also denote the outer twist of $SO(2n)$ with respect to a quadratic extension $E / F$ by $SO(2n, \eta_{E/F})$, where $\eta_{E/F}$ is the quadratic (id\`ele class) character associated to $E/F$ by the local (global) class field theory. Denote $\eta_{G} = \eta_{E/F}$. The corresponding similitude groups can be defined as follows.
\[
GSp(2n) = (\mathbb{G}_{m} \times Sp(2n)) / (\Two) \,\, \text{ and } \,\, GSO(2n, \eta_{E/F}) = (\mathbb{G}_{m} \times SO(2n, \eta_{E/F})) / (\Two),
\] 
where $\Two$ is embedded diagonally into the centre of each factor. The similitude character $\c$ is square on $\mathbb{G}_{m}$ and trivial on the other factor. We fix an automorphism $\theta_{0}$ of $G$ preserving an $F$-splitting. When $G$ is symplectic, we require $\theta_{0}$ to be trivial. When $G$ is special even orthogonal, we require $\theta_{0}$ to be the unique nontrivial outer automorphism induced from the conjugation of the full orthogonal group. Clearly, $\theta_{0}^{2} = 1$, $\theta_{0}$ extends to $\lG$ by acting trivially on $\lZ$, and $\c$ is $\theta_{0}$-invariant. Let $\Sigma_{0} = \langle \theta_{0} \rangle$. For our induction arguments in the proofs, we will also need to consider 
\begin{align}
\label{eq: product}
G = G_{1} \times \cdots \times G_{q}
\end{align}
where $G_i$ is a quasisplit symplectic or special even orthogonal group. We define
\[
\lG = (\mathbb{G}_{m} \times G_{1} \times G_{2} \times \cdots \times G_{q}) / (\Two),
\]
where $\Two$ is embedded diagonally into the center of each factor. We also define a character $\c$ of $\lG$, which is square on $\mathbb{G}_{m}$ and trivial on the other factors. We define a group of automorphisms of $G$ by taking the product of $\Sigma_{0}$ on each factor, and we denote this group again by $\Sigma_{0}$. We can extend $\Sigma_0$ to $\lG$ by the trivial action on $Z_{\lG}$. We can also view $\lG$ as a subgroup of $\lG_1 \times \cdots \times \lG_q$ by 
\[
\lG \cong \Big\{ (g_i) \in \prod_{i} \lG_{i} \, | \, \lambda_{1}(g_{1}) = \cdots = \lambda_{q}(g_{q}) \Big\}.
\] 
For admissible representations $\lr_i$ of $\lG_i(F)$ in the local case (or $\lG_{i}(\A_F)$ in the global case), we define the restriction of $\otimes_{i} \, \lr_{i}$ to $\lG$ by $\widetilde{\otimes}_i \, \lr_i$.

\subsection{Local theory}
\label{subsec: local}

Suppose $F$ is local. The local Langlands group is defined as follows 
\[
L_{F} = 
\begin{cases}
W_{F} & \text{if $F$ is archimedean}, \\ 
W_{F} \times SL(2, \C) & \text{if $F$ is nonarchimedean}.
\end{cases}
\] 
Let $G$ be a quasisplit connected reductive group over $F$. A local Langlands parameter $\p$ is a $\D{G}$-conjugacy class of admissible homomorphisms from $L_{F}$ to $\L{G}$ (cf. \cite{Borel:1979}). Let $\P{G}$ be the set of local Langlands parameters and $\Pbd{G}$ be the subset of bounded parameters, i.e., the closure of $\p(W_{F})$ is compact. Let $\Pkt{}(G(F))$ be the set of isomorphism classes of irreducible admissible representations of $G(F)$ and $\Pkt{temp}(G(F))$ be the subset of tempered representations. If $\chi$ is a quasicharacter of a closed subgroup $Z_{F}$ of $\Z(F)$, we define $\H(G, \chi)$ to be the space of $\chi^{-1}$-equivariant smooth functions on $G(F)$ with compact support modulo $Z_{F}$.

For $G \subseteq \lG$ as in \eqref{eq: extension}, the projection
\(
\bold{p}
\)
induces a surjection $\P{\lG} \rightarrow \P{G}$ (cf. \cite[Section 2.2]{Xu:2018}). For $\p \in \P{G}$, let $L_{F}$ act on $\D{D}$, $\D{G}^{\Sigma}$, and $\D{\lG}^{\Sigma}$ by conjugation through $\p$. We denote the corresponding group cohomology by $H^{*}_{\p}(L_{F}, \cdot)$. Then
\(
H^{0}_{\p}(L_{F}, \D{D}) = \D{D}^{\Gal{}}
\)
and
\(
H^{1}_{\p}(L_{F}, \D{D}) = H^{1}(W_{F}, \D{D}).
\)
We define
\[
S^{\Sigma}_{\p}: = \Cent(\Im \p, \D{G}^{\Sigma}) = H^{0}_{\p}(L_{F}, \D{G}^{\Sigma}), \quad S_{\lp}^{\Sigma} := \Cent(\Im \lp, \D{\lG}^{\Sigma}) = H^{0}_{\p}(L_{F}, \D{\lG}^{\Sigma}).
\]
The short exact sequence 
\[
\xymatrix{1 \ar[r] & \D{D} \ar[r] & \D{\lG}^{\Sigma} \ar[r]  & \D{G}^{\Sigma} \ar[r] & 1}
\]
induces a long exact sequence
\begin{align*}
\xymatrix{1 \ar[r] &  \D{D}^{\Gamma} \ar[r] & S_{\lp}^{\Sigma} \ar[r] & S_{\p}^{\Sigma} \ar[r]^{\delta \quad \quad} & H^{1}(W_{F}, \D{D}),}
\end{align*}
and hence we get
\begin{align}
\label{eq: old twisted endoscopic sequence}
\xymatrix{1 \ar[r] &  S_{\lp}^{\Sigma}/\D{D}^{\Gal{}} \ar[r]^{\quad \iota} & S_{\p}^{\Sigma} \ar[r]^{\delta \quad \quad} & H^{1}(W_{F}, \D{D}).}
\end{align}
Taking the quotient of \eqref{eq: old twisted endoscopic sequence} by $Z(\D{G})^{\Gal{}}$, we get
\begin{align}
\label{eq: twisted endoscopic sequence mod center}
\xymatrix{1 \ar[r] &  \cS{\lp}^{\Sigma} \ar[r]^{\iota} & \cS{\p}^{\Sigma} \ar[r]^{\bar{\delta} \quad \quad}  & \bar{H}^{1}(W_{F}, \D{D}),}                 
\end{align}
where $\cS{\lp}^{\Sigma} =  S_{\lp}^{\Sigma}/Z(\D{\lG})^{\Gal{}}$, $\cS{\p}^{\Sigma} =  S_{\p}^{\Sigma}/Z(\D{G})^{\Gal{}}$ and $\bar{H}^{1}(W_{F}, \D{D}) = H^{1}(W_{F}, \D{D})/\delta(Z(\D{G})^{\Gamma{}})$. Since $\Im \delta$ is finite (cf. \cite[Lemma 2.1]{Xu:2018}), then $\cS{\lp}^{0} = \cS{\p}^{0}$, which are the identity components. After taking the quotient of \eqref{eq: twisted endoscopic sequence mod center} by the identity components, we get
\begin{align}
\label{eq: twisted endoscopic sequence}
\xymatrix{1 \ar[r] &  \S{\lp}^{\Sigma} \ar[r]^{\iota} & \S{\p}^{\Sigma} \ar[r]^{\bar{\delta} \quad \quad}  & \bar{H}^{1}(W_{F}, \D{D}),}                
\end{align}
where $\S{\lp}^{\Sigma} =  \cS{\lp}^{\Sigma} / \cS{\lp}^{0}$ and $\S{\p}^{\Sigma} =  \cS{\p}^{\Sigma} / \cS{\p}^{0}$.  There are natural maps from $S^{\Sigma}_{\p}, \cS{\p}^{\Sigma}$, and $\S{\p}^{\Sigma}$ to $\D{\Sigma}$, and for $\theta \in \Sigma$, we denote the preimages of $\D{\theta} \in \D{\Sigma}$ by $S^{\theta}_{\p}, \cS{\p}^{\theta}$ and $\S{\p}^{\theta}$ respectively. To understand $\bar{H}^{1}(W_{F}, \D{D})$, we also take
\[
\xymatrix{1 \ar[r] & \D{D} \ar[r] & Z(\D{\lG}) \ar[r]  & Z(\D{G}) \ar[r] & 1}
\]
and it induces
\[
\xymatrix{Z(\D{G})^{\Gal{}} \ar[r]^{\delta \quad} & H^{1}(W_{F}, \D{D}) \ar[r] & H^{1}(W_{F}, Z(\D{\lG})) \ar[r] & H^{1}(W_{F}, Z(\D{G})) \ar[r] & H^{2}(W_{F}, \D{D}) = 1.
}
\]
So
\[
\bar{H}^{1}(W_{F}, \D{D}) \cong {\rm Im}\{H^{1}(W_{F}, \D{D}) \rightarrow H^{1}(W_{F}, Z(\D{\lG}))\}
\]
and
\begin{align}
\label{eq: twist}
\xymatrix{1 \ar[r] & \bar{H}^{1}(W_{F}, \D{D}) \ar[r] & H^{1}(W_{F}, Z(\D{\lG})) \ar[r] & H^{1}(W_{F}, Z(\D{G})) \ar[r] & 1.
}
\end{align}
By the isomorphism $H^{1}(W_{F}, Z(\D{\lG})) \longrightarrow \Hom(\lG(F), \C^{\times})$ \cite[Appendix A]{Xu:2016}, we get an isomorphism
\(
r: \bar{H}^{1}(W_{F}, \D{D}) \rightarrow \Hom(\lG(F)/G(F), \C^{\times})
\)
\cite[Section 2.2]{Xu:2018}. Let us denote $r \circ \bar{\delta}$ by $\a$.

Suppose $G$ is \eqref{eq: product}. We denote the set of $\Sigma_{0}$-orbits in $\Pkt{}(G(F))$ by $\cPkt{}(G(F))$ and the set of $\D{\Sigma}_{0}$-orbits in $\P{G}$ by $\cP{G}$. Similarly, we can define $\cPkt{temp}(G(F))$, $\cPbd{G}$, and analogues of these sets for $\lG$. We denote by $\sH(G, \chi)$ (resp. $\sH(\lG, \lif{\chi})$) the subspace of $\Sigma_{0}$-invariant functions in $\H(G, \chi)$ (resp. $\H(\lG, \lif{\chi})$). By the local Langlands correspondence for classical groups cf. \cite[Theorem 1.5.1]{Arthur:2013}, we can associate any $[\p] \in \cPbd{G}$ with a finite subset $\cPkt{\p}$ of $\cPkt{temp}(G(F))$ such that 
\[
f(\p) := \sum_{\r \in \cPkt{\p}} f_{G}(\r), \quad \quad f \in \sH(G)
\]
is stable and
\begin{align}
\label{eq: disjoint decomposition}
\cPkt{temp}(G(F)) = \bigsqcup_{[\p] \in \cPbd{G}} \cPkt{\p}.
\end{align}
Moreover, with respect to a fixed $\Sigma_{0}$-stable Whittaker datum of $G$, there is a canonical pairing between $\cPkt{\p}$ and $\S{\p}$, which induces an inclusion of $\cPkt{\p}$ into the set of irreducible characters $\D{\S{\p}}$ of $\S{\p}$
\[
\cPkt{\p} \longrightarrow \D{\S{\p}}, \quad \r \mapsto \langle \cdot, \r \rangle
\]
such that the generic representation is sent to the trivial character. For $[\p] \in \cPbd{G}$, the lift $\lp$ is unique up to twists by 
\(
\bar{H}^{1}(W_{F}, \D{D})
\)
which will be identified with ${\rm Hom}(\lG(F)/G(F), \mathbb{C}^{\times})$. For any subgroup $\Sigma \subseteq \Sigma_0$, we get from \eqref{eq: twisted endoscopic sequence}
\begin{align}
\label{eq: local twisted endoscopic sequence}
\xymatrix{1 \ar[r] &  \S{\lp}^{\Sigma} \ar[r]^{\iota} & \S{\p}^{\Sigma} \ar[r]^{\a \quad \quad \quad \quad}  & \Hom(\lG(F)/G(F), \C^{\times})}.               
\end{align}
In \cite[Theorem 4.6]{Xu:2018} we have shown that there exists a lift $\cPkt{\lp} \subseteq \cPkt{temp}(\lG(F))$ unique up to twisting by ${\rm Hom}(\lG(F)/G(F), \mathbb{C}^{\times})$ such that
\[
\lf(\lp) := \sum_{[\lr] \in \cPkt{\lp}} \lf_{\lG}(\lr), \quad \quad \lf \in \sH(\lG)
\]
is stable. Moreover, we have shown for $\omega \in {\rm Hom}(\lG(F)/G(F), \mathbb{C}^{\times})$ and any $[\lr] \in \cPkt{\lp}$,
\[
[\lp \otimes \omega] = [\lp] \Leftrightarrow [\lr \otimes \omega] = [\lr] \Leftrightarrow \omega \in \a(\S{\p}^{\Sigma_0}).
\] 
As a consequence,
\begin{align}
\label{eq: disjoint decomposition similitude}
\cPkt{temp}(\lG(F)) = \bigsqcup_{[\p] \in \cPbd{G}} \, \bigsqcup_{\omega \in  {\rm Hom}(\lG(F)/G(F), \mathbb{C}^{\times})/\a(\S{\p}^{\Sigma_0})} \cPkt{\lp} \otimes \omega.
\end{align}
We can define
\[
\cPkt{\lp} \longrightarrow \D{\S{\lp}}, \quad \langle \cdot, \lr \rangle := \langle \cdot, \r \rangle|_{\S{\lp}}
\]
for any $\r \subseteq \lr|_{G}$. For $[\p] \in \cPbd{G}$, $\theta \in \Sigma_0$ and any semisimple element $s \in \cS{\p}^{\theta}$, let $\D{G}'  := \Cent(s, \D{G})^{0}$ and it can be equipped with a Galois action given by $\p$. This determines a quasisplit connected reductive group $G'$, and $\p$ will factor through $\L{G'}$ for some $\theta$-twisted endoscopic datum $(G', s, \xi)$ of $G$, and hence we get a parameter $\p' \in \cP{G'}$. In this way, we call $(G', \p')$ corresponds to $(\p, s)$, and denote this relation by $(G', \p') \rightarrow (\p, s)$. By \cite{Xu:2018}, $(G', s, \xi)$ can be lifted to a $(\theta, \x)$-twisted endoscopic datum $(\lG', \lif{s}, \lif{\xi})$ of $\lG$ for some $\omega \in {\rm Hom}(\lG(F)/G(F), \mathbb{C}^{\times})$. In sum, we have the following diagram
\begin{align}
\label{eq: lift endoscopic embedding}
\xymatrix{ \L{\lG'} \ar[r]^{\lif{\xi}}     \ar[d]   & \L{\lG}   \ar[d] \\
 \L{G'}    \ar[r]^{\xi}     &    \L{G}.}
\end{align}
There is a decomposition $G' \cong M_{l} \times G'_{-}$, where $M_l$ is a product of general linear groups and $G'_{-}$ is as \eqref{eq: product} (cf. \cite[Section 2]{Xu:2018}). Correspondingly, we can decompose $\p' = \p_{l} \times \p'_{-}$ and define $\cPkt{\p'} = \Pkt{\p_l} \otimes \cPkt{\p'_{-}}$, where $\Pkt{\p_l}$ is the singleton $L$-packet given by the local Langlands correspondence for general linear groups \cite{HarrisTaylor:2001} \cite{Henniart:2000} \cite{Scholze:2013}. Let $x$ be the image of $s$ in $\S{\p}^{\theta}$. Arthur established the following character relation
\[
f^{G'}(\p') = \sum_{\r \in \cPkt{\p}} \langle x, \r^{+}\rangle f_{G^{\theta}}(\r), \quad f \in \sH(G)
\]
(cf. \cite[Theorem 2.2.1 and Theorem 2.2.4]{Arthur:2013}), where $f^{G'}$ the Langlands-Shelstad-Kottwitz transfer of $f$,
\(
f_{G^{\theta}}(\r) := {\rm tr} \, \pi(f) \circ A_{\r}(\theta)
\)
for an intertwining operator $A_{\r}(\theta)$ between $\r$ and $\r^{\theta}$ of order two. The choice of $A_{\r}(\theta)$ determines an extension $\r^{+}$ of $\r$ to $G(F)^{+} := G(F) \rtimes \langle \theta \rangle$, which corresponds to the extension $\langle \cdot, \r^{+} \rangle$ of $\langle \cdot, \r \rangle$ to $\S{\p}^{+}$ generated by $\S{\p}^{\theta}$. We define $\cPkt{\lp'}$ to be $\Pkt{\p_{l}} \otimes \cPkt{\lp'_{-}}$. Then we can choose $\cPkt{\lp}$ such that
\[
\lf^{\lG'}(\lp') = \sum_{\lr \in \cPkt{\lp}} \lf_{\lG^{\theta}}(\lr, \omega), \quad \lf \in \sH(\lG)
\]
(cf. \cite[Theorem 4.6]{Xu:2018}), where $\lf_{\lG^{\theta}}(\lr, \x) = {\rm tr} (\lr(\lf) \circ A_{\lr}(\theta, \x))$, and $A_{\lr}(\theta, \omega)$ is the intertwining operator between $\lr \otimes \omega$ and $\lr^{\theta}$ normalized by  
\[
A_{\lr}(\theta, \omega)|_{\r} = \langle x, \r^{+} \rangle A_{\r}(\theta)
\]
for any $\r \subseteq \lr|_{G}$. In this case, we say the transfer of $\cPkt{\lp'}$ with respect to $\lif{\xi}$ is $\cPkt{\lp}$ and write ${\rm Tran}_{\lif{\xi}} \, \cPkt{\lp'} = \cPkt{\lp}$. 

\begin{remark}
\label{rk: compatible with parabolic induction}
For any proper Levi subgroup $\lif{M}'$ of $\lG'$, there exists a $\theta$-stable proper Levi subgroup $\lif{M}$ of $\lG$ and an $L$-homomorphism $\tilde{\xi}_{\lif{M}'}$ such that the following diagram commutes
\[
\xymatrix{\L{\lG}' \ar[r]^{\tilde{\xi}} & \L{\lG} \\
\L{\lif{M}}' \ar[r]^{\tilde{\xi}_{\lif{M}'}} \ar[u] & \L{\lif{M}} \ar[u]
}
\]
and $(\lif{M}', \tilde{s}, \tilde{\xi}_{\lif{M}'})$ is a $(\theta, \omega)$-twisted endoscopic datum of $\lif{M}$. Suppose $\p'$ factors through $\p'_{M'} \in \cPbd{M'}$ for the Levi subgroup $M' := \lif{M}' \cap G'$ of $G'$, then the compatibility of twisted endoscopic transfer with parabolic induction gives
\[
{\rm Tran}_{\lif{\xi}} \, \cPkt{\lp'} = {\rm Tran}_{\tilde{\xi}} \, \, {\rm Ind}^{\lG'}_{\lif{M}'} \, \cPkt{\lp'_{M'}} = {\rm Ind}^{\lG}_{\lif{M}} \,\, {\rm Tran}_{\tilde{\xi}_{\lif{M}'}} \, \cPkt{\lp'_{M'}}.
\]
If $\lif{M} \cong \lif{M}'$, then ${\rm Tran}_{\tilde{\xi}_{\lif{M}'}}$ corresponds to twisting by a character depending on $\tilde{\xi}_{\lif{M}'}$, so it also transfers representations. In this case, we can still make sense of ${\rm Tran}_{\lif{\xi}} \, \cPkt{\lp'}$. 
\end{remark}

\subsection{Global theory}
\label{subsec: global}

Suppose $F$ is global and $G$ is a quasisplit symplectic or special even orthogonal group over $F$. Arthur \cite[Theorem 1.5.2]{Arthur:2013} showed the following decomposition
\[
L^{2}_{disc}(G(F) \backslash G(\mathbb{A}_{F})) = \bigoplus_{\psi \in \cQdt{G}} L^{2}_{disc, \psi}(G(F) \backslash G(\mathbb{A}_{F})),
\]
where $\cQdt{G}$ is the set of $\D{\Sigma}_0$-conjugacy classes of the discrete global Arthur parameters of $G$. We will only consider the subset $\cPdt{G}$ of generic parameters in this paper. Let $N = 2n + 1$ if $G = Sp(2n)$ and $N = 2n$ if $G = SO(2n, \eta_{E/F})$. Then $\cPdt{G}$ will be defined in terms of automorphic representations of $GL(N, \A_{F})$ with respect to the $GL(N, \mathbb{C})$-conjugacy class of twisted endoscopic embedding $\xi_{G}: \L{G} \longrightarrow GL(N, \mathbb{C})$ satisfying
\begin{itemize}
\item $\xi_{G}|_{\D{G}}$ is the standard representation of $\D{G}$,
\item $\xi_{G}|_{W_{F}}$ is trivial if $N$ is odd, and factors through $\Gal{E/F}$ with the nontrivial element sent to a reflection if $N$ is even.
\end{itemize}
We denote by $\Psm{m}$ the set of isomorphism classes of irreducible unitary cuspidal automorphic representations of $GL(m, \A_{F})$. For $\p \in \Psm{m}$, we denote its dual by $\p^{\vee}$. When $\p = \p^{\vee}$, we say $\p$ is of orthogonal type (resp. symplectic type) if the symmetric square (resp.  skew-symmetric square) $L$-function $L(s, \p, S^2)$ (resp. $L(s, \p, \wedge^2)$) has a pole at $s = 1$. Denote the central character of $\p$ by $\eta_{\p}$. Suppose $\p = \p^{\vee} \in \Psm{m}$, then $\eta_{\p}$ is quadratic and we denote the associated quadratic extension by $E/F$. We associate $\p$ with a twisted endoscopic datum $(G_{\p}, s_{\p}, \xi_{\p})$ of $GL(m)$ as follows.
\[
\L{G_{\p}} = \begin{cases} SO(2n + 1, \mathbb{C}) \times W_{F} & \text{ if $N = 2n +1$}  \\
SO(2n, \mathbb{C}) \rtimes W_{F} & \text{ if $N = 2n$ and $\p$ is orthogonal type} \\
Sp(2n, \mathbb{C}) \times W_{F} & \text{ if $N = 2n$ and $\p$ is symplectic type}
\end{cases}
\]
where the action of $W_{F}$ factors through $\Gal{E/F}$. The twisted endoscopic embedding 
\(
\xi_{\p}: \L{G_{\p}} \longrightarrow GL(m, \mathbb{C})
\)
satisfies:
\begin{itemize}
\item $\xi_{\p}|_{\D{G}_{\p}}$ is the standard representation,
\item $\xi_{\p}|_{W_{F}}$ factors through $\Gal{E/F}$ and the image of the nontrivial element of $\Gal{E/F}$ is $-I$ if $N$ is odd, and is a reflection if $N$ is even.
\end{itemize}
It should be noted that $\xi_{\p}$ is not equivalent to $\xi_{G_{\p}}$ if $N$ is odd and $\eta_{\p} \neq 1$. For any place $v$, $\p_{v}$ is an irreducible admissible representation of $GL(m, F_{v})$. By the local Langlands correspondence of general linear groups \cite{HarrisTaylor:2001} \cite{Henniart:2000} \cite{Scholze:2013}, we can associate it with a Langlands parameter of $GL(m, F_{v})$, still denoted by $\p_{v}$. We will define $\L{G_{\p_{v}}}$ and $\xi_{\p_{v}}$ by restriction. By \cite[Corollary 6.8.1]{Arthur:2013}, $\p_{v}$ factors through $\xi_{\p_{v}}$. So we have the following diagram.
\begin{align}
\label{diag: local-global}
\xymatrix{\p_{v}: L_{F_{v}} \ar[r] \ar[rd] & \L{G_{\p_{v}}} \ar[d] \ar[r]^{\xi_{\p_{v}} \quad } & GL(m, \mathbb{C}) \ar[d] \\
& \L{G_{\p}} \ar[r]^{\xi_{\p} \quad } & GL(m, \mathbb{C})
}
\end{align}
We denote by $\P{N}$ the set of formal direct sums
\begin{align}
\label{eq: formal direct sum}
\p = l_{1}\p_{1} \# \cdots \# l_{r}\p_{r}
\end{align}
such that $\p_{i} \in \Psm{N_{i}}$ and $\sum_{i=1}^{r} l_{i}N_{i} = N$. We can assign a family of semisimple conjugacy classes in $GL(N, \mathbb{C})$ by 
\[
c(\p_{v}) :=  \underbrace{c(\p_{1, v}) \+ \cdots \+ c(\p_{1, v})}_{l_{1}} \+ \cdots \+ \underbrace{c(\p_{r, v}) \+ \cdots \+ c(\p_{r, v})}_{l_{r}}
\]
at unramified places $v$ of $\p$, where $c(\p_{i, v})$ is the Satake parameter of the local component $\p_{i, v}$. We define an involution on $\P{N}$ by
\[
\p^{\vee} = l_{1}\p^{\vee}_{1} \# \cdots \# l_{r}\p^{\vee}_{r}.
\]
Suppose $\p \in \P{N}$ such that $\p = \p^{\vee}$, we also get an involution on the set of indices by requiring $\p_{i^{\vee}} = \p_{i}^{\vee}$. Consequently $l_{i} = l_{i^{\vee}}$. This gives a disjoint decomposition of these indices 
\begin{align}
\label{eq: partition}
I_{\p} \, \bigsqcup \, J_{\p} \, \bigsqcup \, J_{\p}^{\vee}
\end{align}
where $I_{\p}$ indexes the set of self-dual simple parameters. Let $K_{\p} = I_{\p} \sqcup J_{\p}$ and $I_{\p, O}$ (resp. $I_{\p, S}$) index the self-dual simple parameters of orthogonal (resp. symplectic) type. For $i \in I_{\p}$, let $G_{i} = G_{\p_i}$ and $\xi_i = \xi_{\p_{i}}$. For $j \in J_{\p}$, let $G_{j} = GL(N_{j})$ and define $\xi_{j} : \L{G_{j}} \longrightarrow GL(2N_{j}, \mathbb{C})$ by sending $g \rtimes w$ to $\text{diag}\{g, {}^tg^{-1}\}$. Then we define a substitute global Langlands group for $\p$ following \cite[Section 1.4]{Arthur:2013} by taking the fibre product
\[
\mathcal{L}_{\p} := \prod_{k \in K_{\p}} \{ \L{G_{k}} \longrightarrow W_{F} \}.
\]
It is equipped with $W_{F} \rightarrow \mathcal{L}_{\p}$ and $L_{F_{v}} \longrightarrow \mathcal{L}_{\p}$ by \eqref{diag: local-global}. Let 
\[
\p^{\mathcal{E}} := \bigoplus_{k \in K_{\p}} l_{k}\xi_{k} : \mathcal{L}_{\p} \longrightarrow GL(N, \mathbb{C}).
\]
We define 
\begin{align}
\label{eq: global parameter}
\cP{G} := \{ \p \in \P{N} \, : \, \p = \p^{\vee} \text{ and } \ep \text{ factors through } \xi_{G} \}.
\end{align}
which substitutes for the set of $\widehat{\Sigma}_{0}$-conjugacy classes of global Langlands parameters for $G$. Write $\P{G, \p}$ for the set of $\D{G}$-conjugacy classes of $L$-homomorphisms $\p_{G}: \mathcal{L}_{\p} \rightarrow \L{G}$ such that $\xi_{G} \circ \p_{G}$ is $GL(N, \mathbb{C})$-conjugate to $\ep$. So the pair $(\p, \p_{G})$ is a substitute for the global Langlands parameter for $G$. Let $m_{\p} = |\P{G, \p}|$. For $\p \in \cP{G}$ and subgroup $\Sigma \subseteq \Sigma_{0}$, we define
\begin{align*}
S^{\Sigma}_{\p} = \Cent(\Im \p_{G}, \D{G}^{\Sigma}), \quad \cS{\p}^{\Sigma} = S^{\Sigma}_{\p} / Z(\D{G})^{\Gal{F}}, \quad \S{\p}^{\Sigma} = \cS{\p}^{\Sigma} / \com[0]{\cS{\p}}.
\end{align*}
and $S^{\theta}_{\p}, \cS{\p}^{\theta}$ and $\S{\p}^{\theta}$ for $\theta \in \Sigma_0$ as in the local case. Let  
\begin{align*}
& \cPsm{G} = \{ \p \in \cP{G} : \cS{\p}^{\Sigma_0} = 1\}, \quad \cPdt{G} = \{ \p \in \cP{G} : |\cS{\p}| < \infty \}, \quad \cP{G^{\theta}}  = \{ \p \in \cP{G}: S_{\p}^{\theta} \neq \emptyset\},  \\ 
& \cPel{G^{\theta}} = \{ \p \in \cP{G^{\theta}} : |\cS{\p, s}^{0}| < \infty \, \text{for some semisimple} \, s \in \cS{\p}^{\theta} \},
\end{align*}
where $\theta \in \Sigma_{0}$. The following lemma is a direct consequence of the computation of $S_{\p}$ (cf. \cite[Lemma 3.2]{Xu:2018}).
\begin{lemma}
\label{lemma: discrete parameter} \,
\begin{enumerate}
\item $\cPsm{G} = \Psm{N} \cap \cP{G}$.
\item Suppose $\p \in \cP{G}$, then $\p$ is in $\cPdt{G}$ if and only if $K_{\p} = I_{\p, O}$ and $l_{i} =1$ for all $i \in K_{\p}$.
\item Suppose $\p$ is in $\cPel{G^{\theta}}$ for $\theta \in \Sigma_{0}$, then $K_{\p} = I_{\p, O}$ and $l_{i} \leqslant 2$ for all $i \in K_{\p}$.
\item Suppose $G$ is special even orthogonal and $\p \in \cP{G}$, then $\p$ is in $\cP{G^{\theta_{0}}}$ if and only if there exists $i \in I_{\p, O}$ such that $N_{i}$ is odd.
\end{enumerate}
\end{lemma}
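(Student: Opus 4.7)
The proof strategy is to make the structural description of $S_\p$ from \cite[Lemma 3.2]{Xu:2018} completely explicit and then read off each of the four assertions from the resulting product of classical groups. Writing $\p = l_1\p_1 \# \cdots \# l_r\p_r$ with the partition $K_\p = I_\p \sqcup J_\p$ of \eqref{eq: partition}, the centralizer $S_\p$ inside $\D{G}$ is the determinant-one subgroup of
\[
\prod_{i \in I_{\p, O}} H^{O}(l_i) \times \prod_{i \in I_{\p, S}} H^{S}(l_i) \times \prod_{j \in J_\p} GL(l_j, \C),
\]
where $(H^O, H^S) = (O, Sp)$ when $\D{G}$ is orthogonal and $(Sp, O)$ when $\D{G}$ is symplectic, each factor acting on the multiplicity space of the corresponding simple constituent.

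For (2), finiteness of $\cS{\p}$ is equivalent to zero-dimensionality of this product; using $\dim GL(l) = l^2$, $\dim Sp(l) = l(l+1)/2$ and $\dim O(l) = l(l-1)/2$, this forces $J_\p = \emptyset$, $I_{\p, S} = \emptyset$, and $l_i = 1$ for every $i \in I_{\p, O}$. Part (1) then adds the requirement that the remaining finite group of sign characters be trivial modulo $Z(\D{G})^{\Gamma_F}$ and modulo the $\theta_0$-twisted coset, which forces $|I_{\p, O}| = 1$, i.e. $\p$ is simple; the converse is immediate since a single self-dual simple parameter of the correct type has $\D{G}$-centralizer absorbed by the center.

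For (3), the same product description expresses $\cS{\p, s}^{0}$ as a product of connected (possibly twisted) centralizers in each classical factor. Since $GL(l)$ has no element with finite connected centralizer for $l \geq 1$, $Sp(l)$ has none for $l \geq 2$, while an orthogonal reflection in $O(2)$ has trivial identity-component centralizer, the existence of a semisimple $s \in \cS{\p}^\theta$ with $|\cS{\p, s}^{0}| < \infty$ yields $J_\p = \emptyset$, $I_{\p, S} = \emptyset$, and $l_i \leq 2$ for $i \in I_{\p, O}$. For (4), $S_\p^{\theta_0}$ is nonempty iff the $GL(N, \C)$-centralizer of $\Im \p$ meets $O(N, \C) \setminus SO(N, \C)$, because $\theta_0$ acts on $\D{G} = SO(2n, \C)$ by conjugation by an element of $O(2n, \C) \setminus SO(2n, \C)$. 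A block-wise determinant calculation gives $+1$ for each $J_\p$ and $I_{\p, S}$ block, while an $O(l_i)$-element of determinant $-1$ contributes the sign $(-1)^{N_i}$, so a total sign of $-1$ is realizable precisely when some $i \in I_{\p, O}$ has $N_i$ odd. The main bookkeeping obstacle is the consistent treatment of the determinant-one and Galois-invariance conditions simultaneously with the $\theta_0$-twist in the even orthogonal case, but once the product decomposition is fixed all four statements follow directly.
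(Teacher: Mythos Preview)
Your proposal is correct and follows exactly the approach the paper indicates: the paper gives no argument beyond the sentence ``direct consequence of the computation of $S_{\p}$ (cf.\ \cite[Lemma 3.2]{Xu:2018}),'' and you have simply made that computation explicit and read off each assertion. A couple of minor sharpenings would make the sketch airtight: in (3) you should note that for $i\in I_{\p,S}$ the multiplicity $l_i$ is forced to be even (so $l_i\geq 2$) because a symplectic irreducible inside an orthogonal representation must occur with even multiplicity, and you should also say explicitly why $O(l)$ admits no semisimple element with finite connected centralizer once $l\geq 3$; in (1) the phrase ``modulo the $\theta_0$-twisted coset'' is slightly off --- what you need is that $S_\p^{\theta_0}=\emptyset$, which together with $S_\p=Z(\D{G})^{\Gamma}$ forces $r=1$.
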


At any place $v$, we define subsets $\cPsm{G_v}, \cPdt{G_v}, \cP{G^{\theta}_v}, \cPel{G^{\theta}_v}$ of $\cPbd{G_v}$ as in the global case. For $[\p_{v}] \in \cPbd{G_{v}}$, we can view $\p_{v}$ as a representation of $L_{F_{v}}$ through the composition with $\xi_{G_v}$. Then we can decompose it into irreducible subrepresentations 
\[
\p_v = l_{1}\p_{v, 1} \oplus \cdots \oplus l_{r}\p_{v, r}.
\] 
Since $\p_v$ is self-dual, we will get a partition of indices as \eqref{eq: partition}. If $\p_{v, i}$ is self-dual, we say it is of orthogonal (resp. symplectic) type if it factors through an orthogonal (symplectic) group. Then Lemma~\ref{lemma: discrete parameter} is still valid.

For $\p \in \cP{G}$, we define $\p_{v}$ through the following diagram
\begin{align}
\label{eq: local-global}
\xymatrix{L_{F_{v}} \ar[r]^{\p_{v}}  \ar[d]    &  \L{G_{v}} \ar[d] \\
 \mathcal{L}_{\p} \ar[r]^{\p_{G}}  & \L{G}}
\end{align}
The generalized Ramanujan conjecture would imply that $[\p_{v}] \in \cPbd{G_v}$. Since we do not have this conjecture at the moment, we can only assume that $[\p_{v}] \in \cuP{G_{v}}$, which is a subset of $\cP{G_v}$ characterized by the property that
\[
\xi_{G_v} \circ \p_{v} =  \p_{1} \+ \cdots \+ \p_{r} \+ ( \nu^{a_{1}}\p_{r+1} \+ \nu^{-a_{1}}\p_{r+1} ) \cdots \+ (\nu^{a_{s}}\p_{r+s} \+ \nu^{-a_{s}}\p_{r+s})
\]
where $\p_{i}$ is unitary for $1 \leqslant i \leqslant r+s$ and $0 < a_{j} < 1/2 \text{ for } 1 \leqslant j \leqslant s$ (cf. \cite[Proposition 3.10]{Xu:2018}). 

For any $[\p_v] \in \cuP{G^{\theta}_v}$ with $\theta \in \Sigma_{0}$, $\p_v$ factors through $\p_{M_v, \lambda} := \p_{M_v} \otimes (\lambda \circ |\cdot|_{F})$ for some $\theta$-stable parabolic subgroup $P \supseteq M$, where $\p_{M_v} \in \cPbd{M_v^{\theta}}$ and $\lambda \in \mathfrak{a}^{*}_{M_v}$ lies in the open chamber determined by $P$. The normalized parabolic induction induces bijections $\cPkt{\p_v} \cong \cPkt{\p_{M_v, \lambda}}$ and $\cPkt{\lp_v} \cong \cPkt{\lp_{M_v, \lambda}}$ (cf. \cite[Proposition 3.11]{Xu:2018}). Moreover, we have the following diagram
\begin{align}
\label{eq: nontempered}
\xymatrix{1 \ar[r] &  \S{\lp_{M_v}}^{\Sigma} \ar[r]^{\iota_{M_{v}}} \ar[d]^{\cong} & \S{\p_{M_{v}}}^{\Sigma} \ar[r]^{\a^{M_{v}} \quad \quad \quad \quad \quad} \ar[d]^{\cong} & \Hom(\lG(F_{v})/G(F_{v}), \C^{\times}) \ar@{=}[d] \\
1 \ar[r] &  \S{\lp_v}^{\Sigma} \ar[r]^{\iota_v} & \S{\p_v}^{\Sigma} \ar[r]^{\a_v \quad \quad \quad \quad \quad} & \Hom(\lG(F_{v})/G(F_{v}), \C^{\times}).           
}               
\end{align}
Since the parabolic induction preserves stability and is compatible with the twisted endoscopy, the previous local results for $\cPbd{G_{v}}$ extend to $\uP{G_{v}}$ except for \eqref{eq: disjoint decomposition} \eqref{eq: disjoint decomposition similitude}.

The diagram \eqref{eq: local-global} gives rise to an inclusion $S_{\p} \hookrightarrow S_{\p_{v}}$ at any place $v$, which induces a homomorphism $\S{\p} \rightarrow \S{\p_{v}}$. By the local Langlands correspondence for $G_{v}$, we can associate $\p_{v}$ with $\cPkt{\p_{v}}$ and define the global $L$-packet by taking the restricted tensor product
\[
\cPkt{\p} := \sideset{}{'} \bigotimes_{v}  \cPkt{\p_{v}}
\]
and define the global pairing by
\[
\langle x, \r \rangle := \prod_{v} \langle x_{v}, \r_{v} \rangle, \quad x \in \S{\p}, \, [\r] \in \cPkt{\p}
\]
For $\p \in \cPdt{G}$, let $L^2_{disc, \p} (G(F) \backslash G(\A_{F}))$ be the subspace of discrete automorphic representations whose Satake parameters are sent to $c(\p_{v})$ under $\xi_{G_v}$ for almost all places. Let $\sH(G) = \otimes'_{v} \sH(G_{v})$, then there is a decomposition as $\sH(G)$-modules
\begin{align}
\label{eq: decomposition G}
L^2_{disc, \p} (G(F) \backslash G(\A_{F}))= m_{\p} \sum_{\substack{[\r] \in \cPkt{\p}  \\  \langle \cdot, \r \rangle = 1}} \r
\end{align}
(cf. \cite[Theorem 1.5.2]{Arthur:2013}).

Suppose $G = G_{1} \times \cdots \times G_{q}$ as in \eqref{eq: product} and we define $\cP{G}$ to be the set of $\p = \p_{1} \times \p_{2} \times \cdots \times \p_{q}$ such that $\p_{i} \in \cP{G_{i}}$ for $1 \leqslant i \leqslant q$. We also define 
\[
\mathcal{L}_{\p} := \prod_{i=1}^{q} \{\mathcal{L}_{\p_{i}} \longrightarrow W_{F} \}
\]
and
\[
\ep = \prod_{i = 1}^{q} \ep_{i}.
\]
Then we can define $S^{\Sigma}_{\p}$ and $\Phi(G, \p)$ as before. Note $S_{\p} = \prod_{i=1}^{q} S_{\p_{i}}$ and $m_{\p} = \prod_{i=1}^{q} m_{\p_{i}}$. Then \eqref{eq: decomposition G} still holds. Let $\tilde{\zeta}$ be a character of $Z_{\lG}(\mathbb{A}_{F})/Z_{\lG}(F)$ and $\zeta$ its restriction to $Z_{G}(\mathbb{A}_{F})$. Then we have
\[
{\rm Ind}^{\, \lG(\A_{F})}_{\, \lG(F)\lZ(\A_{F})G(\A_{F})} L^{2}_{disc}( G(F) \backslash G(\A_{F}), \zeta) \cong L^{2}_{disc}(\lG(F) \backslash \lG(\A_{F}), \lif{\zeta})
\]
(cf. \cite[Lemma 5.3]{Xu:2018}). So we can decompose 
\[
L^{2}_{disc}(\lG(F) \backslash \lG(\A_{F}), \lif{\zeta}) = \bigoplus_{\psi \in \bar{\Psi}_{2}(G, \zeta)} L^{2}_{disc, \psi}(\lG(F) \backslash \lG(\mathbb{A}_{F}), \lif{\zeta})
\]
where $\bar{\Psi}_{2}(G, \zeta) \subseteq \bar{\Psi}_{2}(G)$ is associated with the central character $\zeta$ and
\[
L^{2}_{disc, \psi}(\lG(F) \backslash \lG(\mathbb{A}_{F}), \lif{\zeta}) := {\rm Ind}^{\, \lG(\A_{F})}_{\, \lG(F)\lZ(\A_{F})G(\A_{F})} L^{2}_{disc, \q}( G(F) \backslash G(\A_{F}), \zeta).
\]
Our goal is to decompose the right hand side in terms of global $L$-packets for $\q = \p \in \cPdt{G, \zeta}$. In parallel with the local case, we also have \eqref{eq: twisted endoscopic sequence} and an isomorphism
\(
r: \bar{H}^{1}(W_{F}, \D{D}) \rightarrow \Hom(\lG(\A_{F})/ \lG(F)G(\A_{F}), \C^{\times})
\) (cf. \cite[Lemma 2.11]{Xu:2018}). Denote $r \circ \bar{\delta}$ by $\a$. For any $\Sigma \subseteq \Sigma_0$, we have 
\begin{align}
\label{eq: global twisted endoscopic sequence}
\xymatrix{1 \ar[r] &  \S{\lp}^{\Sigma} \ar[r]^{\iota} \ar[d] & \S{\p}^{\Sigma} \ar[r]^{\a \quad \quad \quad \quad \quad \quad} \ar[d] & \Hom(\lG(\A_{F})/ \lG(F)G(\A_{F}), \C^{\times}) \ar[d] \\
1 \ar[r] &  \S{\lp_v}^{\Sigma} \ar[r]^{\iota_v} & \S{\p_v}^{\Sigma} \ar[r]^{\a_v \quad \quad \quad \quad \quad} & \Hom(\lG(F_{v})/G(F_{v}), \C^{\times}).           
}               
\end{align}
Let $\clPkt{\p, \lif{\zeta}}$ be the set of isomorphism classes of irreducible admissible representations of $\lG(\A_{F})$ as $\sH(\lG)$-modules with central character $\lif{\zeta}$, whose restriction to $G(\A_{F})$ have irreducible constituents contained in $\cPkt{\p}$. It follows from \eqref{eq: global twisted endoscopic sequence} that $[\lr \otimes \omega] = [\lr]$ for any $[\lr] \in \clPkt{\p, \lif{\zeta}}$ and $\omega \in \a(\S{\p}^{\Sigma_0})$. Let 
\[
X = {\rm Hom}(\lG(\A_{F}) / \lZ(\A_{F})G(\A_{F}), \mathbb{C}^{\times}), \quad Y = {\rm Hom}(\lG(\A_{F}) / \lG(F) \lZ(\A_{F})G(\A_{F}), \mathbb{C}^{\times}).
\] 
Then $\a(\S{\p}^{\Sigma_0}) \subseteq Y$. We define the global pairing as
\[
\langle x, \lr \rangle := \prod_{v} \langle x_{v}, \lr_{v} \rangle, \quad x \in \S{\lp}, \, [\lr] \in \clPkt{\p, \lif{\zeta}}.
\]
By \cite[Corollary 5.6]{Xu:2018}, we can always choose a representative $\lr$ in the discrete automorphic spectrum of $\lG(\A_{F})$ for $[\lr] \in \clPkt{\p, \lif{\zeta}} / X$ with $\langle \cdot, \lr \rangle = 1$. Moreover, we have shown the following decomposition as $\sH(\lG)$-modules 
\begin{align}
\label{eq: discrete spectrum}
L^2_{disc, \p} (\lG(F) \backslash \lG(\A_{F}), \lif{\zeta}) = m_{\p} \sum_{\x \in Y / \a(\S{\p})} \quad \sum_{\substack{[\lr] \in \clPkt{\p, \lif{\zeta}} / X  \\  \langle \cdot, \lr \rangle = 1}} \lr \otimes \x,
\end{align}
where $\lr$ are taken to be the representatives of $\clPkt{\p, \lif{\zeta}} / X$ in the discrete automorphic spectrum of $\lG(\A_{F})$ (cf. \cite[Proposition 5.11]{Xu:2018}).

\section{Stable trace formula}
\label{sec: stable trace formula}

Suppose $F$ is global and $G$ is \eqref{eq: product}. Let $\lif{Z}_{\A_{F}} = \prod'_{v} \lif{Z}_{F_{v}}$ be a closed subgroup of $\lZ(\A_{F})$ such that $\lif{Z}_{\A_{F}} \Z(\A_{F}) = \lZ(\A_{F})$. Let $\lif{Z}_{F} = \lif{Z}_{\A_{F}} \cap \lZ(F)$ and $\lif{\chi}$ a character of $\lif{Z}_{\A_{F}} / \lif{Z}_{F}$. Let $Z_{{\A_{F}}} = \lif{Z}_{\A_{F}} \cap \Z(\A_{F})$ and $Z_{F} = \lif{Z}_{F} \cap \Z(F)$. We denote the restriction of $\lif{\chi}$ to $Z_{\A_{F}}$ by $\chi$. Let $\theta \in \Sigma_0$ and $\x \in Y$, the discrete part of the $(\theta, \x)$-twisted trace formula for $\lG$ takes the form
\begin{align}
\label{eq: twisted spectral side}
\tIdt{\lG^{\theta}}{, t}(\lf) = \sum_{ \{ \lM \} } |W(\lM)|^{-1} \sum_{w \in W^{\theta}(\lM)_{reg}} |\det(w-1)_{\mathfrak{a}^{\lG^{\theta}}_{\lM}}|^{-1} tr(M_{\lP|\theta \lP, t}(w, \lif{\chi}) I^{\theta, \x}_{\lP, t}(\lif{\chi}, \lf)), \quad \lf \in \H(\lG, \lif{\chi}).
\end{align}
We give the explanation of this formula below. The outer sum is taken over $\lG(F)$-conjugacy classes of Levi subgroups $\lM$ of $\lG$, and the inner sum is taken over elements $w$ in the Weyl set
\[
W^{\theta}(\lM) := \Norm(A_{\lM}, \lG \rtimes \theta) / \lM
\]
such that $|\det(w-1)_{\mathfrak{a}^{\lG^{\theta}}_{\lM}}|^{-1} \neq 0$. Here $\mathfrak{a}^{\lG^{\theta}}_{\lM}$ is the kernel of the canonical projection of 
\(
\mathfrak{a}_{\lM} \rightarrow \mathfrak{a}_{\lG} \rightarrow \mathfrak{a}_{\lG^{\theta}}
\) 
for 
\(
\mathfrak{a}_{\lG^{\theta}} := \mathfrak{a}_{\lG} / \{ X - \theta(X): X \in \mathfrak{a}_{\lG}\}.
\)
For any Levi subgroup $\lM$ of $\lG$, we take 
\[
\bigoplus_{\lif{\zeta}_{\lM}} \, L^{2}_{disc, t}(\lM(F) \backslash \lM(\A_{F}), \lif{\zeta}_{\lM}) 
\] 
where the central character $\lif{\zeta}_{\lM}$ extends $\lif{\chi}$ and is invariant under some element of $W^{\theta}(\lM)_{reg}$, and the archimedean infinitesimal characters of the irreducible constituents have norm $t$ on their imaginary parts. Then 
\[
I_{\lP, t}(\lif{\chi}, \lf) = \int_{\lif{Z}_{\A_{F}} \backslash \lG(\A_{F})} \lf(g) I_{\lP, t}(\tilde{\chi}, g) dg
\]
defines an operator on the space $\H_{\lP, t}(\tilde{\chi})$ of the corresponding normalized induced representation. Let $R(\theta): \H_{\theta \lP, t}(\tilde{\chi}) \rightarrow \H_{\lP, t}(\tilde{\chi})$ be induced by the action of $\theta$ on $\lG(\A_{F})$ and $R(\x): \H_{\lP, t}(\tilde{\chi}) \rightarrow \H_{\lP, t}(\tilde{\chi})$ induced by multiplying $\omega$. Finally $I^{\theta, \x}_{\lP, t}(\lif{\chi}, \lf)$ is the composition $R(\theta)^{-1} \circ R(\x) \circ I_{\lP, t}(\lif{\chi}, \lf)$ and $M_{\lP|\theta \lP, t}(w, \lif{\chi})$ is the standard intertwining operator between $\H_{\theta \lP, t}(\lif{\chi})$ and $\H_{\lP, t}(\lif{\chi})$. For the term corresponding to $\lM = \lG$, we denote 
\[
R^{\lG}_{disc, t}(\lf) := I_{\lG, t}(\lif{\chi}, \lf), \quad R^{(\lG^{\theta}, \x)}_{disc, t}(\lf) := R(\theta)^{-1} \circ R(\x) \circ R^{\lG}_{disc, t}(\lf).
\] 

The stabilization of \eqref{eq: twisted spectral side} results from the works of Arthur in the ordinary case \cite{Arthur:2001} \cite{Arthur:2002} \cite{Arthur:2003} and the works of Moeglin-Waldspurger in the twisted case \cite{MW1:2016} \cite{MW2:2016}. To describe it, let us denote by $\End{}{\lG^{\theta}, \x}$ ($\End{ell}{\lG^{\theta}, \x}$) the set of equivalence classes of twisted (elliptic) endoscopic data for $(G, \theta, \x)$. Then the stabilization takes the following form
\begin{align}
\label{eq: twisted endoscopic side}
\tIdt{\lG^{\theta}}{, t}(\lf) = \sum_{\lG' \in \tEnd{ell}{\lG^{\theta}}} \iota(\lG, \lG') \Sdt{\lG'}{, t}(\lf^{\lG'}), \quad \lf \in \H(\lG, \lif{\chi}).
\end{align}
where $\lf^{\lG'}$ is the Langlands-Shelstad-Kottwitz transfer of $\lf$. The coefficients $\iota(\lG, \lG')$ are given as follows 
\begin{align}
\label{eq: stabilization coefficient}
\iota(\lG, \lG') = | \bar{Z}(\D{\lG'})^{\Gal{}} |^{-1} | \Out_{\lG}(\lG') |^{-1} 
\end{align}
(cf. \cite[(5.13)]{Xu:2018}), where 
\[
\bar{Z}(\D{\lG'})^{\Gal{}} = Z(\D{\lG'})^{\Gal{}} Z(\D{\lG})^{\Gal{}} / Z(\D{\lG})^{\Gal{}}, \quad \Out_{\lG}(\lG') = \Aut_{\lG}(\lG') / \D{\lG'}Z(\D{\lG})^{\Gal{}}.
\]

For $\p \in \cP{G}$, we choose $\lif{\chi}$ such that $\chi$ matches the restriction of the central character of $\cPkt{\p}$. Then we define the $\p$-component of \eqref{eq: twisted spectral side} to be
\begin{align*}
\tIdt{\lG^{\theta}}{, \p}(\lf) := \sum_{\lif{c} \rightarrow c(\p)} \tIdt{\lG^{\theta}}{, t(\p), \lif{c}}(\lf), \quad \lf \in \sH(\lG, \lif{\chi})
\end{align*}
which is contributed from automorphic representations $\lr$ of $\lG(\A_{F})$, whose central characters match $\lif{\chi}$ by restriction and Satake parameters $\lif{c}_v$ projected to $c(\p_v)$ and $t(\p)$ is the norm of of the imaginary part of the archimedean infinitesimal character determined by $\p$. It follows from \cite[Lemma 5.1]{Xu:2018} that the stabilization of the $\p$-component of the twisted stable trace formula \eqref{eq: twisted endoscopic side} for $\lG$ is 
\begin{align}
\label{eq: endoscopic side component}
\tIdt{\lG^{\theta}}{, \p}(\lf) = \sum_{\lG' \in \End{ell}{\lG^{\theta}, \x}} \iota(\lG, \lG') \Sdt{\lG'}{, \p}(\lf^{\lG'}),  \quad \lf \in \sH(\lG, \lif{\chi})
\end{align}
where 
\[
\Sdt{\lG'}{, \p}(\lf^{\lG'}) = \sum_{\lif{c}' \rightarrow c(\p)} \Sdt{\lG'}{, t(\p), \lif{c}'}(\lf^{\lG'})
\]
can be defined recursively by using the ordinary stable trace formula.


\section{Statements of main results}
\label{sec: main results}

Let $F$ be a global field of characteristic zero and $G$ be \eqref{eq: product}. The following theorem proves \cite[Conjecture 5.16]{Xu:2018}, which concerns the existence of global $L$-packets of $\lG(\A_{F})$.

\begin{theorem}
\label{thm: global L-packet}
\,
\begin{enumerate}
\item For $\p \in \cP{G}$, there exists a global packet $\cPkt{\lp}$ of $\sH(\lG)$-modules of irreducible admissible representations of $\lG(\A_{F})$  satisfies the following properties:
          \begin{enumerate}
          \item $ \cPkt{\lp} = \bigotimes'_{v} \cPkt{\lp_{v}}$, where $\cPkt{\lp_{v}}$ is some lift of $\cPkt{\p_{v}}$;
          \item there exists $[\lr] \in \cPkt{\lp}$ isomorphic to an automorphic representation of $\lG$ as $\sH(\lG)$-modules.
          \end{enumerate}
Moreover, $\cPkt{\lp}$ is unique up to twisting by characters of $\lG(\A_{F}) / \lG(F)G(\A_{F})$.

\item Suppose $\p \in \cPdt{G}$, we have the following decomposition as $\sH(\lG)$-module 
\begin{align}
\label{formula: discrete spectrum}
L^{2}_{disc, \p}(\lG(F) \backslash \lG(\A_{F}), \lif{\zeta}) = m_{\p} \bigoplus_{\x \in Y / \a(\S{\p})} \bigoplus_{\substack{ [\lr] \in \cPkt{\lp} \otimes \x \\ \langle \cdot, \lr \rangle = 1}} \lr.
\end{align}
where $\lif{\zeta}$ is an extension of the central character of $\cPkt{\p}$.
\end{enumerate}
\end{theorem}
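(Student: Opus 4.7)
The plan is to reduce Theorem~\ref{thm: global L-packet} to the case of discrete parameters, then extract both the tensor-product structure of the global packet and the multiplicity formula by comparing the spectral and endoscopic expansions of the $\theta_{0}$-twisted discrete trace formula for $\lG$, using the weak stable multiplicity formula (Theorem~\ref{thm: stable multiplicity formula}) together with the comparison formulas of Section~\ref{sec: comparison formula} as the key inputs.

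First I would dispose of part (1) for general $\p \in \cP{G}$ by reducing to $\p \in \cPdt{G}$. If $\p$ is not discrete, it factors through a proper $\theta_{0}$-stable Levi $M \subsetneq G$ as a twist $\p = \p_{M, \lambda}$ in the sense of \eqref{eq: nontempered}. Assuming the global packet $\cPkt{\lp_{M}}$ exists for $M$ by induction on dimension, I would define $\cPkt{\lp}$ as the normalized parabolic induction of $\cPkt{\lp_{M}}$; compatibility of parabolic induction with restricted tensor products and with the local construction of \cite[Theorem 4.6]{Xu:2018} (see \eqref{eq: nontempered}) yields properties (1a)--(1b). In the discrete case, the coarse decomposition \eqref{eq: discrete spectrum} implies the $\p$-isotypic component is nonzero, so by \cite[Corollary 5.6]{Xu:2018} there is a discrete automorphic $\lr \in \clPkt{\p, \lif{\zeta}}$ with $\langle \cdot , \lr \rangle = 1$. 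Factoring $\lr = \bigotimes'_{v} \lr_{v}$ and invoking the local disjoint decomposition \eqref{eq: disjoint decomposition similitude} place by place, each $\lr_{v}$ lies in some $\cPkt{\lp_{v}}$ for a lift $\lp_{v}$ of $\p_{v}$; setting $\cPkt{\lp} := \bigotimes'_{v} \cPkt{\lp_{v}}$ gives the required packet. Uniqueness of $\cPkt{\lp}$ up to twisting by $Y$ then follows from the local uniqueness (up to $\Hom(\lG(F_{v})/G(F_{v}), \C^{\times})$) combined with the global constraint that some twist of $\cPkt{\lp}$ contain an automorphic representation.

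The heart of the proof is part (2). Starting from \eqref{eq: discrete spectrum}, I would show that the inner sum over $[\lr] \in \clPkt{\p, \lif{\zeta}} / X$ with $\langle \cdot, \lr \rangle = 1$ refines to the sum in \eqref{formula: discrete spectrum}. Following the blueprint of \cite[Theorem 5.21]{Xu:2018}, this is done by isolating the $\p$-part $\tIdt{\lG^{\theta_{0}}}{, \p}$ of the discrete side of the $\theta_{0}$-twisted trace formula and comparing it to its stabilization \eqref{eq: endoscopic side component}. All elliptic twisted endoscopic groups $\lG'$ of $\lG$ decompose into products of similitude groups of strictly smaller dimension, so by induction their stable distributions $\Sdt{\lG'}{, \p}$ can be computed spectrally using Theorem~\ref{thm: stable multiplicity formula} together with the global packets already constructed for them. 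Substituting these into \eqref{eq: endoscopic side component}, expanding via the local character identity of \cite[Theorem 4.6]{Xu:2018}, and matching terms across Section~\ref{sec: comparison formula}'s comparison formulas produces exactly the multiplicity \eqref{formula: discrete spectrum}.

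The main obstacle, and the reason \cite[Theorem 5.21]{Xu:2018} was previously only stated for $\S{\lp} = 1$, is that the comparison formula \cite[Lemma 5.25]{Xu:2018} was derived under the full stable multiplicity formula, not just the weak form available here. Verifying that Theorem~\ref{thm: stable multiplicity formula} suffices will require a delicate induction in which, at each stage, one confirms that the contributions parameterized by $\S{\lp}$ can be isolated without circularly invoking Theorem~\ref{thm: global L-packet} for $\lG$ itself. A further delicate point is coordinating the local lifts $\lp_{v}$ produced by endoscopic transfer: the character twist in \eqref{formula: discrete spectrum} must land in $Y$ rather than merely $X$, and this rigidity ultimately rests on the compatible lifting results of Section~\ref{sec: compatible lifting}. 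Finally the induction has to be arranged so that the parameters of $GL$-type treated in Section~\ref{sec: GL-type} serve as the base case, with the general case handled by reduction to products involving $GL$-type simple factors.
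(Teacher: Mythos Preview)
Your reduction of part (1) to the discrete case via parabolic induction, and your construction of $\cPkt{\lp}$ by choosing a discrete automorphic $\lr$ and factoring it locally, are essentially what the paper does. However, your proof of part (2) misses the central mechanism and misidentifies which trace formula to use.

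First, for the discrete case the paper does \emph{not} use the $\theta_{0}$-twisted trace formula; it uses the ordinary one ($\theta = \mathrm{id}$, $\omega = 1$). When $G$ is special even orthogonal and $\p$ has no odd-dimensional simple factor (Lemma~\ref{lemma: discrete parameter}(4)), the set $\S{\p}^{\theta_{0}}$ is empty and the $\theta_{0}$-twisted expansion sees nothing. The twisted formula enters only later, in the proof of Theorem~\ref{thm: compatible normalization}.

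Second, and more fundamentally, ``substituting the stable multiplicity formulas and matching terms'' does not by itself yield \eqref{formula: discrete spectrum}. After applying the endoscopic expansion \eqref{eq: endoscopic expansion} or \eqref{eq: endoscopic expansion 1} to $\Idt{\lG}{, \p}(\lf) - \Sdt{\lG}{, \p}(\lf)$, the paper adds to both sides the explicit correction
\[
2\, C_{\lp_o} \sum_{\omega \in Y/\a(\S{\p_o})} \sum_{x_o \in \S{\lp_o} - \{1\}} \sum_{\substack{[\lr] \in \cPkt{}(\lp_o; \lp_b)\\ \langle x_o, \lr \rangle = -1}} \lf_{\lG}(\lr \otimes \omega)
\]
(or its analogue when $\p = \p_o$), chosen precisely so that the resulting distribution is \emph{stable}. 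One then argues place-by-place: fixing $\otimes_{w \neq v} \lf_w$ and invoking \cite[Corollary 4.8]{Xu:2018}, stability forces the coefficient of $\lf_v(\lr_v)$ to be constant across $\lr_v \in \cPkt{\lp_v}$. Iterating over all places propagates from the single known automorphic $\lr^{0}$ to the full packet. Without this stability argument there is no a priori reason every $[\lr] \in \cPkt{\lp}$ with $\langle \cdot, \lr \rangle = 1$ contributes to $\Idt{\lG}{, \p}$, and the coarse decomposition \eqref{eq: discrete spectrum} alone cannot be upgraded to \eqref{formula: discrete spectrum}. Your remarks about the induction, the $GL$-type base case, and the $Y$ versus $X$ issue are on target, but the stability step is the missing load-bearing idea.
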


The next theorem proves \cite[Conjecture 5.19]{Xu:2018}, which describes the discrete spectrum under the twists by an outer automorphism $\theta \in \Sigma_0$ and a character $\x \in Y$. 

\begin{theorem}
\label{thm: compatible normalization}
Suppose $\p \in \cPdt{G}$ and $x \in \S{\p}^{\theta}$ with $\a(x) = \x$ for $\theta \in \Sigma_{0}$ and some character $\x$ of $\lG(\A_{F})/\lG(F)G(\A_{F})$. For $[\lr] \in \cPkt{\lp}$ with $\langle \cdot, \lr \rangle =1$, the intertwining operator 
\(
R(\theta)^{-1} \circ R(\x)
\)
restricted to the $\lr$-isotypic component $I(\lr)$ in the discrete spectrum is equal to the product of the multiplicity $m(\lr)$ and the local intertwining operators $A_{\lr_{v}}(\theta, \x_{v})$ normalized by $x_{v}$, i.e.
\begin{align}
\label{formula: theta twisted discrete spectrum}
\tIdt{\lG^{\theta}}{, \p}(\lf) = m_{\p} \sum_{\x' \in Y / \a(\S{\p})} \sum_{\substack{[\lr] \in \cPkt{\lp} \otimes \x' \\ \langle \cdot, \lr \rangle = 1}} \lf_{\lG^{\theta}}(\lr, \x),  \,\,\,\,\, \lf \in \sH(\lG, \lif{\chi}),
\end{align}
where $ \lf_{\lG^{\theta}}(\lr, \x) = \prod_{v} \lf_{\lG^{\theta}_{v}}(\lr_{v}, \x_{v})$, and it does not depend on the choice of $x$ in the $\S{\lp}$-coset.
\end{theorem}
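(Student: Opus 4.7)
The plan is to deduce this from Theorem~\ref{thm: global L-packet}(2) together with the analogous twisted multiplicity formula for $G$ established in \cite{Arthur:2013}. First I would apply Theorem~\ref{thm: global L-packet}(2) to write
\[
L^{2}_{disc, \p}(\lG(F) \backslash \lG(\A_{F}), \lif{\zeta}) = m_{\p} \bigoplus_{\x' \in Y/\a(\S{\p})} \bigoplus_{\substack{[\lr] \in \cPkt{\lp}\otimes\x' \\ \langle \cdot, \lr\rangle=1}} \lr
\]
as $\sH(\lG)$-modules. Since $x \in \S{\p}^{\theta}$ has $\a(x)=\x$, we have $[\lp \otimes \x] = [\lp^{\theta}]$, and hence for each summand $\lr$ above, $\lr \otimes \x \cong \lr^{\theta}$ as abstract admissible representations. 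Thus $R(\theta)^{-1} \circ R(\x)$ preserves the $\lr$-isotypic subspace $I(\lr)$ and acts there as $m(\lr) \cdot T_{\lr}$ for a single intertwiner $T_{\lr}: \lr \otimes \x \to \lr^{\theta}$.

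The key step is to identify $T_{\lr}$ with $\prod_{v} A_{\lr_{v}}(\theta, \x_{v})$, where the local operators are normalized by $x_{v}$. Since $\lr$ is irreducible, two such intertwiners differ by a scalar, so this amounts to a single product formula. I would verify it by restricting everything to the subspace $\r \subseteq \lr|_{G(\A_{F})}$ of an irreducible constituent: Arthur's twisted multiplicity formula \cite[Theorem 4.2.2]{Arthur:2013} applied to $(G, \theta)$ says that the global $\theta$-intertwining on the $\r$-isotypic part of $L^{2}_{disc, \p}(G)$ acts as $m(\r) \prod_{v} \langle x_{v}, \r_{v}^{+}\rangle A_{\r_{v}}(\theta)$. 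Combined with the local defining property $A_{\lr_{v}}(\theta, \x_{v})|_{\r_{v}} = \langle x_{v}, \r_{v}^{+}\rangle A_{\r_{v}}(\theta)$ recalled in Section~\ref{subsec: local}, this pins down $T_{\lr}|_{\r} = \prod_{v} A_{\lr_{v}}(\theta, \x_{v})|_{\r}$, and since $\lr$ is generated from $\r$ by translation under $\lZ(\A_{F})$ on which both sides have matching central character, the identity extends to all of $\lr$.

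For the independence on the choice of $x$ in its $\S{\lp}$-coset, two choices $x, x'$ differ by $y \in \S{\lp}^{\theta}$, which rescales each local operator $A_{\lr_{v}}(\theta, \x_{v})$ by $\langle y_{v}, \r_{v}^{+}\rangle$. The product $\prod_{v} \langle y_{v}, \r_{v}^{+}\rangle$ equals $\langle y, \r\rangle|_{\S{\lp}} = \langle y, \lr\rangle$, which is $1$ by the hypothesis $\langle \cdot, \lr\rangle = 1$. Hence the local operators, and so their product, are unchanged. Taking the trace against $\lf \in \sH(\lG, \lif{\chi})$ and summing over $\omega' \in Y/\alpha(\S{\p})$ and $[\lr]$ then yields \eqref{formula: theta twisted discrete spectrum}.

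The main obstacle I anticipate is the transfer of Arthur's $G$-side identity of global and product-of-local intertwining operators to the similitude setting; this requires carefully matching the Whittaker normalization on $G$ (which enters $A_{\r_{v}}(\theta)$) with the extension conventions for $A_{\lr_{v}}(\theta, \x_{v})$, and verifying that the restriction-from-$\lG$ to $G$ picks up precisely an irreducible constituent on which Arthur's formula applies without further sign corrections.
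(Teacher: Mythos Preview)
Your approach is quite different from the paper's, and it contains a genuine gap at the restriction step.

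The paper does not lift the result from Arthur's theory for $G$. Instead it proves Theorem~\ref{thm: compatible normalization} by a direct trace formula comparison: one writes $\tIdt{\lG^{\theta}}{,\p}(\lf)$ on the one hand as $\sum_{\x'}\sum_{[\lr]} m(\lr,\theta,\x)\,\lf_{\lG^{\theta}}(\lr,\x)$ with unknown integers $m(\lr,\theta,\x)$ bounded by $m(\lp)$, and on the other hand via the endoscopic expansion of Lemma~\ref{lemma: twisted endoscopic expansion} (equation~\eqref{eq: twisted endoscopic expansion}). After rewriting the endoscopic side using the local character relations, linear independence of twisted characters forces $m(\lr,\theta,\x)=m(\lp)$. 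A crucial byproduct of this argument is the identification $\cPkt{\lp}=\cPkt{}(\lp_o;\lp_b)$ (or $\cPkt{\lp}=\cPkt{\lp_{x'}}$ when $\p=\p_o$), which feeds directly into the proof of Theorem~\ref{thm: functoriality}; your approach would not produce this.

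The gap in your argument is the passage from $L^{2}_{disc,\p}(\lG)$ to $L^{2}_{disc,\p}(G)$. You treat ``restrict to $\r\subseteq\lr|_{G(\A_F)}$'' as if it identified the $\lr$-isotypic subspace of $L^{2}_{disc,\p}(\lG)$ with a $\r$-isotypic subspace of $L^{2}_{disc,\p}(G)$, on which Arthur's global $\theta$-intertwining identity would then apply. But the two automorphic realizations are related by the induction isomorphism ${\rm Ind}^{\,\lG(\A_F)}_{\,\lG(F)\lZ(\A_F)G(\A_F)} L^{2}_{disc,\p}(G,\zeta)\cong L^{2}_{disc,\p}(\lG,\lif{\zeta})$, not by naive restriction of functions, and you have not tracked $R(\theta)^{-1}\circ R(\x)$ through this induction. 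In particular the operator $R(\x)$ acts nontrivially on the induced model even though $\x|_{G(\A_F)}=1$; it permutes the fibres of the induction and this is precisely where the new content beyond Arthur's $G$-statement lies. Your claim that ``$\lr$ is generated from $\r$ by translation under $\lZ(\A_F)$'' is also false, since $\lZ(\A_F)G(\A_F)\subsetneq\lG(\A_F)$ (the similitude character is not a square on $\lG(\A_F)$); while irreducibility of $\lr$ would rescue the extension step for abstract intertwiners, it does not address the comparison of automorphic realizations. Finally, even granting a $G$-side identity of the form you want, the global operator on $I(\lr)\cong\lr^{\oplus m(\lr)}$ is a priori an arbitrary element of $M_{m(\lr)}(\C)$ tensored with a fixed local intertwiner, and your argument gives no mechanism to pin down its trace as $m(\lr)$ times the local product.
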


Both theorems have been proved in \cite{Xu:2018} for $\p = \p_{1} \times \p_{2} \times \cdots \times \p_{q} \in \cPdt{G}$ with $\p_{i} \in \cPdt{G_{i}}$ such that $\S{\lp_{i}} = 1$ for all $i$. The next result concerns the functoriality of twisted endoscopic transfer, which can be formulated as follows. For $\p \in \cP{G}$, $\theta \in \Sigma_0$ and any semisimple element $s \in \cS{\p}^{\theta}$, we can define the relation $(G', \p') \rightarrow (\p, s)$ as in the local case. Define
\[
{\rm Tran} \, \cPkt{\lp'} := \otimes'_{v} \, {\rm Tran} \, \cPkt{\lp'_v}
\]
to be the transfer of a global $L$-packet $\cPkt{\lp'}$ with respect to the endoscopic embedding $\tilde{\xi}$ (cf. \eqref{eq: lift endoscopic embedding}).

\begin{conjecture}
\label{conj: functoriality}
There exists a global $L$-packet $\cPkt{\lp}$ such that
\[
{\rm Tran} \, \cPkt{\lp'} = \cPkt{\lp}.
\] 
\end{conjecture}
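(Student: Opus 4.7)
By Theorem~\ref{thm: global L-packet} applied to $G$, there already exists a global $L$-packet $\cPkt{\lp_0}$ of $\lG(\A_{F})$ satisfying both the restricted-tensor-product property and the automorphicity property. At each place $v$, the transfer ${\rm Tran} \, \cPkt{\lp'_v}$ is, by the local character relation of Section~\ref{subsec: local}, a local lift of $\cPkt{\p_v}$; such lifts are unique up to twisting by ${\rm Hom}(\lG(F_{v})/G(F_{v}), \C^{\times})$. Consequently the global restricted tensor product $\otimes'_{v} \, {\rm Tran} \, \cPkt{\lp'_v}$ differs from $\cPkt{\lp_0}$ by a global character $\omega \in X$:
\[
{\rm Tran} \, \cPkt{\lp'} = \cPkt{\lp_0} \otimes \omega.
\]
The conjecture is therefore equivalent to the assertion $\omega \in Y$, since $\cPkt{\lp_0} \otimes \omega$ is automatically a valid global $L$-packet whenever $\omega$ is automorphic. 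Concretely, one must exhibit a single automorphic representation inside ${\rm Tran} \, \cPkt{\lp'}$.

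The plan to produce such an automorphic member is to compare the two expansions of the twisted trace formula at the $\p$-component. On the spectral side one has formula \eqref{formula: theta twisted discrete spectrum}, which decomposes $\tIdt{\lG^{\theta}}{, \p}(\lf)$ into contributions from $\cPkt{\lp_0} \otimes \x'$ for $\x' \in Y / \a(\S{\p})$. On the endoscopic side one has \eqref{eq: endoscopic side component}; the contribution of the endoscopic group $\lG'$ attached to $s$ can be analyzed via the (weak) stable multiplicity formula of Theorem~\ref{thm: stable multiplicity formula} applied to $\lp'$, producing a term involving $\lf^{\lG'}(\lp')$, which in turn rewrites as a sum of character values $\lf_{\lG^{\theta}}(\lr, \x)$ over $[\lr] \in {\rm Tran} \, \cPkt{\lp'}$ via the defining local character identities. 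Linear independence of the twisted characters attached to the spectral side then forces the character $\omega$ appearing in the identification ${\rm Tran} \, \cPkt{\lp'} = \cPkt{\lp_0} \otimes \omega$ to lie in $Y$ modulo $\a(\S{\p})$, which is exactly the required automorphicity.

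The main obstacle is that the stable multiplicity formula for $\lG'$ is known unconditionally only when $\S{\lp'} = 1$ (cf.\ \cite[Theorem 5.21]{Xu:2018}). To push beyond this, I would run a simultaneous induction on the rank of $G$ combining two devices: first, the doubling trick of adjoining an auxiliary self-dual simple parameter so that $\S{\lp'}$ is forced to be trivial after the modification, reducing to the stable case (this is the analogue of an argument in \cite{Arthur:2013}); and second, the transitivity of endoscopic transfer \eqref{eq: transitivity}, which lets one factor the transfer ${\rm Tran}_{\tilde{\xi}}$ through an intermediate endoscopic group to which the stable case already applies. The genuinely hard remaining case is that of parameters containing $GL$-type orthogonal simple factors (Definition~\ref{def: GL-type}): there the $2$-cocycle obstruction to lifting the twisted endoscopic embedding discussed in Section~\ref{sec: compatible lifting} enters, the doubling trick no longer suffices to trivialize $\S{\lp'}$, and this is precisely the case excluded from Theorem~\ref{thm: functoriality 1}. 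I would expect the conjecture in its full generality to require new input beyond the present machinery.
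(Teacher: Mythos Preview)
Your analysis is correct: the statement is left as an open conjecture in the paper (immediately after stating it the paper writes ``We are not able to prove this conjecture in all cases''), and what is actually proved is the partial Theorem~\ref{thm: functoriality} under the extra hypothesis $s \in (S^{\theta}_{\p_{o}} \times Z(\D{G}_b))/Z(\D{G})^{\Gal{}}$. Your outlined strategy---reducing to $\omega \in Y$, comparing the spectral and endoscopic expansions of the twisted trace formula, invoking the doubling of a simple constituent and the compatible-lifting/transitivity machinery, and isolating the $GL$-type obstruction as the genuinely hard remaining case---is exactly the route the paper follows in Sections~\ref{sec: compatible lifting} and~\ref{sec: proof of main theorems} to obtain that partial result.
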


We are not able to prove this conjecture in all cases. This is due to the occurrence of what we call $GL$-type orthogonal simple parameters. They are defined as follows.

\begin{definition}
\label{def: GL-type}
For $\p \in \Psm{N}$ such that $\p = \p^{\vee}$. We call $\p$ is of {\bf $GL$-type} if $N$ is even and at every place $v$,  $\p_{v}$ factors through the $L$-group of a Levi subgroup $M_{\p, v}$ of $G_{\p, v}$, which is a product of general linear groups.
\end{definition}

Suppose $G$ is symplectic or special even orthogonal and $\phi \in \cP{G}$, we will decompose the parameter as 
\[
\phi = \phi_{o} \boxplus \phi_b
\]
where $\phi_{b}$ consists of all $GL$-type orthogonal simple parameters in $\phi$. The parameter $\phi$ factors through $\phi_{H} := \phi_o \times \phi_b$ for the endoscopic group
\[
H := G_o \times G_b.
\]
The embedding $\L{H} \hookrightarrow \L{G}$ induces an isomorphism
\[
S^{\Sigma_0}_{\phi_{o}} \times S_{\p_b} \cong S^{\Sigma_0}_{\phi}.
\]

\begin{theorem}
\label{thm: functoriality}
For $\p \in \cP{G^{\theta}}$ and semisimple $s \in (S^{\theta}_{\p_{o}} \times Z(\D{G}_b))/Z(\D{G})^{\Gal{}}$, let $(G', \p') \rightarrow (\p, s)$ and $\lG' \in \tEnd{}{\lG^{\theta}}$ be the lift of $G'$, then there exist global packets $\cPkt{\lp'}$ and $\cPkt{\lp}$ such that
\[
{\rm Tran} \, \cPkt{\lp'} = \cPkt{\lp}.
\] 
\end{theorem}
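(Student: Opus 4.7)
The plan is to exploit the hypothesis on $s$ to reduce to a product situation, handle each factor separately, and then glue the lifts using compatible lifting. Since the component of $s$ on $\D{G}_b$ is central, $s$ induces a decomposition of the $\theta$-twisted endoscopic triple $(G', s, \xi)$ as a product $(G'_o \times G_b, s_o \times z_b, \xi'_o \times \xi_b)$, where $s_o$ is a semisimple element in $S^{\theta}_{\p_o}$, $z_b \in Z(\D{G}_b)$, and the second factor of $G'$ is $G_b$ itself, since the centralizer of a central semisimple element is the full group. Correspondingly $\p' = \p'_o \times \p_b$ with $(G'_o, \p'_o) \to (\p_o, s_o)$, and the similitude lift $\lG' \in \tEnd{}{\lG^{\theta}}$ admits a parallel decomposition via the lifting construction indicated in Section~\ref{subsec: local}.

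Next I would treat the two factors separately. For the ordinary factor, $\p_o$ contains no $GL$-type orthogonal simple parameters by construction, so Theorem~\ref{thm: functoriality 1} (to be established in Sections~\ref{sec: GL-type}--\ref{sec: compatible lifting}) furnishes lifts $\cPkt{\lp'_o}$ and $\cPkt{\lp_o}$ with ${\rm Tran}\,\cPkt{\lp'_o} = \cPkt{\lp_o}\otimes\omega_o$ for some $\omega_o \in Y$, and this $\omega_o$ can be absorbed into the choice of $\cPkt{\lp_o}$ to give a clean equality. For the GL-type factor $\p_b$, every local component $\p_{b,v}$ factors through a Levi $M_{\p_b,v}$ of $G_{b,v}$ that is a product of general linear groups, so each local packet $\cPkt{\lp_{b,v}}$ is a singleton obtained by parabolic induction from the local Langlands correspondence on the similitude Levi $\lif{M}_{\p_b,v}$; since $z_b$ is central, the local transfer along $\xi_b$ is of $GL$-to-$GL$ type and hence rigid, producing a canonical $\cPkt{\lp_b}$ equal to its own transfer.

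The final step is to combine the two lifts into a single global $L$-packet $\cPkt{\lp}$ of $\lG$ by invoking Corollary~\ref{cor: compatible lifting packet for product} in Section~\ref{sec: compatible lifting}, so that the global transfer identity ${\rm Tran}\,\cPkt{\lp'} = \cPkt{\lp}$ follows tensor-factor by tensor-factor at each place. The main obstacle is ruling out a residual character twist $\omega \in X \setminus Y$ after gluing, which requires careful control of the $2$-cocycle obstructions for lifting endoscopic embeddings discussed at the end of the introduction. The hypothesis that $s$ is central on the $G_b$-factor is precisely what kills this obstruction on the GL-type side, so that only the ordinary part carries any potential twist, where it is governed by the proof of Theorem~\ref{thm: functoriality 1} and absorbed into the choice of $\cPkt{\lp_o}$.
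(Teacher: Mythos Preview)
Your reduction has a genuine gap. You invoke Theorem~\ref{thm: functoriality 1} for the ordinary factor $\p_o$, but Theorem~\ref{thm: functoriality 1} is a \emph{consequence} of Theorem~\ref{thm: functoriality}, not an independently established input; the paper proves all the main theorems simultaneously by induction on the rank, so appealing to Theorem~\ref{thm: functoriality 1} at this point is circular. More seriously, your decomposition achieves nothing when $\p = \p_o$ (i.e.\ $\p_b$ is trivial): then $G_o = G$ is at the current inductive level, there is no smaller factor to reduce to, and the entire content of the theorem is concentrated in this case. This is precisely the hardest situation, and your proposal does not address it.

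The paper's route is quite different. It first establishes functoriality for \emph{some} semisimple $s$ as a byproduct of the trace formula comparison in the proofs of Theorems~\ref{thm: global L-packet} and~\ref{thm: compatible normalization}: one shows that certain twisted distributions force $\cPkt{\lp} = \cPkt{}(\lp_o; \lp_b)$ (or $\cPkt{\lp} = \cPkt{\lp_x}$ when $\p = \p_o$), and then compatible lifting (Corollary~\ref{cor: compatible lifting packet for product}) propagates this to all $s$ in the required range, \emph{provided} $\S{\p_o}^{\Sigma_0} \neq \S{\lp_o}$. The residual case where $\S{\p_o}^{\Sigma_0} = \S{\lp_o}$ and the map \eqref{eq: local-global component group} is trivial cannot be handled by any product decomposition or compatible lifting alone; the paper introduces an auxiliary parameter $\p^{+} = 2\p_1 \# \p_2 \# \cdots \# \p_r$ in a larger group $G^{+}$ and runs a separate comparison of spectral and endoscopic expansions there (Lemma~\ref{lemma: functoriality of twisted endoscopic transfer for discrete parameters}) to force the desired equality. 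Your proposal has no analogue of this doubling argument, so it cannot close the proof.
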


This result is closely related to the stable multiplicity formula conjectured by Arthur.

\begin{conjecture}[Stable multiplicity formula]
\label{conj: stable multiplicity formula}
Suppose $\p \in \cP{G}$, then 
\begin{align}
\label{formula: stable multiplicity}
\Sdt{\lG}{, \p}(\lf) = m_{\p} \sum_{\x \in Y / \a(\S{\p})} |\S{\lp}|^{-1} \sigma( \com[0]{\cS{\p}}) \lf^{\lG} (\lp \otimes \x), \,\,\,\,\,\,\, \lf  \in \sH(\lG, \lif{\chi}),
\end{align}
where $\sigma(\com[0]{\cS{\p}})$ is some constant defined through \eqref{eq: combinatorial identity} and
\(
\lf^{\lG} (\lp \otimes \x) := \prod_{v} \lf_{v}(\lp_{v} \otimes \x_{v}).
\)
\end{conjecture}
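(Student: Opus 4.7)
The plan is to proceed by induction on $\dim \lG$, extracting the formula by comparing the spectral expansion of $\Idt{\lG}{, \p}$ given in Theorem~\ref{thm: global L-packet}(2) with its stabilized endoscopic expansion \eqref{eq: endoscopic side component} specialized to $\theta = 1$ and $\x = 1$. Concretely, I would isolate the principal term $\lG' = \lG$ in
\[
\Idt{\lG}{, \p}(\lf) = \iota(\lG, \lG) \Sdt{\lG}{, \p}(\lf) + \sum_{\substack{\lG' \in \End{ell}{\lG} \\ \lG' \ne \lG}} \iota(\lG, \lG') \Sdt{\lG'}{, \p}(\lf^{\lG'}), \quad \lf \in \sH(\lG, \lif{\chi}),
\]
and solve for $\Sdt{\lG}{, \p}(\lf)$, with the inductive hypothesis supplying the stable multiplicity formula for each $\Sdt{\lG'}{, \p}$ with $\lG' \ne \lG$. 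For each proper elliptic endoscopic datum, the derived group $G' \cong M_{l} \times G'_{-}$ has factors of strictly smaller rank, so the inductive hypothesis applies to $G'_{-}$ while the general linear factor $M_{l}$ is governed by the standard formula.

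For each such $\lG'$, induction gives an explicit expression for $\Sdt{\lG'}{, \p}(\lf^{\lG'})$ as a sum of stable characters $\lf^{\lG'}(\lp' \otimes \x')$. The functoriality of endoscopic transfer established in Theorem~\ref{thm: functoriality} then identifies each such $\lf^{\lG'}(\lp' \otimes \x')$ with the twisted character $\sum_{[\lr] \in \cPkt{\lp}} \langle s, \lr \rangle \, \lf_{\lG}(\lr)$, where $s \in \cS{\p}$ is the semisimple element giving rise to $\lG'$ via the diagram \eqref{eq: lift endoscopic embedding} and the packet lifts are chosen compatibly across the transfer. Regrouping the double sum over $\lG'$ and $\x'$ into a single sum over semisimple classes $s \in \cS{\p}/\com[0]{\cS{\p}}$, and applying the orthogonality $\sum_{x \in \S{\lp}} \langle x, \lr \rangle = |\S{\lp}| \cdot \mathbf{1}_{\langle \cdot, \lr \rangle = 1}$ to the spectral side supplied by Theorem~\ref{thm: global L-packet}(2), the formula for $\Sdt{\lG}{, \p}$ should emerge after invoking Arthur's combinatorial identity that expresses $\sigma(\com[0]{\cS{\p}})$ as a signed sum over semisimple classes in $\cS{\p}$. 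The comparison formula promised in Section~\ref{sec: comparison formula} is precisely the vehicle for turning this heuristic matching into an identity.

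The main obstacle is that the above strategy requires Theorem~\ref{thm: functoriality} in its strongest form, whereas the paper establishes it only when $\p$ avoids $GL$-type orthogonal simple parameters in the sense of Definition~\ref{def: GL-type}. When $\p$ contains such a component, the transfer character $\x$ produced by matching $\cPkt{\lp'}$ against $\cPkt{\lp}$ can a priori lie in the larger group $X$ rather than in $Y$, so the endoscopic contributions cannot be cleanly expressed as twists $\lp \otimes \x$ with $\x \in Y / \a(\S{\p})$ on the right-hand side of the conjecture. This is exactly why the present paper can only establish a weak form (Theorem~\ref{thm: stable multiplicity formula}) of the conjecture, restricted to parameters covered by Theorem~\ref{thm: functoriality}. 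Settling the full conjecture appears to demand either a genuinely new input that removes the $GL$-type restriction from the functoriality theorem—likely via a more delicate analysis of central characters for the twisted endoscopic groups $\lif{H}$ constructed from the cochain splitting described in Section~\ref{sec: compatible lifting}—or a detour that bypasses functoriality for those parameters altogether.
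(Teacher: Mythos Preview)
The statement is labeled a \emph{Conjecture} in the paper and is not proved there; consequently there is no proof in the paper against which to compare your proposal. Your own final paragraph acknowledges this and correctly locates the obstruction in the $GL$-type restriction of Theorem~\ref{thm: functoriality}: when $\p$ contains more than one $GL$-type orthogonal simple constituent, the transfer ${\rm Tran}\,\cPkt{\lp'}$ is only known to agree with $\cPkt{\lp}$ up to an $X$-twist rather than a $Y$-twist, and the endoscopic side of the comparison cannot be collapsed to the sum over $Y/\a(\S{\p})$ appearing on the right of~\eqref{formula: stable multiplicity}. So your proposal is not a proof but an accurate diagnosis of why none is available.

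For the parameters where the conjectural formula \emph{is} established---part~(2) of Theorem~\ref{thm: stable multiplicity formula}---your sketch is essentially the argument the paper carries out. The inductive stable multiplicity formula is fed into the proper endoscopic terms via Lemma~\ref{lemma: twisted endoscopic expansion}, the transfers are identified with global packet characters through Theorem~\ref{thm: functoriality} and the global intertwining relation~\eqref{eq: global intertwining relation}, and the result is compared with the spectral side~\eqref{formula: discrete spectrum} in the discrete case or~\eqref{eq: twisted spectral expansion 1} otherwise; see Corollary~\ref{cor: functoriality of twisted endoscopic transfer for general group}. One small inaccuracy: for the untwisted elliptic endoscopic data that arise when $\theta=1$, the group $G'$ is already a product of classical groups of smaller rank, with no general linear factor $M_l$; the decomposition $G'\cong M_l\times G'_-$ you quote pertains to the general twisted case.
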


In this paper, we will only consider the case that $\phi_b \in \cPdt{G_b}$. For any unordered $m$-partition
\[
\phi_{b} = \phi_{b, 1} \boxplus \cdots \boxplus \phi_{b, m}, \quad \quad \phi_{b, i} \in \cPdt{G_{b, i}}
\]
the parameter $\phi_{b}$ factors through 
\(
\phi'_b := \phi_{b, 1} \times \cdots \times \phi_{b, m}
\) 
for $G' := G_{b, 1} \times \cdots \times G_{b, m}$ by a sequence of endoscopic embeddings
\[
\D{G}_{b, 1} \times \cdots \times \D{G}_{b, m} \xrightarrow{\xi_{m}} \cdots \xrightarrow{\xi_{1}} \D{G}_{b}
\]
We can lift each endoscopic embedding to that of similitude groups. For fixed $\cPkt{\lp_{b, i}}$, let us define $\cPkt{\lp'_b} := \cPkt{\lp_{b,1}} \tilde{\otimes} \cdots \tilde{\otimes} \,\, \cPkt{\lp_{b, m}}$ and
\[
\cPkt{}(\lp_{b, 1}, \cdots, \lp_{b, m}) := {\rm Tran}_{\lif{\xi}_1} \circ \cdots \circ {\rm Tran}_{\lif{\xi}_m} \, \cPkt{\lp'_b}
\]
We also define
\[
\cPkt{}(\lp_o; \lp_{b, 1}, \cdots, \lp_{b, m}) := {\rm Tran} \, \cPkt{\lp_o} \tilde{\otimes} \,\, \cPkt{}(\lp_{b, 1}, \cdots, \lp_{b, m})
\]
and
\[
\lf(\lp_o; \lp_{b, 1}, \cdots, \lp_{b, m}) := \sum_{\lr \in \cPkt{}(\lp_o; \lp_{b, 1}, \cdots, \lp_{b, m})} \lf(\lr)
\]
\[
\lf(\p_o; \p_{b, 1}, \cdots, \p_{b, m}) := \sum_{\omega \in Y} \, \sum_{\lr \in \cPkt{}(\lp_o; \lp_{b, 1}, \cdots, \lp_{b, m})} \lf(\lr \otimes \omega)
\]
At last, we need to define some constants by the recursive formula 
\begin{align*}
a_1 = 1, \quad \quad 
a_m  = \frac{1}{4} \sum_{k = 1}^{m-1} \begin{pmatrix} m \\ k \end{pmatrix} a_{k}a_{m-k} \quad (m > 1).
\end{align*}

\begin{theorem}
\label{thm: stable multiplicity formula}

Suppose $\p \in \cP{G}$ and $\p_b \in \cPdt{G_b}$.

\begin{enumerate}

\item If $\p = \p_b$, then

\begin{align}
\label{formula: discrete part GL-type}
\Idt{\lG_b}{, \p_b}(\lf) = m_{\p_b}  \lf(\p_{b}), \,\,\,\,\,\,\, \lf  \in \sH(\lG, \lif{\chi}),
\end{align}

\begin{align}
\label{formula: stable multiplicity GL-type}
\Sdt{\lG_b}{, \p_b}(\lf) = m_{\p_b} \sum_{\p_{b, 1}, \cdots, \p_{b, m}} (-1)^{m - 1} a_m \, \lf(\p_{b, 1}, \cdots, \p_{b, m}), \,\,\,\,\,\,\, \lf  \in \sH(\lG, \lif{\chi}),
\end{align}
where the sum is over unordered partition of $\p$.

\item If $\p$ contains at most one GL-type orthogonal simple parameter, then

\begin{align}
\label{formula: stable multiplicity}
\Sdt{\lG}{, \p}(\lf) = m_{\p} \sum_{\x \in Y / \a(\S{\p})} |\S{\lp}|^{-1} \sigma( \com[0]{\cS{\p}}) \lf^{\lG} (\lp \otimes \x), \,\,\,\,\,\,\, \lf  \in \sH(\lG, \lif{\chi}).
\end{align}

\item If $\p \neq \p_o, \p_b$, then 

\begin{align}
\label{formula: stable multiplicity mixed}
\Sdt{\lG}{, \p} = \frac{1}{4} \, {\rm Tran} \, (\Sdt{\lG_o}{, \p_o} \, \tilde{\otimes} \, \Sdt{\lG_b}{, \lp_b}),
\end{align}
where
\begin{align}
\label{formula: stable multiplicity GL-type 1}
\Sdt{\lG_b}{, \lp_b}(\lf) = m_{\p_b} \sum_{\p_{b, 1}, \cdots, \p_{b, m}} (-1)^{m - 1} a_m \, \lf(\lp_{b, 1}, \cdots, \lp_{b, m}), \,\,\,\,\,\,\, \lf  \in \sH(\lG, \lif{\chi}).
\end{align}

\end{enumerate}

\end{theorem}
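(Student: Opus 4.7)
The plan is to prove the three statements together by induction on the number of $GL$-type orthogonal simple constituents appearing in $\phi_b$, combining the stabilization formula \eqref{eq: endoscopic side component} with the comparison formulas of Section~\ref{sec: comparison formula}. The starting point is that when $\phi_b$ is empty (no $GL$-type parameters), \eqref{formula: stable multiplicity} has already been established in the stable case \cite[Theorem 5.21]{Xu:2018}, and this serves as the base case for (2).

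For (1), I would first analyze the pure $GL$-type parameter $\phi = \phi_b \in \cPdt{G_b}$. Since at every place $v$ the local parameter $\phi_{b,v}$ factors through the $L$-group of a Levi subgroup $M_{\phi_{b,v}} \subseteq G_{b,v}$ that is a product of general linear groups, the local $L$-packet is essentially a parabolic induction from a $GL$-type packet. This gives an explicit formula for $\cPkt{\phi_b}$ and pinpoints the constituents of $L^2_{disc,\phi_b}$, yielding \eqref{formula: discrete part GL-type}. To obtain \eqref{formula: stable multiplicity GL-type}, I invert the endoscopic expansion \eqref{eq: endoscopic side component} applied to $\phi_b$: every proper elliptic endoscopic group $\lG'$ of $\lG_b$ whose discrete parameter matches $\phi_b$ comes from an unordered partition of $\phi_b$ into smaller $GL$-type parameters $\phi_{b,1} \boxplus \cdots \boxplus \phi_{b,m}$, and the resulting recursion on $\Sdt{\lG_b}{,\phi_b}$ must match the combinatorial recursion defining the $a_m$. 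The factor $\tfrac{1}{4}$ in the recursion will come from the product $|\bar{Z}(\D{\lG'})^{\Gal{}}|^{-1}|\Out_{\lG}(\lG')|^{-1}$ in \eqref{eq: stabilization coefficient} specialized to the split endoscopic datum that joins two $GL$-type factors.

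For the inductive step encompassing (2) and (3), write $\phi = \phi_o \boxplus \phi_b$ and apply \eqref{eq: endoscopic side component} with $\theta = 1$, $\omega = 1$:
\[
\Idt{\lG}{,\phi}(\lf) = \sum_{\lG' \in \End{ell}{\lG}} \iota(\lG,\lG') \Sdt{\lG'}{,\phi}(\lf^{\lG'}).
\]
The left-hand side is computed via the spectral expansion of Theorem~\ref{thm: compatible normalization}, and the embedding $\L{H} \hookrightarrow \L{G}$ with $H = G_o \times G_b$ allows me to split endoscopic data on $\lG$ into a piece contributing to $G_o$ and a piece contributing to $G_b$. The terms corresponding to proper endoscopic groups in which the $\phi_o$-factor is strictly smaller can be handled by induction on the $\phi_o$ side using case (2) (which for $\phi_o$ alone reduces to \cite[Theorem 5.21]{Xu:2018} once $\S{\lp_o}$ is stable, or to the ordinary Arthur stable multiplicity formula for classical groups). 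What remains after subtracting all such contributions is the single term $\iota(\lG,\lG)\Sdt{\lG}{,\phi}$ together with the $GL$-type contributions to which case (1) applies. Solving for $\Sdt{\lG}{,\phi}$ and using the compatibility of endoscopic transfer with the product $H = G_o \times G_b$ produces the factorization \eqref{formula: stable multiplicity mixed} in case (3), and reduces to \eqref{formula: stable multiplicity} in case (2) when the $GL$-type part contributes a single simple factor (so that $\S{\phi_b}$ is a single $\Z/2\Z$, and the $\tfrac{1}{4}$ combines with $|\S{\lp}|^{-1}$ to reproduce the standard constant).

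The hard part will be the bookkeeping of endoscopic data: to match the stabilized formula for $\lG$ against the product $\Sdt{\lG_o}{,\phi_o} \,\tilde{\otimes}\, \Sdt{\lG_b}{,\lp_b}$, one must verify that every elliptic $\lG' \in \End{ell}{\lG}$ whose parameter meets $\phi = \phi_o \boxplus \phi_b$ factors uniquely through an endoscopic embedding of $\tilde{H}$ into $\lG$, and that the product of the coefficients $\iota(\lG,\lG')$ together with the cohomological obstruction described after \eqref{eq: transitivity} aggregates precisely to the factor $\tfrac{1}{4}$ in \eqref{formula: stable multiplicity mixed}. This requires the compatible lifting results of Section~\ref{sec: compatible lifting} together with the $1$-cochain correction for the twisted endoscopic embedding in the $GSO(2n,\eta)$ case mentioned in the introduction. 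Once this combinatorial-cohomological identity is verified, the induction closes and all three cases follow simultaneously.
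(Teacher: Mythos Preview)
Your high-level plan (induction, endoscopic expansion, combinatorial recursion for the $a_m$) is in the right spirit, but there are two genuine gaps.

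\textbf{Part (1).} Your claim that the local factorization of $\phi_{b,v}$ through a $GL$-type Levi ``pinpoints the constituents of $L^2_{disc,\phi_b}$'' and directly yields \eqref{formula: discrete part GL-type} is too quick. Knowing each local packet is a parabolic induction does not by itself tell you which restricted tensor products occur automorphically, nor with what multiplicity; that is the content of Theorem~\ref{thm: global L-packet} for $\phi_b$, which must be proved first. The paper does this by a \emph{stability argument}: one first shows (Lemma~\ref{lemma: endoscopic side GL-type}) that
\[
\Idt{\lG}{,\phi}(\lf)-\Sdt{\lG}{,\phi}(\lf)=\sum_{1\neq x\in\S{\lp}}(m_\phi/m_{\phi'_x})^2\,\Sdt{\lG'_x}{,\phi'_x}(\lf'),
\]
then uses induction on the rank (not on the number of $GL$-type constituents) to conclude that $\Idt{\lG}{,\phi}(\lf)$ is stable, and only then deduces the packet structure and \eqref{formula: discrete part GL-type} from \eqref{eq: discrete spectrum}. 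Your recursion for \eqref{formula: stable multiplicity GL-type} is essentially correct in outline, but note that each $x\in\S{\lp}-\{1\}$ gives only a $2$-partition; the $m$-partitions arise by iterating, and the coefficient $\tfrac14$ is $(m_\phi/m_{\phi'_x})^2=(\tfrac12)^2$ rather than a direct reading of $\iota(\lG,\lG')$.

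\textbf{Parts (2) and (3).} The essential missing ingredient is the functoriality of endoscopic transfer (Theorem~\ref{thm: functoriality}). In your sketch you compare the spectral expansion \eqref{eq: twisted spectral expansion 1} with the endoscopic expansion \eqref{eq: endoscopic expansion}--\eqref{eq: endoscopic expansion 1}, but these expansions involve \emph{different} distributions $\lf_{\lG^\theta}(\lp\otimes\omega',x)$ and $\lf'_{\lG^\theta}(\lp_o\otimes\omega',x_o;\lp_b)$. The identity between them is the global intertwining relation \eqref{eq: global intertwining relation}, and this requires Theorem~\ref{thm: functoriality}: one must know that the transfer of the global packet $\cPkt{\lp'}$ from the endoscopic group lands in $\cPkt{\lp}$ (up to a twist in $Y$), not merely in some $\cPkt{\lp}\otimes\omega$ with $\omega\in X$. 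Without this, the terms on the two sides are indexed by genuinely different packets and the comparison does not close. The compatible-lifting machinery of Section~\ref{sec: compatible lifting} enters precisely to prove Theorem~\ref{thm: functoriality}, not to handle the coefficient bookkeeping as you suggest. Once functoriality is in place, the difference \eqref{eq: coefficient} between $i^\theta_{\phi_o}$ and $e'^\theta_{\phi_o}$ isolates $\Sdt{\lG}{,\phi}$ and yields \eqref{formula: stable multiplicity} and \eqref{formula: stable multiplicity mixed} directly; no separate cohomological identity for the factor $\tfrac14$ is needed beyond what is already built into Lemma~\ref{lemma: twisted endoscopic expansion}.
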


We can generalize Theorem~\ref{thm: stable multiplicity formula} for $G = G_1 \times \cdots \times G_q$ and $\p = \p_1 \times \cdots \times \p_q \in \cP{G}$ with $\p_{i, b} \in \cPdt{G_{i, b}}$. The parameter $\phi$ factors through $\phi_{H} := \phi_o \times \phi_b$ for the endoscopic group
\(
H := G_o \times G_b,
\)
where 
\[
G_o = G_{1, o} \times \cdots \times G_{q, o}, \quad \quad G_b = G_{1, b} \times \cdots \times G_{q, b},
\]
and 
\[
\p_o = \p_{1, o} \times \cdots \times \p_{q, o}, \quad \quad \p_b = \p_{1, b} \times \cdots \times \p_{q, b}.
\]

\begin{theorem}
\label{thm: stable multiplicity formula product}

Suppose $G = G_1 \times \cdots \times G_q$ as \eqref{eq: product}, $\p \in \cP{G}$ and $\p_b \in \cPdt{G_b}$.

\begin{enumerate}

\item If $\p = \p_b$, then

\begin{align}
\label{formula: discrete part GL-type product}
\Idt{\lG_b}{, \p_b}(\lf) = m_{\p_b}  \lf(\p_{b}), \,\,\,\,\,\,\, \lf  \in \sH(\lG, \lif{\chi}),
\end{align}
and
\begin{align}
\label{formula: stable multiplicity GL-type product}
\Sdt{\lG}{, \p} = (\tilde{\otimes}_{i \neq 1} \Sdt{\lG_i}{, \lp_i}) \, \tilde{\otimes} \, \Sdt{\lG_1}{, \p_1}.
\end{align}
where $\Sdt{\lG_i}{, \lp_i}$ is defined as in \eqref{formula: stable multiplicity} and \eqref{formula: stable multiplicity mixed} by dropping the sum over $\omega$.

\item If each $\p_i$ contains at most one GL-type orthogonal simple parameter, then

\begin{align}
\label{formula: stable multiplicity product}
\Sdt{\lG}{, \p}(\lf) = m_{\p} \sum_{\x \in Y / \a(\S{\p})} |\S{\lp}|^{-1} \sigma( \com[0]{\cS{\p}}) \lf^{\lG} (\lp \otimes \x), \,\,\,\,\,\,\, \lf  \in \sH(\lG, \lif{\chi}).
\end{align}

\item If $\p \neq \p_o, \p_b$, then 

\begin{align}
\label{formula: stable multiplicity mixed product}
\Sdt{\lG}{, \p} = (\frac{1}{4})^{l} \, {\rm Tran} \, (\Sdt{\lG_o}{, \p_o} \, \tilde{\otimes} \, \Sdt{\lG_b}{, \lp_b}),
\end{align}
where
\(
l = \sharp \{1 \leqslant i \leqslant q \, | \, \p_i \neq \p_{i, o}, \p_{i, b}\}
\)
and
\begin{align}
\label{formula: stable multiplicity GL-type product 1}
\Sdt{\lG_{b}}{, \lp_{b}} = \tilde{\otimes}_{i} \, \Sdt{\lG_{i, b}}{, \lp_{i, b}} \, .
\end{align}

\end{enumerate}

\end{theorem}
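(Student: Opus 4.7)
The strategy is to reduce Theorem~\ref{thm: stable multiplicity formula product} to Theorem~\ref{thm: stable multiplicity formula} by exploiting the product structure $G = G_1 \times \cdots \times G_q$ and the fiber product realization of $\lG$ inside $\lG_1 \times \cdots \times \lG_q$. The key organizing principle is that $\cP{G} = \prod_i \cP{G_i}$, and for $\p = \p_1 \times \cdots \times \p_q$ the data $(\cPkt{\p}, \S{\p}, \mathcal{L}_\p, \ep)$ factor as products over $i$, so that the $\p$-components of both spectral and endoscopic sides of the stabilized twisted trace formula for $\lG$ can, after accounting for the fiber product identification $\lambda_1(g_1) = \cdots = \lambda_q(g_q)$, be expressed through the symbol $\tilde{\otimes}_i$ applied to the $\p_i$-components on each $\lG_i$.

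First, for part (1), when $\p = \p_b \in \cPdt{G_b}$, I would apply Theorem~\ref{thm: stable multiplicity formula}(1) to each factor $\lG_i = \lG_{i, b}$. The identity \eqref{formula: discrete part GL-type product} then follows by multiplying the single-factor identities \eqref{formula: discrete part GL-type} across $i$, using the fact that for a pure $GL$-type discrete parameter there is no character twist ambiguity and hence the central characters align correctly on the fiber product. The product expression \eqref{formula: stable multiplicity GL-type product} for $\Sdt{\lG}{, \p}$ is then the multiplicative consequence on the stable side, since the sum over partitions of $\p_b$ decouples into independent sums over partitions of each $\p_{i, b}$ and $\tilde{\otimes}$ distributes over these sums.

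Second, for part (2), where each $\p_i$ contains at most one $GL$-type orthogonal simple parameter, Theorem~\ref{thm: stable multiplicity formula}(2) applies directly to each $(\lG_i, \p_i)$. The delicate step is to verify that the character sum $\sum_{\x \in Y/\a(\S{\p})}$ for the fiber product $\lG$ collates correctly with the individual sums over $Y_i/\a_i(\S{\p_i})$; this should follow from the compatibility diagram \eqref{eq: global twisted endoscopic sequence} and the fact that characters of $\lG(\A_{F})/\lG(F)\lZ(\A_{F})G(\A_{F})$ are determined by their restrictions to each $\lG_i$ once one fixes the common similitude component. The multiplicativity $|\S{\lp}|^{-1} \sigma(\com[0]{\cS{\p}}) = \prod_i |\S{\lp_i}|^{-1} \sigma(\com[0]{\cS{\p_i}})$ is immediate from $\S{\lp} = \prod_i \S{\lp_i}$ and the recursive definition of $\sigma$ for products of connected groups. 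For part (3), the mixed case, one decomposes $\p = \p_o \boxplus \p_b$ with endoscopic group $H = G_o \times G_b$; applying Theorem~\ref{thm: stable multiplicity formula}(3) to each index $i$ where $\p_i \neq \p_{i, o}, \p_{i, b}$ contributes a factor $1/4$ per such index, yielding the exponent $l$ in \eqref{formula: stable multiplicity mixed product}. The transfer ${\rm Tran}$ commutes with $\tilde{\otimes}_i$ because the endoscopic embedding $\L{H} \hookrightarrow \L{G}$ factors coordinatewise.

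The main obstacle I anticipate is the handling of the global character group $Y$ on the fiber product $\lG$. Because $\lG \subsetneq \lG_1 \times \cdots \times \lG_q$, the natural map $\prod_i Y_i \to Y$ is neither injective nor surjective in general, and one must check that the character sums and the coefficients $\a(\S{\p})$ assemble coherently after restriction to $\lG(\A_F)$. I would address this by pulling everything back to $G$ (where the product structure is genuine), applying the product formulas there via the decomposition \eqref{eq: decomposition G}, and then inducing back to $\lG$ via the analogue of \cite[Lemma 5.3]{Xu:2018}, tracking the contributions of $Y$-twists through the bookkeeping established in \eqref{eq: discrete spectrum} for each $\p_i$. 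Once this reduction is achieved, the three parts of the theorem follow from their single-factor counterparts by multiplicativity.
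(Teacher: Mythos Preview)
Your proposal has a structural gap and diverges from the paper's approach in a way that matters.

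\textbf{The inductive structure.} You treat Theorem~\ref{thm: stable multiplicity formula} as an available input and then try to deduce Theorem~\ref{thm: stable multiplicity formula product} by multiplying over the factors $\lG_i$. But in the paper both theorems are proved \emph{simultaneously} inside a single induction on $N$, together with Theorems~\ref{thm: global L-packet}, \ref{thm: compatible normalization}, \ref{thm: functoriality}. In particular, when establishing the product case at level $N$ one does not yet have Theorem~\ref{thm: stable multiplicity formula} for the factor $G(N)$; only the results for proper Levi subgroups and proper twisted endoscopic groups are assumed.

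\textbf{The missing product formula for $S$.} Your argument for parts (2) and (3) rests on the claim that $\Sdt{\lG}{,\p}$ decomposes as $\tilde{\otimes}_i \Sdt{\lG_i}{,\p_i}$ up to bookkeeping on $Y$. This is not a formal consequence of the fiber-product structure: $\Sdt{\lG}{,\p}$ is defined through the stabilized trace formula for $\lG$ itself, and $\lG \subsetneq \prod_i \lG_i$, so the endoscopic expansion for $\lG$ is not a product of expansions for the $\lG_i$. The paper establishes such a product decomposition only in the pure $GL$-type case $\p = \p_b$ (Lemma~\ref{lemma: product formula}), and there the proof is by induction on $|\S{\lp}|$ using the recursive identity of Lemma~\ref{lemma: endoscopic side GL-type}, not by multiplying explicit formulas. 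Your proposed fix---pull back to $G$, use the genuine product there, then induce---does not produce a relation among stable distributions on the similitude side.

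\textbf{What the paper actually does for parts (2) and (3).} Rather than reducing to the single-factor theorem, the paper applies the comparison formulas (Lemmas~\ref{lemma: twisted spectral expansion} and \ref{lemma: twisted endoscopic expansion}) \emph{directly to the product group} $G$. The endoscopic expansion in Lemma~\ref{lemma: twisted endoscopic expansion} is already written in terms of $\p_o$ and $\p_b$ for the full product, and its proof (Section~\ref{sec: endoscopic expansion}) organizes the sum over $\cS{\p}\backslash\cS{\p,ss}^\theta(\omega)$ via the fibration over $\cS{\p_o}\backslash\cS{\p_o,ss}^\theta(\omega)$, summing separately over $S_{\p_{k,b}}$ for $k\in K$ and $\cS{\p_{j,b}}$ for $j\in J$. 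Parts (2) and (3) then fall out of the comparison with \eqref{eq: twisted spectral expansion 1} via \eqref{eq: coefficient} and the global intertwining relation \eqref{eq: global intertwining relation}, once Theorem~\ref{thm: functoriality} is in hand (Corollaries~\ref{cor: functoriality of twisted endoscopic transfer for general group 1} and \ref{cor: functoriality of twisted endoscopic transfer for general group}). The decomposition into $G_o \times G_b$ is the operative one, not the decomposition into $G_1 \times \cdots \times G_q$.
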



\section{Comparison formulas}
\label{sec: comparison formula}

We would like to expand both sides of \eqref{eq: endoscopic side component} in terms of local objects. First we need to introduce some general constructions from \cite{Arthur:2013}. Suppose $S$ is a connected complex reductive group with an automorphism $\theta$, we denote $S^{\theta} = S \rtimes \theta$, which can be viewed as a connected component of the complex reductive group $S^{+} := S \rtimes \langle \theta \rangle$. We fix a maximal torus $T$ of $S$, and define the Weyl set
\[
W^{\theta}(S) = \Norm(T, S^{\theta}) / T.
\]
Let $W^{\theta}(S)_{reg}$ be the set of Weyl elements $w$ such that 
\(
\det(w-1)|_{\mathfrak{a}_{T}} \neq 0.
\) 
Moreover, let $s^{0}(w)$ denote the sign $(-1)^{n}$, where $n$ is the number of positive roots of $(S, T)$ mapped by $w$ to negative roots. Now we can assign to $S^{\theta}$ a real number
\[
i^{\theta}(S) := |W(S)|^{-1} \sum_{w \in W^{\theta}_{reg}(S)} s^{0}(w) |\det(w - 1)|_{\mathfrak{a}_{T}}^{-1},
\]
where $W(S)$ is the Weyl group of $S$. Let us denote the set of semisimple elements of $S^{\theta}$ by $S^{\theta}_{ss}$. And for any $s \in S^{\theta}_{ss}$, we write $S_{s} = \Cent(s, S)$. Let 
\(
S^{\theta}_{ell} = \{s \in S^{\theta}_{ss} : |Z(S_{s})| < \infty \}
\)
and $\mathcal{E}^{\theta}_{ell}(S)$ the $S$-conjugacy classes in $S^{\theta}_{ell}$. Arthur proved (\cite{Arthur:2013}, Proposition 4.1.1) that there exist unique constants $\sigma(S_{1})$ defined for all connected complex reductive groups $S_{1}$ satisfying
\[
\sigma(S_{1}) = \sigma(S_{1} / Z_{1}) |Z_{1}|^{-1}
\]
for any central subgroup $Z_{1}$ of $S_{1}$, such that the following equality holds
\begin{align}
\label{eq: combinatorial identity}
\sum_{s \in \mathcal{E}^{\theta}_{ell}(S)} |\pi_{0}(S_{s})|^{-1} \sigma((S_{s})^{0}) = i^{\theta}(S)
\end{align}
for any pairs $(S, \theta)$. 

Let us assume Theorem~\ref{thm: global L-packet}, ~\ref{thm: compatible normalization}, ~\ref{thm: stable multiplicity formula}, ~\ref{thm: stable multiplicity formula product} hold for the proper Levi subgroups and proper twisted endoscopic groups of $\lG$. For $\p \in \cP{G} - \cPdt{G}$, let $\bar{T}_{\p}$ be a maximal torus of $\cS{\p}^{0}$. Then $\p$ factors through $\p_{M} \in \cPdt{M}$ for a Levi subgroup $M$ of $G$ such that $\D{M} := {\rm Cent}(\bar{T}_{\p}, \D{G})$. For $\theta \in \Sigma_0$, let $\cS{\p}^{+}$ be generated by $\cS{\p}$ and $\cS{\p}^{\theta}$. Define
\[
\N{\p}^{\theta}(G) = \Norm(\cT{\p}, \cS{\p}^{\theta}) / \Cent(\cT{\p}, \com[0]{\cS{\p}})^{0}, \quad \N{\p}^{+}(G) = \Norm(\cT{\p}, \cS{\p}^{+}) / \Cent(\cT{\p}, \com[0]{\cS{\p}})^{0}, 
\]
and
\[
W^{+}_{\p} = W(\bar{T}_{\p}, \cS{\p}^{+}), \quad W^{0}_{\p} = W(\bar{T}_{\p}, \cS{\p}^{0}), \quad R^{+}_{\p} = W^{+}_{\p}/W^{0}_{\p}.
\] 
They fit into the following diagram.
\begin{align} 
\label{eq: theta twisted intertwining relation diagram}
\xymatrix @C=0.5cm @R=0.5cm{
             &                                                 & 1   \ar[d]                                        & 1  \ar[d]                                        &    \\
             &                                                 & \com[0]{W_{\p}}    \ar[d]   \ar@{=}[r]  & \com[0]{W_{\p}}   \ar[d]      &     \\
1 \ar[r]  & \S{\p_{M}}    \ar@{=}[d]     \ar[r]  & \N{\p}^{+}    \ar[d]    \ar[r]   & W_{\p}^{+}     \ar[d] \ar[r]   & 1  \\
1 \ar[r]  & \S{\p_{M}}   \ar[r]                       & \S{\p}^{+}    \ar[d]     \ar[r]  & R_{\p}^{+}     \ar[d] \ar[r]    & 1   \\
            &                                                  & 1                                                  & 1                                                   & }  
\end{align}
For $[\lr_{M}] \in \cPkt{\lp_{M}}$, 
\[
\mathcal{I}_{\lP}(\lr_{M}, \lf) := \int_{\lif{Z}_{\A_{F}} \backslash \lG(\A_{F})} \lf(g) \mathcal{I}_{\lP}(\lr_{M}, g) dg
\]
defines an operator on the space $\mathcal{H}_{\lP}(\lr_M)$ of normalized induced representation of $\lr_{M}$. For $\omega \in Y$, let
\[
\mathcal{I}_{\lP}^{\theta, \x}(\lr_{M} \otimes \x^{-1}, \lf) = R(\theta)^{-1} \circ R(\x) \circ \mathcal{I}_{P}(\lr_{M} \otimes \x^{-1}, \lif{f}).
\]
For $u \in \N{\p}^{\theta}$, let $w_u$ be the image of $u$ in $W^{+}_{\p}$ and $\theta_{u}$ a representative of $w_{u}$ in $G(F) \rtimes \theta$ preserving the $F$-splitting of $M$. Then $u$ gives an element in $\S{\p_{M}}^{\theta_u}$. We define
\[
R_{P|\theta P}(\theta_{u}, \r_{M}^{+}, \p) := \bigotimes_{v} \, R_{P_v|\theta P_v}(\theta_{u_{v}}, \r_{M_v}^{+}, \p_v)
\]
to be the normalized intertwining operator between $\mathcal{H}_{\theta P}(\r_{M}^{\theta^{-1}})$ and $\mathcal{H}_{P}(\r_{M})$. After identifying
\begin{align*}
\mathcal{H}_{\theta \lP}(\lr_{M}^{\theta^{-1}}) & \cong \bigoplus_{\r_{M} \subseteq \Res \, \lr_{M}} \mathcal{H}_{\theta P}(\r_{M}^{\theta^{-1}}) \\
\mathcal{H}_{\lP}(\lr_{M} \otimes \x^{-1}) & \cong \bigoplus_{\r_{M} \subseteq \Res \, \lr_{M}} \mathcal{H}_{P}(\r_{M})
\end{align*}
as $G(\mathbb{A}_F)$-representations, we define the normalized intertwining operator between $\mathcal{H}_{\theta \lP}(\lr_{M}^{\theta^{-1}})$ and $\mathcal{H}_{\lP}(\lr_{M} \otimes \x^{-1})$ to be 
\[
R_{\lP | \theta \lP}(u, \lr_{M}, \lp) := \bigoplus_{\r_{M} \subseteq \Res \, \lr_{M}} \langle u, \r_{M}^{+} \rangle R_{P|\theta P}(\theta_{u}, \r_{M}^{+}, \p).
\]
If $\lr_{M}$ is a discrete automorphic representation, then it follows from Theorem~\ref{thm: compatible normalization} and analogous result for general linear groups (cf. \cite[Lemma 4.2.3]{Arthur:2013}) that
\[
R_{\lP|\theta \lP}(u, \lr_{M}, \lp) = r_{P}(w_u, \p_{M})^{-1} M_{\lP|\theta \lP}(w_u, \lr_{M})
\]
where 
\[
r_{P}(w, \p_{}) = L(0, \r_{\p_{M}}, \rho^{\vee}_{w_{u}^{-1}P | P} ) \varepsilon(0, \r_{\p_{M}}, \rho^{\vee}_{w_{u}^{-1}P | P}, \psi_{F} )^{-1} L(1, \r_{\p_{M}}, \rho^{\vee}_{w_{u}^{-1}P | P})^{-1}
\]
and $\rho^{\vee}_{w_{u}^{-1}P | P}$ is the contragredient of the adjoint representation of $\L{M}$ over ${\rm Lie}(w_{u}^{-1}\D{N}_{P}) \, \cap \, {\rm Lie}(\D{N}_{P}) \backslash {\rm Lie}(w_{u}^{-1} \D{N}_{P})$. In \cite[5.4.1]{Xu:2018} we have incorrectly written the normalizing factor as a product of local normalizing factors. It should be obtained by analytic continuation of the product. This following lemma is \cite[Lemma 5.22]{Xu:2018}.

\begin{lemma}
\label{lemma: twisted spectral expansion}
Suppose $\p \in \cP{G} - \cPdt{G}$, $\theta \in \Sigma_{0}$ and $\x \in Y$, then
\begin{align}
\label{eq: twisted spectral expansion}
\tIdt{\lG^{\theta}}{, \p} (\lf) = C_{\lp} \sum_{\x' \in Y / \a(\S{\p})} \sum_{x \in \S{\p}^{\theta}(\x)} i^{\theta}_{\p}(x) \lf_{\lG^{\theta}}(\lp \otimes \x', x),     \,\,\,\,\, \lf \in \sH(\lG, \lif{\chi}).          
\end{align}
where $C_{\lp} = m_{\p} |\S{\lp}|^{-1}$ and
\[
\lf_{\lG^{\theta}}(\lp \otimes \x', x) = \sum_{[\lr_{M}] \in \cPkt{\lp_{M}} \otimes \x'} tr (R_{\lP|\theta \lP}(u, \lr_{M}, \lp) I^{\theta, \x}_{\lP}(\lr_{M} \otimes \x^{-1}, \lf)),
\]
and
\(
i^{\theta}_{\p}(x) := i^{s}(\cS{\p}^{0})
\)
for any semisimple $s \in \cS{\p}^{\theta}$.
\end{lemma}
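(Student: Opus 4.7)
The plan is to start from the standard form \eqref{eq: twisted spectral side} of the discrete part of the twisted trace formula, restrict to the $\phi$-component, and then apply the induction hypothesis (Theorem~\ref{thm: global L-packet}, \ref{thm: compatible normalization}, \ref{thm: stable multiplicity formula}, and \ref{thm: stable multiplicity formula product} for proper Levis) to the Levi contributions. Since $\phi \notin \cPdt{G}$, the outer Levi sum only receives contributions from those $\widetilde{M}$ through which some discrete parameter $\phi_M \in \cPdt{M}$ factors to give $\phi$; by the definition of $\bar{T}_\phi$ these are exactly the Levis conjugate to the one with $\D{M} = \Cent(\bar{T}_\phi,\D{G})$. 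So from the outset the formula reduces to a single Levi class $\widetilde{M}$ with Weyl set $W^\theta(\widetilde{M})_{\rm reg}$.

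Next, I would insert the induction hypothesis for $\widetilde{M}$: the $\phi_M$-isotypic part of the discrete spectrum of $\widetilde{M}(\A_F)$ decomposes, modulo $\theta_0$-conjugacy, as a sum over $[\lr_M] \in \cPkt{\lp_M} \otimes Y_M/\a(\S{\phi_M})$ with pairing condition $\langle \cdot,\lr_M\rangle = 1$, and the action by $\theta$ and $\omega$ on each isotypic block is given (via Theorem~\ref{thm: compatible normalization}) by the normalized local intertwining operators. This converts the operator $I_{\widetilde{P},t}^{\theta,\x}(\lif{\chi},\lf)$, restricted to the $\phi$-component, into the induced operator $\mathcal{I}_{\widetilde{P}}^{\theta,\x}(\lr_M \otimes \x^{-1}, \lf)$ summed over $[\lr_M] \in \cPkt{\lp_M}\otimes \x'$. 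At the same time, replace the unnormalized intertwining operator $M_{\widetilde{P}|\theta\widetilde{P}}(w,\lif{\chi})$ by $r_P(w,\phi_M)^{-1} R_{\widetilde{P}|\theta\widetilde{P}}(u,\lr_M,\lp)$, picking a lift $u \in \N{\phi}^\theta$ of each Weyl element $w \in W^\theta(\widetilde{M})_{\rm reg}$. This step must use the global normalization from Theorem~\ref{thm: compatible normalization} together with the corresponding result for general linear groups (the analogue of \cite[Lemma 4.2.3]{Arthur:2013}); the factor $r_P(w,\phi_M)$ is obtained by analytic continuation of the product of local normalizing factors, not as a term-by-term product, and then cancels against the analogous factor hidden in $|\det(w-1)_{\mathfrak{a}^{\lG^\theta}_{\widetilde{M}}}|^{-1}$ via the standard sign $s^0(w)$.

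The remaining job is combinatorial. Using the diagram \eqref{eq: theta twisted intertwining relation diagram}, the Weyl sum
\[
|W(\widetilde{M})|^{-1}\sum_{w\in W^\theta(\widetilde{M})_{\rm reg}} s^0(w)\,|\det(w-1)_{\mathfrak{a}^{\lG^\theta}_{\widetilde{M}}}|^{-1}
\]
regroups, after summation over lifts $u$ of $w$ into $\N{\phi}^\theta$ and over representatives of $\bar{S}_{\phi_M}$-orbits, into a sum over $x \in \S{\phi}^\theta(\omega)$ weighted by the factor $i^\theta(\com[0]{\cS{\phi,x}})$, which is exactly $i^\theta_\phi(x)$ by definition. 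The constant $m_\phi|\S{\lp}|^{-1}=C_{\lp}$ appears through the multiplicity in \eqref{formula: discrete spectrum} for the Levi $\widetilde{M}$ combined with the index $[\S{\lp}:\S{\lp_M}]$ governing the identification $\N{\phi}^\theta \twoheadrightarrow \S{\phi}^\theta$. Finally, the outer sum over $\omega' \in Y/\a(\S{\phi})$ comes directly from the analogous sum in \eqref{formula: discrete spectrum} applied to $\widetilde{M}$, using the bijection $Y_M/\a(\S{\phi_M})\cong Y/\a(\S{\phi})$ arising from the compatibility in \eqref{eq: nontempered}.

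The main obstacle is the combinatorial identification of the Weyl coefficients with $i^\theta_\phi(x)$ while simultaneously tracking the twist $\x \in Y$, the pairing $\langle u, \r_M^+\rangle$, and the passage between $\N{\phi}^\theta$ and $\S{\phi}^\theta$ through \eqref{eq: theta twisted intertwining relation diagram}. One must be careful that the $\x$ in the operator $I^{\theta,\x}_{\widetilde{P},t}$ is \emph{not} summed over (it is fixed on the left-hand side), whereas the outer $\x'$ appears because the Levi spectrum at $\phi_M$ itself decomposes as a $Y_M$-orbit. A second subtlety is the analytic-continuation issue in the normalizing factor $r_P(w,\phi_M)$, which is the reason we quote Theorem~\ref{thm: compatible normalization} globally rather than place-by-place. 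Once these points are in place, the proof closes by matching terms with the definition of $\lf_{\lG^\theta}(\lp\otimes\x',x)$.
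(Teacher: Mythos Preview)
The paper does not actually prove this lemma here; it simply records it as \cite[Lemma~5.22]{Xu:2018}. So there is no ``paper's own proof'' to compare against beyond that citation.

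Your outline is broadly the correct strategy and is essentially what underlies the cited proof: restrict the spectral side \eqref{eq: twisted spectral side} to the $\phi$-component, observe that only the Levi class $\widetilde{M}$ with $\widehat{M}=\Cent(\bar{T}_\phi,\widehat{G})$ contributes because $\phi\notin\cPdt{G}$, plug in the induction hypothesis (Theorems~\ref{thm: global L-packet} and~\ref{thm: compatible normalization} for $\widetilde{M}$) to decompose the discrete spectrum of $\widetilde{M}$ and to replace the unnormalized intertwining operator by the normalized one, and then carry out the combinatorial reorganization through the diagram \eqref{eq: theta twisted intertwining relation diagram} to collapse the Weyl sum into the sum over $x\in\S{\phi}^\theta(\omega)$ weighted by $i^\theta_\phi(x)$. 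That is Arthur's standard mechanism from \cite[\S4.3]{Arthur:2013}, transported to $\widetilde{G}$.

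One point in your sketch is not right: the normalizing factor $r_P(w,\phi_M)$ does \emph{not} ``cancel against the analogous factor hidden in $|\det(w-1)_{\mathfrak{a}^{\widetilde{G}^\theta}_{\widetilde{M}}}|^{-1}$ via the standard sign $s^0(w)$.'' These are unrelated quantities. The scalar $r_P(w,\phi_M)$ is purely there to convert the unnormalized global intertwining operator $M_{\widetilde{P}|\theta\widetilde{P}}(w,\lif{\chi})$ into the normalized $R_{\widetilde{P}|\theta\widetilde{P}}(u,\lr_M,\lp)$; it disappears at that step and plays no further role. The factor $|\det(w-1)|^{-1}$ together with the sign $s^0(w)$ is what survives and, after the passage from $W^\theta_\phi$ to $\S{\phi}^\theta$ via \eqref{eq: theta twisted intertwining relation diagram}, becomes the combinatorial constant $i^\theta_\phi(x)=i^s(\cS{\phi}^0)$. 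Also, your appeal to \eqref{eq: nontempered} for the identification of the $Y$-sums is misplaced: that diagram is local. The matching of the Levi sum over $Y/\alpha(\S{\phi_M})$ with the target sum over $Y/\alpha(\S{\phi})$ is instead handled in the combinatorial step together with the accounting of $|\S{\lp}|$ versus $|\S{\lp_M}|$ when passing from $\N{\phi}^\theta$ to $\S{\phi}^\theta$.
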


For the expansion of the right hand side of \eqref{eq: endoscopic side component}, we introduce the following notations. Let $\p = \p_1 \times \cdots \times \p_q \in \cP{G}$ such that $\p_b \in \cPdt{G_b}$. Let
\[
J = \{1 \leqslant i \leqslant q \, | \, \p_i = \p_{i, b} \}, \quad \quad K = \{1 \leqslant i \leqslant q \, | \, \p_i \neq \p_{i, b}, \p_{i, o} \}
\]
and
\[
\p_{J} = \prod_{i \in J} \p_i, \quad \quad \p_{K} = \prod_{i \in K} \p_i.
\]
The embedding $\L{H} \hookrightarrow \L{G}$ induces an isomorphism
\[
S^{\Sigma_0}_{\phi_{o}} \times S_{\p_b} \cong S^{\Sigma_0}_{\phi}
\]
where
\[
S^{\Sigma_0}_{\phi_{o}} = S^{\Sigma_{1, 0}}_{\phi_{1, o}} \times \cdots \times S^{\Sigma_{q, 0}}_{\phi_{q, o}}, \quad \quad S_{\phi_{b}} = S_{\phi_{1, b}} \times \cdots \times S_{\phi_{q, b}} \,.
\]
This gives a projection 
\begin{align}
\label{eq: projection}
\S{\p}^{\Sigma_0} \rightarrow \S{\p_o}^{\Sigma_0}.
\end{align}
Let 
\[
\cPkt{}(\lp_o; \lp_b) = {\rm Tran} \, \cPkt{\lp_o} \tilde{\otimes} \, \cPkt{\lp_b}.
\]
For $x_{o} \in \mathcal{S}^{\theta}_{\phi_o}(\omega)$, we define
\[
\lf_{\lG^{\theta}}' (\lp_o \otimes \x', x_o; \lp_b) := \lf_{\lG^{\theta}}' (\lp \otimes \x', x) = \lf^{\lG'}(\lp' \otimes \x')
\]
for any 
\[
x \in (S^{\theta}_{\p_{o}}(\omega) \times Z(\D{G}_b))/S^{0}_{\p_o}Z(\D{G})^{\Gal{}} \subseteq \S{\p}^{\theta}(\omega)
\] 
projecting to $x_o$. 
Let
\begin{align}
\label{eq: coefficient}
e'^{\theta}_{\p_o}(x_o) = \begin{cases}
                                      i^{\theta}_{\p_o}(x_o)  & \text{ if } x_o \neq 1, \\
                                      i^{\theta}_{\p_o}(x_o) - \sigma(\com[0]{\cS{\p_o}}) & \text{ if } x_o = 1.
                                      \end{cases}
\end{align}

\begin{lemma}
\label{lemma: twisted endoscopic expansion}
Suppose $\p \in \cP{G}$ such that $\p_b \in \cPdt{G_b}$ and $\p \neq \p_b$. Let $\theta \in \Sigma_{0}$ and $\x \in Y$. 

\begin{enumerate}

\item

If $\theta =id, \x = 1$, then there are two cases.

\begin{enumerate}

\item $\p = \p_o$:

\begin{align}
\label{eq: endoscopic expansion}
\Idt{\lG}{, \p}(\lf) - \Sdt{\lG}{, \p} (\lf) = C_{\lp} \sum_{\x' \in Y / \a(\S{\p})} \sum_{x \in \S{\lp}} e'_{\p}(x) \lf_{\lG}' (\lp \otimes \x', x), \,\,\,\,\, \lf \in \sH(\lG, \lif{\chi}). 
\end{align}

\item $\p \neq \p_o$:

\begin{align}
\label{eq: endoscopic expansion 1}
\Idt{\lG}{, \p}(\lf) - \Sdt{\lG}{, \p} (\lf) & = m_{\p_{J}} C_{\lp_o} \sum_{\x' \in Y / \a(\S{\p_o})} \sum_{x_o \in \S{\lp_o}} e'_{\p_o}(x_o) \lf_{\lG}' (\lp_o \otimes \x', x_o; \lp_b) \\
& +  {\rm Tran} \, \Sdt{\lG_o}{, \p_o} \tilde{\otimes} \, \Big(\frac{m_{\p_{J}}}{m_{\p_{b}}} \, \Idt{\lG_b}{, \lp_b} - (\frac{1}{4})^{|K|} \, \Sdt{\lG_b}{, \lp_b} \Big), \,\,\,\,\, \lf \in \sH(\lG, \lif{\chi})
\end{align}
where
\[
\Idt{\lG_b}{, \lp_b}(\lf) = m_{\p_b} \lf(\lp_b).
\]

\end{enumerate}

\item 

If $\theta \neq id$ or $\x \neq 1$, then

\begin{align}
\label{eq: twisted endoscopic expansion}
\tIdt{\lG^{\theta}}{, \p} (\lf) = m_{\p_{J}} C_{\lp_o} \sum_{\x' \in Y / \a(\S{\p_o})} \sum_{x_o \in \S{\p_o}^{\theta}(\x)} e'^{\theta}_{\p_o}(x_o) \lf_{\lG^{\theta}}' (\lp_o \otimes \x', x_o; \lp_b), \,\,\,\,\, \lf \in \sH(\lG, \lif{\chi}). 
\end{align}

\end{enumerate}
\end{lemma}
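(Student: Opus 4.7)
The strategy is to derive both formulas by expanding \eqref{eq: endoscopic side component} using the inductive stable multiplicity formula (Theorem~\ref{thm: stable multiplicity formula product}) applied to each proper elliptic $(\theta,\x)$-twisted endoscopic group of $\lG$, then reorganizing through the combinatorial identity \eqref{eq: combinatorial identity}. This follows the same pattern as the proof of \cite[Lemma 5.25]{Xu:2018}, but now incorporating the GL-type contributions on the $b$-side.

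First I parameterize the elliptic $(\theta,\x)$-twisted endoscopic data $\lG'$ of $\lG$ through which $\p$ factors by semisimple classes $x \in \cS{\p}^{\theta}(\x)$: each class $x$ with representative $s$ yields $(G',\p') \rightarrow (\p,s)$, which lifts to $\lG'$ as in \eqref{eq: lift endoscopic embedding}. Using the isomorphism $S^{\Sigma_0}_{\p_o} \times S_{\p_b} \cong S^{\Sigma_0}_{\p}$ coming from the embedding $\L{H} \hookrightarrow \L{G}$, each class decomposes as $(x_o, x_b)$ with $x_o \in \cS{\p_o}^{\theta}(\x)$ and $x_b$ in the image of $Z(\D{G}_b)$. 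Correspondingly $\lG'$ factors as a fibre product of $\lG'_o$ and $\lG'_b$ respecting the similitude characters, and the parameter $\p'$ decomposes as $\p'_o \times \p'_b$ with $\p'_o$ determined by $x_o$ and $\p'_b$ a partition of $\p_b$ determined by $x_b$.

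Next, for each proper $\lG'$ I invoke Theorem~\ref{thm: stable multiplicity formula product}. The $o$-factor yields the standard stable multiplicity coefficient $m_{\p'_o} |\S{\lp'_o}|^{-1} \sigma(\com[0]{\cS{\p'_o}})$ together with the transfer term that, after regrouping endoscopic data by their $x_o$-image, produces $\lf_{\lG^\theta}'(\lp_o \otimes \x', x_o; \lp_b)$. The $b$-factor, via \eqref{formula: stable multiplicity GL-type product 1} and \eqref{formula: stable multiplicity GL-type 1}, contributes a sum over unordered partitions of $\p'_b$ weighted by $(-1)^{m-1} a_m$. After fixing $x_o$ and summing over admissible $x_b$ with coefficients $\iota(\lG, \lG')$, the identity \eqref{eq: combinatorial identity} applied to $(\com[0]{\cS{\p_o}}, x_o)$ converts $\sum_{x_b} |\pi_0(S_s)|^{-1} \sigma((S_s)^0)$ into $i^{\theta}_{\p_o}(x_o)$, while the $b$-side telescopes via the recursion $a_m = \tfrac{1}{4}\sum_k \binom{m}{k} a_k a_{m-k}$. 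In case (1b), for $x_o = 1$ and $\theta = \text{id}$, $\x = 1$, the $b$-side collapse yields the term $(m_{\p_J}/m_{\p_b}) \Idt{\lG_b}{, \lp_b}$, while for $x_o \neq 1$ one recovers the term $(1/4)^{|K|} \Sdt{\lG_b}{, \lp_b}$ consistent with \eqref{formula: stable multiplicity mixed product}.

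Finally, in the untwisted case the identity datum $\lG' = \lG$ contributes its own $\Sdt{\lG}{, \p}$, which I transpose to the left-hand side; this shifts the $x_o = 1$ coefficient from $i_{\p_o}(1)$ to $i_{\p_o}(1) - \sigma(\com[0]{\cS{\p_o}})$ in agreement with \eqref{eq: coefficient}. When $\theta \neq \text{id}$ or $\x \neq 1$, the identity is not a $(\theta,\x)$-twisted datum, so no subtraction is needed and \eqref{eq: twisted endoscopic expansion} follows directly. The main technical obstacle will be the combinatorial bookkeeping of the GL-type contributions: one must verify that the $(-1)^{m-1}a_m$-weighted partition sums from the various proper $\lG'_b$, together with the coefficients $\iota(\lG, \lG')$, collapse precisely into the stated expressions involving $\Idt{\lG_b}{, \lp_b}$ and $\Sdt{\lG_b}{, \lp_b}$. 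Matching the external sum over $\x' \in Y/\a(\S{\p_o})$ with the $Y$-twists arising from the endoscopic lift requires careful use of \eqref{eq: global twisted endoscopic sequence} to identify twists of the ambient group with those on the $o$-factor.
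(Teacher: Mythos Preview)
Your overall strategy matches the paper's approach: parametrize the elliptic $(\theta,\x)$-twisted endoscopic data by conjugacy classes in $\cS{\p,ss}^{\theta}(\x)$, fiber over the projection to $\cS{\p_o,ss}^{\theta}(\x)$, apply the inductive stable multiplicity formula on each proper endoscopic group, collapse the $b$-fiber, and reduce the remaining $o$-sum to \cite[Lemma~5.25]{Xu:2018}. Two points, however, are misstated and would cause trouble if carried out as written.

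First, the $b$-fiber over a fixed $x_o$ is not merely the image of $Z(\D{G}_b)$. In the single-factor case it is the full group $S_{\p_b}$, and in the product case $G = G_1 \times \cdots \times G_q$ it is a torsor for $S_{\p_{K,b}} \times \cS{\p_{J,b}}$, where $K$ indexes factors with $\p_i \neq \p_{i,o}, \p_{i,b}$ and $J$ those with $\p_i = \p_{i,b}$. The restriction to $Z(\D{G}_b)$ appears only in the \emph{definition} of the output term $\lf_{\lG^{\theta}}'(\lp_o \otimes \x', x_o; \lp_b)$, not in the sum producing it; you must sum over the full fiber and then observe that the result depends only on $x_o$.

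Second, the collapse of the $b$-fiber does not go through the combinatorial identity \eqref{eq: combinatorial identity}. Since $\p_b \in \cPdt{G_b}$, the fiber is finite, and the relevant mechanism is Lemma~\ref{lemma: endoscopic side GL-type}: it repackages the sum $\sum_{x_b}\Sdt{\lG'_b}{,\lp'_b}$ (together with the $(1/4)^{l}$-weights from \eqref{formula: stable multiplicity mixed product}) into $\Idt{\lG_b}{,\lp_b}$. Concretely, for $k \in K$ the sum over $S_{\p_{k,b}}$ yields $\tfrac{1}{2}\Idt{\lG_{k,b}}{,\lp_{k,b}}$, and for $j \in J$ the sum over $\cS{\p_{j,b}}$ yields $\Idt{\lG_{j,b}}{,\lp_{j,b}}$; the distinction between $x_o = 1$ and $x_o \neq 1$ is what produces the extra term $(\tfrac{1}{4})^{|K|}\Sdt{\lG_b}{,\lp_b}$ in \eqref{eq: endoscopic expansion 1}. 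The identity \eqref{eq: combinatorial identity} enters only on the $o$-side, and there only implicitly via the reduction to \cite[Lemma~5.25]{Xu:2018}. Your sentence ``$\sum_{x_b} |\pi_0(S_s)|^{-1} \sigma((S_s)^0) = i^{\theta}_{\p_o}(x_o)$'' conflates a $b$-fiber sum with an $o$-side invariant and is not correct as written.
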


This reduces to \cite[Lemma 5.25]{Xu:2018} when each $\p_i$ has at most one $GL$-type simple orthogonal parameter. The remaining case that $\p = \p_b$ will be treated in the next section. To compare \eqref{eq: endoscopic expansion 1}, \eqref{eq: twisted endoscopic expansion} with \eqref{eq: twisted spectral expansion}, we note that $\lf_{\lG^{\theta}}(\lp \otimes \x', x)$ only depends on the image $x_o$ of $x$ under \eqref{eq: projection} and $i^{\theta}_{\p}(x) = i^{\theta}_{\p_o}(x_o)$. So we can rewrite \eqref{eq: twisted spectral expansion} as
\begin{align}
\label{eq: twisted spectral expansion 1}
\tIdt{\lG^{\theta}}{, \p} (\lf) = m_{\p_J}C_{\lp_o} \sum_{\x' \in Y / \a(\S{\p_o})} \sum_{x_o \in \S{\p_o}^{\theta}(\x)} i^{\theta}_{\p_o}(x_o) \lf_{\lG^{\theta}}(\lp \otimes \x', x),     \,\,\,\,\, \lf \in \sH(\lG, \lif{\chi}).          
\end{align}
Moreover,
\begin{align}
\label{eq: coefficient}
i^{\theta}_{\p_o}(x) - e'^{\theta}_{\p_o}(x_o) = \begin{cases}
                                      0 & \text{ if } x_o \neq 1, \\
                                      \sigma(\com[0]{\cS{\p_o}}) & \text{ if } x_o = 1.
                                      \end{cases}
\end{align}
If we further assume Theorem~\ref{thm: functoriality},  then by the local intertwining relation (cf. \cite[Theorem 4.12]{Xu:2018}) we get
\begin{align}
\label{eq: global intertwining relation}
\lf_{\lG^{\theta}}' (\lp_o \otimes \x', x_o; \lp_b) = \lf_{\lG^{\theta}}(\lp \otimes \x', x).
\end{align}
This is the global intertwining relation for $\lG$.


\section{$GL$-type orthogonal parameters}
\label{sec: GL-type}

In this section we would like to prove Theorem~\ref{thm: global L-packet}, ~\ref{thm: stable multiplicity formula}, ~\ref{thm: stable multiplicity formula product} for $\p = \p_b \in \cPdt{G}$. Theorem~\ref{thm: compatible normalization} is irrelevant in this case. We can assume $G = G_1 \times \cdots \times G_q$, where $G_i = SO(2n_i)$. Let $\p = \p_1 \times \cdots \times \p_{q}$, where $\p_i \in \cPdt{G_i}$ and $\p_{i, b} = \p_i$. Then $\S{\lp} = \S{\p} = \prod_{i=1}^{q} \S{\p_{i}} = \prod_{i=1}^{q} \S{\lp_{i}}$. For $x = (x_i) \in \S{\lp}$, $\p$ factors through $\p'_x$ for the elliptic endoscopic group $G'_x$. The following lemma treats the remaining case left by Lemma~\ref{lemma: twisted endoscopic expansion}.

\begin{lemma}
\label{lemma: endoscopic side GL-type}
\[
\Idt{\lG}{, \p}(\lf) - \Sdt{\lG}{, \p} (\lf) = \sum_{1 \neq x \in \S{\lp}} (m_{\p} / m_{\p'_x})^2 \, \Sdt{\lG'_x}{, \p'_x}(\lf'), \quad \quad \lf \in \sH(\lG, \lif{\chi}).
\]
\end{lemma}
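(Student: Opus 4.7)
The plan is to apply the $\theta = \mathrm{id}$, $\omega = 1$ case of the stabilization \eqref{eq: endoscopic side component} to the $\p$-component, identify the nonvanishing terms, and extract the coefficient. Since $\iota(\lG, \lG) = 1$ follows from \eqref{eq: stabilization coefficient}, \eqref{eq: endoscopic side component} rewrites as
\[
\Idt{\lG}{,\p}(\lf) - \Sdt{\lG}{,\p}(\lf) = \sum_{\substack{\lG' \in \tEnd{ell}{\lG} \\ \lG' \neq \lG}} \iota(\lG, \lG') \Sdt{\lG'}{,\p}(\lf^{\lG'}).
\]

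Next, I would identify which $\lG'$ contribute. Because $\p \in \cPdt{G}$ satisfies $K_\p = I_{\p, O}$ with each $l_i = 1$ (Lemma~\ref{lemma: discrete parameter}), the distribution $\Sdt{\lG'}{,\p}$ vanishes unless $\p$ factors through $\lG'$. Such $\lG'$ arise as lifts $\lG'_x$ of the endoscopic groups $(G'_x, s_x, \xi_x)$ attached to semisimple elements $x \in \cS{\p} = \S{\p} = \S{\lp}$ via the construction discussed in Section~\ref{sec: main results} (and the lifting \eqref{eq: lift endoscopic embedding}). For $\p = \p_b$ with each $\p_i \in \cPdt{G_i}$ of GL-type orthogonal form, $\S{\lp} = \prod_i \S{\lp_i}$ is an elementary abelian $2$-group, and the construction $x \mapsto \lG'_x$ descends to a bijection between $\S{\lp}$ and the equivalence classes of contributing endoscopic data, with $x = 1$ giving $\lG$. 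This yields
\[
\Idt{\lG}{,\p}(\lf) - \Sdt{\lG}{,\p}(\lf) = \sum_{1 \neq x \in \S{\lp}} \iota(\lG, \lG'_x) \Sdt{\lG'_x}{,\p'_x}(\lf').
\]

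The main remaining step is the coefficient identity
\(
\iota(\lG, \lG'_x) = (m_\p/m_{\p'_x})^2,
\)
i.e.\ showing that both $|\bar{Z}(\D{\lG'_x})^{\Gal{}}|$ and $|\Out_\lG(\lG'_x)|$ equal $m_{\p'_x}/m_\p$ by \eqref{eq: stabilization coefficient}. The key observation is that, for a discrete GL-type parameter on a product of $SO(2n_i)$'s, the multiplicity $m_\p$ is the $2$-power counting the freedom to lift an $L$-parameter through the $O(2n_i)/SO(2n_i)$ quotient in each factor; the same applies to $m_{\p'_x}$, so the ratio $m_{\p'_x}/m_\p$ measures exactly the additional $\D{\theta}_0$-symmetry produced on $\lG'_x$ when even-orthogonal blocks of $\lG$ get split by the endoscopic construction. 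This symmetry appears once in the center of $\D{\lG'_x}$ modulo $Z(\D{\lG})^{\Gal{}}$ and once in $\Aut_\lG(\lG'_x)$ modulo $\D{\lG'_x} Z(\D{\lG})^{\Gal{}}$, producing the square. The main obstacle is performing this computation of $\Out_\lG(\lG'_x)$ precisely, since the twisted endoscopic embedding for the similitude group carries the $1$-cochain obstruction discussed in the introduction; one must track it through the compatible-lifting framework of Section~\ref{sec: compatible lifting} when relating $(\lG'_x, \tilde{s}_x, \tilde{\xi}_x)$ back to $(G'_x, s_x, \xi_x)$.
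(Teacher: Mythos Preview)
Your setup through the stabilization identity is fine, but the argument breaks down at the bijection step. The map $x \mapsto \lG'_x$ from $\S{\lp}\setminus\{1\}$ to equivalence classes of contributing endoscopic data is \emph{not} a bijection: distinct $x$'s can give the same endoscopic datum with different parameters $\p'$. For instance, take $G=SO(12)$ and $\p=\p_1\boxplus\p_2\boxplus\p_3$ with three distinct $4$-dimensional GL-type simple factors; then $\S{\lp}\cong(\Two)^2$ has three nontrivial elements, all yielding the same endoscopic group $SO(4)\times SO(8)$ but three different $\p'$'s. Consequently $\Sdt{\lG'_x}{,\p}$ is a genuine sum over all such $\p'$, not equal to $\Sdt{\lG'_x}{,\p'_x}$, so your displayed identity after the bijection claim is already wrong. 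Relatedly, your asserted coefficient identity $|\Out_{\lG}(\lG'_x)|=m_{\p'_x}/m_\p$ is not correct in general, and the worry about the $1$-cochain is a red herring.

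The paper avoids both issues by never computing $|\Out|$. One first records $|\bar{Z}(\D{\lG'})^{\Gal{}}|=m_{\p'}/m_\p$ and $\Out_{\lG}(\lG')=\Out_G(G')$, then rewrites the right-hand side as a sum over $\p_G\in\P{G,\p}$ (contributing a uniform factor $m_\p$) and over pairs $(\lG',\p')$ with $\p'\in\P{G',\p_G}$. The natural map from such pairs to $\S{\p}\setminus\{1\}$ has fibre of size $|\Out_G(G'_x)|\cdot|S_{\p,x}/S_{\p,x}\cap\D{G}'_xZ(\D{G})^{\Gal{}}|^{-1}$, and the contribution of each pair is $m_{\p'_x}^{-1}\Sdt{\lG'_x}{,\p'_x}(\lf')$. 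The GL-type hypothesis enters only to give $S_{\p,x}=S_\p\subseteq\D{G}'_x$, so the second factor in the fibre size is $1$. Then $|\Out_G(G'_x)|$ cancels against the $|\Out|^{-1}$ in $\iota(\lG,\lG'_x)$, and the remaining factors combine as $m_\p\cdot(m_\p/m_{\p'_x})\cdot m_{\p'_x}^{-1}=(m_\p/m_{\p'_x})^2$. You should reorganize your argument around pairs $(\lG',\p')$ rather than endoscopic data alone.
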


\begin{proof}
We follow the same strategy in \cite[Section 5.4.2]{Xu:2018}. Recall
\[
\Idt{\lG}{, \p}(\lf) - \Sdt{\lG}{, \p} (\lf) = \sum_{\lG' \in \End{ell}{\lG} - \{\lG\} } \iota( \lG, \lG' ) \Sdt{\lG'}{, \p}(\lf^{\lG'}) , \quad \quad \lf \in \sH(\lG, \lif{\chi}),
\]
where
\[
\iota(\lG, \lG') = | \bar{Z}(\D{\lG'})^{\Gal{}} |^{-1} | \Out_{\lG}(\lG') |^{-1}.
\]
First note
\[
|\bar{Z}(\D{\lG'})^{\Gal{}}| = |\bar{Z}(\D{G}')^{\Gal{}}| = m_{\p'}/m_{\p}, \quad \Out_{\lG}(\lG') = \Out_{G}(G').
\]
Secondly, we can turn the right hand side into a sum over 
\(
\p_{G} \in \P{G, \p}
\)
and
\[
\Big\{ (\lG', \p') : \lG' \in \End{ell}{\lG} - \{\lG\} \text{ and } \p' \in \P{G', \p_{G}} \Big\}
\]
where $\P{G', \p_{G}}$ is the set $\D{G}'$-conjugacy classes of $L$-homomorphisms $\p' : \mathcal{L}_{\p} \rightarrow \L{G}'$ such that $\xi' \circ \p'$ is $\D{G}$-conjugate to $\p_{G}$. The contribution from each $\p_{G}$-summand is the same, so we get a constant multiple $|\P{G, \p} | = m_{\p}$. Moreover, the contribution of $(\lG', \p')$ only depends on its image under 
\[
\Big\{ (\lG', \p') : \lG' \in \End{ell}{\lG} - \{ \lG \}  \text{ and } \p' \in \P{G', \p_{G}} \Big\} \longrightarrow \S{\p} - \{1\},
\]
which is
\(
m^{-1}_{\p'_x} \, \Sdt{\lG'_x}{, \p'_x}(\lf').
\)
The fiber has size
\(
| \Out_{G}(G'_x) | | S_{\p, x} / S_{\p, x} \cap \D{G}'_x Z(\D{G})^{\Gal{}} |^{-1}.
\)
Since $\p = \p_b$, we have $S_{\p, x} = S_{\p} \subseteq \D{G}'_x$. Hence $| S_{\p, x} / S_{\p, x} \cap \D{G}'_x Z(\D{G})^{\Gal{}} | = 1$. In sum, we get
\begin{align*}
\Idt{\lG}{, \p}(\lf) - \Sdt{\lG}{, \p} (\lf) = \sum_{1 \neq x \in \S{\lp}} (m_{\p} / m_{\p'_x})^2 \, \Sdt{\lG'_x}{, \p'_x}(\lf').
\end{align*}

\end{proof}

Before starting the proof of the main theorems, we make the following observations. Theorem~\ref{thm: global L-packet} implies \eqref{formula: discrete part GL-type} \eqref{formula: discrete part GL-type product}. So for Theorem~\ref{thm: stable multiplicity formula}, ~\ref{thm: stable multiplicity formula product}, it remains to show \eqref{formula: stable multiplicity GL-type} and \eqref{formula: stable multiplicity GL-type product}. At last, Theorem~\ref{thm: global L-packet} follows from the special case that $G = SO(2n)$. So now we assume Theorem~\ref{thm: global L-packet} holds for $G = SO(2n)$ with $n < N$ and $\p = \p_b \in \cPdt{G}$. In particular, we will fix $\cPkt{\lp}$. For $G = G_1 \times \cdots \times G_q$ such that $n_i < N$, it follows that Theorem~\ref{thm: global L-packet} and \eqref{formula: discrete part GL-type product} holds for $G$. We define recursively
\[
\Sdt{\lG}{, \lp} (\lf) = \Idt{\lG}{, \lp}(\lf) - \sum_{1 \neq x \in \S{\lp}} (m_{\p} / m_{\p'_x})^2 \, \Sdt{\lG'_x}{, \lp'_x}(\lf'), \quad \quad \lf \in \sH(\lG, \lif{\chi}).
\]
We will show that it admits a tensor product decomposition as \eqref{formula: stable multiplicity GL-type product 1}, and matches \eqref{formula: stable multiplicity GL-type 1} in case $G = SO(2n)$. By induction on $|\S{\lp}|$, one can easily show
\begin{align}
\label{eq: stable decomposition}
\Sdt{\lG}{, \p} (\lf) = \sum_{\omega \in Y} \Sdt{\lG}{, \lp} (\lf \otimes \omega).
\end{align}

\begin{lemma}
\label{lemma: product formula}

\begin{align}
\label{eq: product formula 1}
\Sdt{\lG}{, \lp} = \tilde{\otimes}_{i} \, \Sdt{\lG_i}{, \lp_i}
\end{align}
\begin{align}
\label{eq: product formula}
\Sdt{\lG}{, \p} = (\tilde{\otimes}_{i \neq 1} \Sdt{\lG_i}{, \lp_i}) \, \tilde{\otimes} \, \Sdt{\lG_1}{, \p_1}
\end{align}
\end{lemma}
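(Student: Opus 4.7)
\medskip

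\noindent\textbf{Proof proposal.} My plan is to establish \eqref{eq: product formula 1} by induction on $|\S{\lp}| = \prod_{i} |\S{\lp_i}|$, using the recursive definition
\[
\Sdt{\lG}{, \lp}(\lf) = \Idt{\lG}{, \lp}(\lf) - \sum_{1 \neq x \in \S{\lp}} (m_{\p}/m_{\p'_x})^{2} \Sdt{\lG'_x}{, \lp'_x}(\lf').
\]
The base case is $\S{\lp} = 1$, i.e. all $\S{\lp_i}=1$. Here $\Sdt{\lG_i}{,\lp_i} = \Idt{\lG_i}{,\lp_i}$ and similarly for $\lG$, and since by the standing hypothesis of Theorem~\ref{thm: global L-packet} for the factors we have $\cPkt{\lp} = \tilde{\otimes}_i \cPkt{\lp_i}$ and $m_\p = \prod_i m_{\p_i}$, the formula $\Idt{\lG}{,\lp}(\lf) = m_\p \sum_{[\lr]\in\cPkt{\lp}} \lf(\lr)$ factors directly as $\tilde{\otimes}_i \Idt{\lG_i}{,\lp_i}$.

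For the inductive step, the key input is that for $\p = \p_b = \p_1 \times \cdots \times \p_q \in \cPdt{G}$, any elliptic endoscopic datum contributing to $\Sdt{\lG}{,\lp}$ is determined by $x = (x_i) \in \S{\lp} = \prod_i \S{\lp_i}$, and both the endoscopic group $\lG'_x$ and the parameter $\lp'_x$ decompose as (similitude) products: $\lG'_x \cong \tilde{\prod}_i \lG'_{i,x_i}$ and $\lp'_x$ restricts to $\lp'_{i,x_i}$ on each factor. Moreover, $m_{\p'_x} = \prod_i m_{\p'_{i,x_i}}$. Applying the induction hypothesis to each $\Sdt{\lG'_{i,x_i}}{,\lp'_{i,x_i}}$ (which has strictly smaller $|\S{\lp'_{i,x_i}}|$ whenever $x_i \neq 1$) and the base-case analysis to the factors with $x_i = 1$, and comparing with the analogous expansion for each $\Sdt{\lG_i}{,\lp_i}$, we see that
\[
\Idt{\lG}{,\lp} - \sum_{1 \neq x} (m_\p/m_{\p'_x})^2 \Sdt{\lG'_x}{,\lp'_x}
= \tilde{\otimes}_i \Big( \Idt{\lG_i}{,\lp_i} - \sum_{1 \neq x_i} (m_{\p_i}/m_{\p'_{i,x_i}})^2 \Sdt{\lG'_{i,x_i}}{,\lp'_{i,x_i}}\Big)
= \tilde{\otimes}_i \Sdt{\lG_i}{,\lp_i},
\]
as desired. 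The matching of endoscopic coefficients and transfer factors across the product structure is the main technical point here; the central $2$-group identifications in the similitude extension make this slightly subtle, but it reduces to the compatibility of the stabilization data $\iota(\lG,\lG')$ with the product decomposition (cf. \eqref{eq: stabilization coefficient}).

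For \eqref{eq: product formula}, I combine \eqref{eq: product formula 1} with the averaging formula \eqref{eq: stable decomposition}:
\[
\Sdt{\lG}{,\p}(\lf) = \sum_{\omega \in Y} \Sdt{\lG}{,\lp}(\lf \otimes \omega) = \sum_{\omega \in Y} \tilde{\otimes}_i \Sdt{\lG_i}{,\lp_i}(\lf \otimes \omega).
\]
Because the similitude character of $\lG$ is shared across the factors $G_1,\dots,G_q$, a character $\omega \in Y = \Hom(\lG(\A_F)/\lG(F)\lZ(\A_F)G(\A_F),\C^\times)$ is determined by (and induces) a character $\omega_1$ on the first similitude factor alone, with the remaining factors untwisted. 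Concretely, the natural map $Y \to Y_1$ (restriction through the $i=1$ embedding) is an isomorphism under our conventions, so the $Y$-sum collapses to a sum over $Y_1$ acting only on the first tensor factor, giving $\tilde{\otimes}_{i\neq 1} \Sdt{\lG_i}{,\lp_i} \,\tilde{\otimes}\, \sum_{\omega_1 \in Y_1} \Sdt{\lG_1}{,\lp_1}(\,\cdot \otimes \omega_1)$, which by \eqref{eq: stable decomposition} for $\lG_1$ equals the right-hand side of \eqref{eq: product formula}.

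The main obstacle I anticipate is the first one: tracking the endoscopic data, the coefficients $\iota(\lG,\lG')$, and the central character identifications through the factorization $\lG = (\mathbb{G}_m \times \prod G_i)/\Two$. Writing $\lG'_x$ as a fibre product of the $\lG'_{i,x_i}$ over the common similitude character, and verifying that $|\bar{Z}(\D{\lG'_x})^{\Gal{}}|$ and $|\Out_{\lG}(\lG'_x)|$ split multiplicatively (up to the factors already accounted for in $m_\p/m_{\p'_x}$), is where the bookkeeping is heaviest. A secondary subtlety is the identification $Y \cong Y_1$ used in the fourth paragraph, which needs to be stated explicitly and checked against the concrete description of $\lG$ as a similitude fibre product.
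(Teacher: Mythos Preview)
Your overall strategy coincides with the paper's: argue \eqref{eq: product formula 1} by induction on $|\S{\lp}|$ from the recursive definition and the factorization $\Idt{\lG}{,\lp} = \tilde{\otimes}_i \Idt{\lG_i}{,\lp_i}$, then deduce \eqref{eq: product formula} from \eqref{eq: product formula 1} via \eqref{eq: stable decomposition}. Your treatment of \eqref{eq: product formula} is in fact more explicit than the paper's one-line remark.

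The inductive step, however, is misstated. You say you apply the induction hypothesis ``to each $\Sdt{\lG'_{i,x_i}}{,\lp'_{i,x_i}}$'', but the lemma is about product groups, so the hypothesis must be applied to the \emph{product} $\lG'_x$ (whose $|\S{\lp'_x}|$ is strictly smaller than $|\S{\lp}|$ for $x\neq 1$), giving $\Sdt{\lG'_x}{,\lp'_x} = \tilde{\otimes}_i \Sdt{\lG'_{i,x_i}}{,\lp'_{i,x_i}}$; this is exactly the paper's step. With this in hand and writing $c_x=\prod_i c_{i,x_i}$ with $c_{i,x_i}=(m_{\p_i}/m_{\p'_{i,x_i}})^2$, the recursion for $\lG$ becomes
\[
\Sdt{\lG}{,\lp}=\tilde{\otimes}_i \Idt{\lG_i}{,\lp_i}-\sum_{1\neq x}\,\tilde{\otimes}_i\, c_{i,x_i}\Sdt{\lG'_{i,x_i}}{,\lp'_{i,x_i}},
\]
and since $\sum_{x_i} c_{i,x_i}\Sdt{\lG'_{i,x_i}}{,\lp'_{i,x_i}}=\Idt{\lG_i}{,\lp_i}$ by the recursion for $\lG_i$, the full sum over \emph{all} $x$ equals $\tilde{\otimes}_i \Idt{\lG_i}{,\lp_i}$, so what remains is the $x=1$ term $\tilde{\otimes}_i \Sdt{\lG_i}{,\lp_i}$. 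Note that your displayed equation is the \emph{goal}, not a term-by-term identity obtained by distributivity: expanding the right-hand side over $x=(x_i)$ produces signs $(-1)^{|\{i:x_i\neq 1\}|}$ and $\Idt{\lG_i}{,\lp_i}$ (not $\Sdt{\lG_i}{,\lp_i}$) in the slots with $x_i=1$, so a naive term matching fails.

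Finally, the ``main obstacle'' you flag --- matching $\iota(\lG,\lG')$, $|\Out|$, transfer factors across the product --- is not where the work lies here. The recursive definition in this $GL$-type setting is already written with coefficients $(m_\p/m_{\p'_x})^2$, which factor trivially since $m_{\p'_x}=\prod_i m_{\p'_{i,x_i}}$; the paper does not comment on endoscopic coefficients at all in this proof.
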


\begin{proof}
In view of \eqref{eq: stable decomposition}, \eqref{eq: product formula} follows from \eqref{eq: product formula 1}. We can write the right hand side of \eqref{eq: product formula 1} as
\begin{align*}
\tilde{\otimes}_{i} \, \Sdt{\lG_i}{, \lp_i} = \tilde{\otimes}_i \, \Idt{\lG_i}{, \lp_i} - \sum_{1 \neq x \in \S{\lp}} (m_{\p} / m_{\p'_x})^2 \, {\rm Tran} \, (\tilde{\otimes}_i \, \Sdt{\lG'_{x_i}}{, \lp'_{x_i}}).
\end{align*}
By induction on $|\S{\lp}|$, we can assume
\[
\Sdt{\lG'_x}{, \lp'_x} = \tilde{\otimes}_i \, \Sdt{\lG'_{x_i}}{, \lp'_{x_i}}\,.
\]
So it suffices to know
\[
\Idt{\lG}{, \lp} = \tilde{\otimes}_i \, \Idt{\lG_i}{, \lp_i}
\]
which follows easily from \eqref{formula: discrete part GL-type product}.
\end{proof}

\begin{lemma}
\label{lemma: stable expansion}
For $G = SO(2n)$,
\begin{align}
\label{eq: stable expansion 1}
\Sdt{\lG}{, \lp}(\lf) = m_{\p} \sum_{\p_{1}, \cdots, \p_{m}} (-1)^{m - 1} a_m \, \lf(\lp_{1}, \cdots, \lp_{m}), \,\,\,\,\,\,\, \lf  \in \sH(\lG, \lif{\chi}),
\end{align}
\begin{align}
\label{eq: stable expansion}
\Sdt{\lG}{, \p}(\lf) = m_{\p} \sum_{\p_{1}, \cdots, \p_{m}} (-1)^{m - 1} a_m \, \lf(\p_{1}, \cdots, \p_{m}), \,\,\,\,\,\,\, \lf  \in \sH(\lG, \lif{\chi}).
\end{align}
In both cases, the sum is over unordered partition of $\p$.
\end{lemma}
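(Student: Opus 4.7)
The plan is to prove \eqref{eq: stable expansion 1} by induction on the number $r$ of simple constituents of $\p = \p_1 \boxplus \cdots \boxplus \p_r$, and then deduce \eqref{eq: stable expansion} by summing the result over $Y$. For the base case $r = 1$, the parameter $\p$ is itself simple so $\S{\lp}$ is trivial, and the recursive definition of $\Sdt{\lG}{, \lp}$ given just before \eqref{eq: stable decomposition} collapses to $\Sdt{\lG}{, \lp} = \Idt{\lG}{, \lp}$, which equals $m_{\p} \lf(\lp)$ by \eqref{formula: discrete part GL-type}. This matches the unique $m = 1$ summand on the right hand side of \eqref{eq: stable expansion 1} since $a_1 = 1$.

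For the inductive step, I would apply the recursion. The $\Idt{\lG}{, \lp}$ term supplies the $m = 1$ summand. Each nontrivial $x \in \S{\lp}$ corresponds to an unordered bipartition $\{S_1, S_2\}$ of $\{1, \ldots, r\}$ into two nonempty subsets, giving the elliptic endoscopic datum with $G'_x = SO(2n_{S_1}) \times SO(2n_{S_2})$ and $\p'_x = \p^{S_1} \times \p^{S_2}$, where $\p^{S_j} := \boxplus_{i \in S_j} \p_i$. Lemma~\ref{lemma: product formula} then gives
\[
\Sdt{\lG'_x}{, \lp'_x} = \Sdt{\lif{G}^{S_1}}{, \lp^{S_1}} \,\tilde{\otimes}\, \Sdt{\lif{G}^{S_2}}{, \lp^{S_2}},
\]
to each factor of which the induction hypothesis applies since $|S_j| < r$. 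To compute the coefficient of $\lf(\lp_1, \ldots, \lp_m)$ in $\Sdt{\lG}{, \lp}(\lf)$ attached to a fixed unordered partition $\pi = \{T_1, \ldots, T_m\}$ of $\{1, \ldots, r\}$ with $m \geq 2$, I would collect contributions from each unordered bipartition $(\pi_1, \pi_2)$ of $\pi$, with the associated bipartition of indices given by $S_j = \bigsqcup_{T \in \pi_j} T$. Since all simple constituents are even-dimensional of orthogonal type, Lemma~\ref{lemma: discrete parameter} together with Arthur's multiplicity formula yields $m_{\p} = m_{\p^{S_j}} = 2$ and hence $m_{\p'_x} = 4$, so that the contribution of each bipartition is
\[
-\Bigl(\tfrac{m_{\p}}{m_{\p'_x}}\Bigr)^{\!2} m_{\p^{S_1}} m_{\p^{S_2}} (-1)^{(|\pi_1|-1)+(|\pi_2|-1)} a_{|\pi_1|} a_{|\pi_2|} \;=\; (-1)^{m-1} a_{|\pi_1|} a_{|\pi_2|}.
\]
Summing over unordered bipartitions of $\pi$ and invoking the defining recursion for $a_m$,
\[
\sum_{\pi_1 \sqcup \pi_2 = \pi} (-1)^{m-1} a_{|\pi_1|} a_{|\pi_2|} \;=\; \tfrac{(-1)^{m-1}}{2} \sum_{k=1}^{m-1} \binom{m}{k} a_k a_{m-k} \;=\; m_{\p}(-1)^{m-1} a_m,
\]
which is exactly the coefficient demanded by \eqref{eq: stable expansion 1}.

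Finally, \eqref{eq: stable expansion} will follow from \eqref{eq: stable expansion 1} by applying \eqref{eq: stable decomposition} and using that, by construction, $\lf(\p_1, \ldots, \p_m) = \sum_{\omega \in Y} (\lf \otimes \omega)(\lp_1, \ldots, \lp_m)$. The main obstacle I expect is the combinatorial bookkeeping in the inductive step: verifying that the numerical factors $(m_{\p}/m_{\p'_x})^2$ from Lemma~\ref{lemma: endoscopic side GL-type} and the multiplicities $m_{\p^{S_j}}$ combine into a sign-free coefficient which the recursion $a_m = \tfrac{1}{4}\sum_k \binom{m}{k} a_k a_{m-k}$ can absorb, and that each unordered partition of $\p$ is accounted for with the correct multiplicity; everything else is then a routine induction.
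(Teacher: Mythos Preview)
Your proposal is correct and follows essentially the same approach as the paper: both proofs apply the recursive definition of $\Sdt{\lG}{,\lp}$, use Lemma~\ref{lemma: product formula} to factor $\Sdt{\lG'_x}{,\lp'_x}$, invoke the induction hypothesis on the two factors, and then collect coefficients via the recursion $a_m=\tfrac14\sum_k\binom{m}{k}a_k a_{m-k}$. The only cosmetic differences are that the paper inducts on $n$ rather than on the number $r$ of simple constituents (equivalent here, since each endoscopic factor has strictly smaller rank and strictly fewer constituents) and records the ratio $m_\p/m_{\p'_x}=\tfrac12$ directly rather than computing $m_\p=2$, $m_{\p'_x}=4$ as you do.
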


\begin{proof}
In view of \eqref{eq: stable decomposition}, \eqref{eq: stable expansion} follows from \eqref{eq: stable expansion 1}. We will prove \eqref{eq: stable expansion} by induction on $n$. By definition,
\[
\Sdt{\lG}{, \lp} (\lf) = \Idt{\lG}{, \lp}(\lf) - \sum_{1 \neq x \in \S{\lp}} (m_{\p} / m_{\p'_x})^2 \, \Sdt{\lG'_x}{, \lp'_x}(\lf').
\]
For $1 \neq x \in \S{\lp}$, let $G'_x = G_{I} \times G_{II}$ and $\p'_x = \p_I \times \p_{II}$. By \eqref{eq: product formula 1} and the induction assumption, we can write
\begin{align*}
\Sdt{\lG'_x}{, \lp'_x} & = \Sdt{\lG_I}{, \lp_I} \tilde{\otimes} \, \Sdt{\lG_{II}}{, \lp_{II}} \\
& = m_{\p_I} m_{\p_{II}} \Big(\sum_{\p_{I, 1}, \cdots, \p_{I, l}} (-1)^{l - 1} a_l \,  \lf_I(\lp_{I, 1}, \cdots, \lp_{I, l}) \Big) \\
& \tilde{\otimes} \Big(\sum_{\p_{II, 1}, \cdots, \p_{II, k}} (-1)^{k - 1} a_k \,  \lf_{II}(\lp_{II, 1}, \cdots, \lp_{II, k}) \Big).
\end{align*}
Then
\[
\Sdt{\lG'_x}{, \lp'_x}(\lf') = (-1)^{k+l} m_{\p_I}m_{\p_{II}} a_{k} a_{l} \lf(\lp_{I, 1}, \cdots, \lp_{I, l}, \lp_{II, 1}, \cdots, \lp_{II, k}).
\]
Note
\[
m_\p/m_{\p'_x} = \frac{1}{2}, \quad \quad  m_{\p'_x} = m_{\p_I}m_{\p_{II}}.
\]
So
\[
- (m_{\p} / m_{\p'_x})^2 \, \Sdt{\lG'_x}{, \lp'_x}(\lf') = m_{\p} \, (-1)^{k+l -1} \, (\frac{1}{2} a_{k}a_{l}) \, \lf(\lp_{I, 1}, \cdots, \lp_{I, l}, \lp_{II, 1}, \cdots, \lp_{II, k}).
\]
At last, the association of $x$ with $\p_I, \p_{II}$ gives a bijection between $\S{\lp} - \{1\}$ and unordered $2$-partitions of $\p$. So 
\[
\Sdt{\lG}{, \lp} (\lf) - \Idt{\lG}{, \lp}(\lf)
\] 
is a sum of distributions 
\[
\lf(\lp_{1}, \cdots, \lp_{m})
\]
over unordered $m$-partitions of $\p$ for $m > 1$, with coefficients
\[
(-1)^{m-1} \frac{1}{2} \sum_{S_1, S_2} a_{|S_1|} a_{|S_2|} 
\]
where the sum is over unordered $2$-partitions $(S_1, S_2)$ of $\{\p_{1}, \cdots, \p_{m}\}$. It is easy to check that
\[
\frac{1}{2} \sum_{S_1, S_2} a_{|S_1|} a_{|S_2|} = a_{m}.
\]
The rest is clear.

\end{proof}

Lemma~\ref{lemma: product formula}, ~\ref{lemma: stable expansion} complete the proof of Theorem~\ref{thm: stable multiplicity formula}, ~\ref{thm: stable multiplicity formula product} in case $n_i < N$. To complete the induction argument, we still need to prove Theorem~\ref{thm: global L-packet} for $G = SO(2N)$. By Lemma~\ref{lemma: endoscopic side GL-type}, 
\[
\Idt{\lG}{, \p}(\lf) - \Sdt{\lG}{, \p} (\lf) = \sum_{1 \neq x \in \S{\lp}} (m_{\p} / m_{\p'_x})^2 \, \Sdt{\lG'_x}{, \p'_x}(\lf').
\]
For $1 \neq x \in \S{\lp}$, let $G'_x = G_{I} \times G_{II}$ and $\p'_x = \p_I \times \p_{II}$. By \eqref{eq: product formula} and Lemma~\ref{lemma: stable expansion}, we can write
\begin{align*}
\Sdt{\lG'_x}{, \p'_x} & = \Sdt{\lG_I}{, \p_I} \tilde{\otimes} \, \Sdt{\lG_{II}}{, \lp_{II}} \\
& = m_{\p_I} m_{\p_{II}} \Big(\sum_{\p_{I, 1}, \cdots, \p_{I, l}} (-1)^{l - 1} a_l \,  \lf_I(\p_{I, 1}, \cdots, \p_{I, l}) \Big) \\
& \tilde{\otimes} \Big(\sum_{\p_{II, 1}, \cdots, \p_{II, k}} (-1)^{k - 1} a_k \,  \lf_{II}(\lp_{II, 1}, \cdots, \lp_{II, k}) \Big).
\end{align*}
Then we see $\Sdt{\lG'_x}{, \p'_x}(\lf')$ is stable. It follows that $\Idt{\lG}{, \p}(\lf)$ is stable. Then one can argue by stability to show Theorem~\ref{thm: global L-packet}. The arguement is as follows. We define
\(
\cPkt{\lp} = \otimes'_{v} \cPkt{\lp_{v}}
\) 
such that it contains a discrete automorphic representation $\lr^{0}$. Since $\Idt{\lG}{, \p}(\lf)$ is stable, it is stable at every place. So we can take $\lf = \otimes_{w} \lf_{w}$ and fix $\otimes_{w \neq v}\lf_{w}$ for any place $v$, then by \cite[Corollary 4.8]{Xu:2018} the coefficient of $\lf_{v}(\lr_{v})$ in $\Idt{\lG}{, \p}(\lf)$ must be the same for all $\lr_{v} \in \cPkt{\lp_{v}}$. By varying $\otimes_{w \neq v}\lf_{w}$ and the linear independence of characters of $\otimes_{w \neq v} \sH(\lG_{w}, \lif{\chi}_{w})$-modules, we have that
\[
[\lr^{0}] = [\lr^{0}_{v}] \otimes (\otimes_{w \neq v} [\lr^{0}_{w}])
\]
contributes to $\Idt{\lG}{, \p}(\lf)$ if and only if all elements in
\[
\cPkt{\lp_{v}} \otimes (\otimes_{w \neq v} [\lr^{0}_{w}])
\]
also contribute to $\Idt{\lG}{, \p}(\lf)$. By repeating this argument, one can show all elements in $\cPkt{\lp}$ contribute to $\Idt{\lG}{, \p}(\lf)$. Then Theorem~\ref{thm: global L-packet} will follow from \eqref{eq: discrete spectrum} immediately. This finishes the proof of Theorem~\ref{thm: global L-packet}, ~\ref{thm: stable multiplicity formula}, ~\ref{thm: stable multiplicity formula product} in the case $\phi = \phi_b \in \cPdt{G}$.


\section{Proof of Lemma~\ref{lemma: twisted endoscopic expansion}}
\label{sec: endoscopic expansion}

In this section we will prove Lemma~\ref{lemma: twisted endoscopic expansion} by assuming Theorem~\ref{thm: global L-packet}, ~\ref{thm: compatible normalization}, ~\ref{thm: stable multiplicity formula}, ~\ref{thm: stable multiplicity formula product} for the proper Levi subgroups and proper twisted endoscopic groups of $\lG$. We follow the same strategy as in \cite[Section 5.4.2]{Xu:2018}. The goal is to expand the right hand sides of
\[
\Idt{\lG}{, \p}(\lf) - \Sdt{\lG}{, \p} (\lf) = \sum_{\lG' \in \End{ell}{\lG} - \{\lG\} } \iota( \lG, \lG' ) \Sdt{\lG'}{, \p}(\lf^{\lG'}),
\]
and
\[
\tIdt{\lG^{\theta}}{, \p}(\lf) = \sum_{\lG' \in \tEnd{ell}{\lG^{\theta}}} \iota(\lG, \lG') \Sdt{\lG'}{, \p}(\lf^{\lG'})
\]
in case $\theta \neq id$ or $\omega \neq 1$. We can turn the right hand side into a sum over 
\(
\p_{G} \in \P{G, \p}
\)
and
\[
\Big\{ (\lG', \p') : \lG' \in \End{ell}{\lG^{\theta}} - \{\lG\} \text{ and } \p' \in \P{G', \p_{G}} \Big\}
\]
where $\P{G', \p_{G}}$ is the set $\D{G}'$-conjugacy classes of $L$-homomorphisms $\p' : \mathcal{L}_{\p} \rightarrow \L{G}'$ such that $\xi' \circ \p'$ is $\D{G}$-conjugate to $\p_{G}$. The contribution from each $\p_{G}$-summand is the same, so we get a constant multiple $|\P{G, \p} | = m_{\p}$. Moreover, the contribution of $(\lG', \p')$ only depends on its image under 
\[
\Big\{ (\lG', \p') : \lG' \in \End{ell}{\lG^{\theta}} - \{\lG\}  \text{ and } \p' \in \P{G', \p_{G}} \Big\} \longrightarrow \cS{\p} \backslash (\cS{\p, ss}^{\theta}(\omega) - \{1\}),
\]
which is
\(
\frac{\iota(\lG, \lG'_x) }{m_{\p'_x}} \, \Sdt{\lG'_x}{, \p'_x}(\lf').
\)
Here 
\(
\cS{\p, ss}^{\theta}(\x) = \{ s \in \cS{\p, ss}^{\theta}: \a(s) = \x\}
\)
and the fiber has size
\(
| \Out_{G}(G'_x) | | S_{\p, x} / S_{\p, x} \cap \D{G}'_x Z(\D{G})^{\Gal{}} |^{-1}.  
\)

Suppose $G$ is $Sp(2n)$ or $SO(2n, \eta)$ and $\p \neq \p_{o}, \p_{b}$. We have $m_{\p} = m_{\p_o}$ and 
\(
S^{\Sigma_0}_{\p} \cong S^{\Sigma_0}_{\p_{o}} \times S_{\p_{b}},
\)
which induces
\[
1 \rightarrow S_{\p_{b}} \rightarrow \cS{\p}^{\Sigma_0} \rightarrow \cS{\p_o}^{\Sigma_0} \rightarrow 1.
\]
Moreover, we have
\begin{align}
\label{eq: fiberation}
\cS{\p} \backslash (\cS{\p, ss}^{\theta}(\omega) - \{1\}) \longrightarrow \cS{\p_o} \backslash \cS{\p_{o, ss}}^{\theta}(\omega) \quad x \mapsto x_o.
\end{align}
Note
\[
|{\rm Out}_{G}(G')| = \begin{cases} |{\rm Out}_{G_o}(G'_o)|  & \text{ if } \p'_o \neq \p_o \\
2 |{\rm Out}_{G_o}(G'_o)| & \text{ if } \p'_o = \p_o
\end{cases}
\]
and
\(
| S_{\p, x} / S_{\p, x} \cap \D{G}'_x Z(\D{G})^{\Gal{}} | = | S_{\p_{o}, x_o} / S_{\p_{o}, x_o} \cap \D{G}'_{o, x_{o}} Z(\D{G}_{o})^{\Gal{}} |
\)
for $S_{\p_b} \subseteq S_{\p, x} \cap \D{G}' Z(\D{G})^{\Gamma}$. At last
\[
\iota(\lG, \lG') = \begin{cases} \iota(\lG_o, \lG'_o) & \text{ if } \p'_o \neq \p_o \\
\frac{1}{4} \iota(\lG_o, \lG'_o) & \text{ if } \p'_o = \p_o
\end{cases}
\]
\[
m_{\p'} = \begin{cases} m_{\p'_o} & \text{ if } \p'_o \neq \p_o \\
2 \, m_{\p'_o} & \text{ if } \p'_o = \p_o
\end{cases}
\]
and
\[
\Sdt{\lG'}{, \p'} = (\frac{1}{4})^{l} \, {\rm Tran} \, (\Sdt{\lG'_o}{, \p'_o} \, \tilde{\otimes} \, \Sdt{\lG'_b}{, \lp'_b}),
\]
where 
\[
l = \begin{cases} 2 & \text{ if } \p'_o \neq \p_o, \p'_b \neq \p_b \\
0 & \text{ if } \p'_o = \p_o, \p'_b = \p_b \\
1 & \text{ otherwise. }
\end{cases}
\]
By the compatibility of parabolic induction with endoscopic transfer, we have
\[
{\rm Tran} \, \Sdt{\lG'}{, \p'} = (\frac{1}{4})^{l} \, {\rm Tran} \, \Big({\rm Tran} \, \Sdt{\lG'_o}{, \p'_o} \, \tilde{\otimes} \, {\rm Tran} \, \Sdt{\lG'_b}{, \lp'_b} \Big).
\]
Here the distribution in the bracket may not be stable, so its transfer is in the sense of Remark~\ref{rk: compatible with parabolic induction}. We separate the coefficients for $(\lG_o, \lG'_o)$ and sum over the fibers of \eqref{eq: fiberation}. Since $S_{\p_b}$ is in the center of $\cS{\p}^{\Sigma_0}$, the fiber of
\[
\cS{\p} \backslash \cS{\p, ss}^{\theta}(\omega) \longrightarrow \cS{\p_o} \backslash \cS{\p_{o, ss}}^{\theta}(\omega)
\]
is a $S_{\p_{b}}$-torsor. Note ${\rm Tran} \, \Sdt{\lG'_o}{, \p'_o}$ remains the same over the fiber and ${\rm Tran} \, \Sdt{\lG'_b}{, \lp'_b}$ only depends on the image under $S_{\p_b} \rightarrow \S{\p_b} = \S{\lp_b}$, whose kernel is $Z(\D{G}_b) \cong \Two$. First we sum over the fiber of $x_o \neq 1$, in which case $\p'_o \neq \p_o$ and the result is
\begin{align*}
& {\rm Tran} \, \Big( {\rm Tran} \, (\Sdt{\lG'_o}{, \p'_o}) \, \tilde{\otimes} \, |Z(\D{G}_b)| \cdot \Big( \sum_{1 \neq x \in \S{\lp_b}} (\frac{1}{4})^2 \, {\rm Tran} \, (\Sdt{\lG'_{b, x}}{, \lp'_{b, x}}) + \frac{1}{4}\Sdt{\lG_b}{, \lp_b} \Big) \Big) \\
=  & \, {\rm Tran} \, \Big({\rm Tran} \, (\Sdt{\lG'_o}{, \p'_o}) \, \tilde{\otimes} \, \frac{1}{2} \Idt{\lG_b}{, \lp_b} \Big).
\end{align*}
Next we sum over the fiber of $x_o = 1$, in which case $\p'_o = \p_o$ and the result is
\begin{align*}
& {\rm Tran} \, \Big( {\rm Tran} \, (\Sdt{\lG_o}{, \p_o}) \, \tilde{\otimes} \, \frac{1}{4} \Big(|Z(\D{G}_b)| \sum_{1 \neq x \in \S{\lp_b}} \frac{1}{4} \, {\rm Tran} \, (\Sdt{\lG'_{b, x}}{, \lp'_{b, x}})  + \Sdt{\lG_b}{, \lp_b} \Big) \Big) \\
= & \, {\rm Tran} \,  \Sdt{\lG_o}{, \p_o} \, \tilde{\otimes} \, \Big(\frac{1}{2} \Idt{\lG_b}{, \lp_b} -  \frac{1}{4}\Sdt{\lG_b}{, \lp_b} \Big).
\end{align*}
In both cases we have used Lemma~\ref{lemma: endoscopic side GL-type}.

In general, recall
\[
\p_{J} = \prod_{i \in J } \p_i, \quad  \quad  \p_{K} = \prod_{i \in K} \p_i
\]
and
\[
1 \rightarrow S_{\p_{K, b}} \times \cS{\p_{J, b}} \rightarrow \cS{\p}^{\Sigma_0} \rightarrow \cS{\p_o}^{\Sigma_0} \rightarrow 1
\]
which induces
\[
\cS{\p} \backslash \cS{\p, ss}^{\theta}(\omega) \longrightarrow \cS{\p_o} \backslash \cS{\p_{o, ss}}^{\theta}(\omega) \quad x \mapsto x_o
\]
where the fiber is a $S_{\p_{K, b}} \times \cS{\p_{J, b}}$-torsor. By the decomposition
\[
S_{\p_{K, b}} \times \cS{\p_{J, b}} = \prod_{k \in K} S_{\p_{k, b}} \times \prod_{j \in J} \cS{\p_{j, b}}
\]
we could sum over $S_{\p_{k, b}}$ and $\cS{\p_{j, b}}$ respectively. Note 
\[
m_{\p} = \prod_{i} m_{\p_i} \quad \quad m_{\p'} = \prod_i m_{\p'_i}
\] 
\[
|{\rm Out}_{G}(G')| = \prod_i |{\rm Out}_{G_i}(G'_i)| \quad \quad \iota(\lG, \lG') = \prod_i \iota(\lG_i, \lG'_i)
\]
and
\[
\Sdt{\lG'}{, \p'} = (\frac{1}{4})^{l} \, {\rm Tran} \, (\Sdt{\lG'_o}{, \p'_o} \, \tilde{\otimes} \, \Sdt{\lG'_b}{, \lp'_b})
\]
where 
\[
(\frac{1}{4})^{l} \, \Sdt{\lG'_b}{, \lp'_b} = \tilde{\otimes}_{k \in K} \, (\frac{1}{4})^{l_k} \, \Sdt{\lG'_{k, b}}{, \lp'_{k, b}} \tilde{\otimes} \, \tilde{\otimes}_{j \in J} \Sdt{\lG'_{j, b}}{, \lp'_{j, b}}
\]
and 
\[
l_k = \begin{cases} 2 & \text{ if } \p'_{k, o} \neq \p_{k, o}, \p'_{k, b} \neq \p_{k, b} \\
0 & \text{ if } \p'_{k, o} = \p_{k, o}, \p'_{k, b} = \p_{k, b} \\
1 & \text{ otherwise. }
\end{cases}
\]
By the compatibility of parabolic induction with endoscopic transfer,
\begin{align*}
{\rm Tran} \, \Sdt{\lG'}{, \p'} & = (\frac{1}{4})^{l} \, {\rm Tran} \, \Big( {\rm Tran} \, \Sdt{\lG'_o}{, \p'_o} \, \tilde{\otimes} \, {\rm Tran} \, \Sdt{\lG'_b}{, \lp'_b} \Big) \\
& =  {\rm Tran} \, \Big( {\rm Tran} \, \Sdt{\lG'_o}{, \p'_o} \, \tilde{\otimes} \, \Big(\tilde{\otimes}_{k \in K}  {\rm Tran} \, (\frac{1}{4})^{l_k} \Sdt{\lG'_{k, b}}{, \lp'_{k, b}} \Big) \tilde{\otimes} \, \Big(\tilde{\otimes}_{j \in J}  {\rm Tran} \, \Sdt{\lG'_{j, b}}{, \lp'_{j, b}} \Big) \Big).
\end{align*}
We again separate the coefficients for $(\lG_{o}, \lG'_{o})$ and sum over the fibers of
\[
\cS{\p} \backslash (\cS{\p, ss}^{\theta}(\omega) - \{1\}) \longrightarrow \cS{\p_o} \backslash \cS{\p_{o, ss}}^{\theta}(\omega) \quad x \mapsto x_o.
\]
For $x_o \neq 1$, the sum over $\cS{\p_{j, b}}$ gives $\Idt{\lG_{j, b}}{, \lp_{j, b}}$ and the sum over $S_{\p_{k, b}}$ gives $\frac{1}{2}\Idt{\lG_{k, b}}{, \lp_{k, b}}$. So the result is 
\begin{align*}
& {\rm Tran} \, \Big({\rm Tran} \, (\Sdt{\lG'_o}{, \p'_o}) \, \tilde{\otimes} \, \Big( \tilde{\otimes}_{k \in K} \frac{1}{2} \, \Idt{\lG_{k, b}}{, \lp_{k, b}} \Big) \tilde{\otimes} \Big( \tilde{\otimes}_{j \in J} \Idt{\lG_{j, b}}{, \lp_{j, b}}  \Big) \Big) \\
= \, & {\rm Tran} \, \Big({\rm Tran} \, (\Sdt{\lG'_o}{, \p'_o}) \, \tilde{\otimes} \, \frac{m_{\p_{J}}}{m_{\p_{b}}} \, \Idt{\lG_b}{, \lp_b} \Big).
\end{align*}
For $x_o = 1$, we obtain the same result after adding
\(
{\rm Tran} \, \Sdt{\lG_o}{, \p_o} \tilde{\otimes} \, (\frac{1}{4})^{|K|} \, \Sdt{\lG_b}{, \lp_b}.
\)
At last, the sum over $\cS{\p_o} \backslash (\cS{\p_{o, ss}}^{\theta}(\omega) - \{1\})$ reduces to the case for $\p_o$, which has been treated in \cite[Lemma 5.25]{Xu:2018}.


\section{Compatible lifting of $L$-packets}
\label{sec: compatible lifting}

\subsection{Factorization of local and global parameters}

Let $F$ be a local (resp. global) field of characteristic zero. Let $G$ be a quasisplit symplectic or special even orthogonal group over $F$, and $[\p] \in \cuP{G}$ (resp. $\cP{G}$). We choose
\(
\xi_{G}: \L{G} \rightarrow GL(V),
\)
whose image is contained in an orthogonal group $O(V)$ for some nondegenerate symmetric bilinear form $B$. We choose a decomposition of $V$ into subrepresentations (not necessarily irreducible) of $L_{F}$ (resp. $\mathcal{L}_{\p}$) as follows
\begin{align}
\label{eq: decomposition of parameter}
\p = \oplus_{i \in I} \p_i \oplus \oplus_{j \in J} (\p_{GL, j} \oplus \p^{\vee}_{GL, j}), \quad V = \oplus_{i \in I} V_{i} \oplus \oplus_{j \in J} (W_j \oplus W_j^{\vee})
\end{align}
such that the restriction of $B$ to $V_{i}$ is nondegenerate and to $W_j$ is zero, and $W_j^{\vee}$ is dual to $W_j$ under $B$. Let $N_i = {\rm dim} \, V_i$ ($i \in I$) and $N_{j} = {\rm dim} \, W_{j}$ ($j \in J$). Let $\eta_i := \eta_{\phi_i}$ $(i \in I)$ associated with an extension $E_{i}/F$ of degree at most two.
We would like to factorize $\p$ through a sequence of proper twisted endoscopic groups and proper Levi subgroups 
\begin{align}
\label{eq: factorization of parameter}
\p: L_{F} \text{ (resp. $\mathcal{L}_{\p}$) } \xrightarrow{\p^{(n)}} \L{G}^{(n)} \xrightarrow{\xi^{(n-1)}} \L{G}^{(n-1)} \xrightarrow{\xi^{(n-2)}} \cdots \xrightarrow{\xi^{(0)}} \L{G}
\end{align}
according to the decomposition \eqref{eq: decomposition of parameter}. By this we mean
\[
\D{G}^{(k)} = \prod_l SO(V^{(k)}_l) \times \prod_m GL(W^{(k)}_m),
\]
where 
\[
V^{(k)}_{l} = \oplus_{i \in I^{(k)}_{l}} V_{i} \oplus \oplus_{j \in J_{l}^{(k)}} (W_j \oplus W_j^{\vee}), \quad \quad W^{(k)}_m = \oplus_{i \in S^{(k)}_{m} / \langle \sigma \rangle} V^{1/2}_{i, \sigma(i)} \oplus \oplus_{j \in T^{(k)}_{m}} W_j
\]
for $I_l^{(k)}, S^{(k)}_{m} \subseteq I$ and $J^{(k)}_l, T^{(k)}_m \subseteq J$, and $\sigma$ is an involution on $S^{(k)}_{m}$ without fixed points such that $\phi_{i} \cong \phi_{\sigma(i)}$, and 
\(
V^{1/2}_{i, \sigma(i)} \subseteq V_{i} \oplus V_{\sigma(i)}
\)
is an $L_{F}$ (resp. $\mathcal{L}_{\p}$)-subrepresentation $\phi^{1/2}_{i, \sigma(i)}$, on which $B$ vanishes. The composition of $\L{G}^{(k)} \rightarrow \L{G}$ with $\xi_{G}$ factors through
\[
\L{G}^{(k)} \longrightarrow \prod_l O(V^{(k)}_l) \times \prod_m GL(W^{(k)}_m).  
\] 
Let $\phi^{(k)}: L_{F} \text{ (resp. $\mathcal{L}_{\p}$) } \rightarrow \L{G}^{(k)}$ and $\phi^{(0)} = \phi$. We write
\begin{align}
\label{eq: k-th parameter}
\p^{(k)} = \prod_{l} \p^{(k)}_l \times \prod_m \phi^{(k)}_{GL, m}.
\end{align}
Then
\[
\p^{(k)}_l  = (\oplus_{i \in I^{(k)}_{l}} \p_i \oplus \oplus_{j \in J^{(k)}_{l}} (\p_{GL, j} \oplus \p^{\vee}_{GL, j})) \otimes \eta^{(k)}_{l} \in \cuP{G^{(k)}_{l}} \, \text{(resp. $\cP{G^{(k)}_{l}}$)}
\]
for some quadratic character $\eta^{(k)}_{l}$. We require that $\p^{(n)}_{l} = \p_i \otimes \eta^{(n)}_l$ for some $i \in I$ such that $N_i \neq 1$, or $\p^{(n)}_l = (1 \oplus \eta') \eta$ for some quadratic characters $\eta, \eta'$. We also require $\xi^{(k)}$ to be given by the fiber product
\[
\xymatrix{\L{G}^{(k+1)} \ar[r] \ar[d] & \L{G}^{(k)} \ar[d] \\
O(V^{(k + 1)}_{l_1}) \times O(V^{(k + 1)}_{l_2}) \ar[r] & O(V^{(k)}_{l})
}
\]
for some $l$ or  
\[
\xymatrix{\L{G}^{(k+1)} \ar[r] \ar[d] & \L{G}^{(k)} \ar[d] \\
GL(W^{(k+1)}_{m_1}) \times O(V^{(k+1)}_{l_{1}}) \ar[r] & O(V^{(k)}_{l})
}
\]
for some $l$ or
\[
\xymatrix{\L{G}^{(k+1)} \ar[r] \ar[d] & \L{G}^{(k)} \ar[d] \\
GL(W^{(k+1)}_{m_1}) \times GL(W^{(k+1)}_{m_2}) \ar[r] & GL(W^{(k)}_m)
}
\]
for some $m$.

\subsection{Lifting local and global parameters}

When $F$ is global, we will only consider the restriction of the global parameter from $\mathcal{L}_{\p}$ to $L_{F} := W_{F}$ and denote it still by $\p$. We will give two lifts of $\phi$. First of all, we would like to choose lifts of $\p_i$ for even $N_i$ and $\p_{i} \otimes \eta_i$ for odd $N_{i} \neq 1$. Then we would like to lift $\phi_i$ to $GPin(V_{i})$. When $N$ is even, we just take the chosen lift $\lp_i$. When $N$ is odd, we take
\[
\widetilde{\p_i \otimes \eta_i} \otimes \tilde{\eta}^{\square}_{i}
\]
where
\(
\tilde{\eta}_{i}^{\square}: \Gamma_{F_{i}/F} \rightarrow Pin(V_i)
\)
is a lift of 
\(
\eta_{i}: \Gamma_{F_{i}/F} \rightarrow O(V_i), \tau_i \mapsto -I.
\)
Then we get a lift $\tilde{\p}^{\sharp}_{O}$ of $\p_{O} := \prod_{i \in I} \p_i$ through the multiplication in the Clifford algebra $Cl(\oplus_{i \in I} V_{i})$:
\[
m: \prod_{i \in I} GPin(V_i) \rightarrow GPin(\oplus_{i \in I} V_{i}).
\]
Let
\(
\tilde{\p}^{\sharp} := \prod_j \phi_{GL, j} \rtimes \lp_O^{\sharp}  
\)
be a lift of $\p$ through the natural embedding  
\[
\prod_{j \in J} GL(W_j) \times GPin(\oplus_{i \in I} V_i) \hookrightarrow GPin(V).
\] 
Note $\lp^{\sharp}$ is not a homomorphism.

Next, we want to lift $\p$ through that of $\p^{(n)}$ and $\xi^{(k)}$. To lift $\p^{(n)}$, we first lift
\(
\p^{(n)}_O := \prod_{l} \p^{(n)}_l
\)
through that of $\p^{(n)}_l$ (cf. \eqref{eq: k-th parameter}). There are three cases.

\begin{enumerate}

\item $\p^{(n)}_l = \p_i \otimes \eta_i$ for odd $N_i \neq 1$. Then we can take the chosen lift $\widetilde{\phi_i \otimes \eta_i}$.

\item $\p^{(n)}_l = \p_i \otimes \eta$ for even $N_i$ and some quadratic character $\eta$. We can take the chosen lift of $\p_i$
\[
\lp_i: L_{F} \rightarrow GSpin(V_i) \rtimes \Gal{E_{i}/F}
\]
and twist it by a lift of $\eta$
\[
\tilde{\eta}^{\square}: \Gamma_{E/F} \rightarrow Spin(V_i),
\]
where $E/F$ is the quadratic extension associated with $\eta$. Note $\lp_i \otimes \tilde{\eta}^{\square}$ is not a homomorphism. The obstruction is given by a 2-cocycle 
\(
b^{(n)}_l \in Z^{2}(\Gamma_{EE_i/F}, \Two),
\) 
which splits in $Z^{2}(W_{F}, \D{D})$ (cf. \cite[Lemma 4]{Langlands:1979}). In particular, we can choose a $1$-cochain $c^{(n)}_l: W_F \rightarrow \D{D}$ such that $\lp_i \otimes \tilde{\eta}^{\square}c^{(n)}_l$ becomes a homomorphism.

\item $\p^{(n)}_l = (1 \oplus \eta')\eta$. Note $G^{(n)}_l = SO(2, \eta')$ and $1 \oplus \eta' \in \cPbd{G^{(n)}_l}$ corresponds to the trivial representation of $G^{(n)}_l(F)$ (resp. $G^{(n)}_l(\A_{F})$). We take the trivial lift $\widetilde{1 \oplus \eta'}^{{\rm triv}} \in \cPbd{\lG^{n}_l}$, i.e., it corresponds to the trivial representation of $\lG^{(n)}_l(F)$ (resp. $\lG^{(n)}_l(\A_{F})$), and twist it by $\widetilde{\eta}^{\square}$ as in the second case. To make $(\widetilde{1 \oplus \eta'}^{{\rm triv}}) \otimes \widetilde{\eta}^{\square}$ a homomorphism, we need to further choose a $1$-cochain $c^{(n)}_l: W_F \rightarrow \D{D}$. 

\end{enumerate}
Let us denote the resulting lift of $\p^{(n)}_O$ by $\lp^{(n) \square}_{O}$. Then
\(
\prod_m \phi^{(n)}_{GL, m} \rtimes \lp^{(n) \square}_O
\) 
is a lift of $\p^{(n)}$. In order to compare it with $\lp^{\sharp}$, we need to twist it further according to $\p^{(n)}_{GL, m}$. There are two cases. For $\phi_{GL, j} \otimes \eta$ in $\p^{(n)}_{GL, m}$, we need to introduce a twist by 
\begin{align}
\label{eq: Siegel Levi 1}
(\p_{GL, j} \rtimes 1) \otimes \tilde{\eta}^{\square} / (\p_{GL, j} \otimes \eta) \rtimes 1.
\end{align}
For $\p^{1/2}_{i, \sigma(i)} \otimes \eta$ in $\p^{(n)}_{GL, m}$, let us take 
\(
\p_{i, \sigma(i)} := \p_i \oplus \p_{\sigma(i)} 
\) 
as a parameter of $SO(V_i \oplus V_{\sigma(i)})$. Then we need to introduce a twist by
\begin{align}
\label{eq: Siegel Levi}
\widetilde{(\p_{i, \sigma(i)} \otimes \eta)}^{\sharp} / (\p^{1/2}_{i, \sigma(i)} \otimes \eta) \rtimes 1.
\end{align}
Denote the product of twists from $\p^{(n)}_{GL, m}$ by $\beta^{(n)}_m$. Then we define
\(
\lp^{(n) \square} :=  \prod_m \p^{(n)}_{GL, m} \rtimes (\p^{(n) \square}_O \otimes \prod_{m} \beta^{(n)}_m).
\)

We can calculate the twists \eqref{eq: Siegel Levi 1}, \eqref{eq: Siegel Levi} explicitly. First, we compute $(\p^{1/2}_{i, \sigma(i)} \otimes \eta) \rtimes 1$. Let 
\(
V_{i, \sigma(i)} = V^{1/2}_{i, \sigma(i)} \oplus (V^{1/2}_{i, \sigma(i)})^{\vee}
\)
and
\(
\iota: GL(V^{1/2}_{i, \sigma(i)}) \longrightarrow O(V_{i, \sigma(i)}), x \mapsto (x, {}^tx^{-1}),
\)
where ${}^tx^{-1} \in GL((V^{1/2}_{i, \sigma(i)})^{\vee})$ is defined by
\[
B(v, {}^tx^{-1}v^{\vee}) = B(x^{-1}v, v^{\vee}), \quad \quad v \in V^{1/2}_{i, \sigma(i)}, \quad v^{\vee} \in (V^{1/2}_{i, \sigma(i)})^{\vee}.
\]
Since $\phi^{1/2}_{i, \sigma(i)} \oplus (\phi^{1/2}_{i, \sigma(i)})^{\vee}$ preserves $V_{i}$, the image of $\phi^{1/2}_{i, \sigma(i)}$ lies in an orthogonal group $C$ such that
\[
O(V_{i}) \xleftarrow{\cong} \iota(C) = \iota(GL(V^{1/2}_{i, \sigma(i)})) \cap (O(V_i) \times O(V_{\sigma(i)})) \xrightarrow{\cong} O(V_{\sigma(i)}).
\]
The isomorphism $O(V_{i}) \cong O(V_{\sigma(i)})$ determines an isometry $V_{i} \cong V_{\sigma(i)}$ up to multiplication by $-1$. We fix an isometry and it induces an isomorphism $Pin(V_{i}) \cong Pin(V_{\sigma(i)})$. We choose sections $x \mapsto \tilde{x}$ so that the diagram commutes
\[
\xymatrix{Pin(V_{i}) \ar[r]^{\sim} & Pin(V_{\sigma(i)}) \\
O(V_i)  \ar[r]^{\sim} \ar[u]_{x \mapsto \tilde{x}} & O(V_{\sigma(i)}) \ar[u]_{x \mapsto \tilde{x}}.
}
\]
Let $C^{0}$ be the identity component of $C$. We would like to describe the image of the composition
\[
l: C \hookrightarrow GL(V^{1/2}_{i, \sigma(i)}) \hookrightarrow GL(V^{1/2}_{i, \sigma(i)}) \times \mathbb{C}^{\times} \hookrightarrow GSpin(V_{i, \sigma(i)}).
\]
Note
\[
\xymatrix{GSpin(V_{i, \sigma(i)}) \ar[r]^{\quad \quad \lambda} & \mathbb{C}^{\times} \\
GL(V^{1/2}_{i, \sigma(i)}) \times \mathbb{C}^{\times} \ar[ur]_{(g, t) \mapsto {\rm det}(g)t^2} \ar[u] & 
}
\] 
So the image of $C^{0}$ is in $Spin(V_{i, \sigma(i)})$.

\begin{lemma}
\label{lemma: Siegel Levi}
\,
\begin{enumerate}
\item
For $x \in C^{0}$,
\(
l(x) = \widetilde{\iota(x)|_{V_{i}}} \cdot \widetilde{\iota(x)|_{V_{\sigma(i)}}}.
\)

\item
For $x \in C \backslash C^{0}$,
\(
l(x) = \sqrt{-1} \, \widetilde{\iota(x)|_{V_{i}}} \cdot \widetilde{\iota(x)|_{V_{\sigma(i)}}}  \,\, {\rm mod} \,\, \Two.
\)

\end{enumerate}
\end{lemma}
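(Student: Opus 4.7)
The plan is to perform a direct Clifford-algebra computation. Choose orthonormal bases $\{f_1,\dots,f_n\}$ of $V_i$ and $\{g_1,\dots,g_n\}$ of $V_{\sigma(i)}$ compatible with the fixed isometry $V_i \cong V_{\sigma(i)}$, and set $e_k = (f_k + \sqrt{-1}\, g_k)/\sqrt{2}$, $e_k^\vee = (f_k - \sqrt{-1}\, g_k)/\sqrt{2}$. Then $\{e_k, e_k^\vee\}$ is a hyperbolic basis of $V^{1/2}_{i,\sigma(i)} \oplus (V^{1/2}_{i,\sigma(i)})^\vee$ with $B(e_k, e_l^\vee) = \delta_{kl}$, and $V^{1/2}_{i,\sigma(i)} = \text{span}\{e_k\}$ projects isomorphically onto both $V_i$ and $V_{\sigma(i)}$. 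The main algebraic input, verified by direct expansion together with $V_i \perp V_{\sigma(i)}$ (so every $f_k$ anticommutes with every $g_l$ in $Cl(V_{i,\sigma(i)})$), is the identity
\[
e_k e_l^\vee - e_l e_k^\vee \;=\; f_k f_l + g_k g_l.
\]

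For part (1), I reduce to a generating set of $C^0 \cong SO(V_i)$, namely the one-parameter subgroups of simultaneous rotations in $\{f_k,f_l\} \subset V_i$ and $\{g_k,g_l\} \subset V_{\sigma(i)}$. Under $C \hookrightarrow GL(V^{1/2}_{i,\sigma(i)})$, such a rotation $R(\theta)$ corresponds to $x_\theta$ acting on $\{e_k, e_l\}$ by the same rotation. The Lie algebra embedding $\mathfrak{gl}(V^{1/2}_{i,\sigma(i)}) \hookrightarrow \mathfrak{spin}(V_{i,\sigma(i)}) \subset Cl^2(V_{i,\sigma(i)})$ sends the infinitesimal generator of $x_\theta$ to $\tfrac{1}{2}(e_k e_l^\vee - e_l e_k^\vee)$, which by the key identity equals $\tfrac{1}{2}(f_k f_l + g_k g_l)$. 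Since $f_k f_l$ commutes with $g_k g_l$ as degree-$2$ elements in the Clifford subalgebras of $V_i$ and $V_{\sigma(i)}$, exponentiation factors and yields
\[
l(x_\theta) = \bigl(\cos(\theta/2) - \sin(\theta/2) f_k f_l\bigr)\bigl(\cos(\theta/2) - \sin(\theta/2) g_k g_l\bigr),
\]
which is the product of the standard Pin-lifts of $R(\theta)$ on $V_i$ and on $V_{\sigma(i)}$. To propagate this to all of $C^0$, note that both sides of (1) project to the same element of $SO(V_{i,\sigma(i)})$, so their ratio takes values in $\{\pm 1\}$; the sign ambiguity in the chosen section $x \mapsto \tilde x$ cancels in the product $\widetilde{\iota(x)|_{V_i}} \cdot \widetilde{\iota(x)|_{V_{\sigma(i)}}}$ because the two factors transform compatibly under $Pin(V_i) \cong Pin(V_{\sigma(i)})$. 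Hence the ratio is locally constant; it equals $+1$ at the identity and on each generating one-parameter subgroup by the exponential computation above, so by connectedness it is $+1$ throughout $C^0$.

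For part (2), since $C/C^0 \cong \Two$, it suffices to check one non-identity coset representative. Take $x$ corresponding to reflection in $f_1^\perp \subset V_i$ and simultaneously in $g_1^\perp \subset V_{\sigma(i)}$, which under $C \subset GL(V^{1/2}_{i,\sigma(i)})$ is $x = \text{diag}(-1,1,\dots,1)$ on $\{e_k\}$. Using the relation $(e_1 e_1^\vee)^2 = 2\,e_1 e_1^\vee$ inside $Cl(V_{i,\sigma(i)})$, exponentiation of the Lie-algebra lift yields $l(x) = 1 - e_1 e_1^\vee$. A direct calculation gives
\[
f_1 g_1 \;=\; \tfrac{1}{2\sqrt{-1}}\bigl(e_1^\vee e_1 - e_1 e_1^\vee\bigr) \;=\; -\sqrt{-1}\,(1 - e_1 e_1^\vee),
\]
so $l(x) = \sqrt{-1}\cdot \widetilde{\iota(x)|_{V_i}} \cdot \widetilde{\iota(x)|_{V_{\sigma(i)}}}$ mod $\Two$, as claimed.

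I expect the hardest part to be bookkeeping of sign and normalization conventions: the choice of section $O(V_i) \to Pin(V_i)$ making the comparison diagram in the setup commute, the Clifford convention $v^2 = B(v,v)$, the factor $\tfrac{1}{2}$ in the embedding $\mathfrak{so}(V_{i,\sigma(i)}) \hookrightarrow Cl^2$, and the compatibility $\lambda \circ l(g,t) = \det(g)\,t^2$ between the similitude character of $GSpin$ and the Siegel Levi datum $GL(V^{1/2}_{i,\sigma(i)}) \times \mathbb{C}^\times$. Once these are fixed coherently, the algebraic manipulations above are routine.
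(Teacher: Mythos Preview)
Your approach is correct and reaches the same conclusion, but the paper's proof is considerably shorter and proceeds by a different, more structural route.

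For part (1), the paper does not compute on one-parameter subgroups at all. Instead it observes that each section $x\mapsto \widetilde{\iota(x)|_{V_i}}$ fails to be a homomorphism by a $2$-cocycle $b_i\in Z^2(C^0,\Two)$, and that by the compatibility of the two sections one has $b_i=b_{\sigma(i)}$; since the two factors commute in $Cl(V_{i,\sigma(i)})$, the product $x\mapsto \widetilde{\iota(x)|_{V_i}}\cdot\widetilde{\iota(x)|_{V_{\sigma(i)}}}$ is therefore a genuine group homomorphism $C^0\to Spin(V_{i,\sigma(i)})$. Its ratio with $l$ is then a homomorphism $C^0\to \D{D}$, which is trivial because $C^0$ is semisimple (in fact connectedness suffices, since both maps land in $Spin$ and hence the ratio is $\mu_2$-valued). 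Your explicit exponential computation on rotation subgroups, together with the continuity/connectedness argument, is a valid substitute for this, but the cocycle-cancellation viewpoint avoids the Lie-algebra bookkeeping entirely.

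For part (2), the paper again avoids any explicit Clifford computation: it simply notes that $\lambda(l(x))=\det(x)=-1$ for $x\in C\setminus C^0$, whereas $\widetilde{\iota(x)|_{V_i}}\cdot\widetilde{\iota(x)|_{V_{\sigma(i)}}}$ lies in $Pin(V_i)\cdot Pin(V_{\sigma(i)})$ with $\lambda=1$. Since both are lifts of $\iota(x)$, their ratio lies in $\D{D}$ and squares to $-1$, hence equals $\pm\sqrt{-1}$. Your explicit verification that $l(\mathrm{diag}(-1,1,\dots,1))=1-e_1e_1^\vee=\sqrt{-1}\,f_1g_1$ recovers this, but depends on pinning down the exact Lie-algebra embedding of $\mathfrak{gl}(V^{1/2})$ into $\mathfrak{gspin}$ (the trace part goes to the center, not to $\mathfrak{spin}$), which is precisely the normalization headache you flag at the end; the paper's $\lambda$-comparison sidesteps this completely.

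In short: your proof is sound and more hands-on, which has the virtue of making the $\sqrt{-1}$ visible as an honest Clifford identity; the paper's proof is a two-line structural argument that trades explicitness for brevity.
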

 
\begin{proof}
We first show 
\(
C^{0} \rightarrow Spin(V_{i, \sigma(i)}), x \mapsto  \widetilde{\iota(x)|_{V_{i}}} \cdot \widetilde{\iota(x)|_{V_{\sigma(i)}}}
\)
defines a group homomorphism. Let $b_{i} \in Z^{2}(C^{0}, \Two)$ (resp. $b_{\sigma(i)} \in Z^2(C^{0}, \Two)$) be the obstruction of $\widetilde{\iota(x)|_{V_{i}}}$ (resp. $\widetilde{\iota(x)|_{V_{\sigma(i)}}}$) from being a group homomorphism. Since $b_{i} = b_{\sigma(i)}$ and $\widetilde{\iota(x)|_{V_{i}}}, \widetilde{\iota(x)|_{V_{\sigma(i)}}}$ commute, then it defines a group homomorphism. It differs from $l$ by a group homomorphism
\(
C^0 \rightarrow \D{D}.
\)
Since $C^{0}$ is semisimple, then this must be trivial. The second part follows from the fact that $\lambda(\iota(x)) = -1$.

\end{proof}

\begin{lemma}
\begin{enumerate}

\item The twist \eqref{eq: Siegel Levi 1} is equal to
\begin{align}
\label{eq: Siegel Levi 2}
c_{\eta}: \Gal{E/F} \rightarrow \D{D}, \quad \tau \mapsto \begin{cases} \sqrt{-1} &  N_j \text{ odd } \\
1 & N_j \text{ even } \end{cases}
\end{align}
up to ${\rm Hom}(\Gal{E/F}, \Two)$-twists.

\item
The twist \eqref{eq: Siegel Levi} is equal to
\begin{align}
\label{eq: Siegel Levi 3}
\chi_{i} \cdot \omega_{i} \cdot c_{i} \cdot c_{\eta}  \,\, {\rm mod} \,\,  {\rm Hom}(\Gal{EE_i/F}, \Two)
\end{align}
where $\chi_i = \lambda_i \circ (\widetilde{\p_i \otimes \eta_i})$ (resp. $\chi_i = \lambda_i \circ \lp_i$),
\[
(\widetilde{\p_i \otimes \eta_i}) \otimes w_{i} = (\widetilde{\p_{\sigma(i)} \otimes \eta_i}) \quad \quad \text{(resp. $\lp_i \otimes \omega_i = \lp_{\sigma(i)}$)}
\]
for $N_i$ odd (resp. $N_i$ even), and 
\(
c_{i}: \Gal{E_{i}/F} \rightarrow \D{D}, \tau_i \mapsto \sqrt{-1}.
\)

\end{enumerate}
\end{lemma}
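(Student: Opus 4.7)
The plan is to compute both $1$-cochains directly on generators of the relevant Galois groups, using the explicit description of $Pin$ and $GPin$ as $\mathbb{C}^\times$-extensions sitting inside the Clifford algebra, and then to compare them modulo the indicated subgroup $\Hom(\Gal{\cdot/F}, \Two)$. This indeterminacy reflects the fact that the lift $\tilde{\eta}^\square$ of the quadratic character $\eta$ to $Spin$ is canonical only up to a $\Two$-valued character; any of our computations inherit this ambiguity, so only classes modulo $\Hom(\cdot, \Two)$ can be compared canonically. Both ratios project to the identity in $\D{G}$, so they take values in the central torus $\D{D}$, which is where $c_\eta$, $c_i$, $\omega_i$ and $\chi_i$ live.

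For part (1), evaluate both lifts of $\p_{GL, j} \otimes \eta$ on the generator $\tau$ of $\Gal{E/F}$. The lift $(\p_{GL, j} \otimes \eta) \rtimes 1$ uses the Levi embedding $GL(W_j) \times \mathbb{C}^\times \hookrightarrow GPin(W_j \oplus W_j^\vee)$, while $(\p_{GL, j} \rtimes 1) \otimes \tilde{\eta}^\square$ multiplies this embedding by a fixed lift $\tilde{\eta}^\square(\tau)$ of $-I_{W_j \oplus W_j^\vee}$ inside $Pin$. The ratio is therefore encoded in the obstruction for $\tilde{\eta}^\square(\tau)$ to commute with the image of $\p_{GL, j}$ in $GPin$. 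A direct Clifford algebra computation on a hyperbolic basis shows that lifting $-I_{W_j \oplus W_j^\vee}$ to $Pin$ as $e_1 \cdots e_{2N_j}$ introduces anticommutation signs whose contribution, reduced modulo $\Hom(\Gal{E/F}, \Two)$, equals $\sqrt{-1}$ when $N_j$ is odd and $1$ when $N_j$ is even. This yields $c_\eta$.

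For part (2), first apply Lemma~\ref{lemma: Siegel Levi} to express $(\p^{1/2}_{i, \sigma(i)} \otimes \eta) \rtimes 1$ in terms of the multiplication $m$ in the Clifford algebra: on the identity component $C^0$ it agrees with $m\bigl(\widetilde{\iota(\cdot)|_{V_i}}, \widetilde{\iota(\cdot)|_{V_{\sigma(i)}}}\bigr)$, while on the non-identity component $C \setminus C^0$ it picks up an extra $\sqrt{-1}$. Restricted through the $L$-parameter $\p^{1/2}_{i, \sigma(i)}$ and evaluated at the generator $\tau_i$ of $\Gal{E_i / F}$, this extra factor yields precisely the character $c_i$. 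On the other side, by definition of the $\sharp$-lift,
\[
\widetilde{(\p_{i, \sigma(i)} \otimes \eta)}^\sharp \;=\; m\bigl(\widetilde{\p_i \otimes \eta}^\sharp, \widetilde{\p_{\sigma(i)} \otimes \eta}^\sharp\bigr).
\]
Matching the chosen lifts $\widetilde{\iota(\cdot)|_{V_i}}$ against the chosen lifts $\widetilde{\p_i \otimes \eta_i}$ (resp. $\lp_i$) according to the parity of $N_i$, the mismatch on the $V_{\sigma(i)}$-factor is by construction $\omega_i$, and the $\mathbb{C}^\times$-factor in the Siegel Levi $GL \times \mathbb{C}^\times \hookrightarrow GSpin$ (visible in the diagram defining $l$) contributes the similitude character $\chi_i$. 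Finally, the passage $\p_{i, \sigma(i)} \leadsto \p_{i, \sigma(i)} \otimes \eta$ on the $GL$-piece of the Siegel Levi contributes $c_\eta$ by part (1). Combining these four contributions gives $\chi_i \cdot \omega_i \cdot c_i \cdot c_\eta$ modulo $\Hom(\Gal{EE_i/F}, \Two)$.

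The main obstacle is the sign bookkeeping in the Clifford algebra: every reflection lifted to $Pin$ squares to $\pm 1$ depending on the normalization of the bilinear form, and the multiplication map $m$ introduces further anticommutation signs. At each step one must verify that the residual indeterminacy lies inside $\Hom(\Gal{\cdot / F}, \Two)$ — equivalently, that the ambiguity coming from alternative choices of $\tilde{\eta}^\square$, of hyperbolic basis, and of isometry $V_i \cong V_{\sigma(i)}$ (which is canonical only up to $\pm 1$) always lands in this subgroup. Only the powers of $\sqrt{-1}$ that survive this reduction remain, and they are exactly the ones appearing in \eqref{eq: Siegel Levi 2} and \eqref{eq: Siegel Levi 3}.
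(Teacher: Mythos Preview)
Your approach for part (2) is essentially the paper's: both reduce to Lemma~\ref{lemma: Siegel Levi}, write $\lp_i^\sharp(w) = u(w)\,\widetilde{\iota(x)|_{V_i}}$ for a $\D{D}$-valued cochain $u$, observe that $\chi_i = \lambda_i\circ\lp_i^\sharp = u^2$, and read off $c_i$ from the extra $\sqrt{-1}$ on $C\setminus C^0$ and $\omega_i$ from the mismatch of the chosen lifts on $V_i$ and $V_{\sigma(i)}$. The only difference is presentation; the paper makes the identity $\chi_i(w)=u(w)^2$ explicit, whereas you attribute $\chi_i$ more loosely to ``the $\mathbb{C}^\times$-factor in the Siegel Levi''.

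For part (1) the paper takes a cleaner route than you do. Rather than a Clifford-algebra calculation on a hyperbolic basis, it simply composes the ratio \eqref{eq: Siegel Levi 1} with the similitude character $\lambda$. Since the ratio is $\D{D}$-valued and $\lambda|_{\D{D}}$ is squaring, this determines the ratio up to a sign, i.e.\ up to $\Hom(\Gal{E/F},\Two)$, which is exactly the claimed ambiguity. One has $\lambda\bigl((\p_{GL,j}\otimes\eta)\rtimes 1\bigr)=\det(\p_{GL,j})\cdot\eta^{N_j}$ from the Levi formula $\lambda(g,t)=\det(g)t^2$, while $\lambda\bigl((\p_{GL,j}\rtimes 1)\otimes\tilde\eta^\square\bigr)=\det(\p_{GL,j})$ since $\tilde\eta^\square$ lands in $Spin$. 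The quotient is $\eta^{N_j}$, whose square root in $\D{D}$ is precisely $c_\eta$. The same argument isolates $c_\eta$ in part (2), allowing the reduction to $\eta=1$.

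Your own description of part (1) contains a small conceptual slip. You say the ratio is ``encoded in the obstruction for $\tilde\eta^\square(\tau)$ to commute with the image of $\p_{GL,j}$'' and then invoke ``anticommutation signs''. But $\tilde\eta^\square(\tau)$ is a lift of the central element $-I\in SO(W_j\oplus W_j^\vee)$, hence lies in the centre of $GSpin$; it commutes with everything, and the $\p_{GL,j}$-contribution cancels out entirely. The ratio is really just $(-I_{W_j},1)^{-1}\cdot\tilde\eta^\square(\tau)$, a comparison of two lifts of $-I$ that has nothing to do with $\p_{GL,j}$. Your Clifford computation can still be made to give the right answer, but the framing via commutation obstructions is misleading and would not guide the calculation correctly.
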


\begin{proof}
For (1) it suffices to to compute the composition of \eqref{eq: Siegel Levi 1} with the similitude character $\lambda$. For (2) we also get $c_{\eta}$ for the same reason. So let us assume $\eta = 1$. Let $\p_{i, \sigma(i)}(w) = \iota(x)$ for some $x \in C$. Then 
\(
(\p^{1/2}_{i, \sigma(i)} \rtimes 1 ) (w) =  l(x).
\)
Let
\(
\lp^{\sharp}_{i}(w) = u(w) \, \widetilde{\iota(x)|_{V_{i}}}
\)
and
\( 
\lp^{\sharp}_{\sigma(i)} (w) = \omega_i(w) \, u(w) \, \widetilde{\iota(x)|_{V_{\sigma(i)}}}
\)
for $u(w) \in \D{D}$. Then
\[
\widetilde{(\p_{i, \sigma(i)})}^{\sharp} (w) =  u(w)^{2} \, \omega_{i}(x) \, \widetilde{\iota(x)|_{V_{i}}} \cdot \widetilde{\iota(x)|_{V_{\sigma(i)}}}.
\]
Note
\(
\chi_i(w) = \lambda_{i} \circ \lp_{i}(w) = u(w)^{2}.
\)
So it follows from Lemma~\ref{lemma: Siegel Levi} that
\[
\widetilde{(\p_{i, \sigma(i)})}^{\sharp} (w) = c_{i}(w) \chi_{i}(w) \omega_{i}(w) (\p^{1/2}_{i, \sigma(i)} \rtimes 1 )(w) \,\, {\rm mod } \,\, {\rm Hom}(\Gal{E_i/F}, \Two).
\]
\end{proof}

Now we want to lift $\xi^{(k)}$, which is given by a twisted elliptic endoscopic embedding
\begin{align}
\label{eq: embedding}
\L{H} \rightarrow SO(V^{(k)}_{l}) \rtimes W_{F}
\end{align}
for some $l$, or embedding of maximal Levi subgroup of $SO(V^{(k)}_{l})$ or $GL(W^{(k)}_m)$. In the case of Levi subgroups, there is a natural lifting. So we focus on the endoscopic case. Suppose 
\(
\D{H} = SO(V^{(k+1)}_{l_{1}}) \times SO(V^{(k+1)}_{l_{2}}).
\) 

\begin{enumerate}

\item ${\rm dim} \, V^{(k+1)}_{l_{1}}$ is odd, ${\rm dim} \, V^{(k+1)}_{l_{2}}$ is even:  \eqref{eq: embedding} factors through
\[
SO(V^{(k+1)}_{l_{1}}) \times SO(V^{(k+1)}_{l_{2}}) \rtimes \Gal{E/F} \xrightarrow{\xi_{k}} SO(V^{(k)}_l), \quad \tau \mapsto z
\]
where $\tau$ is the nontrivial element of $\Gal{E/F}$. We choose a lift by requiring
\(
\lif{\xi}_{k}^{\, \square}: \tau \mapsto \tilde{z} \in Spin(V^{(k)}_l).
\) 
The obstruction is a $2$-cocycle 
\(
b_{k} \in Z^{2}(\Gal{E/F}, \Two),
\)
which splits in $Z^{2}(\Gal{E/F}, \D{D})$.

\item ${\rm dim} \, V^{(k+1)}_{l_{1}}$ and ${\rm dim} \, V^{(k+1)}_{l_{2}}$ are both even or odd: \eqref{eq: embedding} factors through
\[
SO(V^{(k+1)}_{l_{1}}) \times SO(V^{(k+1)}_{l_{2}}) \rtimes \Gal{E_{I} E_{II}/F} \xrightarrow{\xi_{k}} SO(V^{(k)}_l) \rtimes \Gal{E/F}, \quad \tau_{i} \mapsto z_{i}
\]
where $\tau_i$ is the nontrivial element of $\Gal{E_{i}/F}$ ($i = I, II$). We choose a lift by requiring
\(
\lif{\xi}_{k}^{\, \square}: \tau_{i} \mapsto \tilde{z}_{i} \in Spin(V^{(k)}_l) \rtimes \Gamma_{E/F}.
\)
The obstruction is a $2$-cocycle 
\(
b_{k} \in Z^{2}(\Gal{E_{I}E_{II}/F}, \Two), 
\)
which splits in $Z^{2}(W_{F}, \D{D})$.

\end{enumerate}
Let us denote the resulting lift by 
\(
\tilde{\xi}^{(k)\square}: \L{\lG}^{(k+1)} \rightarrow \L{\lG}^{(k)}.
\)
Now we can define a lift of $\p$
\[
\lp^{\square} := \tilde{\xi}^{(0)\square} \circ \cdots \circ \lp^{(n)\square}.
\]
To get to a homomorphism, we still need to choose $1$-cochains 
\(
c^{(n)}, c^{(k)}: W_F \rightarrow \D{D}
\) 
for $\lp^{(n) \square}$ and $\tilde{\xi}^{(k) \square}$. Take $\lp^{(n)} := c^{(n)}\lp^{(n)\square}$ and $\tilde{\xi}^{(k)} := c^{(k)} \tilde{\xi}^{(k)\square}$. Then we define
\[
\lp = \tilde{\xi}^{(0)} \circ \cdots \circ \lp^{(n)}.
\]
Let $c = \prod_{k = 0}^{n} c^{(k)}$ and $L = \prod_{i \in I} E_i$.

\begin{lemma}
$\tilde{\p}^{\sharp} = u \cdot \tilde{\p}^{\square}$ for some $1$-cochain $u \in C^{1}(\Gal{L/F}, \Two)$.
\end{lemma}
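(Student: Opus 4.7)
The plan is to verify the identity by pinning down the ratio $u(w) := \tilde{\p}^{\sharp}(w) \cdot \tilde{\p}^{\square}(w)^{-1}$ in three stages: first showing $u$ is valued in $\D{D}$, then in $\Two \subseteq \D{D}$, and finally that it descends to $\Gal{L/F}$.

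First I would observe that both $\tilde{\p}^{\sharp}$ and $\tilde{\p}^{\square}$ are by construction lifts of $\p$ along the projection $\bold{p}: \L{\lG} \longrightarrow \L{G}$, so their ratio is automatically a $1$-cochain $u \colon W_{F} \rightarrow \D{D}$ (a priori not a homomorphism, since $\tilde{\p}^{\sharp}$ itself is not one).

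Second, I would compute $\lambda \circ u$ and show it is trivial, so that $u$ lands in $\ker(\lambda|_{\D{D}}) = \Two$. For $\tilde{\p}^{\sharp}$, the Clifford multiplication map $m$ multiplies similitudes, and the auxiliary $\tilde{\eta}^{\square}_{i}$ are $Pin$-valued with similitude trivial, so $\lambda \circ \tilde{\p}^{\sharp} = \prod_{i \in I} \chi_{i}$ on the pieces with $N_{i} \neq 1$. For $\tilde{\p}^{\square}$, each $\tilde{\eta}^{\square}$ appearing in the twists of $\tilde{\p}^{(n)\square}_{l}$ is $Spin$-valued; the Siegel Levi corrections $\beta^{(n)}_{m}$ contribute similitude $\chi_{i} \cdot \omega_{i} \cdot c_{i} \cdot c_{\eta}$ modulo $\Two$ by \eqref{eq: Siegel Levi 3}; and the endoscopic embedding lifts $\tilde{\xi}^{(k)\square}$ are, by construction, similitude-preserving. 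Applying Lemma~\ref{lemma: Siegel Levi} and noting that in the twist \eqref{eq: Siegel Levi} the factors $c_i$ attached to $i$ and $\sigma(i)$ appear in conjugate pairs so that their $\sqrt{-1}$'s cancel modulo $\Two$, the assembled similitude of $\tilde{\p}^{\square}$ also equals $\prod_{i \in I}\chi_{i}$ modulo a $\Two$-valued cochain. The residual freedom in the $1$-cochains $c^{(n)}_{l}$ and $c^{(k)}$ is absorbed into this $\Two$-ambiguity. Hence $\lambda \circ u$ is $\Two$-valued, forcing $u$ itself to be $\Two$-valued.

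Third, I would show $u$ factors through $\Gal{L/F}$. The only data responsible for the difference between the two constructions are the quadratic twists $\tilde{\eta}^{\square}_{i}$ (and $\tilde{\eta}^{\square}$ in the second and third cases of $\tilde{\p}^{(n)\square}_{l}$) and the endoscopic embedding lifts $\tilde{\xi}^{(k)\square}$, all of whose obstructions are $2$-cocycles in $Z^{2}(\Gal{L/F}, \Two)$ in the factorization \eqref{eq: factorization of parameter}. Restricting to $\Gal{\bar{F}/L}$, all these $\tilde{\eta}^{\square}$-twists become trivial and each $\tilde{\xi}^{(k)\square}$ restricts to the canonical spin lift of the inclusion of identity components. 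Both $\tilde{\p}^{\sharp}|_{\Gal{\bar{F}/L}}$ and $\tilde{\p}^{\square}|_{\Gal{\bar{F}/L}}$ then reduce to genuine group homomorphisms into $Spin(V) \cdot \D{D} \subseteq \D{\lG}^{0}$ that lift $\p|_{\Gal{\bar{F}/L}}$ and share the same similitude; since the $Spin$-lift of a semisimple parameter with prescribed similitude is unique, they agree on $\Gal{\bar{F}/L}$, so $u|_{\Gal{\bar{F}/L}} = 1$ and $u$ descends to a cochain on $\Gal{L/F}$.

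The main obstacle is the bookkeeping in the second step: explicitly tracking how the $\sqrt{-1}$ factors of Lemma~\ref{lemma: Siegel Levi} and formulas \eqref{eq: Siegel Levi 2}--\eqref{eq: Siegel Levi 3} propagate along the chain $\tilde{\xi}^{(0)} \circ \cdots \circ \tilde{\p}^{(n)}$, and verifying that they pair up across the involution $\sigma$ on each $S^{(k)}_{m}$ so as to match the Clifford-multiplication formula used for $\tilde{\p}^{\sharp}$, leaving only a $\Two$-valued discrepancy.
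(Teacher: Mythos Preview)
Your approach is genuinely different from the paper's, which proceeds by induction along the factorization \eqref{eq: factorization of parameter}: it defines intermediate lifts $\tilde{\p}^{(k)\sharp}_{l}$ and $\tilde{\p}^{(k)\square}_{l}$, assumes the statement for $k>0$, and then handles the passage from $k=1$ to $k=0$ by a case analysis (endoscopic embedding versus Levi embedding), using commutative diagrams involving the Clifford multiplication map $m$ and Lemma~\ref{lemma: Siegel Levi} directly. Your three-step strategy (ratio in $\D{D}$, then similitude comparison to force $\Two$, then restriction to $W_{L}$ for the descent) is an attractive alternative, but as written it has two genuine gaps.

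The first gap is in Step 2. The similitude character $\lambda$ restricted to the central torus $\D{D}\cong\C^{\times}$ is the squaring map (see the diagram preceding Lemma~\ref{lemma: Siegel Levi}, where $(g,t)\mapsto\det(g)t^{2}$). Hence $\lambda\circ u=u^{2}$, and the conclusion ``$\lambda\circ u$ is $\Two$-valued'' only yields $u^{2}\in\Two$, i.e.\ $u$ is valued in the fourth roots of unity $\mu_{4}$, not in $\Two$. To salvage this step you would need $\lambda\circ u=1$ on the nose; but the $\sqrt{-1}$ factors built into $c_{i}$ and $c_{\eta}$ (cf.\ \eqref{eq: Siegel Levi 2}, \eqref{eq: Siegel Levi 3}) square to $-1$, so showing exact equality of similitudes requires tracking these signs through the entire chain of $\tilde{\xi}^{(k)\square}$'s, which is essentially the bookkeeping you flag as the ``main obstacle'' and which the paper handles via its induction.

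The second gap is in Step 3. Your uniqueness claim fails: two homomorphisms $W_{L}\to GSpin(V)$ with the same projection to $SO(V)$ and the same similitude differ by an element of $\Hom(W_{L},\ker(GSpin\to SO\times\mathbb{G}_{m}))=\Hom(W_{L},\Two)$, which is not trivial. So even over $L$ the two lifts are only known to differ by a $\Two$-valued \emph{homomorphism} on $W_{L}$, not to coincide. To get $u|_{W_{L}}=1$ you would again need to verify that over $W_{L}$ both constructions reduce to the \emph{same} explicit formula (Clifford multiplication of the $\tilde\p_{i}|_{W_{L}}$), which is precisely the content of Lemma~\ref{lemma: Siegel Levi}(1) applied inductively---i.e.\ the paper's argument.
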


\begin{proof}
To use induction in the proof, we define 
\[
\tilde{\p}^{(k) \sharp}_{l} := (\widetilde{\p^{(k)}_{l} \otimes \eta^{(k)}_l})^{\sharp} \otimes \tilde{\eta}_l^{(k) \square}
\] 
and define $\tilde{\p}^{(k) \square}_{l}$ as $\tilde{\p}^{\square}$. By induction, we can assume for $k > 0$
\[
\lp^{(k) \sharp}_{l} = \lp^{(k) \square}_{l} \,\, {\rm mod} \,\, C^{1}(\Gal{L/F}, \Two).
\]

Suppose $G^{(1)}$ is a twisted elliptic endoscopic group of $G$ as in case (1). We have 
\[
\xymatrix{& L_{F} \ar[dd]^{\lp^{(1) \square}_{1} \times \lp^{(1) \square}_{2}} \ar[dddl]_{(\lp^{(1) \sharp}_{1} \otimes \tilde{\eta}_1^{(1)\square} )\times \lp^{(1) \sharp}_{2}} \ar[dddr]^{\tilde{\p}^{(1) \square}} & \\
&&\\
& GSpin(V^{(1)}_{1}) \times GSpin(V^{(1)}_{2}) \rtimes \Gal{E/F} \ar[dr]^{p^{(1)}} \ar[dl] & \\
GPin(V^{(1)}_{1}) \times GPin(V^{(1)}_{2}) \ar[dr]_{m} & & G(Spin(V^{(1)}_{1}) \times Spin(V^{(1)}_{2})) \rtimes \Gal{E/F} \ar[dl]^{\tilde{\xi}^{\square}_1} \\
& GPin(V). &
}
\]
Here $\eta^{(1)}_1$ is associated with $E/F$.
Since
\[
\lp^{(1) \sharp}_{i} = \lp^{(1) \square}_{i} \,\, {\rm mod} \,\, C^{1}(\Gal{L/F}, \Two),
\] 
then the left diagram commutes up to twists by $C^{1}(\Gal{L/F}, \Two)$. Note that
\[
\lp^{\sharp} = m \circ ((\widetilde{\p^{(1)}_{1} \otimes \eta^{(1)}_1})^{\sharp} \times \lp^{(1) \sharp}_{2}) \,\, {\rm mod} \,\, C^{1}(\Gal{L/F}, \Two).
\]
Hence 
\[
\tilde{\p}^{\sharp} = \tilde{\p}^{\square} \,\, {\rm mod} \,\, C^{1}(\Gal{L/F}, \Two).
\] 
The argument for case (2) is similar. 

Now let us suppose $G^{(1)}$ is a maximal Levi subgroup of $G$, then by definition
\[
\lp^{(1) \square} = \p^{(1)}_{GL, 1} \times (\lp^{(1) \square}_{1} \otimes \beta^{(1)}_{1}).
\]
We can let $\lp^{(1)\square}$ factor through the Levi subgroup
\[
\prod_{j \in T^{(1)}_{1}} GL(W_{j}) \times \prod_{i \in S^{(1)}_{1} / \langle \sigma \rangle} GL(V^{\frac{1}{2}}_{i, \sigma(i)}) \times GSpin(V^{(1)}_{1}) \rtimes \Gal{E/F}.
\]
Note this is also the Levi subgroup of
\[
^{L}\widetilde{H} := G(\prod_{j \in T^{(1)}_{1}} Spin(W_{j} \oplus W^{\vee}_{j}) \times \prod_{i \in S^{(1)}_{1} / \langle \sigma \rangle} Spin(V_{i} \oplus V_{\sigma(i)}) \times Spin(V^{(1)}_{1})) \rtimes \Gal{E/F}.
\]
onto which we have a projection from 
\[
^{L}\lif{\widetilde{H}} := \prod_{j \in T^{(1)}_{1}} GSpin(W_{j} \oplus W^{\vee}_{j}) \times \prod_{i \in S^{(1)}_{1} / \langle \sigma \rangle} GSpin(V_{i} \oplus V_{\sigma(i)}) \times GSpin(V^{(1)}_{1}) \rtimes \Gal{E/F}.
\]
Note
\[
\lp^{\sharp} =  m \circ (\prod_{j \in T^{(1)}_{1}} (\p_{GL, j} \rtimes 1) \times \prod_{i \in S^{(1)}_{1} / \langle \sigma \rangle} \tilde{\p}^{\sharp}_{i, \sigma(i)} \times \lp^{(1) \sharp}_{1}) \,\, {\rm mod} \,\, C^{1}(\Gal{L/F}, \Two).
\]
By induction, we may assume 
\[
\lp^{(1)\square}_{1} = \lp^{(1) \sharp}_{1} \,\, {\rm mod} \,\, C^{1}(\Gal{L/F}, \Two)
\]
Since 
\[
\beta^{(1)}_{1} = \prod_{i \in S^{(1)}_{1} / \langle \sigma \rangle} \tilde{\p}_{i, \sigma(i)}^{\sharp} / \p^{1/2}_{i, \sigma(i)} \rtimes 1,
\]
then the result follows from the commutative diagram
\[
\xymatrix{
L_{F} \ar[r] \ar[dr] & ^{L}\lif{H} \ar[r] & GPin(V) \\
& ^{L}\lif{\lif{H}}. \ar[u] \ar[ur]_{m}
}
\]

\end{proof}

As a direct consequence, we have

\begin{proposition}
\label{prop: compatible lifting parameter}
Suppose we get two lifts $\lp^{\square}$ and ${}'\lp^{\square}$ from different factorizations of $\p$ as \eqref{eq: factorization of parameter}. Then there exists a $1$-cochain $u \in C^{1}(\Gal{L/F}, \Two)$ such that
\(
\lp^{\, \square} = u \cdot {}'\lp^{\, \square}.
\)
\end{proposition}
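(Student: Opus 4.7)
The proposition is an almost immediate consequence of the lemma that precedes it, so the plan is simply to use $\tilde{\p}^{\sharp}$, which is defined intrinsically via the Clifford algebra and does \emph{not} depend on any choice of factorization \eqref{eq: factorization of parameter}, as a common reference point for the two lifts.

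First, I would apply the preceding lemma to the factorization that produces $\lp^{\, \square}$: this gives a $1$-cochain $u_{1} \in C^{1}(\Gal{L/F}, \Two)$ with
\[
\tilde{\p}^{\,\sharp} \;=\; u_{1} \cdot \lp^{\, \square}.
\]
Next I would apply the same lemma, verbatim, to the second factorization yielding ${}'\lp^{\, \square}$. Because the left-hand side $\tilde{\p}^{\,\sharp}$ is built only from the chosen lifts $\lp_{i}$ (or $\widetilde{\p_{i}\otimes\eta_{i}}\otimes \tilde{\eta}^{\square}_{i}$) and the fixed Clifford multiplication $m$, it is the \emph{same} object for both factorizations. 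Thus we obtain a second $1$-cochain $u_{2}\in C^{1}(\Gal{L/F}, \Two)$ with
\[
\tilde{\p}^{\,\sharp} \;=\; u_{2} \cdot {}'\lp^{\, \square}.
\]

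Combining the two identities gives
\[
\lp^{\, \square} \;=\; u_{1}^{-1} u_{2} \cdot {}'\lp^{\, \square},
\]
and setting $u := u_{1}^{-1} u_{2}$ produces the desired $1$-cochain, since $C^{1}(\Gal{L/F}, \Two)$ is a group under pointwise multiplication (recall $\Two$ is abelian of exponent two, so inverses are trivial to handle).

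The only point that requires a small verification is that both factorizations really land inside the \emph{same} quadratic extension $L = \prod_{i \in I} E_{i}$ referenced in the lemma. This is clear because $L$ depends only on the intrinsic collection of self-dual simple constituents $\p_{i}$ of $\p$ (through their central characters $\eta_{i}$), and this data is common to every factorization of the form \eqref{eq: factorization of parameter}; the intermediate fields $E$ appearing in the individual steps $\xi^{(k)}$ or in the twists $\widetilde{\eta}^{\square}$ are all contained in $L$. So there is no genuine obstacle — the content of the proposition has already been absorbed into the previous lemma, and the proof is essentially a one-line cancellation.
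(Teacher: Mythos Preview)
Your proof is correct and is exactly the argument the paper has in mind: the proposition is stated in the paper as ``a direct consequence'' of the preceding lemma, and your use of the factorization-independent lift $\tilde{\p}^{\,\sharp}$ as a common reference to cancel the two applications of that lemma is precisely the intended one-line deduction. Your remark that $L=\prod_{i\in I}E_i$ depends only on the fixed decomposition \eqref{eq: decomposition of parameter} and not on the choice of factorization is the only nontrivial check, and you handle it correctly.
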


Suppose we construct two lifts $\lp, {}'\lp$ by choosing $1$-cochains $\{c^{(k)}\}$ and $\{{}'c^{(k)}\}$ respectively. We would like to define a homomorphism
\(
\chi^{c, {}'c}: W_{F} \rightarrow \D{D}
\)
such that
\(
\chi^{c, {}'c} = u \cdot {}'c/c
\)
for some 1-cochain $u \in C^1(\Gal{L/F}, \Two)$. The existence of $\chi^{c, {}'c}$ follows from Proposition~\ref{prop: compatible lifting parameter}. From this definition, we also see that $\chi^{c, {}'c}$ is uniquely determined up to twists by ${\rm Hom}(\Gal{L/F}, \Two)$. Note $c, {}'c$ split two cocycles $b, {}'b$ of $\Gal{L/F}$ in $\Two$ respectively by our construction. So the existence of $\chi^{c, {}'c}$ is also equivalent to $b = {}'b$ in $H^{2}({\Gal{L/F}, \Two})$. Since ${\rm Hom}(\Gal{L/F}, \Two) \subseteq \a(\S{\p}^{\Sigma_0})$ (cf. \cite[Lemma 6.9]{Xu:2018}), we can draw the following consequence.

\begin{corollary}
\label{cor: compatible lifting parameter}
Suppose $F$ is local and let $\lp, {}'\lp$ be two lifts of $\p \in \cuP{G}$ obtained by choosing $1$-cochains $\{c^{(k)}\}$ and $\{{}'c^{(k)}\}$ respectively, then $\lp \otimes \chi^{c, {}'c} = {}'\lp$ in $\cuP{\lG}$.
\end{corollary}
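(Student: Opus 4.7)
First I would observe that both $\lp$ and ${}'\lp$ are, by construction, honest $L$-homomorphisms $W_{F} \to \L{\lG}$ projecting to the same parameter $\p : W_{F} \to \L{G}$ under $\bold{p} : \D{\lG} \to \D{G}$. Consequently the pointwise quotient $\chi := ({}'\lp)\cdot \lp^{-1}$ takes values in $\ker(\D{\lG} \to \D{G}) = \D{D}$, which is central in $\D{\lG}$. This forces $\chi$ to be a genuine group homomorphism $W_{F} \to \D{D}$, and by construction $\lp \cdot \chi = {}'\lp$ as maps into $\L{\lG}$.

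Next I would identify $\chi$ with $\chi^{c,{}'c}$ up to ${\rm Hom}(\Gal{L/F},\Two)$-ambiguity. Writing $\lp = c\cdot \lp^{\square}$ and ${}'\lp = {}'c\cdot {}'\lp^{\square}$, Proposition~\ref{prop: compatible lifting parameter} gives $\lp^{\square} = u \cdot {}'\lp^{\square}$ for some $1$-cochain $u \in C^{1}(\Gal{L/F},\Two)$ (and one may take $u = 1$ if a common factorization of $\p$ was used). Expanding $\chi$ then yields
\[
\chi \; = \; ({}'c/c)\cdot ({}'\lp^{\square}/\lp^{\square}) \; = \; u^{-1}\cdot ({}'c/c),
\]
so that $\chi$ fits the defining relation $\chi = (\text{1-cochain in } \Two)\cdot ({}'c/c)$ for $\chi^{c,{}'c}$. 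Hence any other valid $\chi^{c,{}'c}$ differs from $\chi$ by some $h \in {\rm Hom}(\Gal{L/F},\Two)$, using that both $\chi^{c,{}'c}$ and $\chi$ are homomorphisms to $\D{D}$.

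Finally I would invoke \cite[Theorem 4.6]{Xu:2018}, as extended to $\uP{\lG}$ via parabolic induction in Section~\ref{subsec: local}, which states $[\lp \otimes \omega] = [\lp]$ in $\cuP{\lG}$ if and only if $\omega \in \a(\S{\p}^{\Sigma_{0}})$, together with \cite[Lemma 6.9]{Xu:2018}, which gives ${\rm Hom}(\Gal{L/F},\Two) \subseteq \a(\S{\p}^{\Sigma_{0}})$. Writing $\chi^{c,{}'c} = \chi \cdot h$ with $h \in {\rm Hom}(\Gal{L/F},\Two)$, we obtain
\[
\lp \otimes \chi^{c,{}'c} \; = \; (\lp \otimes \chi) \otimes h \; = \; {}'\lp \otimes h,
\]
and $[{}'\lp \otimes h] = [{}'\lp]$ in $\cuP{\lG}$, which yields the corollary.

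I do not anticipate a serious obstacle: the only delicate point is that $\chi^{c,{}'c}$ is declared only modulo ${\rm Hom}(\Gal{L/F},\Two)$-twists, but this is precisely the ambiguity absorbed by the subgroup $\a(\S{\p}^{\Sigma_{0}})$, so the equality holds in $\cuP{\lG}$ as claimed.
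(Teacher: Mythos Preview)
Your proposal is correct and follows the same approach as the paper. The paper does not write out a formal proof but states the corollary as a direct consequence of the paragraph preceding it; you have simply spelled out that argument: identify ${}'\lp/\lp$ as a homomorphism into $\D{D}$, check via Proposition~\ref{prop: compatible lifting parameter} that it satisfies the defining relation for $\chi^{c,{}'c}$, and absorb the residual $\Hom(\Gal{L/F},\Two)$-ambiguity using $\Hom(\Gal{L/F},\Two)\subseteq\a(\S{\p}^{\Sigma_0})$. One cosmetic point: in the local nonarchimedean case the source of $\lp,{}'\lp$ is $L_F = W_F \times SL(2,\C)$ rather than $W_F$, but since the cochains $c^{(k)}$ factor through $W_F$ and $\D{D}$ is central this does not affect the computation.
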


\subsection{Lifting local and global L-packets}

Let $F$ be local (resp. global), we would like to state the representation theoretic analogue of Corollary~\ref{cor: compatible lifting parameter}. The idea is to lift $\cPkt{\p^{(n)}}$ and $\xi^{(k)}$. The strategy is parallel with the case of parameters. First of all, we would like to choose lifts of $\cPkt{\p_i}$ for even $N_i$ and $\cPkt{\p_{i} \otimes \eta_i}$ for odd $N_{i} \neq 1$. We will fix them no matter of the factorization of $\p$. To lift $\cPkt{\p^{(n)}}$, let
\(
\p^{(n)}_O = \prod_{l} \p^{(n)}_l.
\)
We first lift $\cPkt{\p^{(n)}_O}$ through that of $\cPkt{\p^{(n)}_l}$. There are three cases.

\begin{enumerate}

\item $\p^{(n)}_l = \p_i \otimes \eta_i$ for odd $N_i \neq 1$. Then we can take the chosen lift $\cPkt{\widetilde{\phi_i \otimes \eta_i}}$.

\item $\p^{(n)}_l = \p_i \otimes \eta$ for even $N_i$. We can take the chosen lift of $\cPkt{\lp_i}$ and twist it by $\tilde{\eta} := \tilde{\eta}^{\square}c^{
(n)}_{l}$ for some $1$-cochain $c^{(n)}_{l}$ of $W_{F}$ in $\D{D}$.

\item $\p^{(n)}_l = (1 \oplus \eta') \otimes \eta$. We take the trivial lift $\cPkt{\widetilde{1 \oplus \eta'}^{{\rm triv}}}$ and twist it by $\widetilde{\eta} := \widetilde{\eta}^{\square}c^{(n)}_{l}$ for some $1$-cochain $c^{(n)}_{l}$ of $W_{F}$ in $\D{D}$.

\end{enumerate}
Let us denote the resulting lift by $\cPkt{\lp^{(n)}_O}$ with $1$-cochain $c^{(n)}_O := \prod_{l} c^{(n)}_l$. Then
\(
\boxtimes_m \pi_{\phi^{(n)}_{GL, m}} \boxtimes \cPkt{\lp^{(n)}_O} 
\) 
is a lift of $\cPkt{\p^{(n)}}$. For the purpose of comparison, we also need to introduce some twists and $1$-cochains according to $\p^{(n)}_{GL, m}$. There are two cases.\begin{itemize}

\item For $\phi_{GL, j} \otimes \eta$ in $\p^{(n)}_{GL, m}$, we introduce a $1$-cochain $c_{\eta}$ of $\Gal{E/F}$ in $\D{D}$ (cf. \eqref{eq: Siegel Levi 2}). 

\item For $\p^{1/2}_{i, \sigma(i)} \otimes \eta$ in $\p^{(n)}_{GL, m}$, we introduce a twist by $\chi_{i} \, \omega_{i}$, where $\chi_i$ is the central character of $\cPkt{\widetilde{\p_{i} \otimes \eta_i}}$ (resp. $\cPkt{\lp_i}$), and 
\[
\cPkt{\widetilde{\p_{i} \otimes \eta_i}} \otimes \omega_i = \cPkt{\widetilde{\p_{\sigma(i)} \otimes \eta_i}} \quad \quad ({\rm resp.} \, \cPkt{\lp_i} \otimes \omega_i = \cPkt{\lp_{\sigma(i)}})
\] 
for $N_{i}$ odd (resp. even). We also introduce a $1$-cochain $c_{\eta}c_{i}$ of $\Gal{EE_i/F}$ in $\D{D}$ (cf. \eqref{eq: Siegel Levi 3}). 
\end{itemize}
We define $c^{(n)}$ to be the product of $c^{(n)}_{O}$ with these $1$-cochains. Denote the product of twists from $\p^{(n)}_{GL, m}$ by $\chi^{(n)}_m$. Then we define
\[
\cPkt{\lp^{(n)}} := \boxtimes_m \pi_{\phi^{(n)}_{GL, m}} \boxtimes (\cPkt{\lp^{(n)}_O} \otimes \prod_m \chi^{(n)}_m).
\]
We lift $\xi^{(k)}$ as before. Then we can define
\[
\cPkt{\lp} := {\rm Tran}_{\tilde{\xi}^{(0)}} \circ \cdots \circ {\rm Tran}_{\tilde{\xi}^{(n-1)}} \cPkt{\lp^{(n)}}
\]
with respect to some $1$-cochains $\{c^{(k)}\}$. Let $\cPkt{\lp}$ and $\cPkt{{}'\lp}$ be two lifts obtained by choosing $1$-cochains $\{c^{(k)}\}$ and $\{{}'c^{(k)}\}$ respectively. Then $c, {}'c$ split two cocycles $b, {}'b$ of $\Gal{L/F}$ in $\Two$ respectively by our construction. 

\begin{lemma} 
$b = {}'b$ in $H^{2}({\Gal{L/F}, \Two})$. 
\end{lemma}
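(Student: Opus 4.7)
The plan is to reduce this representation-theoretic statement to the parameter-theoretic result already established in Proposition~\ref{prop: compatible lifting parameter}. The key observation is that the $2$-cocycles $b, {}'b \in Z^{2}(\Gal{L/F}, \Two)$ defined by the packet-level construction coincide, on the nose and not merely as cohomology classes, with the corresponding $2$-cocycles arising from the parallel parameter-level construction of $\lp^{\,\square}$ and ${}'\lp^{\,\square}$.

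To verify this, I would run through the construction step by step. At each stage, the $1$-cochain $c^{(k)}$ is introduced in order to cancel an obstruction of purely group-theoretic origin: for a factor $\phi^{(n)}_{l}$ of type (2) or (3), the obstruction $b^{(n)}_{l} \in Z^{2}(\Gal{EE_{i}/F}, \Two)$ measures the failure of the twisted lift $\lp_{i} \otimes \tilde{\eta}^{\square}$ (respectively $\widetilde{1 \oplus \eta'}^{\,{\rm triv}} \otimes \tilde{\eta}^{\square}$) to be a homomorphism into $GSpin$; for a twisted endoscopic step $\xi^{(k)}$, the obstruction $b_{k}$ measures the failure of $\tilde{\xi}^{(k)\square}$ to be an $L$-homomorphism, and is read off directly from the sections $\tau_{i} \mapsto \tilde{z}_{i}$ in $Spin$. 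In every case the cocycle depends only on the choices of lifts through the covers $Spin \twoheadrightarrow SO$ and $Pin \twoheadrightarrow O$ and on the auxiliary Galois-cohomological data, not on whether we are transporting a Langlands parameter or an $L$-packet through that data. In both constructions the $1$-cochain $c^{(k)}$ is precisely what is needed to cancel this common obstruction: at the parameter level so that the resulting composite is a genuine $L$-homomorphism, and at the packet level so that the corresponding twist (or twisted endoscopic transfer) of the packet is well-defined. Hence $b = \prod_{k} b^{(k)}$ in the packet setting equals the parameter-level $b$, and likewise for ${}'b$.

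Once this identification has been made, the conclusion is immediate. By Proposition~\ref{prop: compatible lifting parameter}, there exists $u \in C^{1}(\Gal{L/F}, \Two)$ with $\lp^{\,\square} = u \cdot {}'\lp^{\,\square}$; taking coboundaries and using the defining relations $\partial c = b$, $\partial({}'c) = {}'b$, one obtains $b \cdot ({}'b)^{-1} = \partial u$, so $[b] = [{}'b]$ in $H^{2}(\Gal{L/F}, \Two)$. The substantive point is the step-by-step identification in the previous paragraph, which I expect to be the main obstacle: one has to verify that the $1$-cochains one is forced to introduce at the packet level — in order to make the twists by $\tilde{\eta}$ and the endoscopic transfers ${\rm Tran}_{\tilde{\xi}^{(k)}}$ descend to well-defined $L$-packets of $\lG$ — are constrained to split exactly the same obstructions as at the parameter level. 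This is handled by matching the packet construction with the parameter construction one factorization step at a time, using that ${\rm Hom}(\Gal{L/F}, \Two) \subseteq \a(\S{\p}^{\Sigma_0})$ so that the freedom in lifting packets is parallel to the freedom in lifting parameters.
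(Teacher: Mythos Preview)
Your proposal is correct and follows essentially the same approach as the paper. The paper's proof is even more terse: it simply observes that the $1$-cochains $\{c^{(k)}\}$ and $\{{}'c^{(k)}\}$ at the packet level can be taken to be the same as those used for lifting the parameter $\p$, and then invokes Proposition~\ref{prop: compatible lifting parameter} together with the subsequent discussion (where the existence of $\chi^{c,{}'c}$ is noted to be equivalent to $[b]=[{}'b]$). Your second paragraph spells out in detail \emph{why} the obstructions agree at the two levels, which the paper leaves implicit; your third paragraph unpacks the cohomological equivalence that the paper relegates to ``the discussion after that.'' The final remark about $\Hom(\Gal{L/F},\Two)\subseteq\a(\S{\p}^{\Sigma_0})$ is not actually needed for this lemma (it is used only later, in Corollary~\ref{cor: compatible lifting parameter}), but it does no harm.
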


\begin{proof}
The $1$-cochains $\{c^{(k)}\}$ and $\{{}'c^{(k)}\}$ can be taken to be the same as for the lifting of $\p$. Then the result follows from Proposition~\ref{prop: compatible lifting parameter} and the discussion after that. 
\end{proof}

As a consequence, we can define a homomorphism
\(
\chi^{c, {}'c}: W_{F} \rightarrow \D{D}
\)
such that
\(
\chi^{c, {}'c} = u \cdot {}'c/c
\)
for some 1-cochain $u \in C^{1}(\Gal{L/F}, \Two)$. It is uniquely determined up to twists by ${\rm Hom}(\Gal{L/F}, \Two)$.

\begin{proposition}
\label{prop: compatible lifting packet}
Let $\cPkt{\lp}$ and $\cPkt{{}'\lp}$ be two lifts obtained by $1$-cochains $\{c^{(k)}\}$ and $\{{}'c^{(k)}\}$ respectively, then 
\begin{align}
\label{eq: compatible lifting}
\cPkt{\lp} \otimes \chi^{c, {}'c} = \cPkt{{}'\lp}.
\end{align}
\end{proposition}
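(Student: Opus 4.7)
The plan is to mirror the parameter-level argument of Proposition~\ref{prop: compatible lifting parameter} and Corollary~\ref{cor: compatible lifting parameter} at the level of $L$-packets, using compatibility of endoscopic transfer with character twists. The fixed choices $\cPkt{\widetilde{\phi_i \otimes \eta_i}}$, $\cPkt{\lp_i}$, and $\cPkt{\widetilde{1 \oplus \eta'}^{{\rm triv}}}$ do not depend on the $1$-cochains $\{c^{(k)}\}$, so these cochains enter only through the twists $\tilde{\eta}^{\square} c^{(n)}_l$, $c_\eta$, $c_i$ at the $n$-th stage and through the lifted endoscopic embeddings $\tilde{\xi}^{(k)}$ at each subsequent stage.

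I would proceed by induction on $n$, the length of the factorization \eqref{eq: factorization of parameter}. In the base case, changing $c^{(n)}$ to ${}'c^{(n)}$ in the construction of $\cPkt{\lp^{(n)}}$ replaces each twisting character $\tilde{\eta}$ in cases (2), (3), together with the auxiliary $c_\eta, c_i$ introduced from the $\p^{(n)}_{GL, m}$ factors, by its counterpart built from ${}'c^{(n)}$. By the construction of the local packets in \cite[Theorem 4.6]{Xu:2018}, twisting each factor $\cPkt{\lp_i}$ by a character $\mu$ of $\lG_i(F)/G_i(F)$ produces $\cPkt{\lp_i \otimes \mu}$, so the cumulative effect on $\cPkt{\lp^{(n)}}$ is precisely twisting by the character of $\lG^{(n)}(F)/G^{(n)}(F)$ attached to $({}'c^{(n)})/c^{(n)}$ via the isomorphism $r$.

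For the inductive step I compare ${\rm Tran}_{\tilde{\xi}^{(k)}}$ with ${\rm Tran}_{{}'\tilde{\xi}^{(k)}}$. Since $\tilde{\xi}^{(k)} = c^{(k)} \tilde{\xi}^{(k) \square}$, the ratio ${}'c^{(k)}/c^{(k)}$ is a genuine $1$-cocycle $W_F \to \D{D}$ (both cochains split the same obstruction $b_k$), corresponding under $r$ to a character of $\lG^{(k)}(F)/G^{(k)}(F)$. The defining character relation \cite[Theorem 4.6]{Xu:2018} for the twisted transfer then yields
\[
{\rm Tran}_{{}'\tilde{\xi}^{(k)}} \, \cPkt{\lp^{(k+1)}} = \Big({\rm Tran}_{\tilde{\xi}^{(k)}} \, \cPkt{\lp^{(k+1)}}\Big) \otimes \big({}'c^{(k)}/c^{(k)}\big).
\]
Iterating from $k = n-1$ down to $k = 0$ and combining with the base-case contribution, the accumulated twist on $\cPkt{\lp}$ is the character attached to $({}'c)/c$, which by the very definition of $\chi^{c, {}'c}$ coincides with $\chi^{c, {}'c}$ modulo ${\rm Hom}(\Gal{L/F}, \Two)$.

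The main obstacle I expect is the ${\rm Hom}(\Gal{L/F}, \Two)$ indeterminacy: $\chi^{c, {}'c}$ is only well-defined modulo this subgroup, so to deduce \eqref{eq: compatible lifting} one must show that twisting $\cPkt{\lp}$ by any element of ${\rm Hom}(\Gal{L/F}, \Two)$ leaves it invariant. This will follow from the inclusion ${\rm Hom}(\Gal{L/F}, \Two) \subseteq \a(\S{\p}^{\Sigma_0})$ cited from \cite[Lemma 6.9]{Xu:2018} together with the invariance $\cPkt{\lp} \otimes \omega = \cPkt{\lp}$ for $\omega \in \a(\S{\p}^{\Sigma_0})$ recalled in Section~\ref{subsec: local}. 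A secondary bookkeeping issue is that in the global setting the same construction must be checked place-by-place, but compatibility is immediate since each local packet satisfies the required twist equivariance and the global packet is built as a restricted tensor product.
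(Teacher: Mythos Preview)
Your argument only handles the case where the two lifts arise from the \emph{same} factorization \eqref{eq: factorization of parameter} with different $1$-cochains. In that situation your step-by-step twist comparison is fine: ${}'c^{(k)}/c^{(k)}$ is a genuine $1$-cocycle at each stage because both cochains split the same $b_k$, and transfer is compatible with twisting. But this is the trivial case. The content of the proposition---and the reason it is invoked, for instance in the proof of Theorem~\ref{thm: LLC} to establish independence of the factorization---is that the two lifts may come from \emph{different} factorizations of $\phi$. When the factorizations differ, the maps $\tilde{\xi}^{(k)\square}$ and ${}'\tilde{\xi}^{(k)\square}$ pass through different intermediate groups $G^{(k)}$ altogether, so the ratio ${}'c^{(k)}/c^{(k)}$ is not defined stage by stage and your inductive step collapses. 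Your claim to be ``mirroring'' Proposition~\ref{prop: compatible lifting parameter} is misleading: that proposition is proved not by such a stagewise cochain comparison but by comparing both factorizations to a common object $\tilde{\phi}^{\sharp}$ built from Clifford multiplication, and there is no obvious representation-theoretic analogue of $\tilde{\phi}^{\sharp}$.

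The paper's proof is substantially different. It first reduces to the local nonarchimedean case (the archimedean case follows directly from Corollary~\ref{cor: compatible lifting parameter} via the known local Langlands correspondence), and then inducts on $N$ rather than on the length $n$ of the factorization. For non-elliptic parameters (Lemma~\ref{lemma: compatible lifting of L-packets for non-elliptic parameters}) an arbitrary factorization is compared to a fixed reference factorization by inserting an intermediate factorization sharing a common first step, using compatibility of endoscopic transfer with parabolic induction and the induction hypothesis on smaller groups. For elliptic parameters (Lemmas~\ref{lemma: exceptional case} and~\ref{lemma: compatible lifting of L-packets for elliptic parameters}) no direct local argument is available; instead one globalizes $\phi$, checks the compatibility at all places $v \neq u$ using the non-elliptic case and the fundamental lemma, and then invokes $\Sigma_0$-strong multiplicity one to deduce it at $u$. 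Your scheme does not engage either of these mechanisms.
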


We say $\cPkt{\p}$ has {\bf compatible lifting} with respect to the decomposition \eqref{eq: decomposition of parameter} if \eqref{eq: compatible lifting} holds. We can generalize this notion to $G = G(n_{1}) \times G(n_{2}) \times \cdots \times G(n_{q})$, which is an immediate consequence of Proposition~\ref{prop: compatible lifting packet}. 

\begin{corollary}
\label{cor: compatible lifting packet for product}
Compatible lifting of $L$-packets holds for $\lG$.
\end{corollary}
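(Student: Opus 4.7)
The plan is to reduce Corollary~\ref{cor: compatible lifting packet for product} to the already-established single-factor Proposition~\ref{prop: compatible lifting packet} by decomposing the data component-wise. Writing $G = G_1 \times \cdots \times G_q$ and $\phi = \phi_1 \times \cdots \times \phi_q$ with $\phi_i \in \cP{G_i}$, a factorization of $\phi$ in the sense of \eqref{eq: factorization of parameter} can, after a possible reordering, be arranged so that every embedding $\xi^{(k)}$ acts on one $G_i$ at a time; this is possible because the orthogonal or general linear blocks of $\phi^{(k)}$ appearing in \eqref{eq: k-th parameter} that get split or enlarged by $\xi^{(k)}$ already live inside a single factor. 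Accordingly, any choice of $1$-cochains $\{c^{(k)}\}$ decomposes as $\{c_i^{(k)}\}_i$, and two such choices $c, {}'c$ produce per-factor characters $\chi^{c_i,{}'c_i}: W_F \to \D{D}_i$ via the construction preceding Proposition~\ref{prop: compatible lifting packet}.

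Next I would apply Proposition~\ref{prop: compatible lifting packet} to each factor to obtain
\[
\cPkt{\lp_i} \otimes \chi^{c_i,{}'c_i} = \cPkt{{}'\lp_i}, \qquad i = 1, \ldots, q.
\]
Using the fibre product description $\lG \cong \{(g_i) \in \prod_i \lG_i \mid \lambda_1(g_1) = \cdots = \lambda_q(g_q)\}$ and the definition $\cPkt{\lp} = \widetilde{\bigotimes}_i \cPkt{\lp_i}$ from Section~\ref{sec: L-packet}, I would then assemble the $q$ single-factor identities. The product $\prod_i \chi^{c_i,{}'c_i}$ is a character of $\prod_i \lG_i/G_i$, and via the isomorphism $r$ of Section~\ref{sec: L-packet} it restricts to a well-defined character $\chi^{c,{}'c}$ of $\lG/G$ (both locally and, when $F$ is global, adelically). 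Tensoring the $q$ equalities and restricting to $\lG$ yields
\[
\cPkt{\lp} \otimes \chi^{c,{}'c} = \cPkt{{}'\lp},
\]
which is the compatible lifting statement for $\lG$.

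The main point requiring care is the reduction to a component-wise factorization: a priori one could imagine factorizations of $\phi$ passing through twisted endoscopic or Levi subgroups of $G$ that couple different $G_i$'s, but because $\phi$ is already a product and the construction in Section~\ref{sec: compatible lifting} introduces its $1$-cochains and twists block-by-block (one block at a time inside \eqref{eq: decomposition of parameter}), nothing is lost by choosing the factorization to respect the product decomposition. Once this observation is in place, the corollary follows immediately by combining the per-factor instances of Proposition~\ref{prop: compatible lifting packet}.
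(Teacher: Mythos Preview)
Your proposal is correct and follows essentially the same approach as the paper: decompose $\phi$ into its factors $\phi_i$, apply Proposition~\ref{prop: compatible lifting packet} to each $\phi_i$ to get $\cPkt{\lp_i} \otimes \chi^{c_i,{}'c_i} = \cPkt{{}'\lp_i}$, and then assemble via the restricted tensor product using $\chi^{c,{}'c} = \prod_i \chi^{c_i,{}'c_i}$. The paper phrases the reduction slightly differently---it simply notes that any factorization of $\phi$ automatically gives rise to a factorization of each $\phi_i$ (since the twisted endoscopic and Levi subgroups of a product are themselves products), rather than invoking a reordering---but this is the same observation you make in your final paragraph.
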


\begin{proof}
For $[\p] \in \cuP{G}$ (resp. $\cP{G}$), we write $\p = \p_{1} \times \p_{2} \times \cdots \times \p_{q}$ such that $[\p_{i}] \in \cuP{G(n_{i})}$ (resp. $\cP{G(n_i)}$) for $1 \leqslant i \leqslant q$. Note 
\[
\lG \subseteq \lG(n_{1}) \times \lG(n_{2}) \times \cdots \times \lG(n_{q}) 
\]
Any factorization of $\p$ gives rise to a factorization of $\p_{i}$ for $1 \leqslant i \leqslant q$, which determines the packets $\cPkt{\lp_{i}}$, and the corresponding packet $\cPkt{\lp}$ will be the restriction of $\bigotimes_{i = 1}^{q} \cPkt{\lp_{i}}$ to $\lG$. By Proposition~\ref{prop: compatible lifting packet}, 
\(
\cPkt{\lp_i} \otimes \chi^{c_i, {}'c_{i}} = \cPkt{{}'\lp_{i}}.
\)
Since $\chi^{c, {}'c} = \prod_{i} \chi^{c_i, {}'c_{i}}$, then
\(
\cPkt{\lp} \otimes \chi^{c, {}'c} = \cPkt{{}'\lp}.
\)
\end{proof}

\subsection{Proof of Proposition~\ref{prop: compatible lifting packet}}

In this section we shall give the proof of Proposition~\ref{prop: compatible lifting packet}. First we observe that the local case implies the global case. In the archimedean case, the Local Langlands correspondence is known and is compatible with twisted endoscopic transfer and parabolic induction \cite{Langlands:1989} \cite{Shelstad1:2008} \cite{Mezo:2013}. So the result follows from that for lifting parameters (cf. Corollary~\ref{cor: compatible lifting parameter}). 

Now let us assume $F$ is nonarchimedean. Note if any $\p_i$ or $\p_{GL, j}$ in \eqref{eq: factorization of parameter} is not irreducible, we can always factorize them further. This means that we can reduce to the case that they are all irreducible and $\p_{GL, j}$ are not of orthogonal types. Moreover, when $\p$ is unramified, the fundamental lemma for spherical Hecke algebras \cite{Hales:1995} \cite{LMW:2018} implies the compatibility of the local Langlands correspondence for unramified representations with the twisted endoscopic transfer. We should point out that the twisted case has been shown only for sufficiently large residue characteristic. The compatibility with parabolic induction is clear. So the result would follow from the same argument as in the archimedean case. 

After these reductions, the proof of compatible lifting of $L$-packets proceeds in the same way as our proof of the main local theorem in \cite{Xu:2018}. We assume that it holds for $\lG = GSp(2n)$ and $GSO(2n + 2, \eta)$, when $n < N$. When $n = N$, we first show that the case of non-elliptic parameters follows immediately from this assumption.

\begin{lemma}
\label{lemma: compatible lifting of L-packets for non-elliptic parameters}
Suppose $[\p] \in \cuP{G} - \cPel{G}$, then the packet $\cPkt{\p}$ has compatible lifting.
\end{lemma}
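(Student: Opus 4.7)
The plan is to reduce this statement to the inductive hypothesis (compatible lifting for $\lG = GSp(2n)$ and $GSO(2n+2, \eta)$ with $n < N$) by exploiting the fact that a non-elliptic parameter is always induced from a proper Levi. Since $[\p] \in \cuP{G} - \cPel{G}$, the discussion of $\cuP{G_v^{\theta}}$ in Section~\ref{subsec: global} together with Lemma~\ref{lemma: discrete parameter} produces a proper Levi subgroup $M = \prod_i GL(n_i) \times G_{-}$ of $G$, with $G_{-}$ a quasisplit symplectic or special even orthogonal group of strictly smaller rank, and a parameter $\p_M \in \cuP{M}$ such that $\p = \p_{M,\lambda}$. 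By \cite[Proposition 3.11]{Xu:2018}, $\cPkt{\lp}$ is obtained from $\cPkt{\lp_M}$ by normalized parabolic induction $\Ind_{\lif{P}}^{\lG}$ along any parabolic $\lif{P}$ with Levi component $\lif{M}$.

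Given any two factorizations of $\p$ producing lifts $\lp, {}'\lp$ and $1$-cochains $c, {}'c$, I would first refine each factorization so that its initial step is the Levi embedding $\L{M} \hookrightarrow \L{G}$. A Levi step requires no new $1$-cochain, and by Proposition~\ref{prop: compatible lifting parameter} any refinement alters the lift by a cochain in $C^1(\Gal{L/F}, \Two)$, hence by a character in $\a(\S{\p}^{\Sigma_0})$, which by construction leaves the packet untouched. After this reduction both factorizations become Levi embeddings preceded by factorizations of $\p_M$ inside $M$, and the cochains descend to cochains $c_M, {}'c_M$ attached to these sub-factorizations.

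Because $G_{-}$ has strictly smaller rank than $G$, the induction hypothesis applies to $\lif{M}$ and yields
\[
\cPkt{\lp_M} \otimes \chi_M^{c_M,\, {}'c_M} = \cPkt{{}'\lp_M}.
\]
Applying $\Ind_{\lif{P}}^{\lG}$ to both sides, and using that parabolic induction commutes with twists by characters of $\lG(F)/G(F)$ (which restrict to characters of $\lif{M}(F)/M(F)$), one obtains $\cPkt{\lp} \otimes \chi^{c,\, {}'c} = \cPkt{{}'\lp}$ as soon as the inflation of $\chi_M^{c_M,\, {}'c_M}$ is identified with $\chi^{c,\, {}'c}$.

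The main obstacle will be this last identification of characters. Since $\chi^{c,\, {}'c}$ is defined only modulo $\Hom(\Gal{L/F}, \Two)$, and such twists act trivially on the packet by Corollary~\ref{cor: compatible lifting parameter} together with the defining property of compatible lifting, it suffices to check the matching modulo this subgroup. This should follow formally from applying Proposition~\ref{prop: compatible lifting parameter} to $\p$ and to $\p_M$ in parallel: after the Levi reduction the two factorizations differ only in their sub-factorizations of $\p_M$, so the comparison cochains for $\p$ coincide with those for $\p_M$, modulo cochains in $C^1(\Gal{L/F},\Two)$ that are absorbed into the indeterminacy of $\chi^{c,\, {}'c}$.
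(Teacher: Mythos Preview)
There is a genuine gap in your reduction step. You propose to ``refine each factorization so that its initial step is the Levi embedding'' and justify this via Proposition~\ref{prop: compatible lifting parameter}. But that proposition compares only the parameter lifts $\lp^{\square}$, not the packets $\cPkt{\lp}$. Knowing that two factorizations produce parameter lifts differing by a cochain in $C^{1}(\Gal{L/F},\Two)$ does not tell you that the associated packets differ by the corresponding character twist: that is exactly the content of Proposition~\ref{prop: compatible lifting packet}, which is what you are proving. So invoking it at this stage is circular.

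Concretely, a given factorization may begin with a twisted endoscopic step $\xi^{(0)}$ rather than the Levi embedding, and there is no a priori reason the packet it produces agrees (up to the prescribed twist) with the one coming from the ``Levi first'' route. The paper bridges this by constructing an explicit chain of four factorizations ${}^{0}\p^{(1)},\,{}^{1}\p^{(1)},\,{}^{2}\p^{(1)},\,{}^{3}\p^{(1)}$ connecting the given factorization to a fixed one whose first step \emph{is} the Levi embedding. Consecutive pairs are compared either by the induction hypothesis applied to the strictly smaller parameter $\p^{(1)}$ (giving \eqref{eq: descent} and \eqref{eq: descent 1}), or by directly swapping the order of the Levi and endoscopic steps via the compatibility of parabolic induction with twisted endoscopic transfer (Remark~\ref{rk: compatible with parabolic induction}), which yields \eqref{eq: parallel factorization} after tracking the extra cochain $c_{\eta}$. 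This swap is the substantive input your sketch omits. In the case $\p_{1}\cong\p_{2}\cong\p_{3}$ there is a further issue your outline does not address: the three possible ``fixed'' Levi reductions (one for each pair $\{i,j\}$) must themselves be compared, which the paper handles by a separate argument using conjugation by $S_{\p}$ and the twisting characters $\omega_{i,j}$.
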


\begin{proof}
For the decomposition of $\p$ in \eqref{eq: decomposition of parameter}, we can assume $J \neq \emptyset$ or $\p_1 \cong \p_2 \cong \p_3$. The idea is to compare any factorization of $\p$ to certain fixed ones. For a given factorization, we will assume that $G^{(1)}$ is a twisted elliptic endoscopic group of $G$ such that ${\rm dim} \, V^{(1)}_{1}$ is odd and ${\rm dim} \, V^{(1)}_{2}$ is even, and $\p_{GL, 1}$ (resp. $\p_{i} \oplus \p_{j}$ for $\{i, j\} \subseteq \{1, 2, 3\}$) is in $\p^{(1)}_{1}$. We will add superscripts to the left of $\p$ to indicate different factorizations. The arguments for the other cases are similar, so we do not include them here.

If $J \neq \emptyset$, we factorize $\p$ in the following four ways. From top to bottom, the first is the given factorization and the last is the fixed one.
\[
\begin{tikzcd}
& & \arrow{d}{^{0}\xi^{(1)}} & \\
\arrow{r} & {}^1\p^{(2)}_2 \times \Big((\p_{GL, 1} \otimes \eta) \rtimes {}^1\p^{(2)}_1 \Big) \arrow{r}{{}^1\xi^{(1)}} \arrow[dashrightarrow]{dd} \arrow[dashrightarrow]{rdd}{'\xi^{(1)}} & {}^1\p^{(1)}_2 \times {}^1\p^{(1)}_1 = {}^1\p^{(1)} = {}^0\p^{(1)} \arrow{rd}{{}^1\xi^{(0)} = {}^0\xi^{(0)}} & \\
& && \p \\
\arrow{r} & ^{2}\p^{(2)}_2 \times (\p_{GL, 1} \rtimes  {}^2\p^{(2)}_1) \arrow{r}{^{2}\xi^{(1)}} & \p_{GL, 1} \rtimes {}^2\p^{(1)}_1 = {}^2\p^{(1)} = {}^3\p^{(1)} \arrow{ur}[swap]{^{2}\xi^{(0)} = {}^3\xi^{(0)}} & \\
& & \arrow{u}[swap]{{}^3\xi^{(1)}} & 
\end{tikzcd}
\]
By our assumption, we have ${}^1\p^{(1)}_2 = {}^1\p^{(2)}_2 = {}^2\p^{(2)}_2$, ${}^1\p^{(2)}_1 = {}^2\p^{(2)}_1$ and $\eta = {}^1\eta^{(1)}_1 = {}^1\eta^{(2)}_1$. By induction and Corollary~\ref{cor: compatible lifting packet for product}, we can assume $\cPkt{{}^1\p^{(1)}}$ has compatible lifting, then 
\begin{align}
\label{eq: descent}
\cPkt{{}^0\lp} \otimes \chi^{{}^0c, {}^1c} = \cPkt{{}^1\lp} \, .
\end{align}
Next we would like to show
\begin{align}
\label{eq: parallel factorization}
\cPkt{{}^1\lp} \otimes \chi^{{}^1c, {}^2c} = \cPkt{{}^2\lp}
\end{align}
where ${}^2c$ is chosen according to ${}^1c$ as follows. The factorization of ${}^1\p^{(2)}$ gives rise to a factorization of ${}^2\p^{(2)}$. We will choose ${}^1c^{(k)} = {}^2c^{(k)} \, \text{ if } 2 \leqslant k < n$, and ${}^1c^{(n)} = {}^2c^{(n)} c_{\eta}$. It follows
\(
\cPkt{{}^1\lp^{(2)}} = \cPkt{{}^2\lp^{(2)}} \otimes (\eta \circ {\rm det}_{GL(W_1^{(2)})}).
\)
By the compatibility of twisted endoscopic transfer with parabolic induction, 
\[
\cPkt{{}^1\lp} = {\rm Tran}_{^{2}\lif{\xi}^{(0)}} \, {\rm Tran}_{'\lif{\xi}^{(1)}} \, \cPkt{{}^1\lp^{(2)}}.
\]
Moreover,
\[
{\rm Tran}_{{}^2\lif{\xi}^{(1)}} \, \cPkt{^{2}\lp^{(2)}} = {\rm Tran}_{'\lif{\xi}^{(1)}} \, \cPkt{{}^1\lp^{(2)}} \otimes \chi^{{}^1c^{(0)}c_{\eta}, {}^2c^{(1)}}.
\]
Since 
\[
{}^2c^{(1)}/{}^1c^{(0)}c_{\eta} = {}^2c^{(1)} \, {}^2c^{(n)}/{}^1c^{(0)}{}^1c^{(n)} = {}^2c/{}^1c,
\] 
then 
\[
\chi^{{}^1c^{(0)}c_{\eta}, {}^2c^{(1)}} = \chi^{{}^1c, {}^2c} \, \text{ mod ${\rm Hom}(\Gal{L/F}, \Two)$ }
\]
This proves \eqref{eq: parallel factorization}. By induction, we can also assume $\cPkt{{}^2\p^{(1)}}$ has compatible lifting, then
\begin{align}
\label{eq: descent 1}
\cPkt{{}^2\lp} \otimes \chi^{{}^2c, {}^3c} = \cPkt{{}^3\lp} \, .
\end{align}
So the result follows from \eqref{eq: descent}, \eqref{eq: parallel factorization} and \eqref{eq: descent 1}.

If $\p_1 \cong \p_2 \cong \p_3$, we factorize $\p$ in the following four ways. From top to bottom, the first is the given factorization and the last is one of the three fixed ones indexed by $\{i, j\} \subseteq \{1, 2, 3\}$.
\[
\begin{tikzcd}
& & \arrow{d}{^{0}\xi^{(1)}} & \\
\arrow{r} & {}^1\p^{(2)}_2 \times \Big((\p^{\frac{1}{2}}_{i,j} \otimes \eta) \rtimes {}^1\p^{(2)}_1 \Big) \arrow{r}{{}^1\xi^{(1)}} \arrow[dashrightarrow]{dd} \arrow[dashrightarrow]{rdd}{'\xi^{(1)}} & {}^1\p^{(1)}_2 \times {}^1\p^{(1)}_1 = {}^1\p^{(1)} = {}^0\p^{(1)} \arrow{rd}{{}^1\xi^{(0)} = {}^0\xi^{(0)}} & \\
& && \p \\
\arrow{r} & ^{2}\p^{(2)}_2 \times (\p^{\frac{1}{2}}_{i,j} \rtimes  {}^2\p^{(2)}_1) \arrow{r}{^{2}\xi^{(1)}} & \p^{\frac{1}{2}}_{i,j} \rtimes {}^2\p^{(1)}_1 \arrow{ur}[swap]{^{2}\xi^{(0)} = ^{3}\xi^{(0)}} = {}^2\p^{(1)} = {}^3\p^{(1)} & \\
& & \arrow{u}[swap]{{}^3\xi^{(1)}} & 
\end{tikzcd}
\]
By our assumption, ${}^1\p^{(1)}_2 = {}^1\p^{(2)}_2 = {}^2\p^{(2)}_2$, ${}^1\p^{(2)}_1 = {}^2\p^{(2)}_1$ and $\eta = {}^1\eta^{(1)}_1 = {}^1\eta^{(2)}_1$. Here $\{i, j\}$ depends on ${}^0\p^{(1)}$. As in the previous case, we can show
\[
\cPkt{{}^0\lp} \otimes \chi^{{}^0c, {}^3c} = \cPkt{^{3}\lp} \, .
\]
It remains to compare the three fixed factorizations.
\[
\begin{tikzcd}
\arrow{r} & \p^{\frac{1}{2}}_{i,j} \rtimes \p^{(1)}_1 \arrow{rd}{\xi^{(0)}} & \\
&& \p \\
\arrow{r} & \p^{\frac{1}{2}}_{i,k} \rtimes {}'\p^{(1)}_1 \arrow{ur}[swap]{{}'\xi^{(0)}} & 
\end{tikzcd}
\]
Note $\D{G}^{(1)}, {}'\D{G}^{(1)}$ are Levi subgroups of $\D{G}$ conjugate under $S_{\p}$, and we have $\p^{\frac{1}{2}}_{i, j} \cong \p^{\frac{1}{2}}_{i, k}$ and $\p^{(1)}_1 \cong {}'\p^{(1)}_1$ under the conjugation. It follows that there exists $g \in G(F)$ such that $g(G^{(1)})g^{-1} = {}'G^{(1)}$ and $\cPkt{\p^{(1)}}^{g} = \cPkt{{}'\p^{(1)}}$. We can identify $\p^{(1)}_1$ and ${}'\p^{(1)}_1$, and take the same factorization and $1$-cochains for them. Recall that for $\p^{\frac{1}{2}}_{i, j}$ (resp. $\p^{\frac{1}{2}}_{i, k}$), we need to introduce a twist by $\omega_{i, j} \chi_i$ (resp. $\omega_{i, k} \chi_i$), where $\chi_i$ is the central character of $\cPkt{\lp_i}$ and 
\(
\cPkt{\lp_i} \otimes \omega_{i, j} = \cPkt{\lp_j}, \cPkt{\lp_i} \otimes \omega_{i, k} = \cPkt{\lp_k} \text{(resp. $\cPkt{\lif{\p_i \otimes \eta_i}} \otimes \omega_{i, j} = \cPkt{\lif{\p_j \otimes \eta_j}}, \cPkt{\lif{\p_i \otimes \eta_i}} \otimes \omega_{i, k} = \cPkt{\lif{\p_k \otimes \eta_k}}$)}
\)
for $N_i$ even (resp. odd). Then
\[
\cPkt{\lp^{(1)}_1}^{g} = \cPkt{{}'\lp^{(1)}_1} \otimes \omega_{k, j}
\]
and it follows that 
\[
\cPkt{\lp} = \cPkt{{}'\lp} \otimes \omega_{j, k} \otimes (\omega_{i, j} \chi_i / \omega_{i, k} \chi_i) = \cPkt{{}'\lp}.
\]
So we have \eqref{eq: compatible lifting} for the three fixed factorizations. This finishes the proof.
\end{proof}

At last, we can treat the elliptic parameters. Suppose 
\[
\p = \p_{1} \+ \cdots \+ \p_{q} \+ 2\p_{q+1} \+ \cdots \+ 2\p_{r} \in \cPel{G}.
\]

\begin{lemma}
\label{lemma: exceptional case}
If $\p_{i} = \eta_{i}$, and $r \leqslant 3$ (or $r \leqslant 4$ when $G$ is symplectic), then $\cPkt{\p}$ has compatible lifting.
\end{lemma}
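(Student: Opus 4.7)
The plan is to follow the same pattern as Lemma~\ref{lemma: compatible lifting of L-packets for non-elliptic parameters}: reduce to comparing two factorizations that differ by a single step, and resolve each such pair either by the inductive hypothesis or by an explicit check. The essential new difficulty is that, since $\p \in \cPel{G}$, we cannot factor $\p$ through any proper Levi of $G$; every step in the factorization must be through a twisted elliptic endoscopic embedding. The bound $r \leqslant 3$ (respectively $r \leqslant 4$ for $G$ symplectic) is what keeps the tree of such factorizations finite and small, with the extra case for symplectic $G$ reflecting the extra parity flexibility afforded by an odd-dimensional orthogonal factor on the endoscopic side.

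First I would fix one reference factorization and compare every other factorization to it. Given two factorizations with first endoscopic steps $G \rightarrow G^{(1)}$ and $G \rightarrow {}'G^{(1)}$, I would look for a common refinement, namely a twisted elliptic endoscopic group $G^{(2)}$ embedding into both $G^{(1)}$ and ${}'G^{(1)}$, so that $\p$ factors through $G^{(2)}$ in both chains. Since each $\p_i = \eta_i$ is a character, such a common refinement corresponds combinatorially to a common further partition of the multiset $\{\eta_1,\ldots,\eta_r\}$; for the values of $r$ considered, almost all pairs of factorizations admit a common refinement of strictly smaller rank. Whenever this is the case, the compatibility reduces to a statement about $G^{(2)}$ and its components, where the inductive hypothesis on rank, together with Corollary~\ref{cor: compatible lifting packet for product}, gives $\cPkt{\lp} \otimes \chi^{c, {}'c} = \cPkt{{}'\lp}$ as in the proof of Lemma~\ref{lemma: compatible lifting of L-packets for non-elliptic parameters}.

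The remaining cases are the base cases, i.e.\ the pairs of factorizations that do not share a nontrivial common refinement. Because $r \leqslant 3$ (resp.\ $\leqslant 4$), these base cases can be listed explicitly: they are the configurations in which both $G^{(1)}$ and ${}'G^{(1)}$ are minimal in the sense that any further descent would already make the parameter non-elliptic or would force one of the components to be trivial. For each such configuration I would verify \eqref{eq: compatible lifting} by direct computation: the lifted packet $\cPkt{\lp^{(n)}}$ is explicit (the components are characters twisted by $\tilde{\eta}^{\square} c^{(n)}_{l}$, and $\cPkt{\widetilde{1 \oplus \eta'}^{\rm triv}}$ is the trivial representation), and the endoscopic transfers $\tilde{\xi}^{(k)}$ differ between the two factorizations by a product of the explicit $1$-cochains $c_{\eta}$, $c_i$, $\tilde{\eta}^{\square}c_l^{(n)}$ introduced in Section~\ref{subsec: local}. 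What must be shown is that the quotient of the two global $1$-cochains $c$ and ${}'c$ obtained this way differs from a well-defined character $\chi^{c,{}'c}\colon W_F \to \D{D}$ by an element of $C^1(\Gal{L/F}, \Two)$, which then gives the twist relating $\cPkt{\lp}$ and $\cPkt{{}'\lp}$.

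The main obstacle is precisely this base-case verification. The combinatorics is manageable but delicate, since in the base cases both sides live at the same depth and no inductive shortcut is available; one has to unwind the explicit construction in the Clifford algebra and track the $1$-cochain contributions carefully. I expect that, as in Lemma~\ref{lemma: Siegel Levi}, the computation will reduce to showing that the obstructions $b,{}'b \in H^2(\Gal{L/F}, \Two)$ attached to the two factorizations agree, so that their splittings $c$ and ${}'c$ differ by a genuine character of $W_F$; once this is established, compatibility of the packets follows from the compatibility of the twisted endoscopic transfer with character twists, as recorded in \cite[Theorem 4.6]{Xu:2018}.
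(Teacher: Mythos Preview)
Your approach is fundamentally different from the paper's and has a genuine gap at the crucial step. The paper does not attempt any local or combinatorial analysis of factorizations for this lemma. Instead it proves Lemma~\ref{lemma: exceptional case} by the same \emph{global} method as Lemma~\ref{lemma: compatible lifting of L-packets for elliptic parameters}: globalize $\p$ to $\dot\p$, note that at almost all places the compatible lifting holds by the fundamental lemma for spherical Hecke algebras (the unramified local Langlands correspondence is compatible with twisted endoscopic transfer), and then invoke $\Sigma_0$-strong multiplicity one for $\lif{\dot\p}$ (\cite[Lemma~6.13]{Xu:2018}) to conclude $\cPkt{\lif{\dot\p}}\otimes\chi^{\dot c,{}'\dot c}=\cPkt{{}'\lif{\dot\p}}$, hence the same at the place $u$. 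The only difference from the proof of Lemma~\ref{lemma: compatible lifting of L-packets for elliptic parameters} is that here one cannot appeal to Lemma~\ref{lemma: exceptional case} itself at the auxiliary places, so one uses the fundamental lemma at almost all places together with the stronger form of strong multiplicity one.

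The gap in your proposal is the base-case verification. You suggest reducing it to $b={}'b$ in $H^2(\Gal{L/F},\Two)$ and then concluding via compatibility of transfer with character twists. But $b={}'b$ only yields compatible lifting of \emph{parameters} (Corollary~\ref{cor: compatible lifting parameter}); it does not by itself give compatible lifting of \emph{packets}, which is the content of Proposition~\ref{prop: compatible lifting packet}. The bridge from parameters to packets would be a local Langlands correspondence for $\lG$ compatible with twisted endoscopic transfer --- precisely what the paper is constructing, so it cannot be assumed. At a ramified nonarchimedean place there is no purely local tool in this paper for comparing two different compositions of endoscopic transfers; that is exactly why the argument for elliptic parameters is global. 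Your ``direct computation'' would need an independent local identity of transferred packets in the base cases, and you have not supplied one. Relatedly, your explanation of the bound on $r$ (keeping the factorization tree small) is not the operative reason; the bound reflects the range in which the required global inputs from \cite{Xu:2018} are already available.
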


The proof is similar to the general case with one difference. So we will consider the general case first.

\begin{lemma}
\label{lemma: compatible lifting of L-packets for elliptic parameters}
Suppose $[\p] \in \cPel{G}$, then the packet $\cPkt{\p}$ has compatible lifting.
\end{lemma}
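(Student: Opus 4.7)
The plan is to adapt the global-to-local argument used for the main local theorem of \cite{Xu:2018}. Since an elliptic parameter does not factor non-trivially through a proper Levi subgroup, the descent strategy of Lemma~\ref{lemma: compatible lifting of L-packets for non-elliptic parameters} is unavailable at $\phi$ itself; instead, we globalize $\phi$ and let that lemma, together with Lemma~\ref{lemma: exceptional case}, handle all auxiliary places.

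First I would globalize $\phi$ to a parameter $\dot\phi \in \cPel{\dot G}$ for a global quasisplit symplectic or even orthogonal $\dot G$ whose completion at a chosen place $v_0$ is $(G, \phi)$, arranging that at every other place $v \neq v_0$ the local parameter $\dot\phi_v$ is either non-elliptic (so that Lemma~\ref{lemma: compatible lifting of L-packets for non-elliptic parameters} applies after the parabolic-induction reduction of \cite[Proposition 3.11]{Xu:2018}) or fits the shape of Lemma~\ref{lemma: exceptional case}. The two given factorizations of $\phi$ with $1$-cochains $\{c^{(k)}\}$ and $\{{}'c^{(k)}\}$ are then globalized in parallel, yielding global lifts $\dot{\lp}$ and ${}'\dot{\lp}$; the character $\chi^{c,{}'c}$ is realized as the local component at $v_0$ of an id\`ele class character $\dot\chi^{c,{}'c}$ compatible with both global factorizations at every place. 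By the already-established cases, $\cPkt{\dot{\lp}_v} \otimes \dot\chi^{c,{}'c}_v = \cPkt{{}'\dot{\lp}_v}$ for all $v \neq v_0$.

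Next I would apply the stabilized trace-formula identity \eqref{eq: endoscopic side component}, the comparison formulas of Lemma~\ref{lemma: twisted spectral expansion} and Lemma~\ref{lemma: twisted endoscopic expansion}, and the multiplicity formula \eqref{formula: discrete spectrum} to the $\dot\phi$-part of the discrete spectrum with test function $\lf = \lf_{v_0} \otimes \lf^{v_0}$. Comparing the contributions of $\dot{\lp} \otimes \dot\chi^{c,{}'c}$ and ${}'\dot{\lp}$ to $L^2_{disc, \dot\phi}(\lG(F)\backslash\lG(\A_F), \lif\zeta)$, the local factors at all $v \neq v_0$ match by construction; the linear independence of characters of $\otimes_{v \neq v_0} \sH(\lG_v, \lif\chi_v)$-modules, combined with the stable multiplicity formula from Theorem~\ref{thm: stable multiplicity formula}, then forces $\cPkt{\lp} \otimes \chi^{c,{}'c} = \cPkt{{}'\lp}$ at $v_0$, as desired.

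The main obstacle is the globalization step: one must produce a global elliptic parameter with the prescribed local behavior at $v_0$ while confining every other completion to cases treated by Lemma~\ref{lemma: compatible lifting of L-packets for non-elliptic parameters} or Lemma~\ref{lemma: exceptional case}, and simultaneously realize the local $1$-cochain $\chi^{c,{}'c}$ as the local component at $v_0$ of a Hecke character $\dot\chi^{c,{}'c}$ that is compatible with the pair of global factorizations at every place. Following the construction of auxiliary self-dual cuspidal parameters in \cite{Arthur:2013}, one controls the central characters, residue fields, and ramification so that the comparison at each non-chosen place reduces to the inductive hypothesis or the already-proved non-elliptic and exceptional cases.
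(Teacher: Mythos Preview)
Your overall strategy—globalize $\phi$, verify compatible lifting at all auxiliary places, then deduce it at the distinguished place—matches the paper's. However, the mechanism you propose for the last step is both heavier and potentially circular, and you miss the key simplifying feature of the globalization.

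The paper invokes a specific globalization \cite[Lemma 6.18]{Xu:2018} that arranges three properties: (1) $\dot\phi_u = \phi$ with $\S{\dot\phi} = \S{\phi}$ and, crucially, $\S{\lif{\dot\phi}} = 1$; (2) at nonarchimedean $v \neq u$, $\dot\phi_v$ is a sum of quasicharacters (so Lemma~\ref{lemma: compatible lifting of L-packets for non-elliptic parameters} or Lemma~\ref{lemma: exceptional case} applies); (3) $\Sigma_0$-strong multiplicity one holds for $\lif{\dot\phi}$ at the place $u$. Property (1) places $\dot\phi$ in the \emph{stable} case already handled in \cite{Xu:2018}, so the two lifts $\cPkt{\lif{\dot\phi}}$ and $\cPkt{{}'\lif{\dot\phi}}$ are genuine global $L$-packets by \cite[Lemma 6.28]{Xu:2018}; no appeal to the main theorems of the present paper is needed. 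Then property (3) finishes the argument directly: once the local packets agree (up to the twist $\chi^{\dot c,{}'\dot c}_v$) at all $v \neq u$, strong multiplicity one forces the global packets to agree, hence also at $u$. The factorizations of $\dot\phi$ correspond bijectively to those of $\phi$, so the global $1$-cochains arise automatically and you do not need to separately realize $\chi^{c,{}'c}$ as a Hecke-character component.

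Your proposed route through \eqref{formula: discrete spectrum}, Lemma~\ref{lemma: twisted endoscopic expansion}, and Theorem~\ref{thm: stable multiplicity formula} is problematic because these are precisely the main theorems being established in Section~\ref{sec: proof of main theorems}, \emph{after} compatible lifting is in place; invoking them here risks circularity. The paper avoids this by reducing, via the condition $\S{\lif{\dot\phi}} = 1$, to results already proved in \cite{Xu:2018}, and by replacing your trace-formula comparison with the much lighter tool of strong multiplicity one.
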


\begin{proof}
By applying \cite[Lemma 6.18]{Xu:2018} to $\p$, we get a global lift 
\[
\dot{\p} = \dot{\p}_{1} \+ \cdots \+ \dot{\p}_{q} \+ 2\dot{\p}_{q+1} \+ \cdots \+ 2\dot{\p}_{r} \in \cPel{\dot{G}}
\]
satisfying the following properties.
\begin{enumerate}

\item $\dot{\p}_{u} = \p$, $\S{\dot{\p}} = \S{\p}$, $\S{\lif{\dot{\p}}} = 1$.

\item At nonarchimedean place $v \neq u$, $\dot{\p}_{v}$ is a direct sum of quasicharacters of $\dot{F}_{v}^{\times}$ with at most one ramified quasicharacter counted without multiplicities modulo the unramified quasicharacters.

\item $\Sigma_0$-strong multiplicity one holds for $\lif{\dot{\p}}$ at the place $u$, i.e., for any automorphic representation $\tilde{\dot{\r}}$ of $\lif{\dot{G}}$ such that $[\tilde{\dot{\r}}_{v}] \in \cPkt{\lif{\dot{\p}}_{v}}$ for all $v \neq u$, $[\tilde{\dot{\r}}]$ must be also in $\cPkt{\lif{\dot{\p}}}$.

\end{enumerate}
Suppose $\cPkt{\lif{\dot{\p}}}$ and $\cPkt{{}'\lif{\dot{\p}}}$ are obtained from two different factorizations of $\dot{\p}$ as in \eqref{eq: factorization of parameter} with $1$-cochains $\dot{c}$ and ${}'\dot{c}$ respectively, then they are global $L$-packets of $\lif{\dot{G}}$ by the functoriality of twisted endoscopic transfer (cf. \cite[Lemma 6.28]{Xu:2018}). By Lemma~\ref{lemma: compatible lifting of L-packets for non-elliptic parameters} and Lemma~\ref{lemma: exceptional case},
\(
\cPkt{\lif{\dot{\p}}_v} \otimes \chi^{\dot{c}, {}'\dot{c}}_{v} = \cPkt{\lif{{}'\dot{\p}}_{v}}
\)
for all $v \neq u$. By $\Sigma_0$-strong multiplicity one at the place $u$, we have
\(
\cPkt{\lif{\dot{\p}}} \otimes \chi^{\dot{c}, {}'\dot{c}} = \cPkt{{}'\lif{\dot{\p}}}
\)
and hence
\(
\cPkt{\lif{\dot{\p}}_u} \otimes \chi^{\dot{c}, {}'\dot{c}}_{u} = \cPkt{{}'\lif{\dot{\p}}_{u}}.
\)
At last, one just needs to notice that the factorizations of $\dot{\p}$ one-to-one correspond to the factorizations of $\p$.
\end{proof}

To apply this proof to Lemma~\ref{lemma: exceptional case}, one notes $\lif{\dot{\p}}_v$ may be elliptic at nonarchimedean place $v \neq u$. Nevertheless, we have
\(
\cPkt{\lif{\dot{\p}}_v} \otimes \chi^{\dot{c}, {}'\dot{c}}_{v} = \cPkt{{}'\lif{\dot{\p}}_{v}}
\)
for almost all places by the fundamental lemma for spherical Hecke algebras. By \cite[Lemma 6.13]{Xu:2018}), we know that $\Sigma_0$-strong multiplicity one holds for $\lif{\dot{\p}}$, i.e., for any automorphic representation $\tilde{\dot{\r}}$ of $\lif{\dot{G}}$ such that $[\tilde{\dot{\r}}_{v}] \in \cPkt{\lif{\dot{\p}}_{v}}$ for almost all places $v$, $[\lif{\dot{\r}}]$ must be also in $\cPkt{\lif{\dot{\p}}}$. Hence
\(
\cPkt{\lif{\dot{\p}}} \otimes \chi^{\dot{c}, {}'\dot{c}} = \cPkt{{}'\lif{\dot{\p}}}.
\)
This completes the proof of Lemma~\ref{lemma: exceptional case}.

\subsection{Towards local Langlands correspondence for similitude groups}

Suppose $F$ is nonarchimedean, and $G$ is a quasisplit symplectic or special even orthogonal group over $F$. We would like to show

\begin{theorem}
\label{thm: LLC}
One can associate any $\lp \in \cPbd{\lG}$ with an $L$-packet $\cPkt{\lp}$ such that 
\begin{enumerate}

\item $\cPkt{\lp \otimes \omega} = \cPkt{\lp} \otimes \omega$ for any $\omega \in H^{1}(W_{F}, Z(\D{\lG})) \cong {\rm Hom}(\lG(F), \mathbb{C}^{\times})$;

\item the central character of $\cPkt{\lp}$ is $\chi_{\lp}$, which is associated with the composition of $\lp$ and $\D{\lG} \rightarrow \D{Z}_{\lG}$;

\item it is compatible with twisted endoscopic transfer and parabolic induction.

\end{enumerate}
\end{theorem}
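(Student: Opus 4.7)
The plan is to leverage the compatible lifting of $L$-packets (Proposition~\ref{prop: compatible lifting packet} and its product version Corollary~\ref{cor: compatible lifting packet for product}) to define, for each $\lp \in \cPbd{\lG}$, a distinguished representative within the equivalence class of lifts of the underlying Arthur packet $\cPkt{\p}$, where $\p \in \cPbd{G}$ is the image of $\lp$ under $\bold{p}: \D{\lG} \to \D{G}$.

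First I would choose a decomposition \eqref{eq: decomposition of parameter} of $\p$, a factorization \eqref{eq: factorization of parameter} through proper twisted endoscopic groups and proper maximal Levi subgroups, and a sequence of $1$-cochains $\{c^{(k)}\}$. By the constructions in Section~\ref{sec: compatible lifting}, these data produce both a lifted parameter $\lp^{\square}$ and a lifted packet $\cPkt{\lp^{\square}}$. Since twisting $\{c^{(k)}\}$ by a character of $W_{F}$ valued in $\D{D}$ twists $\lp^{\square}$ by the same character, I can adjust the cochains so that $\lp^{\square} = \lp$ in $\cPbd{\lG}$, and then set $\cPkt{\lp} := \cPkt{\lp^{\square}}$. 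The central character of $\cPkt{\lp^{\square}}$ is read off the composition of $\lp^{\square}$ with the projection $\D{\lG} \to \D{\lZ}$, so by construction it agrees with $\chi_{\lp}$, giving (2).

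For well-definedness, if a second choice of factorization and cochains $\{{}'c^{(k)}\}$ produces $(\lp^{'\square}, \cPkt{\lp^{'\square}})$ with $\lp^{'\square} = \lp$ as well, then Corollary~\ref{cor: compatible lifting parameter} forces $\chi^{c, {}'c}$ to lie in $\a(\S{\p}^{\Sigma_{0}})$ up to $\Hom(\Gal{L/F}, \Two)$, and Proposition~\ref{prop: compatible lifting packet} combined with Corollary~\ref{cor: compatible lifting packet for product} gives $\cPkt{\lp^{'\square}} = \cPkt{\lp^{\square}} \otimes \chi^{c, {}'c}$. Since twists by elements of $\a(\S{\p}^{\Sigma_{0}})$ preserve the $L$-packet (cf.\ the paragraph after \eqref{eq: local twisted endoscopic sequence}), the two constructions yield the same $\cPkt{\lp}$. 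Property (1) is then immediate by absorbing the twist $\omega$ into the cochains.

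For property (3), given a twisted endoscopic embedding $\lif{\xi}: \L{\lG}' \hookrightarrow \L{\lG}$ through which $\lp$ factors as $\lp = \lif{\xi} \circ \lp'$, I would choose the factorization of $\p$ to descend through the underlying embedding $\xi: \L{G}' \hookrightarrow \L{G}$; the defining formula $\cPkt{\lp^{\square}} = {\rm Tran}_{\lif{\xi}^{(0)}} \circ \cdots \circ {\rm Tran}_{\lif{\xi}^{(n-1)}} \cPkt{\lp^{(n)}}$ then immediately yields $\cPkt{\lp} = {\rm Tran}_{\lif{\xi}}\, \cPkt{\lp'}$. The analogous argument running the factorization through a Levi subgroup, combined with Remark~\ref{rk: compatible with parabolic induction}, gives compatibility with parabolic induction. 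The main obstacle throughout is the well-definedness of $\cPkt{\lp}$ across different factorizations, but this is precisely resolved by the compatible lifting results already established.
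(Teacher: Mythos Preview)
Your overall strategy---building the correspondence via the compatible lifting machinery of Section~\ref{sec: compatible lifting}---is exactly the paper's approach, but there are two genuine gaps.

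First, the construction requires a base case: one must \emph{fix in advance} a correspondence $\lp_i \mapsto \cPkt{\lp_i}$ for simple parameters (and for the special parameters $(\widetilde{1\oplus\eta'})^{\mathrm{triv}}\otimes\omega$) already satisfying (1) and (2). This is not automatic. For (1) in the orthogonal case one needs Theorem~\ref{thm: twist}, and for (2) one needs a result from \cite{Xu:2016}. Without anchoring the simple pieces, the inductive construction has nothing to stand on, and the well-definedness and equivariance you claim downstream are not guaranteed.

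Second, your argument for (1) is incorrect. You write that (1) ``is immediate by absorbing the twist $\omega$ into the cochains,'' but the cochains $c^{(k)}$ take values in $\D{D}$, so they can only absorb twists lying in $\bar{H}^{1}(W_F,\D{D}) \cong \Hom(\lG(F)/G(F),\C^{\times})$. A general $\omega \in H^{1}(W_F, Z(\D{\lG}))$ restricts nontrivially to $G(F)$ and therefore changes the underlying parameter $\p$ itself; it cannot be absorbed into the lifting data. The paper instead derives (1) and (2) \emph{from} (3) by extending the arguments of \cite[Lemma~5.2 and Lemma~4.1]{Xu:2016} to the twisted case, together with the base case for simple parameters. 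Your direct argument for (2) is similarly too quick: nothing in the raw construction tells you the central character matches $\chi_{\lp}$ unless you have already verified it at the simple level and then propagated it through the transfers.
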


For the compatibility with twisted endoscopic transfer, we first need to construct the correspondence for the twisted endoscopic groups. It suffices to consider $G = G_1 \times \cdots \times G_q$ \eqref{eq: product} and $\p = \p_1 \times \cdots \times \p_q$ for $\p_{i} \in \cPbd{G_i}$. We define
\[
\lp = {\bf p} \circ (\lp_1 \times \cdots \times \lp_q) \mapsto \cPkt{\lp} = \cPkt{\lp_1} \tilde{\otimes} \cdots \tilde{\otimes} \, \cPkt{\lp_q}.
\]
where ${\bf p}: \L{(\prod_{i = 1}^{q} \lG_{i})} \rightarrow \L{\lG}$ is dual to the inclusion $\lG \hookrightarrow \prod_{i = 1}^{q} \lG_{i}$.

\begin{proof}

The proof is by construction, which depends on our choices for simple parameters.
\begin{itemize}

\item For $N \neq 1$ and $\p \in \cPsm{G}$, we fix a correspondence 
\(
\lp \mapsto \cPkt{\lp}
\) 
such that (1) and (2) hold. Here (1) can be achieved by \cite[Corollary 4.2]{Xu:2018} and Theorem~\ref{thm: twist}, and (2) is due to \cite[Proposition 6.27]{Xu:2016}.

\item Associate
\(
(\widetilde{1 \oplus \eta})^{\rm triv} \otimes \omega
\)
with $\cPkt{\widetilde{1 \oplus \eta}^{\rm triv}} \otimes \omega$.
\end{itemize}
For $\p \in \cPbd{G}$, we take a decomposition of $\p$ as \eqref{eq: decomposition of parameter} such that all $\p_i, \p_{GL, j}$ are irreducible and $\p_{GL, j}$ are not of orthogonal type. Then we also take a factorization of $\p$ as \eqref{eq: factorization of parameter}. We can construct a lift $\lp$ with respect to any choice of lifts $\lp_i$ for even $N_i$ (resp. $\widetilde{\p_i \otimes \eta_i}$ for odd $N_i \neq 1$) and $1$-cochains $\{c^{(k)}\}$. Correspondingly, we will construct $\cPkt{\lp}$ with respect to the same factorization of $\p$, $\cPkt{\lp_i}$ (resp. $\cPkt{\widetilde{\p_i \otimes \eta_i}}$) associated with $\lp_i$ (resp. $\widetilde{\p_i \otimes \eta_i}$), and the same $1$-cochains. Then we define
\[
\lp \otimes \omega \mapsto \cPkt{\lp} \otimes \omega
\]
for $\omega \in \bar{H}^{1}(W_{F}, \D{D}) \cong \Hom(\lG(F)/G(F), \C^{\times})$. It is not hard to see that this is independent of choices of lifts $\lp_i$ (resp. $\widetilde{\p_i \otimes \eta_i}$ if $N_i$ is odd) and $\{c^{(k)}\}$. It is also independent of the factorization by Corollary~\ref{cor: compatible lifting parameter} and Proposition~\ref{prop: compatible lifting packet}. At last, it is independent of the decomposition of $\p$, since any two decompositions are conjugate under $S_{\p}$. 

To show (3), note
\(
\lp = \lif{\xi}^{(0)} \circ \lp^{(1)}
\)
and 
\(
\cPkt{\lp} = {\rm Tran}_{\lif{\xi}^{(0)}} \, \cPkt{\lp^{(1)}}.
\)
So it remains to show that $\lp^{(1)}$ is associated with $\cPkt{\lp^{(1)}}$, which is clear by our construction and choices for simple parameters. Finally, (1) and (2) follows from (3) by extending the arguments of \cite[Lemma 5.2 and Lemma 4.1]{Xu:2016} to the twisted case, which is straightforward.
\end{proof}

As a consequence of Theorem~\ref{thm: LLC} (1), we have

\begin{corollary}
For $\p \in \cPbd{G}$ and $\omega \in H^{1}(W_{F}, Z(\D{\lG})) \cong {\rm Hom}(\lG(F), \mathbb{C}^{\times})$, 
\begin{align*}
[\lp \otimes \omega] = [\lp] \text{ if and only if } \cPkt{\lp} \otimes \omega = \cPkt{\lp}.
\end{align*}
\end{corollary}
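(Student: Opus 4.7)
The plan is to deduce the corollary from Theorem~\ref{thm: LLC}(1), which gives $\cPkt{\lp \otimes \omega} = \cPkt{\lp} \otimes \omega$, together with the classical disjoint decomposition \eqref{eq: disjoint decomposition similitude} and the equivalence
\[
[\lp \otimes \omega'] = [\lp] \Longleftrightarrow \omega' \in \a(\S{\p}^{\Sigma_0})
\]
for $\omega' \in \Hom(\lG(F)/G(F), \C^{\times})$, recalled in the local theory section from \cite[Theorem 4.6]{Xu:2018}.

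The forward direction is formal: if $[\lp \otimes \omega] = [\lp]$ as $\D{\Sigma}_0$-orbits, then the fact that the assignment $[\lp] \mapsto \cPkt{\lp}$ in Theorem~\ref{thm: LLC} is well-defined on $\D{\Sigma}_0$-orbits gives $\cPkt{\lp \otimes \omega} = \cPkt{\lp}$; combining with Theorem~\ref{thm: LLC}(1) yields $\cPkt{\lp} \otimes \omega = \cPkt{\lp}$.

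For the converse, suppose $\cPkt{\lp} \otimes \omega = \cPkt{\lp}$. By Theorem~\ref{thm: LLC}(1) this is $\cPkt{\lp \otimes \omega} = \cPkt{\lp}$. Let $\p = \bold{p}(\lp) \in \cPbd{G}$ and let $\bar{\omega}$ denote the image of $\omega$ under $H^{1}(W_{F}, Z(\D{\lG})) \to H^{1}(W_{F}, Z(\D{G}))$ from \eqref{eq: twist}, so that $\bold{p}(\lp \otimes \omega) = \p \otimes \bar{\omega}$. Restricting packets from $\lG(F)$ to $G(F)$ and invoking the disjoint decomposition \eqref{eq: disjoint decomposition} for $G$ forces $[\p] = [\p \otimes \bar{\omega}]$ in $\cPbd{G}$, i.e.\ either $\p \otimes \bar{\omega}$ is $\D{G}$-conjugate to $\p$ or it is $\D{G}$-conjugate to $\p^{\theta_0}$. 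In the first sub-case $\bar{\omega}$ is cohomologically trivial, so $\omega$ descends to $\Hom(\lG(F)/G(F), \C^{\times})$ and the classical result immediately gives $[\lp \otimes \omega] = [\lp]$. In the second sub-case $\lp^{\theta_0}$ and $\lp \otimes \omega$ are two lifts of the common parameter $\p \otimes \bar{\omega} = \p^{\theta_0}$, so there exists $\omega'' \in \bar{H}^{1}(W_{F}, \D{D}) \cong \Hom(\lG(F)/G(F), \C^{\times})$ with $\lp \otimes \omega = \lp^{\theta_0} \otimes \omega''$. Since $\cPkt{\lp^{\theta_0}} = \cPkt{\lp}$ by the well-definedness on $\D{\Sigma}_0$-orbits, applying Theorem~\ref{thm: LLC}(1) again reduces the hypothesis to $\cPkt{\lp} \otimes \omega'' = \cPkt{\lp}$. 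The classical characterization then gives $\omega'' \in \a(\S{\p}^{\Sigma_0})$, hence $[\lp \otimes \omega''] = [\lp]$, and therefore $[\lp \otimes \omega] = [\lp^{\theta_0} \otimes \omega''] = [\lp]$ as $\D{\Sigma}_0$-orbits.

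The main obstacle is the second sub-case of the converse: one must pass carefully between $\D{G}$-conjugacy and $\D{\Sigma}_0$-conjugacy and exploit the fact, implicit in the construction behind Theorem~\ref{thm: LLC}, that $\cPkt{\lp}$ depends only on the $\D{\Sigma}_0$-orbit of $\lp$, so that $\cPkt{\lp^{\theta_0}} = \cPkt{\lp}$ can legitimately be used to reduce the general $\omega \in H^{1}(W_{F}, Z(\D{\lG}))$ to the subgroup $\bar{H}^{1}(W_{F}, \D{D})$ on which the previously established equivalence of \cite[Theorem 4.6]{Xu:2018} applies.
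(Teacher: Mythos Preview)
Your overall strategy is the right one and matches the paper's one-line justification: the corollary is an immediate consequence of Theorem~\ref{thm: LLC}(1) together with the disjointness \eqref{eq: disjoint decomposition similitude}. However, your case analysis in the converse direction contains a genuine error.

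In your first sub-case you assert that if $\p \otimes \bar{\omega}$ is $\D{G}$-conjugate to $\p$, then $\bar{\omega}$ is cohomologically trivial. This is false. Take for instance $G = SO(2n,\eta')$ and a parameter $\p$ that is $\theta_0$-stable (which by Lemma~\ref{lemma: discrete parameter}(4) happens whenever $\p$ has an odd-dimensional orthogonal constituent) and satisfies $\p \cong \p \otimes \eta$ as representations for some nontrivial quadratic character $\eta$. Then $\p \otimes \bar{\omega}$ with $\bar{\omega} = \eta$ is $\D{G}$-conjugate to $\p$, yet $\bar{\omega}$ is nontrivial. Such parameters certainly exist; they are precisely the ones appearing in Theorem~\ref{thm: twist} in the appendix.

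The fix is simple: your sub-case~2 argument works uniformly. From $[\p \otimes \bar{\omega}] = [\p]$ in $\cPbd{G}$ you only conclude that $\lp \otimes \omega$ and $\lp^{\theta}$ (for some $\theta \in \Sigma_0$, possibly trivial) are two lifts of the same $\D{G}$-orbit of parameters, hence differ by some $\omega'' \in \bar{H}^{1}(W_F,\D{D})$; then proceed exactly as you did in sub-case~2. Even more directly, one may bypass the case split entirely: the packets produced by Theorem~\ref{thm: LLC} are lifts of $\cPkt{\p}$, hence occur among those in \eqref{eq: disjoint decomposition similitude}, which are pairwise disjoint; so $[\lp] \mapsto \cPkt{\lp}$ is injective, and $\cPkt{\lp \otimes \omega} = \cPkt{\lp}$ forces $[\lp \otimes \omega] = [\lp]$ immediately.
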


This corollary is an enhancement of \cite[Corollary 4.2]{Xu:2018} when $G$ is orthogonal. It generalizes Theorem~\ref{thm: twist}.


\section{Proof of main theorems}
\label{sec: proof of main theorems}

Let $F$ be global. We denote $G(n) := Sp(2n), SO(2n+2, \eta)$ and $\lG(n) := GSp(2n), GSO(2n+2, \eta)$ over $F$. Let 
\(
G = G(n_{1}) \times G(n_{2}) \times \cdots \times G(n_{q}).
\)
We assume Theorem~\ref{thm: global L-packet}, \ref{thm: compatible normalization}, ~\ref{thm: functoriality}, \ref{thm: stable multiplicity formula}, \ref{thm: stable multiplicity formula product} hold when $n_{i} < N$ for all $1 \leqslant i \leqslant q$. In the proof, we will always treat the symplectic case first, and then take the results to enhance the induction assumptions.

\subsection{Proof of theorem~\ref{thm: global L-packet}}

\subsubsection{Discrete parameters}
\label{subsubsec: discrete parameter}

Let $G = G(N)$,
\(
\p = \p_1 \boxplus \cdots \boxplus \p_r \in \cPdt{G}
\)
and $\p \neq \p_b$. Suppose $\p \neq \p_o$, then by \eqref{eq: endoscopic expansion 1} 
\begin{align*}
\Idt{\lG}{, \p}(\lf) - \Sdt{\lG}{, \p} (\lf) & = C_{\lp_o} \sum_{\x' \in Y / \a(\S{\p_o})} \sum_{x_o \in \S{\lp_o} - \{1\}} \lf_{\lG}' (\lp_o \otimes \x', x_o; \lp_b) \\
& + {\rm Tran} \, \Sdt{\lG_o}{, \p_o} \tilde{\otimes} \, (\frac{1}{2} \Idt{\lG_b}{, \lp_b} - \frac{1}{4} \Sdt{\lG_b}{, \lp_b} ), \quad \lf \in \sH(\lG, \lif{\chi}).
\end{align*}
We add
\begin{align*}
2 \cdot C_{\lp_o} \sum_{\x \in Y / \a(\S{\p_o})} \sum_{x_o \in \S{\lp_o} - \{1\}}  \sum_{ \substack{ [\lr] \in \cPkt{}(\lp_o; \lp_b) \\ \langle x_o, \lr \rangle = -1}} \lf_{\lG}(\lr \otimes \x)
\end{align*}
to both sides, then the right hand side becomes stable. It follows that
\begin{align}
\label{eq: global L-packet for discrete parameter}
\Idt{\lG}{, \p}(\lf) +  2 \cdot C_{\lp_o} \sum_{\x \in Y / \a(\S{\p_o})} \sum_{x_o \in \S{\lp_o} - \{1\}}  \sum_{ \substack{ [\lr] \in \cPkt{}(\lp_o; \lp_b) \\ \langle x_o, \lr \rangle = -1}} \lf_{\lG}(\lr \otimes \x)  
\end{align}
is also stable. Then we can argue by stability as follows. We define
\(
\cPkt{\lp} = \otimes'_{v} \cPkt{\lp_{v}}
\) 
such that it contains a discrete automorphic representation $\lr^{0}$. Since \eqref{eq: global L-packet for discrete parameter} is stable, it is stable at every place. So we can take $\lf = \otimes_{w} \lf_{w}$ and fix $\otimes_{w \neq v}\lf_{w}$ for any place $v$, then by \cite[Corollary 4.8]{Xu:2018} the coefficient of $\lf_{v}(\lr_{v})$ in \eqref{eq: global L-packet for discrete parameter} must be the same for all $\lr_{v} \in \cPkt{\lp_{v}}$. By varying $\otimes_{w \neq v}\lf_{w}$ and the linear independence of characters of $\otimes_{w \neq v} \sH(\lG_{w}, \lif{\chi}_{w})$-modules, we have that
\[
[\lr^{0}] = [\lr^{0}_{v}] \otimes (\otimes_{w \neq v} [\lr^{0}_{w}])
\]
contributes to \eqref{eq: global L-packet for discrete parameter} if and only if all elements in
\[
\cPkt{\lp_{v}} \otimes (\otimes_{w \neq v} [\lr^{0}_{w}])
\]
also contribute to \eqref{eq: global L-packet for discrete parameter}. By repeating this kind of argument, one can show all elements in $\cPkt{\lp}$ contribute to \eqref{eq: global L-packet for discrete parameter}. Note for any $[\lr] \in \cPkt{\lp}$ such that $\langle \cdot, \lr \rangle = 1$, it can only contribute to $\Idt{\lG}{, \p}(\lf)$, which means it belongs to the discrete automorphic spectrum. Then \eqref{formula: discrete spectrum} follows from this observation and \eqref{eq: discrete spectrum}. Moreover, we get
\(
\cPkt{\lp} = \cPkt{}(\lp_o; \lp_b),
\) 
if there exists $x_o \in \S{\lp_o} - \{1\}$ and $[\lr] \in \cPkt{}(\lp_o; \lp_b)$ such that $\langle x_o, \lr \rangle = -1$, i.e., 
\begin{align}
\label{eq: local-global component group}
\S{\lp_o} \longrightarrow \prod_{v}\S{\lp_{o, v}}
\end{align}
is not trivial. 
Suppose $\p = \p_o$, then by \eqref{eq: endoscopic expansion}
\begin{align*}
\Idt{\lG}{, \p}(\lf) - \Sdt{\lG}{, \p} (\lf) = C_{\lp} \sum_{\x' \in Y / \a(\S{\p})} \sum_{x \in \S{\lp} - \{1\}} \lf_{\lG}' (\lp \otimes \x', x), \quad \lf \in \sH(\lG, \lif{\chi}).
\end{align*}
We add
\begin{align*}
2 \cdot C_{\lp} \sum_{\x \in Y / \a(\S{\p})} \sum_{x \in \S{\lp} - \{1\}}  \sum_{ \substack{ [\lr] \in \cPkt{\lp_{x}} \\ \langle x, \lr \rangle = -1}} \lf_{\lG}(\lr \otimes \x)
\end{align*}
to both sides, where $\cPkt{\lp_{x}}$ is a global packet transferred from $\cPkt{\lp'}$ for $x \in \S{\lp} - \{1\}$. By the argument on stability again, we get the global $L$-packet $\cPkt{\lp}$, with which \eqref{formula: discrete spectrum} holds. Moreover, 
\begin{align}
\label{eq: functoriality discrete}
\cPkt{\lp} = \cPkt{\lp_x}
\end{align}
if there exists $[\lr] \in \cPkt{\lp_x}$ such that $\langle x, \lr \rangle = -1$, i.e., $x$ has nontrivial image in $\prod_{v}\S{\lp_{v}}$. By Proposition~\ref{prop: compatible lifting packet}, \eqref{eq: functoriality discrete} holds for all $x \neq 1$ if it holds for one. So it suffices to know that \eqref{eq: local-global component group} is not trivial. 

\subsubsection{General case}

Suppose $\p \in \cP{G} - \cPdt{G}$, then $\p$ factors through $\p_{M} \in \cPdt{M}$ for some proper Levi subgroup $M$ of $G$. Then by our induction assumption, we have a global $L$-packet $\cPkt{\lp_{M}}$ for $\lM$, and we can define $\cPkt{\lp}$ to be the set of irreducible constituents induced from $\cPkt{\lp_{M}}$. At last, let
\[
G = G(n_{1}) \times G(n_{2}) \times \cdots \times G(n_{q}),
\]  
with $n_{i} \leqslant N$ for $1 \leqslant i \leqslant q$ and $\p \in \cP{G}$. The global $L$-packet is the restriction of the global $L$-packet $\bigotimes_{i = 1}^{q} \cPkt{\lp_{i}}$ of $\lG(n_{1}) \times \lG(n_{2}) \times \cdots \times \lG(n_{q})$, with which \eqref{formula: discrete spectrum} holds.

\subsection{Proof of theorem~\ref{thm: compatible normalization}}
\label{subsec: proof of compatible normalization}

Next, we consider Theorem~\ref{thm: compatible normalization} for $G$. For $\p \in \cPdt{G}$,
\[
\tIdt{\lG^{\theta}}{, \p}(\lf) = tr R^{(\lG^{\theta}. \x)}_{disc, \p}(\lf) = \sum_{\x'} \sum_{\substack{ [\lr] \in \cPkt{\lp} \otimes \x' \\ \langle \cdot, \lr \rangle = 1}} m(\lr, \theta, \x) \lf_{\lG^{\theta}}(\lr, \x), \quad \lf \in \sH(\lG, \lif{\chi}).
\]
Here the sum of $\x'$ is taken over
\(
Y / \prod^{aut}_{v} \a(\S{\p_{v}}^{\Sigma_{0}})
\)
with
\(
\prod^{aut}_{v} \a(\S{\p_{v}}^{\Sigma_{0}}) := \{ \x \in Y :  \x_{v} \in \a(\S{\p_{v}}^{\Sigma_0}) \text{ for all } \, v \},
\)
and $m(\lr, \theta, \x)$ is some integer, whose absolute value is less than or equal to
\(
m(\lp) :=  m_{\p} \, |\prod^{aut}_{v} \a(\S{\p_{v}}^{\Sigma_{0}}) | \, | \a(\S{\p})|^{-1}.
\)
By \eqref{eq: twisted endoscopic expansion}, we have
\[
\tIdt{\lG^{\theta}}{, \p}(\lf) = |\S{\lp_o}|^{-1} \sum_{\x'} \sum_{x'_o \in \S{\p_o}^{\theta}(\x)} m(\lp) \lf'_{\lG^{\theta}}(\lp_o \otimes \x', x'_o; \lp_b),
\]
where the sum of $\x'$ is again over
\(
Y / \prod^{aut}_{v} \a(\S{\p_{v}}^{\Sigma_{0}}).
\)
Therefore
\begin{align*}
\sum_{\x'} \sum_{\substack{ [\lr] \in \cPkt{\lp} \otimes \x' \\ \langle \cdot, \lr \rangle = 1}} m(\lr, \theta, \x) \lf_{\lG^{\theta}}(\lr, \x) =   |\S{\lp_o}|^{-1} \sum_{\x'} \sum_{x'_o \in \S{\p_o}^{\theta}(\x)} m(\lp) \lf'_{\lG^{\theta}}(\lp_o \otimes \x', x'_o; \lp_b). 
\end{align*}
There are two cases. If $\p \neq \p_o$, then 
\begin{align*}
\sum_{\x'} \sum_{\substack{ [\lr] \in \cPkt{\lp} \otimes \x' \\ \langle \cdot, \lr \rangle = 1}} m(\lr, \theta, \x) \lf_{\lG^{\theta}}(\lr, \x) & =  |\S{\lp_o}|^{-1} \sum_{\x'} \sum_{y_o \in \S{\lp_o}} m(\lp)  \sum_{[\lr] \in \cPkt{}(\lp_o; \lp_{b}) \otimes \x'} \langle y, \lr \rangle \lf_{\lG^{\theta}}(\lr, \x) \\
& = m(\lp) \sum_{\x'} \sum_{\substack{[\lr] \in \cPkt{}(\lp_o; \lp_{b}) \otimes \x' \\ \langle \cdot, \lr \rangle = 1}} \lf_{\lG^{\theta}}(\lr, \x),
\end{align*}
where $\lf_{\lG^{\theta}}(\lr, \x)$ is normalized by $x$. By the linear independence of twisted characters of $\bar{\mathcal{H}}(\lG, \lif{\chi})$-modules, we get 
\(
\cPkt{\lp} = \cPkt{}(\lp_o; \lp_b).
\)
If $\p = \p_o$, we define a global packet $\cPkt{\lp_{x'}}$ transferred from $\cPkt{\lp'}$ for any $x' \in \S{\p}^{\theta}(\x)$. Note $\S{\p}^{\theta}(\x) = x \cdot \S{\lp}$, then
\begin{align}
\label{eq: compatible normalization 2}
\sum_{\x'} \sum_{\substack{ [\lr] \in \cPkt{\lp} \otimes \x' \\ \langle \cdot, \lr \rangle = 1}} m(\lr, \theta, \x) \lf_{\lG^{\theta}}(\lr, \x) =   |\S{\lp}|^{-1} \sum_{\x'} \sum_{y \in \S{\lp}} m(\lp)  \sum_{[\lr] \in \cPkt{\lp_{xy}} \otimes \x'} \langle y, \lr \rangle \lf_{\lG^{\theta}}(\lr, \x),
\end{align}
where $\lf_{\lG^{\theta}}(\lr, \x)$ is normalized by $x$. This implies 
\begin{align*}
\sum_{\x'} \sum_{\substack{ [\lr] \in \cPkt{\lp} \otimes \x' \\ \langle \cdot, \lr \rangle = 1}} m(\lr, \theta, \x) \lf_{\lG^{\theta}}(\lr, \x) =   |\S{\lp}|^{-1}  \sum_{\x'} \sum_{y \in \S{\lp}} m(\lp)  \sum_{\substack{ [\lr] \in \cPkt{\lp_{xy}} \otimes \x' \\ \langle \cdot, \lr \rangle = 1}} \lf_{\lG^{\theta}}(\lr, \x). 
\end{align*}
It follows from the linear independence of twisted characters of $\bar{\mathcal{H}}(\lG, \lif{\chi})$-modules that we can choose $\cPkt{\lp_{x'}} = \cPkt{\lp}$ for all $x' \in \S{\p}^{\theta}(\x)$. In both cases, we have
\begin{align*}
\sum_{\substack{ [\lr] \in \cPkt{\lp} \\ \langle \cdot, \lr \rangle = 1}} m(\lr, \theta, \x) \lf_{\lG^{\theta}}(\lr, \x) =   m(\lp) \sum_{\substack{ [\lr] \in \cPkt{\lp} \\ \langle \cdot, \lr \rangle = 1}} \lf_{\lG^{\theta}}(\lr, \x). 
\end{align*}
So $m(\lr, \theta, \x) = m(\lp)$. Hence
\begin{align*}
\tIdt{\lG^{\theta}}{, \p}(\lf) = m_{\p} \sum_{\x' \in Y / \a(\S{\p})} \sum_{\substack{ [\lr] \in \cPkt{\lp} \otimes \x' \\ \langle \cdot, \lr \rangle = 1}} \lf_{\lG^{\theta}}(\lr, \x).
\end{align*}

As a consequence, we have shown the functoriality for 
\[
s \in (S_{\p_o}^{\Sigma_0} \times Z(\D{G}_b))/Z(G)^{\Gal{}} - (S_{\lp_o} \times Z(\D{G}_b)/Z(G)^{\Gal{}}.
\]
By the compatible lifting of global $L$-packets again (cf. Corollary~\ref{cor: compatible lifting packet for product}), this implies the functoriality for all 
\(
s \in (S_{\p_o}^{\Sigma_0} \times Z(\D{G}_b))/Z(G)^{\Gal{}},
\) 
whenever $\cS{\p_o}^{\Sigma_0} \neq \cS{\lp_o}$, i.e. $\S{\p_o}^{\Sigma_0} \neq \S{\lp_o}$.

\subsection{Proof of Theorem~\ref{thm: functoriality}, ~\ref{thm: stable multiplicity formula}, ~\ref{thm: stable multiplicity formula product}}

For Theorem~\ref{thm: stable multiplicity formula}, ~\ref{thm: stable multiplicity formula product}, it remains to show part (2) and (3) (cf. Section~\ref{sec: GL-type}), which are direct consequences of Theorem~\ref{thm: functoriality} and the comparison formulas (cf. Section~\ref{sec: comparison formula}). So we will mainly focus on Theorem~\ref{thm: functoriality}.

\subsubsection{Non-discrete parameters}

\begin{lemma}
\label{lemma: functoriality of twisted endoscopic transfer for non-discrete parameters}
Suppose $G = G(N)$ and $\p \in \cP{G} - \cPdt{G}$, then Theorem~\ref{thm: functoriality} holds for $\lp$.
\end{lemma}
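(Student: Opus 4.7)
The plan is to exploit the fact that every non-discrete global parameter $\p \in \cP{G} - \cPdt{G}$ admits a factorization through a proper Levi subgroup of $G$, and then reduce the functoriality statement to the induction hypothesis on the Levi.

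First, I would choose a proper Levi subgroup $M$ of $G$ such that $\p$ factors through some $\p_M \in \cPdt{M}$. Such a factorization exists by Lemma~\ref{lemma: discrete parameter}: since $\p$ is not discrete, either $J_{\p} \neq \emptyset$ or some orthogonal simple summand occurs with multiplicity $\geqslant 2$, and in either case the corresponding general linear factors can be peeled off into the Levi. Given semisimple $s \in (S^{\theta}_{\p_{o}} \times Z(\D{G}_b))/Z(\D{G})^{\Gal{}}$, the associated twisted endoscopic datum $(G',s,\xi)$ of $G$ lifts to $(\lG', \lif{s}, \lif{\xi})$, and by choosing the Levi compatibly we may arrange that $s$ also represents a twisted endoscopic datum $(M',s,\xi_{M'})$ of $M$ with lift $(\lif{M}', \lif{s}, \lif{\xi}_{M'})$ sitting inside the commutative diagram of Remark~\ref{rk: compatible with parabolic induction}. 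Then $\p'$ factors through some $\p'_{M'}$ with $\xi_{M'} \circ \p'_{M'}$ conjugate to $\p_M$.

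Next, since the rank of $M$ is strictly less than $N$, my induction hypothesis supplies global packets $\cPkt{\lp_M}$ and $\cPkt{\lp'_{M'}}$ with
\[
{\rm Tran}_{\lif{\xi}_{M'}} \, \cPkt{\lp'_{M'}} = \cPkt{\lp_M}.
\]
I would then define $\cPkt{\lp} := \mathrm{Ind}^{\lG}_{\lif{M}} \cPkt{\lp_M}$ and $\cPkt{\lp'} := \mathrm{Ind}^{\lG'}_{\lif{M}'} \cPkt{\lp'_{M'}}$, and invoke the compatibility of twisted endoscopic transfer with parabolic induction (Remark~\ref{rk: compatible with parabolic induction}) to conclude
\[
{\rm Tran}_{\lif{\xi}} \, \cPkt{\lp'} = {\rm Tran}_{\tilde{\xi}} \, \mathrm{Ind}^{\lG'}_{\lif{M}'} \cPkt{\lp'_{M'}} = \mathrm{Ind}^{\lG}_{\lif{M}} \, {\rm Tran}_{\tilde{\xi}_{M'}} \cPkt{\lp'_{M'}} = \mathrm{Ind}^{\lG}_{\lif{M}} \cPkt{\lp_M} = \cPkt{\lp}.
\]
This is a statement at each place $v$ (by compatibility of local parabolic induction with the local transfer), and then globally by taking the restricted tensor product.

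The main obstacle I anticipate is the degenerate situation in which $\lif{M}' \cong \lif{M}$, i.e.\ when $s$ becomes central in $\D{M}$ modulo $Z(\D{G})^{\Gal{}}$. In that case ${\rm Tran}_{\tilde{\xi}_{M'}}$ reduces to twisting by a character $\omega$ of $\lif{M}(\A_F)$ coming from $\tilde{\xi}_{M'}$, and the content of the lemma becomes the assertion that $\omega$ lies in $Y$ rather than only in $X$. Here I would have to check that the character produced by the lift of the endoscopic embedding at the Levi level is automorphic, which amounts to tracking the construction of Section~\ref{sec: compatible lifting} (in particular the role of the $1$-cochain splitting the $2$-cocycle in $\D{D}$) and using that $s$ was chosen in $(S^{\theta}_{\p_{o}} \times Z(\D{G}_b))/Z(\D{G})^{\Gal{}}$ rather than in a larger set. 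Once this is verified, the statement of Theorem~\ref{thm: functoriality} for non-discrete $\p$ falls out, leaving only the discrete case to be handled separately in the sequel.
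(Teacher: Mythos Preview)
Your reduction via a Levi subgroup works when $\p_b$ is non-discrete, and in that case it matches the paper's argument: since $s$ acts through $Z(\D{G}_b)$ on the $b$-part, the parameter $\p'$ inherits the non-discreteness of $\p_b$ and hence factors through a proper Levi $M'$ of $G'$, after which Remark~\ref{rk: compatible with parabolic induction} and the induction hypothesis on the Levi finish the job.

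The gap is in the complementary case where $\p_o$ is non-discrete but $\p_b$ is discrete. Here $\p'$ need not factor through any proper Levi of $G'$. For instance, take $\p = 2\p_1$ with $\p_1$ a non-$GL$-type simple orthogonal parameter and let $s$ be a reflection in $S_{\p} \cong O(2,\C)$; then $G' \cong G_{\p_1} \times G_{\p_1}$ and $\p' = \p_1 \times \p_1$ is discrete in $G'$. In this situation there is no Levi $M'$ of $G'$ to descend to, and correspondingly $s$ only normalizes (but does not centralize) the torus $\bar{T}_{\p}$ defining your Levi $M$, so it does not give a twisted endoscopic datum of $M$ either. Your ``main obstacle'' paragraph misidentifies the issue as the degenerate case $\lif{M}' \cong \lif{M}$; the real obstruction is the absence of any suitable $M'$ at all. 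The paper handles this case not by induction on a Levi but by invoking compatible lifting of $L$-packets (Proposition~\ref{prop: compatible lifting packet}) directly: both ${\rm Tran}\,\cPkt{\lp'}$ and the packet obtained by parabolic induction from $\cPkt{\lp_M}$ (with $\p_{M,b} = \p_b$) arise from factorizations of $\p$ of the shape \eqref{eq: factorization of parameter}, and Proposition~\ref{prop: compatible lifting packet} asserts they differ by an automorphic twist $\chi^{c, {}'c}$, which is exactly the conclusion of Theorem~\ref{thm: functoriality}.
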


\begin{proof}
Suppose $\p_o \in \cP{G_o} - \cPdt{G_o}$, then $\p$ factors through $\p_M$ for a Levi subgroup $M$ of $G$ such that $\p_{M, b} = \p_b$. Note $\cPkt{\lp}$ is obtained by parabolic induction of $\cPkt{\lp_M}$. So the theorem follows from compatible lifting of $L$-packets (cf. Proposition~\ref{prop: compatible lifting packet}). Suppose $\p_b \in \cP{G_b} - \cPdt{G_b}$, then for any $s \in (S^{\theta}_{\p_{o}} \times Z(\D{G}_b))/Z(\D{G})^{\Gal{}}$ we have $\p'_{s, b} \in \cP{G'_{s, b}} - \cPdt{G'_{s, b}}$. So $\p'_s$ factors through a Levi subgroup of $G'_s$. Then the theorem follows from the compatibility of parabolic induction with twisted endoscopic transfer.
\end{proof}

To treat the discrete case, we need to strengthen our induction assumptions as follows.

\begin{lemma}
\label{lemma: functoriality of twisted endoscopic transfer for non-discrete parameters 1}
Suppose $G = G(n)$ and 
\[
\p = \p_{1} \boxplus \cdots \boxplus \p_{q} \boxplus (2 \p_{q+1} \boxplus \cdots \boxplus 2 \p_{r}) \in \cPel{G^{\theta}}
\]
for some $\theta \in \Sigma_0$ such that $\sum_{i = 1}^{r} N_{i} < N$, then Theorem~\ref{thm: global L-packet}, \ref{thm: functoriality} hold for $\lp$.
\end{lemma}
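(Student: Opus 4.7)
The plan is to adapt the stability argument of Section~\ref{subsubsec: discrete parameter} and the spectral comparison of Section~\ref{subsec: proof of compatible normalization} to this twisted elliptic setting. Observe first that since $\sum_{i=1}^r N_i < N$ and $N_G \leqslant 2\sum_i N_i$, every simple constituent $\p_i$ has $N_i < N$, and the rank $n$ of $G = G(n)$ satisfies $n < N$. Consequently, the standing induction hypothesis supplies Theorems~\ref{thm: global L-packet}, \ref{thm: compatible normalization}, \ref{thm: functoriality}, \ref{thm: stable multiplicity formula}, \ref{thm: stable multiplicity formula product} for every proper twisted endoscopic group and every proper Levi subgroup of $\lG$ entering the analysis of $\lp$. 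If $\theta = \mathrm{id}$, then $\cPel{G} = \cPdt{G}$, and both conclusions follow directly from the induction hypothesis applied to $G = G(n)$ with $n < N$; we therefore focus on $\theta \neq \mathrm{id}$, so that $G$ is special even orthogonal.

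The next step is to apply Lemma~\ref{lemma: twisted endoscopic expansion}(2) and compare with Lemma~\ref{lemma: twisted spectral expansion} in the form \eqref{eq: twisted spectral expansion 1}. By the induction hypothesis, the global intertwining relation \eqref{eq: global intertwining relation} converts each geometric term $\lf_{\lG^{\theta}}'(\lp_o \otimes \x', x_o; \lp_b)$ into the matching spectral term $\lf_{\lG^{\theta}}(\lp \otimes \x', x)$. The discrepancy $i^{\theta}_{\p_o}(x_o) - e'^{\theta}_{\p_o}(x_o)$ is concentrated at $x_o = 1$ and equals $\sigma(\com[0]{\cS{\p_o}})$, so subtracting the two expansions isolates a clean identity supported on the $x_o = 1$ contribution together with the stable-multiplicity contribution from the $\lG$ itself.

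From there I would follow the stability-and-linear-independence argument of Section~\ref{subsubsec: discrete parameter}: add to both sides an auxiliary correction of the form
\[
2 \, C_{\lp_o} \sum_{\x' \in Y / \a(\S{\p_o})} \sum_{x_o \in \S{\lp_o} - \{1\}} \sum_{\substack{[\lr] \in \cPkt{}(\lp_o; \lp_b) \otimes \x' \\ \langle x_o, \lr \rangle = -1}} \lf_{\lG^{\theta}}(\lr, \x),
\]
so that the right hand side becomes stable; then invoke \cite[Corollary 4.8]{Xu:2018} place by place, combined with the linear independence of twisted characters of $\sH(\lG_v, \lif{\chi}_v)$-modules, to force every member of $\cPkt{\lp}$ to contribute uniformly to the discrete spectrum. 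This yields \eqref{formula: discrete spectrum} for $\lp$ and, upon matching the $x_o$-indexed terms on the two sides, the functoriality identity ${\rm Tran}\, \cPkt{\lp'} = \cPkt{\lp}$.

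The principal obstacle will be the degenerate case in which the local-global component group map \eqref{eq: local-global component group} from $\S{\lp_o}$ (and from $\S{\lp}$) into $\prod_v \S{\lp_{o,v}}$ has trivial image: in that situation the stability-of-sum step alone cannot pick out $\cPkt{\lp}$ among its $Y$-twists, and one must instead construct an auxiliary global parameter by a doubling trick in the spirit of \cite[\S 5.2]{Arthur:2013} whose component group does surject onto the product of local ones, apply the earlier arguments to that auxiliary object, and then descend back to $\lp$ using the compatible lifting of $L$-packets established in Corollary~\ref{cor: compatible lifting packet for product} together with the functoriality supplied by the induction hypothesis for the simple constituents.
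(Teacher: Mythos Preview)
You have made the key observation---that $\sum_i N_i < N$ forces $n < N$---but then failed to draw the immediate consequence: once $n < N$, the standing induction hypothesis at the start of Section~\ref{sec: proof of main theorems} already gives Theorems~\ref{thm: global L-packet} and~\ref{thm: functoriality} for $\lG = \lG(n)$ itself, not merely for its proper Levi and endoscopic subgroups. There is nothing further to prove. The paper's argument is correspondingly short: in the discrete case $N_G = \sum_i N_i < N$ so $n < N$ and one cites the induction hypothesis directly; in the non-discrete case $\p$ factors through $\p_M$ for a proper Levi $M$, one builds $\cPkt{\lp}$ by parabolic induction from $\cPkt{\lp_M}$ (Theorem~\ref{thm: global L-packet}), and Theorem~\ref{thm: functoriality} follows by repeating the reduction of Lemma~\ref{lemma: functoriality of twisted endoscopic transfer for non-discrete parameters} (compatibility of endoscopic transfer with parabolic induction plus compatible lifting).

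Your elaborate plan---running the twisted spectral and endoscopic expansions, adding stabilizing corrections, invoking linear independence of twisted characters, and in the degenerate case passing to an auxiliary doubled parameter---is therefore unnecessary machinery. It also contains an error: your claim that $\cPel{G^{\mathrm{id}}} = \cPdt{G}$ is false. A parameter such as $\p = 2\p_1$ with $\p_1$ simple orthogonal lies in $\cPel{G}$ (take $s$ a reflection in $O(2,\C) \cong S_\p$) but not in $\cPdt{G}$, so the reduction you propose for $\theta = \mathrm{id}$ does not cover all cases. Even setting that aside, the stability argument you sketch is modeled on Section~\ref{subsubsec: discrete parameter}, which is tailored to the case $n = N$ where the induction hypothesis is \emph{not} yet available for $\lG$ itself; importing that argument here misses the point of the lemma, which is precisely that the hypothesis $\sum_i N_i < N$ places us back inside the inductive range.
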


\begin{proof}
The case of discrete parameters follows from our induction assumption. For non-discrete parameters, we can construct the global $L$-packets by parabolic induction, which shows Theorem~\ref{thm: global L-packet}. By repeating the argument in Lemma~\ref{lemma: functoriality of twisted endoscopic transfer for non-discrete parameters}, we can reduce Theorem~\ref{thm: functoriality} to the discrete case.
\end{proof}

\begin{corollary}
\label{cor: functoriality of twisted endoscopic transfer for general group 1}
Suppose 
\(
G = G(n_{1}) \times \cdots \times G(n_{q})
\)  
and $\p = \p_{1} \times \cdots \times \p_{q} \in \cP{G} - \cPdt{G}$ such that $\p_{i} \in \cP{G(n_i)}$ satisfies the condition in Lemma~\ref{lemma: functoriality of twisted endoscopic transfer for non-discrete parameters 1}, then Theorem~\ref{thm: global L-packet}, ~\ref{thm: functoriality}, ~\ref{thm: stable multiplicity formula}, ~\ref{thm: stable multiplicity formula product} hold for $\lp$.
\end{corollary}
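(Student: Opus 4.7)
The plan is to reduce to the single-factor case handled by Lemma~\ref{lemma: functoriality of twisted endoscopic transfer for non-discrete parameters 1} by exploiting the product decomposition $G = \prod_{i=1}^{q} G(n_i)$ and the embedding
\[
\lG \hookrightarrow \lG(n_1) \times \cdots \times \lG(n_q)
\]
as the fiber product over the similitude characters $\lambda_i$. First I would produce the global $L$-packet required by Theorem~\ref{thm: global L-packet}. Applying Lemma~\ref{lemma: functoriality of twisted endoscopic transfer for non-discrete parameters 1} to each $\phi_i$ yields global $L$-packets $\cPkt{\lp_i}$ satisfying the decomposition \eqref{formula: discrete spectrum}, and I would define $\cPkt{\lp}$ as the restriction $\widetilde{\otimes}_i \, \cPkt{\lp_i}$. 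That this is independent (up to the ambiguity allowed in the statement) of the choices of the lifts $\lp_i$ is precisely the content of Corollary~\ref{cor: compatible lifting packet for product}. The decomposition \eqref{formula: discrete spectrum} for $\phi$ then follows from the induced analogue already recorded in \eqref{eq: discrete spectrum} together with the product factorizations $S_{\phi} = \prod_i S_{\phi_i}$ and $m_{\phi} = \prod_i m_{\phi_i}$ from Section~\ref{subsec: global}.

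Next, for Theorem~\ref{thm: functoriality}, I would note that any semisimple $s = (s_i) \in (S^{\theta}_{\phi_o} \times Z(\D{G}_b))/Z(\D{G})^{\Gal{}}$ decomposes componentwise, so the resulting twisted endoscopic datum splits as $G' = \prod_i G_i'$ with $(G_i', \phi_i') \to (\phi_i, s_i)$. By Lemma~\ref{lemma: functoriality of twisted endoscopic transfer for non-discrete parameters 1} (and Lemma~\ref{lemma: functoriality of twisted endoscopic transfer for non-discrete parameters}), Theorem~\ref{thm: functoriality} already holds for each factor, so ${\rm Tran}\,\cPkt{\lp_i'} = \cPkt{\lp_i}$. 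Taking the restriction under $\lG \subseteq \prod_i \lG(n_i)$ and invoking the compatibility of the local character relation of \cite[Theorem 4.6]{Xu:2018} with products (together with Corollary~\ref{cor: compatible lifting packet for product} to make the choices uniform across $i$) gives ${\rm Tran}\,\cPkt{\lp'} = \cPkt{\lp}$ as required.

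For Theorem~\ref{thm: compatible normalization} (implicitly needed for the stable multiplicity statements) and Theorems~\ref{thm: stable multiplicity formula}, \ref{thm: stable multiplicity formula product}, I would mirror the argument in Section~\ref{subsec: proof of compatible normalization}: combine the now-established Theorems~\ref{thm: global L-packet} and \ref{thm: functoriality} with the endoscopic expansion of Lemma~\ref{lemma: twisted endoscopic expansion} and the twisted spectral expansion \eqref{eq: twisted spectral expansion 1}. The global intertwining relation \eqref{eq: global intertwining relation} equates the two sides term-by-term except for the $x_o = 1$ contribution, whose difference is governed by \eqref{eq: coefficient} and forces the stable distribution $\Sdt{\lG}{, \p}$ to take the predicted form. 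For the product version, the multiplicativity formulas in Theorem~\ref{thm: stable multiplicity formula product} follow from the induction hypothesis applied to each factor combined with the endoscopic expansion computation of Section~\ref{sec: endoscopic expansion}.

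The main obstacle, though essentially bookkeeping, is tracking the interaction between the global component groups under the product structure: one must verify that the projection $\S{\phi}^{\Sigma_0} \to \prod_i \S{\phi_i}^{\Sigma_0}$ behaves correctly with respect to the similitude constraint cutting $\lG$ out of $\prod_i \lG(n_i)$, and that the characters in $Y$ and $X$ (which are defined relative to $\lG$, not the product) interact correctly with the componentwise twists $\a_i(\S{\phi_i})$. This is the point where a naive "restrict from the product" argument could fail; it is handled by the exact sequence \eqref{eq: global twisted endoscopic sequence} which, combined with the compatible-lifting principle, shows the ambiguities on each side match.
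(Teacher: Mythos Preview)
Your proposal is essentially correct and follows the same line as the paper: reduce Theorem~\ref{thm: functoriality} to the single factors via the embedding $\lG \subseteq \prod_i \lG(n_i)$ and Lemma~\ref{lemma: functoriality of twisted endoscopic transfer for non-discrete parameters 1}, then deduce the stable multiplicity formulas by comparing Lemma~\ref{lemma: twisted endoscopic expansion} with \eqref{eq: twisted spectral expansion 1} via \eqref{eq: coefficient} and \eqref{eq: global intertwining relation}, running an induction on $\sum_i n_i$ so that the comparison lemmas apply to the proper endoscopic groups.

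Two small points where you over-complicate or misplace references. First, since $\p \in \cP{G} - \cPdt{G}$, only part~(1) of Theorem~\ref{thm: global L-packet} is at stake; your invocation of \eqref{formula: discrete spectrum} for $\p$ is vacuous, and the paper simply obtains $\cPkt{\lp}$ by parabolic induction from a proper Levi (equivalently, by restriction from $\prod_i \lG(n_i)$) without needing Corollary~\ref{cor: compatible lifting packet for product} at this step. Second, the model for the stable multiplicity argument is not Section~\ref{subsec: proof of compatible normalization} (which treats the discrete case of Theorem~\ref{thm: compatible normalization}) but rather the direct comparison of \eqref{eq: endoscopic expansion}, \eqref{eq: endoscopic expansion 1} with \eqref{eq: twisted spectral expansion 1}; the ingredients you list are the right ones, and your final paragraph's bookkeeping worry is not an actual obstacle here.
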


\begin{proof}
For Theorem~\ref{thm: global L-packet}, we can construct the $L$-packets by parabolic induction. Next we show Theorem~\ref{thm: functoriality}. Since the restriction of global $L$-packet $\bigotimes_{i = 1}^{q} \cPkt{\lp_{i}}$ of $\lG(n_{1}) \times \lG(n_{2}) \times \cdots \times \lG(n_{q}) $ to $\lG$ is also a global $L$-packet, then the functoriality of twisted endoscopic transfer for $\lp$ follows from that for $\lp_{i}$. At last we prove Theorem~\ref{thm: stable multiplicity formula}, ~\ref{thm: stable multiplicity formula product} by induction on $\sum_{i = 1}^{q} n_i$. It remains to show part (2) and (3), which follows from the comparison of \eqref{eq: endoscopic expansion}, \eqref{eq: endoscopic expansion 1} with \eqref{eq: twisted spectral expansion 1} using \eqref{eq: coefficient} and \eqref{eq: global intertwining relation}.
\end{proof}

\subsubsection{Discrete parameters}

\begin{lemma}
\label{lemma: functoriality of twisted endoscopic transfer for discrete parameters}
Suppose $G = G(N)$ and $\p \in \cPdt{G}$, then Theorem~\ref{thm: functoriality} holds for $\lp$.
\end{lemma}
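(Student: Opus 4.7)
The plan is to complete the last remaining step of the main induction, namely functoriality of twisted endoscopic transfer for discrete parameters on $G = G(N)$. I would build on the stability argument already carried out in Section~\ref{subsubsec: discrete parameter} and combine it with the doubling construction used in the proof of \cite[Theorem 7.4.3]{Xu:thesis}, itself inspired by the arguments of \cite[Chapter 4]{Arthur:2013}.

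First I would split into sub-cases according to the decomposition $\phi = \phi_o \boxplus \phi_b$. The case $\phi = \phi_b$ was already settled in Section~\ref{sec: GL-type}. For $\phi \neq \phi_o, \phi_b$, the endoscopic expansion \eqref{eq: endoscopic expansion 1} together with the stability argument of Section~\ref{subsubsec: discrete parameter} already identifies
\[
\cPkt{\lp} = \cPkt{}(\lp_o; \lp_b),
\]
provided some $x_o \in \S{\lp_o} - \{1\}$ has nontrivial image in $\prod_v \S{\lp_{o,v}}$; once this is verified for a single $x_o$, Corollary~\ref{cor: compatible lifting packet for product} extends the identification to every admissible $s$. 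An entirely parallel argument in the case $\phi = \phi_o$ gives $\cPkt{\lp} = \cPkt{\lp_x}$ for one (and hence every) $x \in \S{\lp} - \{1\}$ under the analogous nontriviality of the localization map $\S{\lp} \to \prod_v \S{\lp_v}$.

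The residual task is therefore to ensure that the localization map $\S{\lp} \to \prod_v \S{\lp_v}$ (respectively $\S{\lp_o} \to \prod_v \S{\lp_{o,v}}$) is not identically trivial whenever the source group is nontrivial. Here I would invoke the doubling trick: pick a simple orthogonal summand $\phi_i$ of $\phi$ and an auxiliary simple parameter $\dot{\phi}_0 \in \cPsm{G_0}$ of a compatible type, so that the modified parameter $\dot{\phi}$ obtained from $\phi$ by adjoining $2\dot{\phi}_0$ is elliptic but no longer discrete, and so that at some carefully chosen finite place $v_0$ the new summand $\dot{\phi}_0$ produces a nontrivial contribution to $\S{\dot{\lp}_{v_0}}$. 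Provided the resulting auxiliary group stays within the inductive range, Lemma~\ref{lemma: functoriality of twisted endoscopic transfer for non-discrete parameters 1} together with Corollary~\ref{cor: functoriality of twisted endoscopic transfer for general group 1} furnishes functoriality for $\dot{\lp}$; projecting back along the endoscopic relation $\dot{\phi} \leftrightarrow (\phi, \dot{\phi}_0)$ and tracking component groups through the resulting pair of twisted spectral and endoscopic expansions transfers the nontriviality to $\lp$, which is all that is needed.

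The principal obstacle will be the construction of the auxiliary parameter $\dot{\phi}_0$ in the residual case: one must simultaneously arrange that the enlarged ambient group remains accessible to the induction hypothesis, that the additional summand genuinely breaks triviality of the local component group at $v_0$, and that the endoscopic correspondence used to produce $\dot{\phi}$ from $\phi$ transfers $L$-packet data without introducing a character twist outside $Y$. Overcoming this relies crucially on the compatible lifting machinery of Section~\ref{sec: compatible lifting} and on a careful selection of $\dot{\phi}_0$ at the auxiliary place, in the spirit of \cite[Section 7.4]{Xu:thesis}.
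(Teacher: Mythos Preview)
Your reduction via Sections~\ref{subsubsec: discrete parameter} and~\ref{subsec: proof of compatible normalization} to a residual case, together with the idea of a doubling construction, matches the paper's strategy; but the doubling step itself is set up incorrectly, and the ``residual task'' you identify is not the one that is actually proved.

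The auxiliary parameter is not obtained by adjoining a \emph{new} simple parameter $\dot\phi_0$; rather one doubles an \emph{existing} summand, $\phi^+ := 2\phi_1 \# \phi_2 \# \cdots \# \phi_r$, with $\phi_1$ chosen according to the type of $G$ (in particular not of $GL$-type when $G$ is split orthogonal). The reason is that the endoscopic group $G^+_\varepsilon$ attached to the obvious $\varepsilon\in\S{\phi^+}^{\Sigma_0}$ is then $G\times G_{\phi_1}$ with $\phi^+_\varepsilon = \phi\times\phi_1$, so that $\cPkt{\lp^+_\varepsilon}$ is the restriction of $\cPkt{\lp}\,\tilde\otimes\,\cPkt{\lp_1}$ and the desired packet identity on $G$ can be read off. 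If instead you adjoin a fresh $\dot\phi_0$, the $\varepsilon$-group is $G(\phi\boxplus\dot\phi_0)\times G_{\dot\phi_0}$ and you never recover $\phi$ on $G$. Moreover, $G^+$ is strictly larger than $G(N)$ and is \emph{not} in the inductive range: Lemma~\ref{lemma: functoriality of twisted endoscopic transfer for non-discrete parameters 1} requires $\sum N_i < N$, which fails here. The paper does not avoid this; instead it computes the problematic $\varepsilon$-contribution to the endoscopic side by hand (applying \eqref{eq: endoscopic expansion}, \eqref{eq: endoscopic expansion 1} to $\phi^+_\varepsilon$ and $\phi^+_{\varepsilon,o}$), compares with the spectral expansion \eqref{eq: twisted spectral expansion 1} for $I^{(\lG^{+,\theta},\eta_1)}_{disc,\phi^+}$, and extracts a vanishing statement that forces $\cPkt{\lp}=\cPkt{}(\lp_o;\lp_b)$ (resp.\ $\cPkt{\lp}=\cPkt{\lp_x}$). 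At no point is the localization map $\S{\lp}\to\prod_v\S{\lp_v}$ for the original $\phi$ shown to be nontrivial; in the residual case it may genuinely be trivial, and your plan to ``transfer nontriviality back to $\lp$'' cannot succeed as stated.
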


Suppose 
\(
\p = \p_1 \boxplus \cdots \boxplus \p_r \in \cPdt{G}
\)
and $\p \neq \p_b$. From the discussions in the end of Section~\ref{subsubsec: discrete parameter} and Section~\ref{subsec: proof of compatible normalization}, it remains to consider the case that \eqref{eq: local-global component group} is trivial and $\S{\p_o}^{\Sigma_0} = \S{\lp_o}$. Without loss of generality, we can make the following assumptions.
\begin{enumerate}

\item If $G$ is symplectic, then $N_1$ is odd and $\eta_i = 1$ for all $i$.

\item If $G$ is special even orthogonal and $\eta_G \neq 1$, then $N_i$ is even for all $i$ and $\eta_1 = \eta_G$.

\item If $G$ is special even orthogonal and $\eta_G = 1$, then $N_i$ is even and $\eta_i = 1$ for all $i$, and $\p_1$ is not of $GL$-type (cf. Definition~\ref{def: GL-type}).

\end{enumerate}
In all cases we take an auxiliary parameter as follows
\[
\p^{+} := 2\p_{1} \# \p_{2} \# \cdots \# \p_{r} \in \cP{G^{+}}.
\]
Let $\varepsilon \in \S{\p^{+}}^{\Sigma_0}$ such that $\p^{+}$ factors through 
\[
\p^{+}_{\varepsilon} := (\p_{1} \# \p_{2} \# \cdots \# \p_{r}) \times \p_{1} \in \cPdt{G^{+}_{\varepsilon}}.
\]
Let $\theta = \theta_0$ in case (1) and $\theta = 1$ in case (2) and (3). By \eqref{eq: twisted spectral expansion 1},
\begin{align}
\label{eq: twisted spectral expansion 2}
I^{(\lG^{+, \theta}, \eta_1)}_{disc, \p^{+}} (\lf) = C_{\lp^{+}_o} \sum_{\x \in Y / \langle \eta_1 \rangle} \sum_{x_o \in \S{\p^{+}_o}^{\theta}(\eta_1)} i^{\theta}_{\p^{+}_o}(x_o) \lf_{\lG^{+, \theta}}(\lp^{+}_o \otimes \x, x_o; \lp_b) ,     \,\,\,\,\, \lf \in \sH(\lG^{+}, \lif{\chi}^{+}).
\end{align}
The idea is to compare this with \eqref{eq: twisted endoscopic expansion} in case (1), (2) and with \eqref{eq: endoscopic expansion}, \eqref{eq: endoscopic expansion 1} in case (3). The main problem is that we can not apply \eqref{eq: twisted endoscopic expansion}, \eqref{eq: endoscopic expansion}, \eqref{eq: endoscopic expansion 1} to $\p^{+}$ directly, since Theorem~\ref{thm: stable multiplicity formula product} is only applicable for $\p^{+}_{x}$ with $x \in \S{\p^{+}}^{\Sigma_0} - \{ \varepsilon, 1 \}$ by our induction assumption and Corollary~\ref{cor: functoriality of twisted endoscopic transfer for general group 1}. So we need to compute them separately here. In view of the proof of Lemma~\ref{lemma: twisted endoscopic expansion}, it suffices to determine the contribution of ${\rm Tran} \, \Sdt{\lG^{+}_{\epsilon}}{, \p^{+}_{\epsilon}}$. First, the coefficient of this term is
\[
d := m_{\p^{+}} \frac{\iota(\lG^{+}, \lG^{+}_{\epsilon}) }{m_{\p^{+}_{\epsilon}}} | \Out_{G^{+}}(G^{+}_{\epsilon}) | | S_{\p^{+}, \epsilon} / S_{\p^{+}, \epsilon} \cap \D{G}^{+}_{\epsilon} Z(\D{G}^{+})^{\Gal{}} |^{-1}.
\]
Next we will give a formula for $\Sdt{\lG^{+}_{\epsilon}}{, \p^{+}_{\epsilon}}$. If $\p \neq \p_o$, we can apply \eqref{eq: endoscopic expansion 1} to $\p^{+}_{\epsilon}$ and \eqref{eq: endoscopic expansion} to $\p^{+}_{\epsilon, o}$, and get
\begin{align*}
\Sdt{\lG^{+}_{\epsilon}}{, \p^{+}_{\epsilon}} & = \frac{1}{4} \, \Sdt{\lG^{+}_{\epsilon, o}}{, \p^{+}_{\epsilon, o}} \tilde{\otimes} \, \Sdt{\lG_b}{, \lp_b} \\ & + \Big( \Idt{\lG^{+}_{\epsilon}}{, \p^{+}_{\epsilon}} - \frac{1}{2} {\rm Tran} (\Idt{\lG^{+}_{\epsilon, o}}{, \p^{+}_{\epsilon, o}} \tilde{\otimes} \, \Idt{\lG_b}{, \lp_b})\Big).
\end{align*}
If $\p = \p_o$, let us define $\cPkt{\lp^{+}_{\varepsilon, -}}$ to be the packet $\cPkt{\lp^{+}_{\varepsilon, x}}$ of $\lG^{+}_{\varepsilon}$ transferred from the $L$-packet  of endoscopic group $\lG^{+}_{\varepsilon, x}$ for any $x \in \S{\lp^{+}_{\varepsilon}} - \{1\}$, and we have 
\(
\lf_{\lG^{+}_{\varepsilon}}(\lp^{+}_{\varepsilon, -} \otimes \x') = \lf'_{\lG^{+}_{\varepsilon}}(\lp^{+}_{\varepsilon} \otimes \x', x).
\) 
Then we can apply \eqref{eq: endoscopic expansion} to $\p^{+}_{\epsilon}$, and get
\begin{align*}
\Sdt{\lG^{+}_{\epsilon}}{, \p^{+}_{\epsilon}}(\lf) & =  C_{\lp^{+}_{\epsilon}} \sum_{\x' \in Y'} \lf_{\lG^{+}_{\varepsilon}}(\lp^{+}_{\varepsilon, -} \otimes \x') 
\\ & + m_{\p^{+}_{\epsilon}} \Big( \sum_{\x' \in Y'} \lf_{\lG^{+}_{\varepsilon}}(\lp^{+}_{\varepsilon} \otimes \x') - \sum_{\x' \in Y'} \lf_{\lG^{+}_{\varepsilon}}(\lp^{+}_{\varepsilon, -} \otimes \x') \Big), \quad \lf \in \sH(\lG^{+}_{\epsilon}, \lif{\chi}^{+}_{\epsilon}).
\end{align*}
In case (1) and (2), the substitutes for \eqref{eq: twisted endoscopic expansion} can be described as follows. If $\p \neq \p_o$, then
\begin{align*}
I^{(\lG^{+, \theta}, \eta_1)}_{disc, \p^{+}} (\lf) & = C_{\lp^{+}_o} \sum_{\x \in Y / \langle \eta_1 \rangle} \sum_{x \in \S{\p^{+}_o}^{\theta}(\eta_1)} e'^{\theta}_{\p^{+}_o}(x_o) \lf_{\lG^{+, \theta}}' (\lp^{+}_o \otimes \x, x_o; \lp_b) \\ & + d \cdot {\rm Tran}\Big( \Idt{\lG^{+}_{\epsilon}}{, \p^{+}_{\epsilon}} - \frac{1}{2} {\rm Tran} (\Idt{\lG^{+}_{\epsilon, o}}{, \p^{+}_{\epsilon, o}} \tilde{\otimes} \, \Idt{\lG_b}{, \lp_b})\Big), \quad \lf \in \sH(\lG^{+}, \lif{\chi}^{+}).
\end{align*}
If $\p = \p_o$, then 
\begin{align*}
I^{(\lG^{+, \theta}, \eta_1)}_{disc, \p^{+}} (\lf) & = C_{\lp^{+}} \sum_{\x \in Y / \langle \eta_1 \rangle } \sum_{x \in \S{\p^{+}}^{\theta}(\eta_1)} e'^{\theta}_{\p^{+}}(x) \lf_{\lG^{+, \theta}}' (\lp^{+} \otimes \x, x) \\ & + d \cdot m_{\p^{+}_{\epsilon}} \Big( \sum_{\x' \in Y'} \lf^{\lG^{+}_{\varepsilon}}(\lp^{+}_{\varepsilon} \otimes \x') - \sum_{\x \in Y'}\lf^{\lG^{+}_{\varepsilon}}(\lp^{+}_{\varepsilon, -} \otimes \x') \Big), \quad \lf \in \sH(\lG^{+}, \lif{\chi}^{+}),
\end{align*}
where $\lf_{\lG^{+, \theta}}' (\lp^{+} \otimes \x, \epsilon)$ is redefined to be the transfer of $\cPkt{\lp^{+}_{\epsilon, -}}$ rather than $\cPkt{\lp^{+}_{\epsilon}}$. In case (3), the substitutes for \eqref{eq: endoscopic expansion}, \eqref{eq: endoscopic expansion 1} can be described as follows. If $\p \neq \p_o$, then
\begin{align*}
I^{\lG^{+}}_{disc, \p^{+}} (\lf) - S^{\lG^{+}}_{disc, \p^{+}}(\lf) & = C_{\lp^{+}_o} \sum_{\x \in Y} \sum_{x \in \S{\p^{+}_o}} e'_{\p^{+}_o}(x_o) \lf_{\lG^{+}}' (\lp^{+}_o \otimes \x, x_o; \lp_b) \\
& + d \cdot {\rm Tran}\Big( \Idt{\lG^{+}_{\epsilon}}{, \p^{+}_{\epsilon}} - \frac{1}{2} {\rm Tran} (\Idt{\lG^{+}_{\epsilon, o}}{, \p^{+}_{\epsilon, o}} \tilde{\otimes} \, \Idt{\lG_b}{, \lp_b})\Big), \quad \lf \in \sH(\lG^{+}, \lif{\chi}^{+}).
\end{align*}
If $\p = \p_o$, then
\begin{align*}
I^{\lG^{+}}_{disc, \p^{+}} (\lf) - S^{\lG^{+}}_{disc, \p^{+}}(\lf) & = C_{\lp^{+}} \sum_{\x \in Y} \sum_{x \in \S{\p^{+}}} e'_{\p^{+}}(x) \lf_{\lG^{+}}' (\lp^{+} \otimes \x, x) \\
& + d \cdot m_{\p^{+}_{\epsilon}} \Big( \sum_{\x' \in Y'} \lf^{\lG^{+}_{\varepsilon}}(\lp^{+}_{\varepsilon} \otimes \x') - \sum_{\x' \in Y'} \lf^{\lG^{+}_{\varepsilon}}(\lp^{+}_{\varepsilon, -} \otimes \x') \Big), \quad \lf \in \sH(\lG^{+}, \lif{\chi}^{+}).
\end{align*}

To make the comparison with \eqref{eq: twisted spectral expansion 2}, we note in all cases
\[
i^{\theta}_{\p^{+}_o}(x_o) \lf_{\lG^{+, \theta}}(\lp^{+}_o \otimes \x, x_o; \lp_b) = e'^{\theta}_{\p^{+}_o}(x_o) \lf_{\lG^{+, \theta}}' (\lp^{+}_o \otimes \x, x_o; \lp_b), \quad \lf \in \sH(\lG^{+}, \lif{\chi}^{+})
\]
for all $x_o \in \S{\p^{+}_o}^{\theta}(\eta_1)$. In case (1) and (2), we get
\begin{align}
\label{eq: vanishing 1}
{\rm Tran}\Big( \Idt{\lG^{+}_{\epsilon}}{, \p^{+}_{\epsilon}} - \frac{1}{2} {\rm Tran} (\Idt{\lG^{+}_{\epsilon, o}}{, \p^{+}_{\epsilon, o}} \tilde{\otimes} \, \Idt{\lG_b}{, \lp_b})\Big) = 0
\end{align}
when $\p \neq \p_o$, and
\begin{align}
\label{eq: vanishing 2}
\sum_{\x' \in Y'} \lf^{\lG^{+}_{\varepsilon}}(\lp^{+}_{\varepsilon} \otimes \x') - \sum_{\x' \in Y’} \lf^{\lG^{+}_{\varepsilon}}(\lp^{+}_{\varepsilon, -} \otimes \x') = 0, \quad \lf \in \sH(\lG^{+}, \lif{\chi}^{+})
\end{align}
when $\p = \p_o$. In case (3), we get
\begin{align}
\label{eq: vanishing 3}
{\rm Tran}\Big( \Idt{\lG^{+}_{\epsilon}}{, \p^{+}_{\epsilon}} - \frac{1}{2} {\rm Tran} (\Idt{\lG^{+}_{\epsilon, o}}{, \p^{+}_{\epsilon, o}} \tilde{\otimes} \, \Idt{\lG_b}{, \lp_b})\Big) = \frac{-1}{d} S^{\lG^{+}}_{disc, \p^{+}}
\end{align}
when $\p \neq \p_o$, and
\begin{align}
\label{eq: vanishing 4}
\sum_{\x' \in Y'} \lf^{\lG^{+}_{\varepsilon}}(\lp^{+}_{\varepsilon} \otimes \x') - \sum_{\x' \in Y'}\lf^{\lG^{+}_{\varepsilon}}(\lp^{+}_{\varepsilon, -} \otimes \x') =  \frac{-1}{d \, m_{\p^{+}_{\epsilon}}} S^{\lG}_{disc, \p^{+}}(\lf), \quad \lf \in \sH(\lG^{+}, \lif{\chi}^{+})
\end{align}
when $\p = \p_o$. 

We will further show that \eqref{eq: vanishing 3} and \eqref{eq: vanishing 4} both vanish, hence reduce to \eqref{eq: vanishing 1} and \eqref{eq: vanishing 2} respectively. By compatible lifting of $L$-packets (cf. Proposition~\ref{prop: compatible lifting packet}), the left hand sides of \eqref{eq: vanishing 3} and \eqref{eq: vanishing 4} become
\begin{align}
\label{eq: vanishing 5}
\sum_{\x \in Y/\langle \eta_1 \rangle} \Big( \sum_{[\lr] \in \cPkt{\lp^{+}_{-}} \otimes \x} \langle \varepsilon, \lr \rangle \lf_{\lG^{+}}(\lr) - \sum_{[\lr] \in \cPkt{\lp^{+}} \otimes \x} \langle \varepsilon, \lr \rangle \lf_{\lG^{+}}(\lr)\Big)
\end{align}
where $\cPkt{\lp^{+}_{-}}$ denotes the packet of $\lG^{+}$ transferred from the $L$-packet $\cPkt{\lp^{+}_{\varepsilon}}$ of $\lG^{+}_{\varepsilon}$. By the argument on stability as in the proof of Theorem~\ref{thm: global L-packet}, we can conclude the coefficients of characters of all $[\lr] \in \cPkt{\lp^{+}}$ must be the same. Since $\p_{1}$ is not of $GL$-type, then there exist $[\lr_{1}], [\lr_{2}] \in \cPkt{\lp^{+}}$ such that 
\(
\langle \varepsilon, \lr_{1} \rangle = 1, \langle \varepsilon, \lr_{2} \rangle = -1.
\)
So \eqref{eq: vanishing 5} would not be stable unless it is zero. 

At last, we would like to show $\cPkt{\lp} = \cPkt{}(\lp_o; \lp_b)$ if $\p \neq \p_o$, and $\cPkt{\lp} = \cPkt{\lp_{x}}$ for $1 \neq x \in \S{\lp}$ if $\p = \p_o$. Since $\a(\S{\p^{+}_{v}}^{\Sigma_0}) = \a(\S{\p^{+}_{\varepsilon, v}}^{\Sigma_0})$ for all $v$, then \eqref{eq: vanishing 1} implies that
\begin{align*}
\Idt{\lG^{+}_{\epsilon}}{, \p^{+}_{\epsilon}} - \frac{1}{2} {\rm Tran} (\Idt{\lG^{+}_{\epsilon, o}}{, \p^{+}_{\epsilon, o}} \tilde{\otimes} \, \Idt{\lG_b}{, \lp_b}) = 0.
\end{align*}
Since $\cPkt{\lp^{+}_{\varepsilon}} $ (resp. $\cPkt{\lp^{+}_{\epsilon, o}}$) is the restriction of $\cPkt{\lp} \otimes \cPkt{\lp_{1}}$ (resp. $\cPkt{\lp_{o}} \otimes \cPkt{\lp_{1}}$) and $\a(\S{\p_{1, v}}^{\Sigma_0}) \subseteq \a(\S{\p_{v}}^{\Sigma_0})$ for all $v$, then 
\(
\cPkt{\lp} = \cPkt{}(\lp_o; \lp_b). 
\) 
The other case follows from \eqref{eq: vanishing 2} by the same argument.

\begin{corollary}
\label{cor: functoriality of twisted endoscopic transfer for general group}
Suppose 
\(
G = G(n_{1}) \times \cdots \times G(n_{q})
\)  
and $\p = \p_{1} \times \cdots \times \p_{q} \in \cP{G}$ such that $n_i \leqslant N$, then Theorem~\ref{thm: functoriality} ~\ref{thm: stable multiplicity formula}, ~\ref{thm: stable multiplicity formula product} hold for $\lp$.
\end{corollary}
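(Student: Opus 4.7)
The plan is to reduce Corollary~\ref{cor: functoriality of twisted endoscopic transfer for general group} to the single-factor results already established in Lemma~\ref{lemma: functoriality of twisted endoscopic transfer for non-discrete parameters} and Lemma~\ref{lemma: functoriality of twisted endoscopic transfer for discrete parameters}, using the structural embedding $\lG \hookrightarrow \lG(n_1) \times \cdots \times \lG(n_q)$ together with compatible lifting of $L$-packets. I would induct on $\sum_i n_i$. The case where some $\p_i$ is non-discrete reduces as in Corollary~\ref{cor: functoriality of twisted endoscopic transfer for general group 1}, so the new input is when every $\p_i \in \cPdt{G(n_i)}$ with at least one $n_i = N$.

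For Theorem~\ref{thm: functoriality} in the product case, I would first observe that any semisimple $s \in (S^{\theta}_{\p_o} \times Z(\D{G}_b))/Z(\D{G})^{\Gal{}}$ factors as $s = (s_1, \ldots, s_q)$, so the twisted endoscopic datum $G'$ decomposes as $G' = G'_1 \times \cdots \times G'_q$ with $(G'_i, \p'_i) \to (\p_i, s_i)$. Likewise the lift $\lG'$ sits inside $\prod_i \lG'_i$ with matched similitude characters, via a commuting diagram of lifted endoscopic embeddings. By Lemma~\ref{lemma: functoriality of twisted endoscopic transfer for discrete parameters} (or Lemma~\ref{lemma: functoriality of twisted endoscopic transfer for non-discrete parameters} if $\p'_i$ is not discrete), for each $i$ there exists $\cPkt{\lp'_i}$ with ${\rm Tran}\,\cPkt{\lp'_i} = \cPkt{\lp_i}$. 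Since the global $L$-packet of $\lG$ is the restriction of $\bigotimes_i \cPkt{\lp_i}$ and similarly for $\lG'$, taking restrictions gives ${\rm Tran}\,\cPkt{\lp'} = \cPkt{\lp}$. The only subtlety here is that the lifts on the two sides must be matched compatibly; this is precisely what Corollary~\ref{cor: compatible lifting packet for product} provides, since the freedom to twist each $\cPkt{\lp'_i}$ by a character of $\lG'_i(\A_F)/G'_i(\A_F)$ descends to the restricted packet.

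For Theorem~\ref{thm: stable multiplicity formula} and Theorem~\ref{thm: stable multiplicity formula product} in the product case, only parts (2) and (3) remain, since part (1) was already handled in Section~\ref{sec: GL-type}. Part (3) is now immediate from the tensor product decomposition in Lemma~\ref{lemma: product formula} applied factor by factor, combined with the $GL$-type formula \eqref{formula: stable multiplicity GL-type product 1} for the $\p_{i,b}$ components and \eqref{formula: stable multiplicity} for the mixed or purely orthogonal ones. For part (2), having Theorem~\ref{thm: functoriality} available across all factors means the global intertwining relation \eqref{eq: global intertwining relation} holds, so we may equate the endoscopic expansion \eqref{eq: twisted endoscopic expansion} with the spectral expansion \eqref{eq: twisted spectral expansion 1}; the difference of coefficients is controlled by \eqref{eq: coefficient}, and isolating the $x_o = 1$ contribution extracts the formula \eqref{formula: stable multiplicity product}, exactly as in the single-factor argument of Section~\ref{sec: GL-type} and the comparison carried out in Section~\ref{sec: endoscopic expansion}.

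The main obstacle is not any single new computation but the bookkeeping for the twisted endoscopic data of a product of similitude groups: one must ensure that the lifted embeddings $\L{\lG'}_i \to \L{\lG}_i$ chosen independently for each factor can be arranged to restrict to a single coherent $\L{\lG'} \to \L{\lG}$, and that the resulting tensor-product packet descends correctly. This is exactly the obstruction identified and resolved in Section~\ref{sec: compatible lifting} via the $1$-cochain construction, so in the end the corollary is a clean consequence of the compatible-lifting formalism applied factor by factor, with no further analytic input required.
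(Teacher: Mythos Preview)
Your reduction of Theorem~\ref{thm: functoriality} to the individual factors via the product structure $\lG \subseteq \prod_i \lG(n_i)$ is exactly the paper's argument, and your handling of the non-discrete case by reduction to Corollary~\ref{cor: functoriality of twisted endoscopic transfer for general group 1} is fine.

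There is, however, a genuine gap in your treatment of parts (2) and (3) of Theorem~\ref{thm: stable multiplicity formula} and~\ref{thm: stable multiplicity formula product} in the discrete case. You propose to equate the endoscopic expansion \eqref{eq: twisted endoscopic expansion} with the spectral expansion \eqref{eq: twisted spectral expansion 1}, but neither is the right object here. First, \eqref{eq: twisted endoscopic expansion} is the expansion for $\theta \neq \mathrm{id}$ or $\omega \neq 1$ and does not contain $\Sdt{\lG}{,\p}$; the stable multiplicity formula sits inside the untwisted expansions \eqref{eq: endoscopic expansion} and \eqref{eq: endoscopic expansion 1}. Second, and more seriously, \eqref{eq: twisted spectral expansion 1} comes from Lemma~\ref{lemma: twisted spectral expansion}, whose hypothesis is $\p \in \cP{G} - \cPdt{G}$: it is an expansion over a proper Levi and simply does not apply when $\p$ is discrete. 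So in precisely the case you flag as ``the new input'' your comparison has no spectral side.

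The fix is the one the paper makes explicit: in the discrete case one compares \eqref{eq: endoscopic expansion}, \eqref{eq: endoscopic expansion 1} not with \eqref{eq: twisted spectral expansion 1} but with \eqref{formula: discrete spectrum}, which is now available because Theorem~\ref{thm: global L-packet} has already been established for $\p \in \cPdt{G}$. With Theorem~\ref{thm: functoriality} in hand the global intertwining relation \eqref{eq: global intertwining relation} identifies each $\lf'_{\lG}(\lp_o \otimes \x', x_o; \lp_b)$ with a genuine character sum in $\cPkt{\lp}$, and subtracting the two sides isolates $\Sdt{\lG}{,\p}$ in the required form. The rest of your outline then goes through.
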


\begin{proof}
The proof is the same as Corollary~\ref{cor: functoriality of twisted endoscopic transfer for general group 1} under Lemma~\ref{lemma: functoriality of twisted endoscopic transfer for non-discrete parameters}, \ref{lemma: functoriality of twisted endoscopic transfer for discrete parameters}, except for (2), (3) of Theorem~\ref{thm: stable multiplicity formula}, ~\ref{thm: stable multiplicity formula product} in the discrete case, where we need to compare \eqref{eq: endoscopic expansion}, \eqref{eq: endoscopic expansion 1} with \eqref{formula: discrete spectrum} instead.
\end{proof}




\appendix

\section{}
\label{sec: twist}

Let $F$ be a $p$-adic field and $G = SO(2n, \eta')$ and $\p \in \cPsm{G}$. We would like to show

\begin{theorem}
\label{thm: twist}
For $\omega \in H^{1}(W_{F}, Z(\D{\lG})) \cong {\rm Hom}(\lG(F), \mathbb{C}^{\times})$, 
\begin{align}
\label{eq: twist equivariant}
[\lp \otimes \omega] = [\lp] \text{ if and only if } \cPkt{\lp} \otimes \omega = \cPkt{\lp}.
\end{align}
\end{theorem}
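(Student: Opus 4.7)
The approach is to show that the two subgroups of $\Hom(\lG(F), \C^\times) \cong H^1(W_F, Z(\D{\lG}))$ defined by
\[
A := \{\omega : [\lp \otimes \omega] = [\lp]\}, \qquad B := \{\omega : \cPkt{\lp} \otimes \omega = \cPkt{\lp}\}
\]
coincide. I compare them through the exact sequence \eqref{eq: twist}, rewritten as
\[
1 \to \Hom(\lG(F)/G(F), \C^\times) \to \Hom(\lG(F), \C^\times) \xrightarrow{r} \Hom(G(F), \C^\times) \to 1,
\]
by separately matching the intersections of $A, B$ with $\ker r$ and their images under $r$.

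First I would verify $A \cap \ker r = B \cap \ker r$. By the established part of \cite[Theorem 4.6]{Xu:2018} (equivalently \cite[Corollary 4.2]{Xu:2018}), both intersections equal $\a(\S{\p}^{\Sigma_0})$. Since $\p \in \cPsm{G}$, we have $\S{\p}^{\Sigma_0} = 1$, so both intersections are trivial, and the restriction map is injective on each of $A$ and $B$.

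Next I would prove $r(A) = r(B) = T$, where
\[
T := \{\omega_G \in \Hom(G(F), \C^\times) : [\p \otimes \omega_G] = [\p] \text{ in } \cP{G}\}.
\]
The inclusions $r(A) \subseteq T$ and $r(B) \subseteq T$ are immediate by restriction (using the disjointness of tempered $L$-packets of $G$ from \eqref{eq: disjoint decomposition} in the $B$ case). For $T \subseteq r(A)$, lift any $\omega_G \in T$ to some $\omega$; the underlying parameter of $\lp \otimes \omega$ on $G$ is $\p \otimes \omega_G$, which is $\D{\Sigma}_0$-conjugate to $\p$, so $\lp \otimes \omega$ and $\D{\theta}_0^\epsilon \lp$ are two lifts in $\cP{\lG}$ of the same parameter on $G$, hence differ by a twist $\omega_0 \in \Hom(\lG/G, \C^\times)$, and then $\omega\omega_0^{-1} \in A$ restricts to $\omega_G$. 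For $T \subseteq r(B)$, lift $\omega_G$ to $\omega$; then $\cPkt{\lp} \otimes \omega$ is a lift of $\cPkt{\p \otimes \omega_G} = \cPkt{\p}$ satisfying the stability identity, because
\[
\sum_{\lr \in \cPkt{\lp}} \lf_\lG(\lr \otimes \omega) = (\omega\lf)(\lp),
\]
and the map $\lf \mapsto \omega\lf$ preserves stable equivalence ($\omega$ being constant on conjugacy classes). The uniqueness clause of \cite[Theorem 4.6]{Xu:2018} then forces $\cPkt{\lp} \otimes \omega = \cPkt{\lp} \otimes \omega_0$ for some $\omega_0 \in \Hom(\lG/G, \C^\times)$, so $\omega\omega_0^{-1} \in B$ restricts to $\omega_G$.

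Combining the two comparisons yields $A = B$, which is the theorem. The main subtlety lies in the forward argument for $r(B) \supseteq T$: I need the twist by a general $\omega \in \Hom(\lG(F), \C^\times)$ (not assumed trivial on $G(F)$) to preserve the stability that characterizes $\cPkt{\lp}$, and I must carefully track the $\D{\theta}_0$-conjugacy ambiguity when lifting parameter equivalences from $\cP{G}$ to $\cP{\lG}$, which is precisely where the simplicity hypothesis $\p \in \cPsm{G}$ is used to rule out any residual twist in $\a(\S{\p}^{\Sigma_0})$.
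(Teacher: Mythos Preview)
Your reduction is sound, but the final deduction ``combining the two comparisons yields $A = B$'' is a logical error. Knowing $A \cap \ker r = B \cap \ker r = \{1\}$ and $r(A) = r(B) = T$ does \emph{not} force $A = B$: two subgroups can project isomorphically onto the same image while differing by a nontrivial element of $\ker r$. Concretely, your uniqueness argument for $T \subseteq r(B)$ only produces, for a given $\omega \in A$, some $\omega_0 \in \Hom(\lG(F)/G(F), \C^\times)$ with $\cPkt{\lp} \otimes \omega = \cPkt{\lp} \otimes \omega_0$, hence $\omega\omega_0^{-1} \in B$. But nothing in the stability characterization pins down $\omega_0 = 1$, so you cannot conclude $\omega \in B$. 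Symmetrically, your parameter-side argument for $T \subseteq r(A)$ has the same defect. The local uniqueness statements you invoke are only strong enough to determine things modulo $\ker r$, which is exactly the ambiguity you need to resolve.

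This is precisely why the paper's proof takes a different and substantially harder route. After the same preliminary reductions, it fixes $\eta \in T$ with unique lift $\tilde\eta \in A$ and proves $\tilde\eta \in B$ directly by a global argument: it realizes $\p$ via automorphic induction from a parameter $\p_E$ of $GL(n,E)$ (using $\p \otimes \eta \cong \p$), identifies the appropriate twisted endoscopic group $H$ of $G$ (a unitary group or a restriction of scalars of a classical group), globalizes $\p_H$, and then compares the $(\tilde{\dot\eta})$-twisted discrete part of the trace formula for $\widetilde{\dot G}$ with the stable trace formula for $\widetilde{\dot H}$. The nonvanishing of this comparison produces an automorphic $\tilde{\dot\pi}$ with $\tilde{\dot\pi} \cong \tilde{\dot\pi} \otimes \tilde{\dot\eta}$ (up to $\theta_0$), and specializing at the place $u$ gives $\cPkt{\lp} \otimes \tilde\eta = \cPkt{\lp}$. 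The missing idea in your proposal is this global input; a purely local argument via the stability characterization cannot single out the correct lift among the $\ker r$-twists.
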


Dual to \eqref{eq: twist}, we have
\[
\xymatrix{1 \ar[r] & {\rm Hom}(\lG(F)/G(F), \mathbb{C}^{\times}) \ar[r] & {\rm Hom}(\lG(F), \mathbb{C}^{\times}) \ar[r] & \ar[r] {\rm Hom}(G(F), \mathbb{C}^{\times}) & 1.
}
\]
By \cite[Corollary 4.2]{Xu:2018}, \eqref{eq: twist equivariant} holds for $\p \in \cPbd{G}$ and $\omega \in {\rm Hom}(\lG(F)/G(F), \mathbb{C}^{\times})$. Since we have assumed $\p \in \cPsm{G}$, then any nontrivial $\omega \in {\rm Hom}(\lG(F), \mathbb{C}^{\times})$ such that $[\lp \otimes \omega] = [\lp]$ must be nontrivial on $G(F)$. In order to describe ${\rm Hom}(G(F), \mathbb{C}^{\times})$, we lift them to characters of $GSpin(2n, \eta')(F)$.
\[
\xymatrix{&&1 \ar[d] && \\
&& \mathbb{G}_m \ar[rd]^{z \mapsto z^2} \ar[d] &&  \\
1 \ar[r] & Spin(2n, \eta') \ar[r] \ar[rd] & GSpin(2n, \eta') \ar[r] \ar[d] & \mathbb{G}_m \ar[r] & 1\\
&& SO(2n, \eta') \ar[d]& &\\
&&1 &&
}
\]
Let us denote $G_{sc} := Spin(2n, \eta')$ and $\lG_{sc} := GSpin(2n, \eta')$. Then
\[
{\rm Hom}(G(F), \mathbb{C}^{\times}) \cong {\rm Hom}(\lG_{sc}(F)/F^{\times}G_{sc}(F), \mathbb{C}^{\times}) \cong {\rm Hom}(F^{\times}/F^{\times 2}, \mathbb{C}^{\times}).
\]

\begin{lemma}
\label{lemma: twist}
For $\eta \in {\rm Hom}(F^{\times}/F^{\times 2}, \mathbb{C}^{\times})$, $\cPkt{\p \otimes \eta} = \cPkt{\p} \otimes \eta$.
\end{lemma}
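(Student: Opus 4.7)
The plan is to globalize the situation and then apply $\Sigma_0$-strong multiplicity one at the place under consideration, in the spirit of Lemma~\ref{lemma: compatible lifting of L-packets for elliptic parameters}. Note first that since $\phi \in \cPsm{G}$ and $\phi \otimes \eta$ is still a simple generic parameter of $G$ (twisting by a quadratic character preserves self-duality and cuspidality of the corresponding $GL(2n)$-parameter, provided $\phi \otimes \eta \not\cong \phi$; the case $\phi \otimes \eta \cong \phi$ is a tautological special case), both $\cPkt{\phi \otimes \eta}$ and $\cPkt{\phi} \otimes \eta$ are single $\Sigma_0$-orbits, so it suffices to exhibit one element in common.

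First I would apply \cite[Lemma 6.18]{Xu:2018} to globalize $\phi$ to a simple generic parameter $\dot{\phi} \in \cPsm{\dot{G}}$, where $\dot{G}$ is a quasisplit even orthogonal group over a number field $\dot{F}$, such that $\dot{F}_u = F$ at some nonarchimedean place $u$, $\dot{\phi}_u = \phi$, $\Sigma_0$-strong multiplicity one holds for $\cPkt{\dot{\phi}}$ at $u$ in the sense of \cite[Lemma 6.13]{Xu:2018}, and at the remaining nonarchimedean places $v$ the parameter $\dot{\phi}_v$ is a direct sum of quasicharacters with at most one ramified summand. Simultaneously, by Grunwald--Wang, I would globalize $\eta$ to a quadratic id\`ele class character $\dot{\eta}$ of $\dot{F}^\times$ with $\dot{\eta}_u = \eta$. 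By enlarging the set where $\dot{\eta}$ ramifies I would arrange that $\dot{\phi} \otimes \dot{\eta}$ remains simple generic.

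Next I would verify the local equality $\cPkt{\dot{\phi}_v \otimes \dot{\eta}_v} = \cPkt{\dot{\phi}_v} \otimes \dot{\eta}_v$ at every place $v \neq u$. At the archimedean places this is the classical compatibility of the real local Langlands correspondence with central character twists. At unramified places it follows from the Satake correspondence and the fundamental lemma for spherical Hecke algebras: the Satake parameter of a twist is the twist of the Satake parameter. At the remaining nonarchimedean places the globalization forces $\dot{\phi}_v$ to factor through a torus of a Levi subgroup, so $\cPkt{\dot{\phi}_v}$ is obtained by full parabolic induction from a principal-series on that Levi, and the statement reduces to the corresponding tautology on the inducing datum, where the twist by $\dot{\eta}_v$ on both sides is visible term by term.

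Finally, choose a discrete automorphic $\dot{\pi} \in \cPkt{\dot{\phi}}$, which exists by the analogue of \cite[Theorem 1.5.2]{Arthur:2013} for $\dot{G}$. Viewing $\dot{\eta}$ as an automorphic character of $\dot{G}(\A_{\dot{F}})$ via the spinor norm, the twist $\dot{\pi} \otimes \dot{\eta}$ is again discrete automorphic; its Satake parameters at unramified places are $c(\dot{\phi}_v)\,\dot{\eta}_v(\varpi_v) = c(\dot{\phi}_v \otimes \dot{\eta}_v)$, so $\dot{\pi} \otimes \dot{\eta} \in \cPkt{\dot{\phi} \otimes \dot{\eta}}$. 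At every $v \neq u$, $(\dot{\pi} \otimes \dot{\eta})_v = \dot{\pi}_v \otimes \dot{\eta}_v \in \cPkt{\dot{\phi}_v} \otimes \dot{\eta}_v = \cPkt{\dot{\phi}_v \otimes \dot{\eta}_v}$ by the previous step. By $\Sigma_0$-strong multiplicity one at $u$ this forces $(\dot{\pi} \otimes \dot{\eta})_u \in \cPkt{\phi \otimes \eta}$; but $(\dot{\pi} \otimes \dot{\eta})_u = \dot{\pi}_u \otimes \eta$ with $\dot{\pi}_u \in \cPkt{\phi}$, yielding the desired common element and completing the proof. The main obstacle will be the globalization step, where we must simultaneously guarantee (i) simple genericity of $\dot{\phi}$ with $\Sigma_0$-strong multiplicity one at $u$, (ii) simple genericity of $\dot{\phi} \otimes \dot{\eta}$, and (iii) $\dot{\eta}_u = \eta$ with $\dot{\eta}$ quadratic; items (i) and (iii) are standard, but preserving (ii) requires coordinating the ramification of $\dot{\eta}$ with the globalization data of \cite[Lemma 6.18]{Xu:2018} so that $\dot{\phi} \otimes \dot{\eta}$ stays cuspidal on the $GL(2n)$-side.
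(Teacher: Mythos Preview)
Your globalization approach can be made to work, but the paper takes a far more direct, purely local route. The packet $\cPkt{\p}$ is \emph{characterised} by the twisted endoscopic character relation with $GL(2n)$:
\[
f^{G}(\p) = f_{GL(2n)^{\theta_{2n}}}(\pi^{GL}_{\p}), \qquad f \in C^{\infty}_{c}(GL(2n,F)),
\]
and the paper simply observes that $\pi^{GL}_{\p\otimes\eta}\cong\pi^{GL}_{\p}\otimes\eta$ (local Langlands for $GL$) together with the elementary transfer identity $(f\eta)^{G}=f^{G}\eta$ gives
\[
f^{G}(\p\otimes\eta)=(f\eta)_{GL(2n)^{\theta_{2n}}}(\pi^{GL}_{\p})=(f^{G}\eta)(\p),
\]
which is exactly $\cPkt{\p\otimes\eta}=\cPkt{\p}\otimes\eta$. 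No global input is used.

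A few remarks on your write-up. Your worry (ii) is unfounded: if $\dot\phi$ corresponds to a cuspidal representation of $GL(2n,\A_{\dot F})$ then so does $\dot\phi\otimes\dot\eta$ automatically, and it stays self-dual of orthogonal type with the same determinant (since $\dot\eta^{2n}=1$), hence simple for the same $\dot G$. Also, your verification at all $v\neq u$ and the appeal to $\Sigma_0$-strong multiplicity one at $u$ are both redundant: once $\dot\pi\otimes\dot\eta$ is discrete automorphic with the right Satake parameters at almost all places, Arthur's classification already forces $[\dot\pi\otimes\dot\eta]\in\cPkt{\dot\phi\otimes\dot\eta}=\otimes'_v\cPkt{(\dot\phi\otimes\dot\eta)_v}$, and hence $\dot\pi_u\otimes\eta\in\cPkt{\p\otimes\eta}$ directly. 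In particular you do not need the special local control of \cite[Lemma~6.18]{Xu:2018}; Arthur's simple-parameter globalization \cite[Proposition~6.3.1]{Arthur:2013} suffices. The net effect is that your route trades a one-line identity on transfer factors for the full global classification theorem; the paper's argument is both shorter and more elementary.
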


\begin{proof}
The packet $\cPkt{\p}$ is determined by its $\theta_{2n}$-twisted endoscopic transfer $\r^{GL}_{\p}$ to $GL(2n, F)$ through the character relation 
\[
f^{G}(\p) = f_{GL(2n)^{\theta_{2n}}}(\pi^{GL}_{\p}), \quad f \in C^{\infty}_{c}(GL(2n, F))
\]
(cf. \cite[Theorem 2.2.1]{Arthur:2013}). Since $\r^{GL}_{\p \otimes \eta} \cong \r^{GL}_{\p} \otimes \eta$ and $(f \eta)^{G} = f^{G} \eta$, then
\[
f^{G}(\p \otimes \eta) = f_{GL(2n)^{\theta_{2n}}}(\r^{GL}_{\p} \otimes \eta) = (f\eta)_{GL(2n)^{\theta_{2n}}}(\r^{GL}_{\p}) = (f^{G}\eta)(\p).
\]
This finishes the proof.
\end{proof}

Suppose $1 \neq \eta \in {\rm Hom}(F^{\times}/F^{\times 2}, \mathbb{C}^{\times})$ is associated with a quadratic extension $E/F$ and $[\phi \otimes \eta] = [\phi]$, then $\cPkt{\p} \otimes \eta = \cPkt{\p}$ by Lemma~\ref{lemma: twist}. There exists a unique extension $\tilde{\eta} \in {\rm Hom}(\lG(F), \mathbb{C}^{\times})$ of $\eta$ such that $[\lp \otimes \tilde{\eta}] = [\lp]$. In order to prove Theorem~\ref{thm: twist}, it suffices to show that $\cPkt{\lp} \otimes \tilde{\eta} = \cPkt{\lp}$. 

Note $\r^{GL}_{\p}$ is an essential discrete series representation of $GL(2n, F)$. We denote its Langlands parameter again by $\p$. Since $\pi^{GL}_{\p} \otimes \eta \cong \pi^{GL}_{\p}$, there exists an essential discrete series representation $\pi_{E}$ of $GL(n, E)$, which is not $\Gamma_{E/F}$-conjugation invariant, such that $\pi^{GL}_{\p}$ is the automorphic induction of $\pi_{E}$ (cf. \cite{Henniart-Herb:1995} \cite{AC:1989}). Denote the Langlands parameter of $\pi_E$ by $\p_{E}$. We can also view $\pi_{E}$ as a representation of ${\rm Res}_{E/F} \, GL(n)(F)$ and denote the corresponding Langlands parameter by $\p_{E/F}$. Since
\(
\p = {\rm Ind}^{L_{F}}_{L_{E}} \, \p_{E}
\) 
(cf. \cite{Henniart:2001}), then $\p$ factors through $\p_{E/F}$. Moreover, 
\[
\p|_{L_{E}} = \p^{c}_{E} \oplus \p_{E}.
\]
where $\p^{c}_{E}$ is the $\Gamma_{E/F}$-conjugate of $\p_{E}$. Since $\p$ is self-dual of orthogonal type, then $\p_{E}$ is either self-dual of orthogonal type or conjugate orthogonal (cf. \cite[Section 2]{Mok:2014}). It follows that $\p_{E/F}$ factors through $\p_{H} \in \cPbd{H}$ for a twisted elliptic endoscopic group $H$ of ${\rm Res}_{E/F} \, GL(n)$. It also induces an embedding of $\L{H}$ into $\L{G}$, through which we can view $H$ as a twisted elliptic endoscopic group of $G$. 
\[
\xymatrix{\p: L_{F} \ar[r] \ar[rd] & \L{G} \ar[r] & ^{L}GL(2n) \\
& \L{H} \ar[r] \ar[u] & ^{L}{\rm Res}_{E/F} \, GL(n) \ar[u]}
\]
More precisely, we have the following three cases.
\begin{itemize}

\item If $\p^{c}_{E} = \p^{\vee}_{E}$, then $H = U_{E/F}(n)$ is an $\eta$-twisted endoscopic group of $G$. In this case,
\[
\eta' =  \begin{cases}
1 & \text{ if $n$ is even}, \\
\eta & \text{ if $n$ is odd}.
\end{cases}
\]

\item If $\p_{E} = \p^{\vee}_{E}$ and $n$ is even, then $H = {\rm Res}_{E/F} \, SO(n, \eta_{\p_{E}})$ is an $\eta$-twisted endoscopic group of $G$. In this case, $\eta' = \eta_{\p_{E}}|_{F^{\times}}$.

\item If $\p_{E} = \p^{\vee}_{E}$ and $n$ is odd, then $H = {\rm Res}_{E/F} \, Sp(n-1)$ is a $(\theta_0, \eta)$-twisted endoscopic group of $G$. In this case, $\eta' = \eta \cdot \eta_{\p_{E}}|_{F^{\times}}$.

\end{itemize}
We can lift $H$ to an $\tilde{\eta}$-twisted (resp. $(\theta_0, \tilde{\eta})$-twisted) elliptic endoscopic group $\widetilde{H}$ of $\lG$ (cf. \cite[Proposition 3.1]{Xu:2016}). Then $[\lp \otimes \tilde{\eta}] = [\lp]$. 

Next we can construct a globalization of $\p_{H}$, namely $\dot{\p}_{\dot{H}} \in \cPsm{\dot{H}}$ such that $\dot{\p}_{\dot{H}, u} = \p_{H}$ following \cite[Proposition 6.3.1]{Arthur:2013} \cite[Proposition 7.3.1]{Mok:2014}. It gives rise to a self-dual or conjugate self-dual $\dot{\p}_{\dot{E}} \in \Psm{GL(n)_{\dot{E}}}$ such that $\dot{\p}_{\dot{E}, w} = \p_{E}$ for $w|u$. By automorphic induction \cite{AC:1989} \cite{Henniart:2012}, we obtain a self-dual $\dot{\p} \in \Psm{GL(2n)}$ such that $\dot{\p} \otimes \dot{\eta} = \dot{\p}$. Since $\dot{\p}_{u} = \p \in \cPsm{G}$, then $\dot{\p} \in \cPsm{\dot{G}}$, where $\dot{G} = SO(2n, \dot{\eta}')$. 
Let us lift $\dot{H}$ to an $\tilde{\dot{\eta}}$-twisted (resp. $(\theta_0, \tilde{\dot{\eta}})$-twisted) elliptic endoscopic group $\widetilde{\dot{H}}$ of $\widetilde{\dot{G}}$. The following lemma is the key step of the proof.

\begin{lemma}
\label{lemma: comparison}
If $n$ is even, then
\[
I^{(\widetilde{\dot{G}}, \tilde{\dot{\eta}})}_{disc, \dot{\p}} (\tilde{\dot{f}}) = 4 \, \iota(\widetilde{\dot{G}}, \widetilde{\dot{H}}) S^{\widetilde{\dot{H}}}_{disc, \dot{\p}_{\dot{H}}} (\tilde{\dot{f}}^{\widetilde{\dot{H}}}), \quad \tilde{\dot{f}} \in \sH(\widetilde{\dot{G}}, \lif{\chi}).
\]
If $n$ is odd and $\p^{c}_{E} = \p^{\vee}_{E}$, then
\[
I^{(\widetilde{\dot{G}}, \tilde{\dot{\eta}})}_{disc, \dot{\p}} (\tilde{\dot{f}}) = 2 \, \iota(\widetilde{\dot{G}}, \widetilde{\dot{H}}) S^{\widetilde{\dot{H}}}_{disc, \dot{\p}_{\dot{H}}} (\tilde{\dot{f}}^{\widetilde{\dot{H}}}), \quad \tilde{\dot{f}} \in \sH(\widetilde{\dot{G}}, \lif{\chi}).
\]
If $n$ is odd and $\p_{E} = \p^{\vee}_{E}$, then
\[
I^{(\widetilde{\dot{G}}^{\theta_0}, \tilde{\dot{\eta}})}_{disc, \dot{\p}} (\tilde{\dot{f}}) = 4 \, \iota(\widetilde{\dot{G}}, \widetilde{\dot{H}}) S^{\widetilde{\dot{H}}}_{disc, \dot{\p}_{\dot{H}}} (\tilde{\dot{f}}^{\widetilde{\dot{H}}}), \quad \tilde{\dot{f}} \in \sH(\widetilde{\dot{G}}, \lif{\chi}).
\]

\end{lemma}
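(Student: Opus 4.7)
The plan is to apply the stabilization of the $\dot{\p}$-component of the twisted trace formula and exploit that $\dot{\p} \in \cPsm{\dot{G}}$ to severely restrict which twisted elliptic endoscopic data can contribute. Specifically, I would start from
\[
I^{(\widetilde{\dot{G}}^{\theta}, \tilde{\dot{\eta}})}_{disc, \dot{\p}}(\tilde{\dot{f}}) = \sum_{\widetilde{\dot{G}}'} \iota(\widetilde{\dot{G}}, \widetilde{\dot{G}}') \, S^{\widetilde{\dot{G}}'}_{disc, \dot{\p}}(\tilde{\dot{f}}^{\widetilde{\dot{G}}'}),
\]
where the sum runs over twisted elliptic endoscopic data for $(\dot{G}, \theta, \tilde{\dot{\eta}})$, and reorganize the right-hand side into a sum over pairs $(\widetilde{\dot{G}}', \dot{\p}')$ with $\dot{\p}' \in \P{\dot{G}', \dot{\p}_{\dot{G}}}$, following the method in the proof of Lemma~\ref{lemma: twisted endoscopic expansion} (cf.\ also Section~\ref{sec: endoscopic expansion}). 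Since $\dot{\p}$ is simple, $m_{\dot{\p}} = |\P{\dot{G}, \dot{\p}}| = 1$, so we obtain a single $\dot{\p}_{\dot{G}}$-summand.

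Next I would parametrize the resulting collection of pairs by their image in $\cS{\dot{\p}} \backslash \cS{\dot{\p}, ss}^{\theta}(\tilde{\dot{\eta}})$, with each fiber of size $|\mathrm{Out}_{\dot{G}}(\dot{G}'_x)| \cdot |S_{\dot{\p}, x} / S_{\dot{\p}, x} \cap \D{\dot{G}}'_x Z(\D{\dot{G}})^{\Gal{}}|^{-1}$. Because $\dot{\p}$ is simple, $\S{\dot{\p}} = 1$ and every semisimple element of the twisted coset must centralize $\mathrm{Im}\, \dot{\p}_{\dot{G}}$ up to the $\tilde{\dot{\eta}}$-twist. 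Using the explicit decomposition $\dot{\p}|_{L_{\dot{E}}} = \dot{\p}_{\dot{E}} \oplus \dot{\p}^c_{\dot{E}}$ and identifying $\widetilde{\dot{H}}$ as the centralizer of the relevant $s$, I would verify case-by-case that every contributing $(\widetilde{\dot{G}}', \dot{\p}')$ is equivalent to $(\widetilde{\dot{H}}, \dot{\p}_{\dot{H}})$. Combined with the recursive identity $S^{\widetilde{\dot{G}}'}_{disc, \dot{\p}'} = S^{\widetilde{\dot{H}}}_{disc, \dot{\p}_{\dot{H}}}$, assembling the coefficients yields the desired formula.

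The main obstacle will be the orbit counting that produces the specific constants $4$, $2$, $4$ in the three cases. The distinction between $\tilde{\dot{\eta}}$-twisted endoscopy and $(\theta_{0}, \tilde{\dot{\eta}})$-twisted endoscopy, together with the fact that the lift $\widetilde{\dot{H}}$ of $\dot{H}$ depends on a choice of cocycle (so that the central torus $\D{D}$ enters through $|Z(\D{\widetilde{\dot{H}}})^{\Gal{}}|$), controls whether the fiber index contributes a factor of $2$ or $4$. In the case $n$ odd with $\dot{\p}^c_{\dot{E}} = \dot{\p}^\vee_{\dot{E}}$, the parameter $\dot{\p}$ is associated with a unitary group and the centralizer calculation is smaller, accounting for the reduced factor $2$; in the other two cases an additional factor of $2$ coming from $\mathrm{Out}_{\dot{G}}(\dot{H})$ or from the $\theta_0$-twist produces the factor $4$. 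Carrying out this combinatorial bookkeeping consistently across all three cases is the crux of the argument.
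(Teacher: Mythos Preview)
Your overall framework (start from the stabilization of the $\dot{\p}$-component and isolate which twisted elliptic endoscopic data can carry a parameter mapping to $\dot{\p}$) agrees with the paper. However, the mechanism you propose for extracting the constants $4,2,4$ is off, and this is a genuine gap rather than just bookkeeping left undone.

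First, the parametrization by $\cS{\dot{\p}} \backslash \cS{\dot{\p}, ss}^{\theta}(\tilde{\dot{\eta}})$ does not do real work here: since $\dot{\p}$ is simple, $\cS{\dot{\p}}^{\Sigma_0}$ is trivial, so this quotient is a single point and cannot by itself produce factors of $2$ or $4$. The twisted endoscopic groups that actually appear ($U_{\dot{E}/\dot{F}}(n)$, $\mathrm{Res}_{\dot{E}/\dot{F}} SO(n,\eta_{\dot{K}/\dot{E}})$, $\mathrm{Res}_{\dot{E}/\dot{F}} Sp(n-1)$) are not visible through the $S_{\dot{\p}}$-orbit picture used in Lemma~\ref{lemma: twisted endoscopic expansion}; the paper instead classifies the $\dot{\eta}$-twisted (resp.\ $(\theta_0,\dot{\eta})$-twisted) elliptic endoscopic data of $\dot{G}$ directly and checks, via Satake parameters and $\dot{\p}|_{L_{\dot{E}}}=\dot{\p}_{\dot{E}}\oplus\dot{\p}^{c}_{\dot{E}}$, that only $\dot{H}$ can contribute.

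Second, your claim that every contributing pair is equivalent to $(\widetilde{\dot{H}},\dot{\p}_{\dot{H}})$ is not correct as stated: there are always \emph{two} contributing parameters, $\dot{\p}_{\dot{H}}$ and its $\theta_c$-conjugate $\dot{\p}^{c}_{\dot{H}}$. The constants then come from two separate doublings, neither of which is related to $|Z(\D{\widetilde{\dot{H}}})^{\Gal{}}|$. In the ``other'' cases one has $\theta_c\in\mathrm{Out}_{\widetilde{\dot{G}}}(\widetilde{\dot{H}})$, forcing $S^{\widetilde{\dot{H}}}_{disc,\dot{\p}^{c}_{\dot{H}}}(\tilde{\dot{f}}^{\widetilde{\dot{H}}})=S^{\widetilde{\dot{H}}}_{disc,\dot{\p}_{\dot{H}}}(\tilde{\dot{f}}^{\widetilde{\dot{H}}})$, and in addition the $\D{\Sigma}_0$-conjugacy class of $\dot{H}$ breaks into \emph{two} isomorphism classes of endoscopic data whose transfers are related by $\tilde{\dot{f}}\mapsto\tilde{\dot{f}}^{\theta_0}$; restricting to $\Sigma_0$-invariant test functions in $\sH(\widetilde{\dot{G}},\lif{\chi})$ makes both doublings effective, yielding $2\times 2=4$. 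In the unitary case with $n$ odd there is only one isomorphism class and $\mathrm{Out}=1$; the two parameters are instead related via $\tilde{\dot{f}}\mapsto\tilde{\dot{f}}^{\theta_0}$, so restriction to $\sH$ gives only a single factor of $2$. Without identifying the role of $\dot{\p}^{c}_{\dot{H}}$, the splitting of the $\D{\Sigma}_0$-class, and the restriction to $\Sigma_0$-invariant functions, the constants cannot be recovered.
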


\begin{proof}
By the twisted stable trace formula, we have 
\begin{align*}
I^{(\widetilde{\dot{G}}, \tilde{\dot{\eta}})}_{disc, \dot{\p}} (\tilde{\dot{f}}) = \sum_{\widetilde{\dot{G}}' \in \End{ell}{\widetilde{\dot{G}}, \tilde{\dot{\eta}}}}\iota(\widetilde{\dot{G}}, \widetilde{\dot{G}}') S^{\widetilde{\dot{G}}'}_{disc, \dot{\p}} (\tilde{\dot{f}}') , \quad \tilde{\dot{f}} \in \sH(\widetilde{\dot{G}}, \lif{\chi}) \\
\text{\Big(resp. $I^{(\widetilde{\dot{G}}^{\theta_0}, \tilde{\dot{\eta}})}_{disc, \dot{\p}} (\tilde{\dot{f}}) = \sum_{\widetilde{\dot{G}}' \in \End{ell}{\widetilde{\dot{G}}^{\theta_0}, \tilde{\dot{\eta}}}}\iota(\widetilde{\dot{G}}, \widetilde{\dot{G}}') S^{\widetilde{\dot{G}}'}_{disc, \dot{\p}} (\tilde{\dot{f}}'), \quad \tilde{\dot{f}} \in \sH(\widetilde{\dot{G}}, \lif{\chi})$. \Big)}
\end{align*}
The $\D{\Sigma}_0$-conjugacy class of isomorphism classes of $\dot{\eta}$-twisted (resp. $(\theta_0, \dot{\eta})$-twisted) elliptic endoscopic data of $\dot{G}$ can be classified by $(\dot{G}', \dot{K}/\dot{E})$, where $[\dot{K}:\dot{E}] \leqslant 2$ and $\dot{G}' = U_{\dot{E}/\dot{F}}(n_1) \times {\rm Res}_{\dot{E}/\dot{F}} \, SO(n_2, \eta_{\dot{K}/\dot{E}})$ (resp. $U_{\dot{E}/\dot{F}}(n_1) \times {\rm Res}_{\dot{E}/\dot{F}} \, Sp(n_2 - 1)$) for $n = n_1 + n_2$ and $n_2$ is even (resp. $n_2$ is odd), subject to the conditions that
\(
\dot{\eta}' = \dot{\eta}^{n} \cdot \eta_{\dot{K}/\dot{E}}|_{\A^{\times}_{\dot{F}}}
\)
and if $\dot{G}' = U_{\dot{E}/\dot{F}}(n)$, then $\dot{K} = \dot{E}$. The twisted endoscopic embedding $\xi$ can be chosen to satisfy the following diagram
\[
\xymatrix{\L{\dot{G}} \ar[rr] && \L{GL(2n)} \\
\L{\dot{G}'} \ar[r]^{\xi_{\dot{E}/\dot{F}} \quad \quad \quad \quad \quad \quad} \ar[u]^{\xi} &  \L{{\rm Res}_{\dot{E}/\dot{F}} (GL(n_1) \times GL(n_2))} \ar[r] &  \L{{\rm Res}_{\dot{E}/\dot{F}} GL(n)}, \ar[u]
}
\]
where $\xi_{\dot{E}/\dot{F}}$ is given by the fiber product over $W_{\dot{F}}$ of
\[
\xi_{\chi_{(-1)^{n_1 - 1}}}: \L{U_{\dot{E}/\dot{F}}(n_1)} \rightarrow \L{{\rm Res}_{\dot{E}/\dot{F}} GL(n_1)}
\]
(cf. \cite[Section 2]{Mok:2014}) and
\begin{align*}
\xi_{\dot{K}/\dot{E}}: \L{{\rm Res}_{\dot{E}/\dot{F}} \, SO(n_2, \eta_{\dot{K}/\dot{E}})} \rightarrow \L{{\rm Res}_{\dot{E}/\dot{F}} GL(n_2)} \\
\text{ \Big(resp. $\xi_{\dot{K}/\dot{E}}: \L{{\rm Res}_{\dot{E}/\dot{F}} \, Sp(n_2 - 1)} \rightarrow \L{{\rm Res}_{\dot{E}/\dot{F}} GL(n_2)} $\Big).}
\end{align*}
By \cite[Proposition 2.7]{Xu:2018}), there is a one to one correspondence
\begin{align*}
\End{ell}{\dot{G}, \dot{\eta}} \cong \bigsqcup_{\tilde{\dot{\eta}}|_{\dot{G}} = \dot{\eta}} \End{ell}{\widetilde{\dot{G}}, \tilde{\dot{\eta}}} \\
\text{\Big(resp. $\End{ell}{\dot{G}^{\theta_0}, \dot{\eta}} \cong \bigsqcup_{\tilde{\dot{\eta}}|_{\dot{G}} = \dot{\eta}} \End{ell}{\widetilde{\dot{G}}^{\theta_0}, \tilde{\dot{\eta}}}$ \Big).} 
\end{align*}
It is easy to show that if $S^{\widetilde{\dot{G}}'}_{disc, \dot{\p}}  \neq 0$, then $S^{\dot{G}'}_{disc, \dot{\p}} \neq 0$ (cf. \cite[Lemma 5.7]{Xu:2018}). Suppose $c(\dot{\p})$ is the image of $c(\dot{\p}')$ for some $\dot{\p}' \in \cPdt{\dot{G}'}$. Compose $\dot{\p}'$ with the embeddings
\[
\L{\dot{G}'} \rightarrow  \L{{\rm Res}_{\dot{E}/\dot{F}} (GL(n_1) \times GL(n_2))} \rightarrow  \L{{\rm Res}_{\dot{E}/\dot{F}} GL(n)}, 
\]
we get $\dot{\p}_{\dot{E}, 1} \boxplus \dot{\p}_{\dot{E}, 2} \in \P{GL(n)_{\dot{E}}}$, where $\dot{\p}_{\dot{E}, 1}$ is conjugate self-dual and $\dot{\p}_{\dot{E}, 2}$ is self-dual. 
If the image of $\dot{\p}'$ in $\cPdt{\dot{G}}$ is equal to $\dot{\p}$, then 
\[
\dot{\p}_{\dot{E}, 1} \boxplus \dot{\p}_{\dot{E}, 2} = \dot{\p}_{\dot{E}} \text{ or } \dot{\p}^{c}_{\dot{E}}.
\]
Hence, it is necessary that $\dot{G}' = \dot{H}$ and $\dot{\p}' = \dot{\p}_{\dot{H}}$ or $\dot{\p}^{c}_{\dot{H}}$, where $\dot{\p}^{c}_{\dot{H}}$ is the $\D{\theta}_{c}$-conjugate of $\dot{\p}_{\dot{H}}$ for an automorphism $\theta_c$ of $\dot{H}$ and it gives rise to $\dot{\p}^{c}_{\dot{E}}$.

If $n$ is odd and $\dot{H} = U_{\dot{E}/\dot{F}}(n)$, the corresponding $\D{\Sigma}_0$-conjugacy class of isomorphism classes of endoscopic data consists of only one isomorphism class. Moreover, ${\rm Out}_{\lif{\dot{G}}_v}(\lif{\dot{H}}_v) = 1$ and
\[
S^{\widetilde{\dot{H}}}_{disc, \dot{\p}^{c}_{\dot{H}}} (\tilde{\dot{f}}^{\lif{\dot{H}}}) = S^{\widetilde{\dot{H}}}_{disc, \dot{\p}_{\dot{H}}} ((\tilde{\dot{f}}^{\theta_0})^{\lif{\dot{H}}}), \quad \tilde{\dot{f}} \in \H(\lif{\dot{G}}, \lif{\chi}).
\]
In other cases, it follows from $\theta_c \in {\rm Out}_{\lif{\dot{G}}_v}(\lif{\dot{H}}_v)$ that
\[
S^{\widetilde{\dot{H}}}_{disc, \dot{\p}^{c}_{\dot{H}}} (\tilde{\dot{f}}^{\lif{\dot{H}}}) = S^{\widetilde{\dot{H}}}_{disc, \dot{\p}_{\dot{H}}} (\tilde{\dot{f}}^{\lif{\dot{H}}}), \quad \tilde{\dot{f}} \in \H(\lif{\dot{G}}, \lif{\chi}).
\]
Moreover, the corresponding $\D{\Sigma}_0$-conjugacy class of isomorphism classes of endoscopic data consists of two isomorphism classes. We can lift them to twisted endoscopic data of $\lif{\dot{G}}$ and denote the transfers by $\tilde{\dot{f}}^{\lif{\dot{H}}}$ and $\tilde{\dot{f}}^{\lif{\dot{H}}_1}$ respectively. Then 
\[
\tilde{\dot{f}}^{\lif{\dot{H}}_1} = (\tilde{\dot{f}}^{\theta_0})^{\lif{\dot{H}}}, \quad \tilde{\dot{f}} \in \H(\lif{\dot{G}}, \lif{\chi})
\]
and $\iota(\lif{\dot{G}}, \lif{\dot{H}})$ remains the same for the two endoscopic data. So in all cases we can conclude the lemma by restricting to $\sH(\lif{\dot{G}}, \lif{\chi})$. 

\end{proof}

\begin{remark}
We can determine the coefficient $\iota(\lif{\dot{G}}, \lif{\dot{H}})$ in the lemma using \eqref{eq: stabilization coefficient} as follows.
\[
\iota(\lif{\dot{G}}, \lif{\dot{H}}) = \begin{cases} 1 & \text{ if $\dot{\eta}' = \dot{\eta}$ and $\dot{H} = U_{\dot{E}/\dot{F}}(n)$, } \\ 1/2 & \text{ if $\dot{\eta}' = 1$ and $\dot{H} = {\rm Res}_{\dot{E}/\dot{F}} \, Sp(n - 1)$ or $U_{\dot{E}/\dot{F}}(n)$, } \\
1/4 & \text{ otherwise. } 
\end{cases}
\]
\end{remark}

To finish the proof of Theorem~\ref{thm: twist}, we still need to show the distributions in Lemma~\ref{lemma: comparison} are nonzero. By the stable trace formula of $\lif{\dot{H}}$, we have
\[
S^{\widetilde{\dot{H}}}_{disc, \dot{\p}_{\dot{H}}} (\tilde{\dot{f}}^{\widetilde{\dot{H}}}) = I^{\widetilde{\dot{H}}}_{disc, \dot{\p}_{\dot{H}}} (\tilde{\dot{f}}) - \sum_{\widetilde{\dot{H}}' \in \End{ell}{\widetilde{\dot{H}}} - \{\lif{\dot{H}}\}} \iota(\widetilde{\dot{H}}, \widetilde{\dot{H}}') S^{\widetilde{\dot{H}}'}_{disc, \dot{\p}_{\dot{H}}} (\tilde{\dot{f}}'), \quad \tilde{\dot{f}} \in \H(\lif{\dot{H}}, \lif{\chi}_{\lif{\dot{H}}}).
\]
Since $\dot{\p}_{\dot{H}}$ does not factor through any proper elliptic endoscopic group of $\dot{H}$, then $S^{\dot{H}'}_{disc, \dot{\p}_{\dot{H}}}(\dot{f}') = 0$ for $\dot{H}' \neq H$, which implies that $S^{\widetilde{\dot{H}}'}_{disc, \dot{\p}_{\dot{H}}}({\tilde{\dot{f}}'}) = 0$ (cf. \cite[Lemma 5.7]{Xu:2018}). Therefore,
\[
S^{\widetilde{\dot{H}}}_{disc, \dot{\p}_{\dot{H}}} (\tilde{\dot{f}}^{\widetilde{\dot{H}}}) = I^{\widetilde{\dot{H}}}_{disc, \dot{\p}_{\dot{H}}} (\tilde{\dot{f}}) = tr R^{\widetilde{\dot{H}}}_{disc, \dot{\p}_{\dot{H}}} (\tilde{\dot{f}}), \quad \tilde{\dot{f}} \in \H(\lif{\dot{H}}, \lif{\chi}_{\lif{\dot{H}}}).
\]
So there exists $\tilde{\dot{f}} \in \sH(\lif{\dot{G}}, \lif{\chi})$ such that $S^{\widetilde{\dot{H}}}_{disc, \dot{\p}_{\dot{H}}} (\tilde{\dot{f}}^{\widetilde{\dot{H}}}) \neq 0$, hence $I^{(\widetilde{\dot{G}}, \tilde{\dot{\eta}})}_{disc, \dot{\p}} (\tilde{\dot{f}}) \neq 0$ (resp. $I^{(\widetilde{\dot{G}}^{\theta_0}, \tilde{\dot{\eta}})}_{disc, \dot{\p}} (\tilde{\dot{f}}) \neq 0$) by Lemma~\ref{lemma: comparison}. It follows that there exists $[\tilde{\dot{\r}}] \in \cPkt{\tilde{\dot{\p}}}$ such that $\tilde{\dot{\r}} \cong \tilde{\dot{\r}} \otimes \tilde{\dot{\eta}}$ (resp. $\tilde{\dot{\r}}^{\theta_0} \cong \tilde{\dot{\r}} \otimes \tilde{\dot{\eta}})$. In particular, $\tilde{\dot{\r}}_u \cong \tilde{\dot{\r}}_u \otimes \tilde{\eta}$ (resp. $\tilde{\dot{\r}}_u^{\theta_0} \cong \tilde{\dot{\r}}_u \otimes \tilde{\eta}$). Hence $\cPkt{\lp} = \cPkt{\lp} \otimes \tilde{\eta}$.

\bibliographystyle{amsalpha}

\bibliography{reps}

\end{document}